\documentclass[11pt,a4paper]{amsart}

\usepackage{epsfig}
\usepackage{graphicx}
\usepackage[usenames,dvipsnames]{xcolor}
\usepackage[colorlinks=true,linkcolor=Blue,citecolor=Green]{hyperref}
\usepackage{amsmath,amsthm,amsfonts,amssymb}
\usepackage[T1]{fontenc}
\usepackage[utf8]{inputenc}
\usepackage[capitalise,nameinlink,noabbrev]{cleveref}   
\crefname{enumi}{}{}
\usepackage[alwaysadjust]{paralist}
\usepackage{tikz}
\usetikzlibrary{shapes.geometric}
\usetikzlibrary{arrows}

\usepackage[skip=5pt]{caption}
\usepackage{chngpage}

\newtheorem{theorem}{Theorem}[section]
\newtheorem{lemma}[theorem]{Lemma}
\newtheorem{proposition}[theorem]{Proposition}

\newtheorem{corollary}[theorem]{Corollary}
\newtheorem{conjecture}[theorem]{Conjecture}

\newtheorem{introconjecture}{Conjecture}

\theoremstyle{definition}
\newtheorem{definition}[theorem]{Definition}

\newtheorem{example}[theorem]{Example}
\newtheorem{remark}[theorem]{Remark}

\DeclareMathOperator{\inter}{int}
\DeclareMathOperator{\relint}{relint}
\DeclareMathOperator{\conv}{conv}

\def\F{\mathcal{F}}

\def\R{\mathbb{R}}

\def\Z{\mathbb{Z}}

\def\N{\mathbb{N}}

\def\A{\mathrm{A}}

\DeclareMathOperator{\vol}{vol}
\DeclareMathOperator{\Vol}{Vol}
\DeclareMathOperator{\Surf}{Surf}

\DeclareMathOperator{\surf}{surf}
\DeclareMathOperator{\aff}{aff}
\DeclareMathOperator{\lin}{lin}
\DeclareMathOperator{\GL}{GL}

\DeclareMathOperator*{\argmax}{argmax}
\newcommand{\id}{\text{Id}}


\newcommand{\one}{\mathbf{1}}
\newcommand{\zero}{\mathbf{0}}

\renewcommand*{\a}{\omega}



\usepackage[colorinlistoftodos, textwidth=2.5cm,textsize=small]{todonotes}

\begin{document}

\title[Covering radius and discrete surface area for simplices]{The covering radius and a discrete surface area for non-hollow simplices}


\author{Giulia Codenotti}
\address{Institut f\"ur Mathematik, Freie Universit\"at Berlin, Arnimallee 2, 14195 Berlin, Germany}
\email{codenotti@math.uni-frankfurt.de}

\author{Francisco Santos}
\address{Departamento de Matem\'{a}ticas, Estad\'{i}stica y Computaci\'{o}n, Universidad de Cantabria, Av.~de Los Castros 48, 39005 Santander, Spain}
\email{francisco.santos@unican.es}

\author{Matthias Schymura}
\address{Institut f\"ur Mathematik, BTU Cottbus-Senftenberg, Platz der Deutschen Einheit 1, 03046 Cottbus, Germany}
\email{schymura@b-tu.de}

\thanks{G.~Codenotti and F.~Santos were supported by the Einstein Foundation Berlin under grant EVF-2015-230.
F.~Santos  is also supported by grants MTM2017-83750-P/AEI/10.13039/501100011033 and PID2019-106188GB-I00/AEI/10.13039/501100011033 of the Spanish State Research Agency.
M.~Schymura was supported by the Swiss National Science Foundation (SNSF) within the project \emph{Convexity, geometry of numbers, and the complexity of integer programming (Nr.~163071)}.%
}

\date{August 24, 2021}

\begin{abstract}
We explore upper bounds on the covering radius of non-hollow lattice polytopes. In particular, we conjecture a general  upper bound of $d/2$ in dimension $d$, achieved by the ``standard terminal simplices'' and direct sums of them. We prove this conjecture up to dimension three and show it to be equivalent to the conjecture of Gonz\'alez-Merino \& Schymura (2017) that the $d$-th covering minimum of the standard terminal $n$-simplex equals $d/2$, for every $n \geq d$.

We also show that these two conjectures would follow from a discrete analog for lattice simplices of Hadwiger's formula bounding the covering radius of a convex body in terms of the ratio of surface area versus volume.
To this end, we introduce a new notion of discrete surface area of non-hollow simplices.
We prove our discrete analog in dimension two and we give strong evidence for its validity in arbitrary dimension.
\end{abstract}

\maketitle

\setcounter{tocdepth}{2}

\section{Introduction}


The \emph{covering radius} of a convex body $K$ in $\R^d$ with respect to a lattice $\Lambda$ is defined as
\[
\mu(K,\Lambda) = \min\left\{\mu \geq 0 : \mu K + \Lambda = \R^d\right\}.
\]
For us, a \emph{lattice} is always a full-dimensional linear image of~$\Z^d$.
Unless stated otherwise, we consider $\Lambda=\Z^d$ and just write~$\mu(K)$.
A convex body~$K$ is called \emph{hollow} or \emph{lattice-free} (with respect to~$\Lambda$) if $\inter(K) \cap \Lambda = \emptyset$, where $\inter(K)$ denotes the interior of~$K$.
With this notion, the covering radius $\mu(K,\Lambda)$ can be equivalently described as the greatest $\mu \geq 0$ such that the dilation $\mu K$ admits a hollow translate.


The covering radius is a classical parameter in the Geometry of Numbers, in particular in the realm of transference results, the reduction of quadratic forms, and Diophantine Approximations (cf.~\cite{gruberlekkerkerker1987geometry} for background).
In the context of the so-called \emph{flatness theorem} it also proved crucial in Lenstra's landmark paper~\cite{lenstra1983integer} on solving Linear Integer Programming in fixed dimension in polynomial time (see Kannan \& Lov\'{a}sz~\cite{kannanlovasz1988covering} for more on the flatness theorem).
More recent applications of the covering radius include (a) the classification of lattice polytopes in small dimensions (see Iglesias-Vali\~{n}o \& Santos~\cite{iglesiassantos2019classification} and the references therein), (b) distances between optimal solutions of mixed-integer programs and their linear relaxations (Paat, Weismantel \& Weltge~\cite{paatweismantelweltge2020distances}), (c) unique-lifting properties of maximal lattice-free polyhedra (Averkov \& Basu~\cite{averkovbasu2015lifting}), and (d) another viewpoint on the famous Lonely Runner Problem (Henze \& Malikiosis~\cite{henzemalikiosis}).

The covering radius is clearly invariant under translations of the body~$K$, and for every invertible matrix $A \in \R^{d \times d}$, we have $\mu(AK,A\Lambda) = \mu(K,\Lambda)$.
Hence, the covering radius is invariant under \emph{unimodular transformations}, which are affine maps $x \mapsto Ux + z$, where $z \in \Z^d$ and $U \in \GL_d(\Z)$ is a \emph{unimodular matrix}.
The behavior with respect to inclusions is as follows: For convex bodies $K \subseteq K'$ and lattices $\Lambda' \subseteq \Lambda$, 
we have $\mu(K',\Lambda) \leq \mu(K,\Lambda) \leq \mu(K,\Lambda')$.

We are interested in upper bounds on the covering radius of non-hollow \emph{lattice polytopes}, that is, polytopes all of whose vertices are lattice points.
If we drop the non-hollow condition, the maximum covering radius of a lattice $d$-polytope equals~$d$.
This follows since every lattice $d$-polytope contains a lattice $d$-simplex and for lattice simplices the bound is readily obtained (cf.~\cite[Eq.~(19)]{gonzalezschymura2017ondensities}).
Moreover, equality holds if and only if the lattice polytope is a \emph{unimodular simplex}; that is, one of the form $\conv(\{\zero,b_1,\ldots,b_d\})$, where $\{b_1,\ldots,b_d\}$ is a lattice basis for $\Lambda$, or a lattice translate of that (see \cref{coro:corner} for a proof of a more general statement).
 
 The existence of interior lattice points makes the problem more difficult and interesting.
The natural candidate to play the role of the unimodular simplex is  
\[
S(\one_{d+1}) := \conv(\left\{-\one_d,e_1,\ldots,e_d \right\}),
\]
since it is the unique non-hollow lattice $d$-polytope of minimum volume (see~\cite[Thm.~1.2]{beyhenkwills2007notes}). Here $\one_d = (1,\ldots,1)$ denotes the all-one vector in dimension~$d$, and $e_i$ denotes the $i$th coordinate unit vector.%
\footnote{The notation $S(\one_{d+1})$ comes from the fact that this is a particular case of the simplices $S(\a)$, $\a\in \R_{>0}^{d+1}$ introduced below.
We call $S(\one_{d+1})$  the \emph{standard terminal simplex} since
\emph{terminal} is used in the literature for lattice simplices with the origin in the interior and no lattice points other than the origin and the vertices.}

The covering radius of $S(\one_{d+1})$ was computed in~\cite[Prop.~4.9]{gonzalezschymura2017ondensities}:
\begin{align}
\mu(S(\one_{d+1}),\Z^d) = \frac{d}{2}.\label{eqn:covradSone}
\end{align}
Since the covering radius is additive with respect to direct sums (see \Cref{sec:direct_sum}),
direct sums of simplices of the form $S(\one_l)$ or lattice translates thereof also have covering radius equal to $d/2$. 
We conjecture that this procedure gives \emph{all} the non-hollow lattice polytopes of maximum covering radius in a given dimension:

\begin{introconjecture}
\label{conj:one}
Let $P \subseteq \R^d$ be a non-hollow lattice $d$-polytope. Then
\[
\mu(P) \le \frac{d}{2},
\]
with equality if and only if $P$ is obtained by direct sums and/or translations of simplices of the form $S(\one_l)$.
\end{introconjecture}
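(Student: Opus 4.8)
The plan rests on two ingredients: the additivity of the covering radius under direct sums, recalled above, and a Hadwiger-type surface-area bound for simplices. The ``if'' direction of the equality statement is then immediate, since by additivity and by~\eqref{eqn:covradSone} any direct sum of standard terminal simplices $S(\one_l)$, and any lattice translate of such, has covering radius $\sum_i\mu(P_i)=\tfrac12\sum_i\dim(P_i)=d/2$. For the inequality and the ``only if'' direction the temptation is to proceed by monotonicity, enclosing in $P$ a large non-hollow lattice simplex $S$ and using $\mu(P)\le\mu(S)$; this cannot work uniformly, since the cross-polytope $\conv(\{\pm e_1,\dots,\pm e_d\})$ is a non-hollow extremiser that contains no full-dimensional non-hollow lattice simplex whatsoever. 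A more intrinsic route is therefore required.

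For the simplex case I would follow the Hadwiger philosophy announced in the abstract. The classical bound reads $\mu(S)\le\tfrac12\,\Surf(S)/\Vol(S)$, with surface area and volume measured with respect to the induced lattices; a short computation shows that for $S(\one_{d+1})$ the $d+1$ facets are unimodular and this lattice surface-to-volume ratio equals exactly $d$, so the continuous inequality is already tight at the conjectured extremiser. The difficulty is that $\Surf/\Vol$ diverges for long, thin simplices, rendering the continuous bound useless in general. The remedy is to replace $\Surf(S)$ by the discrete surface area $\surf(S)$ of a non-hollow simplex, designed to discard these irrelevant directions, and to prove the discrete Hadwiger inequality
\[
\mu(S)\le\tfrac12\,\frac{\surf(S)}{\vol(S)},
\]
together with the purely combinatorial estimate $\surf(S)/\vol(S)\le d$, with equality if and only if $S$ is unimodularly equivalent to $S(\one_{d+1})$. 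Granting both, the simplex bound $\mu(S)\le d/2$ follows, and tightness forces every facet to be unimodular with a single interior lattice point, pinning $S$ down to $S(\one_{d+1})$.

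To reach arbitrary non-hollow lattice polytopes I would not argue polytope-by-polytope but instead route through the covering minima $\mu_1\le\cdots\le\mu_d=\mu$ of Kannan and Lov\'asz: I would establish, and then use, the equivalence announced in the abstract between Conjecture~\ref{conj:one} and the assertion that the $d$-th covering minimum of the standard terminal simplex satisfies $\mu_d(S(\one_n))=d/2$ for every $n>d$. The discrete Hadwiger inequality of the previous paragraph is exactly what controls this family of terminal simplices, and the equality analysis for the top covering minimum feeds back, via the additivity of the first paragraph, into the direct-sum classification that yields the ``only if'' direction.

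The hard part is unquestionably the discrete Hadwiger inequality in arbitrary dimension. The tension is intrinsic: $\surf(S)$ must be large enough for $\mu(S)\le\tfrac12\surf(S)/\vol(S)$ to survive, yet small enough for $\surf(S)/\vol(S)\le d$ to hold, and both competing requirements must be met by a single definition. In dimension two the estimate can be verified by an explicit census of non-hollow lattice triangles, and I would treat this regime first to calibrate the definition of $\surf$; the genuine obstacle is the passage to general $d$, together with controlling the behaviour of the discrete surface area along the covering-minima reduction.
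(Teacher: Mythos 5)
Your proposal is, by design, conditional: it reduces \cref{conj:one} to the open discrete Hadwiger inequality (\cref{conj:volume}), which is exactly the paper's implication in \cref{coro:vol_one}, and its combinatorial half, $\Surf_{\Z^d}(S)/\Vol_{\Z^d}(S)=\sum_{i}1/\ell_i\le d$ for lattice simplices, is exactly the estimate proved there; the ``if'' direction via additivity (\cref{cor:directsum}) also matches the paper. The first genuine gap is that your dismissal of the simplex-containment route is self-defeating. Your cross-polytope observation is correct, but the paper's reduction (\cref{lemSimplexInside}, \cref{cor:ReduToSimpls}) never asks for a \emph{full-dimensional} non-hollow simplex: it takes a non-hollow simplex $S\subseteq P$ of possibly smaller dimension (the carrier of an interior lattice point in a triangulation) and combines $\mu(P)\le\mu(S,\Z^d\cap L_S)+\mu(\pi(P),\pi(\Z^d))$ from \cref{lem:projection} with induction on dimension; for $\conv(\{\pm e_1,\dots,\pm e_d\})$ one takes $S=[-e_1,e_1]$. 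Worse, the equivalence with \cref{conj:minima} that you propose to ``establish, and then use'' is itself proved by precisely this reduction: in the direction \cref{conj:minima} $\Rightarrow$ \cref{conj:one} of \cref{thm:one} one must exhibit inside $P$ a simplex spanned by lattice points with the origin in its relative interior, so as to realize it as an integer projection of $S(\one_N)$ via barycentric coordinates. The route you keep is built on the step you discard.

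The second gap is that your equality analysis is false. Equality in $\sum_i 1/\ell_i\le d$ holds if and only if all vertices of $S$ are primitive, not if and only if $S\cong S(\one_{d+1})$: every terminal tetrahedron $T(a,b,c,d)$ of \cref{tbl:minimal_tetrahedra} attains it, and all of them except $T(1,1,1,1)=S(\one_4)$ have $\mu<3/2$. Conversely, simplices other than $S(\one_{d+1})$ attain $\mu(S)=d/2$ with equality in \emph{both} of your inequalities: the triangle $I\oplus I'=\conv(\{(\pm1,0),(0,2)\})$ has $\mu=1$ and $\tfrac12\,\Surf_{\Z^2}(I\oplus I')/\Vol_{\Z^2}(I\oplus I')=\tfrac12\cdot\tfrac{3+3+2}{4}=1$, yet it is not unimodularly equivalent to $S(\one_3)$, and one of its edges has lattice length two, contradicting your ``every facet unimodular'' conclusion; in dimension three, $S(\one_3)\oplus I'$ and several other tetrahedra in \cref{lemma:minimum_3d} have $\mu=3/2$. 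This is why the paper derives from \cref{conj:volume} only the \emph{inequality} of \cref{conj:one}, and obtains the equality characterization (in dimensions two and three) by an entirely different mechanism absent from your plan: classification of minimal non-hollow polytopes (\cref{thm:26minimal}, \cref{thm:M_k-3d}) combined with the tightness machinery of \cref{lemma:simplex_monotone} and \cref{lemma:sum_monotone}, which is also what lets one pass from extremal subpolytopes back to $P$ itself.
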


\begin{example}
In dimension two, $S(\one_3)$ has covering radius $1$, and so do the following  triangle and square:
\begin{align*}
S(\one_{2})\oplus ((1+ S(\one_{2})) &= \conv(\{(1,0), (-1,0), (0,2)\}),
\\
S(\one_{2})\oplus S(\one_{2}) &= \conv(\{(1,0), (-1.0), (0,1), (0, -1)\}).
\end{align*}
In dimension three, translations and/or direct sums of the $S(\one_l)$s produce nine pairwise non-equivalent non-hollow lattice $3$-polytopes of covering radius $3/2$, that we describe in \Cref{lemma:minimum_3d}.
\end{example}

One motivation for \cref{conj:one} is as follows.
The \emph{$d$-th covering minimum} of a convex body $K \subseteq \R^n$ with respect to a lattice $\Lambda \subseteq \R^n$ is defined as 
\[
\mu_d(K, \Lambda) := \max_{\pi} \mu(\pi(K),\pi(\Lambda)),
\]
where $\pi$ runs over all linear projections $\pi: \R^n \to \R^d$ such that $\pi(\Lambda)$ is a lattice.
Covering minima were introduced by Kannan \& Lov\'{a}sz~\cite{kannanlovasz1988covering} and
interpolate between $\mu_n(K) = \mu(K)$ and $\mu_1(K)$, the reciprocal of the  \emph{lattice width} of $K$.

Since $S(\one_{n+1})$ projects to $S(\one_{d+1})$ for every $d < n$, we use~\eqref{eqn:covradSone} and get
\begin{align}
\label{eq:mud_one}
\mu_d(S(\one_{n+1})) \ge
\mu_d(S(\one_{d+1})) = \frac{d}2.
\end{align}
The converse inequality was conjectured in~\cite{gonzalezschymura2017ondensities}:

\begin{introconjecture}[\protect{\cite[Rem.~4.10]{gonzalezschymura2017ondensities}}]
\label{conj:minima}
For every $n \in \N$ and $d \le n$,
\begin{align}
\label{eq:minima}
\mu_d(S(\one_{n+1}))= \frac{d}2.
\end{align}
\end{introconjecture}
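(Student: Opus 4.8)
The lower bound in \eqref{eq:minima} is already recorded in \eqref{eq:mud_one}, so the whole content is the reverse inequality $\mu_d(S(\one_{n+1}))\le d/2$. The plan is to reduce this to \cref{conj:one} and then to attack \cref{conj:one} through a discrete surface-area estimate. First I would unwind the definition of $\mu_d$. A linear projection $\pi\colon\R^n\to\R^d$ with $\Lambda':=\pi(\Z^n)$ a rank-$d$ lattice is determined by the generating vectors $v_i:=\pi(e_i)$ of $\Lambda'$, and it satisfies $\pi(-\one_n)=-\sum_{i=1}^{n}v_i$. Hence
\[
\pi(S(\one_{n+1}))=\conv\bigl(v_1,\dots,v_n,\,-\textstyle\sum_{i=1}^{n}v_i\bigr)=:P_\pi.
\]
Because $\pi$ is surjective and maps the interior point $\zero$ of $S(\one_{n+1})$ to $\zero$, the polytope $P_\pi$ is full-dimensional and contains $\zero\in\Lambda'$ in its interior, so $P_\pi$ is a \emph{non-hollow lattice polytope} with respect to $\Lambda'$. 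Since the covering radius is invariant under the unimodular map carrying $\Lambda'$ onto $\Z^d$, I may assume $\Lambda'=\Z^d$.

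With this set-up, $\mu_d(S(\one_{n+1}))=\max_\pi\mu(P_\pi,\Lambda')$ is a maximum of covering radii of non-hollow lattice $d$-polytopes, and \cref{conj:one} would immediately yield $\mu(P_\pi,\Z^d)\le d/2$ for every admissible $\pi$, giving \eqref{eq:minima}. This is the easy direction of the equivalence between the two conjectures; the converse is more subtle, since a general non-hollow lattice polytope need not have vertices summing to a lattice point, and I would treat it separately. To turn the reduction into an unconditional proof one must still establish \cref{conj:one}, at least for the polytopes $P_\pi$ arising here.

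For that final step I would develop a discrete analog of Hadwiger's formula, which in the continuous setting bounds $\mu(K)$ by (half) the ratio $\Surf(K)/\Vol(K)$. Concretely, I would introduce a notion of discrete surface area $\surf(S)$ of a non-hollow lattice simplex $S$ --- summing contributions of the facets, weighted by their normalized volumes in the affine lattices they span --- calibrated so that the inequality
\[
\mu(S)\le\frac12\,\frac{\surf(S)}{\Vol(S)}
\]
holds with equality for $S=S(\one_{d+1})$; since the normalized volume is $\Vol(S(\one_{d+1}))=d+1$, equality at $d/2$ forces the value $\surf(S(\one_{d+1}))=d(d+1)$ and thereby pins down the correct normalization of $\surf$. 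Granting this inequality, the proof concludes by checking that $\tfrac12\,\surf(P_\pi)/\Vol(P_\pi)\le d/2$ uniformly in $\pi$, using that the vertices of $P_\pi$ sum to zero and generate the lattice. The main obstacle is proving the discrete Hadwiger inequality in general dimension: finding the right definition of $\surf$ and controlling it simultaneously for all projections is delicate, and isolating the extremal configurations --- the direct sums of standard terminal simplices --- is where the real difficulty lies. I expect a direct case analysis to settle dimension two, and a classification of non-hollow lattice $3$-polytopes of large covering radius to settle \eqref{eq:minima} for $d\le 3$ via the reduction above.
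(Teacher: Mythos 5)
The statement you are addressing is \cref{conj:minima}, which remains open in general: the paper establishes only its equivalence with \cref{conj:one} (\cref{thm:one}) and its validity for $d\le 3$ (\cref{thm:one_smalldim}). Your proposal follows essentially the paper's own program. The reduction of the upper bound $\mu_d(S(\one_{n+1}))\le d/2$ to \cref{conj:one} via projections is exactly the direction (i)$\Rightarrow$(ii) of \cref{thm:one}: your observation that $P_\pi$ contains $\zero$ as an interior lattice point of $\pi(\Z^n)$ is precisely what the paper uses there. Your plan to then attack \cref{conj:one} through a discrete Hadwiger inequality is the paper's \cref{conj:volume}, and your calibration $\Surf_{\Z^d}(S(\one_{d+1}))=d(d+1)$, $\Vol_{\Z^d}(S(\one_{d+1}))=d+1$ agrees with \cref{thm:lambda}. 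To this extent the proposal is sound, and it correctly acknowledges where the open difficulty lies.

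However, the final step has a concrete gap beyond the admitted reliance on the unproven \cref{conj:volume}. You propose to conclude by ``checking that $\tfrac12\,\Surf(P_\pi)/\Vol(P_\pi)\le d/2$,'' i.e.\ you apply the discrete Hadwiger inequality directly to $P_\pi$. But $P_\pi=\conv\bigl(\{v_1,\dots,v_n,-\sum_{i}v_i\}\bigr)$ has up to $n+1$ vertices in $\R^d$ with $n>d$, so it is generally \emph{not} a simplex, and \cref{conj:volume} (like \cref{def:discretesurfarea}) is formulated only for simplices; the projection-from-the-opposite-vertex structure underlying $\Surf_{\Z^d}$ does not extend to arbitrary polytopes. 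The paper bridges this with \cref{lemSimplexInside} and \cref{cor:ReduToSimpls}: inside any non-hollow lattice polytope one finds a non-hollow lattice simplex of possibly \emph{lower} dimension, and one then needs \cref{lem:projection} and induction on the dimension to recover the bound $d/2$ for the ambient polytope. Relatedly, the property you single out --- that the vertices of $P_\pi$ sum to zero and generate the lattice --- is neither inherited by that inner simplex nor what the implication actually uses: the paper's \cref{coro:vol_one} deduces $\tfrac12\sum_i 1/\ell_i\le d/2$ for \emph{every} lattice simplex with the origin in its interior, using only that the lattice lengths $\a_i$ of the segments $[\zero,v_i]$ satisfy $\a_i\ge1$ together with the barycentric identity $\sum_i \a_i/\ell_i=d$. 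With these repairs your outline coincides with the paper's chain \cref{conj:volume}$\,\Rightarrow\,$\cref{conj:one}$\,\Rightarrow\,$\cref{conj:minima}, plus the low-dimensional cases (\cref{coro:ConjA_dim2}, \cref{thm:ConjA_dim3}); and, like the paper, it yields \eqref{eq:minima} only for $d\le 3$, not in general.
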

\noindent In \cref{sec:one-minima} we prove:

\begin{theorem}[Equivalence of Conjectures \ref{conj:one} and \ref{conj:minima}, \cref{sec:one}]
\label{thm:one}
For each $d\in \N$, the following are equivalent:
\begin{enumerate}[i)]
\item $\mu(P) \leq \frac{\ell}{2}$ for every non-hollow lattice $\ell$-polytope $P$ and for every $\ell\le d$.
\item \cref{conj:minima} holds for every $\ell\le d$. That is, 
$\mu_\ell(S(\one_{n+1}))=\frac{\ell}{2}$, for every $\ell, n \in \N$ with $\ell\le d\le n$.
\end{enumerate}
\end{theorem}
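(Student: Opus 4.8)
The plan is to prove the two implications separately; in fact the argument works one dimension at a time, establishing for each fixed $\ell$ the equivalence of ``$\mu(P)\le \ell/2$ for all non-hollow lattice $\ell$-polytopes $P$'' with ``$\mu_\ell(S(\one_{n+1}))=\ell/2$ for all $n\ge \ell$''. Throughout I would use two elementary facts about the covering radius: it is invariant under invertible linear maps, $\mu(AK,A\Lambda)=\mu(K,\Lambda)$, and it is monotone in the lattice, i.e.\ $\Lambda'\subseteq\Lambda$ implies $\mu(K,\Lambda')\ge \mu(K,\Lambda)$ (covering with a coarser lattice is harder).

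For the implication (i)$\Rightarrow$(ii), I would argue that $\mu_\ell$ of a standard terminal simplex is controlled by (i) because every relevant projection produces a non-hollow lattice polytope. Fix $\ell\le d\le n$ and a surjective linear projection $\pi\colon\R^n\to\R^\ell$ with $\pi(\Z^n)$ a lattice. Since $\pi$ is an open map and $\zero\in\inter S(\one_{n+1})$, the image $\pi(S(\one_{n+1}))$ is a full-dimensional lattice polytope with respect to $\pi(\Z^n)$ that contains $\pi(\zero)=\zero$ in its interior, hence is non-hollow. Applying the invertible linear map sending $\pi(\Z^n)$ to $\Z^\ell$ turns it into a non-hollow lattice $\ell$-polytope for $\Z^\ell$ of the same covering radius, so (i) bounds $\mu(\pi(S(\one_{n+1})),\pi(\Z^n))\le \ell/2$. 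Taking the maximum over $\pi$ yields $\mu_\ell(S(\one_{n+1}))\le \ell/2$, and together with the lower bound \eqref{eq:mud_one} this gives equality.

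The implication (ii)$\Rightarrow$(i) rests on realizing an \emph{arbitrary} non-hollow lattice $\ell$-polytope as a lattice projection of a standard terminal simplex. Let $P=\conv(v_1,\dots,v_m)$ with $v_i\in\Z^\ell$ and, after translating, $\zero\in\inter P$ a lattice point. Because $\zero$ is interior we can write $\zero=\sum_i\lambda_i v_i$ with all $\lambda_i>0$ and $\sum_i\lambda_i=1$; as the defining linear system has integer coefficients, the strictly positive solutions form a nonempty relatively open rational polytope, so we may take $\lambda_i=b_i/(N+1)$ with $b_i\in\Z_{>0}$ and $N+1=\sum_i b_i$. I would then partition the $N+1$ vertices $-\one_N,e_1,\dots,e_N$ of $S(\one_{N+1})$ into $m$ groups of sizes $b_1,\dots,b_m$ and define $\pi\colon\R^N\to\R^\ell$ by sending each $e_j$ in group $i$ to $v_i$. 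A short computation using $\sum_i b_i v_i=\zero$ shows that the value forced on $-\one_N$ by linearity agrees with its group, so $\pi$ is well defined, and then $\pi(S(\one_{N+1}))=\conv(v_1,\dots,v_m)=P$. The lattice $\pi(\Z^N)$ is the full-rank sublattice of $\Z^\ell$ generated by the $v_i$.

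The payoff combines these ingredients. Since $\pi(\Z^N)$ is a lattice, the definition of covering minima gives $\mu(P,\pi(\Z^N))=\mu(\pi(S(\one_{N+1})),\pi(\Z^N))\le \mu_\ell(S(\one_{N+1}))$, and lattice monotonicity together with $\pi(\Z^N)\subseteq\Z^\ell$ yields $\mu(P)=\mu(P,\Z^\ell)\le \mu(P,\pi(\Z^N))$. It remains only to ensure that (ii) is applicable, i.e.\ that $n=N\ge d$; this is achieved for free by replacing each $b_i$ with $tb_i$ for a large integer $t$, which leaves the barycentric coordinates unchanged while forcing $N$ as large as desired. Invoking \eqref{eq:minima} for this $N$ then gives $\mu(P)\le \ell/2$. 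I expect the main obstacle to be the realization step of the third paragraph---verifying that grouping vertices really yields a well-defined \emph{projection} onto $P$ and correctly identifying $\pi(\Z^N)$---while the sublattice gap is harmless precisely because the covering radius only increases when the lattice shrinks.
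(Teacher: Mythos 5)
Your proposal is correct, and in the direction (ii) $\Rightarrow$ (i) it takes a genuinely different route from the paper. The paper first reduces to simplices via \cref{cor:ReduToSimpls} (which rests on \cref{lemSimplexInside}, \cref{lem:projection} and an induction on dimension), and only then takes barycentric coordinates of the interior lattice point --- which for a simplex are unique and automatically rational. You instead treat an arbitrary non-hollow polytope $P=\conv(\{v_1,\dots,v_m\})$ directly: you write $\zero$ as a strictly positive convex combination of \emph{all} vertices, use density of rational points in the nonempty, relatively open, rational solution set to make the coefficients rational, and then run the same grouping construction, checking via $\sum_i b_i v_i=\zero$ that linearity forces the correct image of the vertex $-\one_N$. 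This buys you independence from the simplex-reduction machinery, at the price of the rationality/density argument; the paper gets rationality for free from unique barycentric coordinates but needs the triangulation-plus-projection induction. The remaining ingredients coincide: the definitional bound $\mu(\pi(S(\one_{N+1})),\pi(\Z^N))\le\mu_\ell(S(\one_{N+1}))$ and lattice monotonicity for $\pi(\Z^N)\subseteq\Z^\ell$ (the paper's \cref{lemma:lattice_monotone}). The direction (i) $\Rightarrow$ (ii) is essentially the paper's argument, combining \eqref{eq:mud_one} with the observation that projections of $S(\one_{n+1})$ are non-hollow; if anything you are slightly more careful, since you spell out the unimodular identification of $(\pi(S(\one_{n+1})),\pi(\Z^n))$ with a non-hollow lattice $\ell$-polytope in $(\R^\ell,\Z^\ell)$, and you bound every projection rather than assuming the maximum over projections is attained. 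You also patch a point the paper leaves implicit: statement (ii) literally covers only $n\ge d$, so one must be able to take the constructed $N$ at least $d$; your rescaling $b_i\mapsto t b_i$ handles this cleanly (alternatively, one can note $\mu_\ell(S(\one_{n+1}))\le\mu_\ell(S(\one_{d+1}))$ for $n\le d$, since coordinate projections compose).
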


\begin{theorem}[\Cref{coro:ConjA_dim2} and \Cref{thm:ConjA_dim3}]
\label{thm:one_smalldim}
\cref{conj:one}, hence also \cref{conj:minima}, holds in dimension up to three.
\end{theorem}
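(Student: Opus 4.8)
The plan is to prove Conjecture~\ref{conj:one} separately in dimensions one, two, and three. The one-dimensional case is trivial: a non-hollow lattice segment must contain a unimodular segment, and the only minimal such segment is a translate of $S(\one_2) = \conv(\{-1,1\})/\ldots$, which after rescaling has covering radius $1/2 = d/2$. For the bound part in higher dimensions, my strategy is to reduce to a finite classification problem. The key is that non-hollow lattice polytopes with interior lattice points are, in a suitable sense, governed by their minimal non-hollow subconfigurations; I would first argue that it suffices to bound $\mu(P)$ for \emph{minimal} non-hollow lattice polytopes, i.e. those that become hollow upon removing any vertex, since enlarging a non-hollow polytope only decreases the covering radius.

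For dimension two, I would invoke the complete classification of lattice polygons by their number of interior and boundary lattice points (via Pick's theorem and the Scott-type bounds), focusing on the non-hollow ones. The finitely many minimal non-hollow lattice triangles and quadrilaterals up to unimodular equivalence can be enumerated, and for each I would compute (or bound) the covering radius directly using the characterization that $\mu(P)$ is the largest $\mu$ such that $\mu P$ admits a hollow translate. The main work is verifying that the bound $\mu \le 1$ holds across the list and isolating the equality cases, which should match exactly the two configurations $S(\one_2)\oplus(1+S(\one_2))$ and $S(\one_2)\oplus S(\one_2)$ from the \Cref{example} together with $S(\one_3)$ itself.

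The genuinely hard part will be dimension three. There is no clean finite classification of non-hollow lattice $3$-polytopes analogous to the planar case, so a direct enumeration is not available. My plan is instead to reduce to minimal non-hollow lattice $3$-polytopes with exactly one interior lattice point, which one may place at the origin, and to exploit known structural results on such polytopes (for instance the work on the maximal number of boundary lattice points, and the finiteness results for lattice polytopes with a fixed number of interior points). Once the candidate polytopes are understood, I would bound $\mu(P) \le 3/2$ by exhibiting, for each candidate, a lattice-point-free translate of the appropriately dilated body, i.e. a good covering by integer translates of $\frac{3}{2}P$. The equality analysis then amounts to showing that covering radius exactly $3/2$ forces the polytope to decompose as a direct sum and/or translate of $S(\one_l)$'s, yielding the nine polytopes announced in \Cref{lemma:minimum_3d}.

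I expect the principal obstacle to be controlling the dimension-three case without a fully explicit classification: the number of relevant minimal non-hollow polytopes is larger and their geometry more varied, so the covering radius computations are delicate and the equality characterization requires care to rule out near-extremal configurations. A secondary technical point is making rigorous the reduction to minimal non-hollow polytopes and verifying monotonicity of the covering radius under the relevant containments, together with handling the interplay between the direct-sum structure (\Cref{sec:direct_sum}) and lattice translations so that the equality cases are neither over- nor under-counted.
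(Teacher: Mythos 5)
Your overall strategy---reduce to minimal non-hollow polytopes, classify them, compute their covering radii, and use strict monotonicity for the equality cases---is indeed the skeleton of the paper's proof. But your dimension-three plan has a genuine gap: the reduction to minimal non-hollow $3$-polytopes \emph{with exactly one interior lattice point} is false. There is an infinite family of minimal non-hollow lattice $3$-polytopes with $k\ge 2$ interior lattice points, namely the tetrahedra $M_k(\alpha,\beta)$ of \cref{thm:M_k-3d} (due to Balletti--Kasprzyk); being minimal, they properly contain no non-hollow lattice $3$-polytope at all, so they cannot be reduced to the one-interior-point case and your enumeration would simply never see them. The paper must treat this family separately, bounding $\mu(M_k(\alpha,\beta))<3/2$ by a projection argument (\cref{coro:M_k-3d}); only after that does the one-interior-point classification (Kasprzyk's $26$ polytopes, handled in \cref{thm:26minimal}) suffice. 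Relatedly, your working definition of minimality (``becomes hollow upon removing any vertex'') is weaker than the containment-based one in \cref{defi:minimal} and already breaks finiteness in the plane: the triangles $M_k(\alpha)$ become hollow when any vertex is deleted, yet each properly contains the non-hollow $M_{k-1}(\alpha)$, so under your definition there are infinitely many ``minimal'' polygons and the claimed finite two-dimensional enumeration fails.

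Two further points would need repair. First, your justification for the equality analysis rests on ``enlarging a non-hollow polytope only decreases the covering radius,'' which is only true weakly: strict decrease under proper containment is false in general (the hexagon of \cref{ex:tightness_char} has the same covering radius $2/3$ as a triangle properly containing it). The equality characterization genuinely requires the tightness machinery---every simplex is tight (\cref{lemma:simplex_monotone}) and direct sums of tight bodies are tight (\cref{lemma:sum_monotone})---applied to the nine extremal polytopes; this is a key idea, not a secondary verification. Second, you have the covering-radius test backwards: exhibiting a lattice-point-free (hollow) translate of $\mu P$ proves $\mu(P)\ge\mu$, not $\mu(P)\le\mu$. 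Upper bounds, and in particular the strict bounds $\mu<3/2$ needed for the $17$ non-extremal candidates, require certifying that $\Z^3$-translates of the dilate cover $\R^3$ with slack, which the paper obtains via projection inequalities, direct-sum decompositions, Cayley-graph diameter computations, or exact mixed-integer programming.
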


The computation of the covering radius for $S(\one_{d+1})$ can be generalized to the following class of simplices:
For each $\a=(\a_0,\dots,\a_d) \in \R_{> 0}^{d+1}$, we define
\[
S(\a) := \conv(\left\{ -\a_0 \one_d, \a_1 e_1, \ldots, \a_d e_d\right\}).
\]
In \cref{sec:lambda} we derive the following closed formula for $\mu(S(\a))$.
Therein and in the rest of the paper we denote by $\Vol_\Lambda(K)$
the \emph{normalized volume}  of a convex body~$K$ with respect to a lattice~$\Lambda$, which equals the Euclidean volume $\vol(K)$ of~$K$ normalized such that a unimodular simplex of~$\Lambda$ has volume one.

\begin{theorem}[\Cref{sec:lambda_proof}]
\label{thm:lambda}
For every $\a \in \R^{d+1}_{>0}$, we have
\[
\mu(S(\a)) 
= 
\frac{\sum_{0 \leq i < j \leq d} \frac{1}{\a_i \a_j}} {\sum_{i=0}^d \frac{1}{\a_i}}
=
\frac12 \frac{\sum_{i=0}^d \Vol_{\pi_i(\Z^d)}(\pi_i(S(\a)))} {\Vol_{\Z^d}(S(\a))},
\]
where $\pi_i: \R^d \to \R^{d-1}$ is the linear orthogonal projection 
along the line through the origin and the $i$th vertex of~$S(\a)$.
\end{theorem}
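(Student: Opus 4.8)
The plan is to split the statement into the covering-radius computation (the first equality), which is the substantial part, and a purely algebraic volume identity (the second equality). First I would use that $\zero \in \inter(S(\a))$, so that the Minkowski gauge $g$ of $S(\a)$ is finite everywhere and $\mu(S(\a)) = \max_{x\in\R^d}\min_{z\in\Z^d} g(x-z)$. Computing the facet description of $S(\a)$ (one facet opposite $-\a_0\one_d$, the others opposite the $\a_i e_i$) yields facet functionals $\ell_0(y) = \sum_{i=1}^d y_i/\a_i$ and $\ell_k(y) = \ell_0(y) - \sigma y_k$ for $1\le k\le d$, where $\sigma := \sum_{i=0}^d 1/\a_i$. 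Since $g = \max_k \ell_k$, this collapses to the compact expression
\[
g(y) = \sum_{i=1}^d \frac{y_i}{\a_i} - \sigma\,\min\{0,y_1,\ldots,y_d\},
\]
which one can sanity-check against the one-dimensional case $S(\a)=[-\a_0,\a_1]$, where $\mu = 1/(\a_0+\a_1)$.

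The key simplification is that $g(x-z)$ depends on $x$ and $z$ only through $x-z$, so minimizing over $z\in\Z^d$ and then maximizing over $x$ reduces the problem to a min-max over the torus: $\mu(S(\a)) = \max_{r\in\R^d/\Z^d}\min_{s\equiv r}F(s)$, with $F(s) = \sum_i s_i/\a_i - \sigma\min\{0,s_1,\ldots,s_d\}$. I would then solve this combinatorially. For the inner minimum, a local exchange argument (moving any coordinate by an integer toward $0$, using that $F\to+\infty$ as any coordinate $\to-\infty$) shows the optimal representative of each coordinate lies in $\{r_i, r_i-1\}$, so the minimum is over the $2^d$ sign patterns $N\subseteq\{1,\ldots,d\}$ of negated coordinates. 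A comparison of single negations (the best one maximizes $1/\a_i + \sigma r_i$) then shows that the minimizing patterns form a maximal chain $\emptyset = N_0\subset N_1\subset\cdots\subset N_d$, so that $\Phi(r):=\min_s F(s)$ is the pointwise minimum of the $d+1$ affine functions $F_{N_0},\ldots,F_{N_d}$. Its maximum over $r$ is attained where all $d+1$ of these coincide; solving the resulting triangular linear system produces the deep hole and the value $\mu(S(\a)) = \big(\sum_{i<j}1/(\a_i\a_j)\big)/\sigma$. (For $d=2$ this reproduces the deep hole $(1/3,2/3)$ and value $1$ of $S(\one_3)$.)

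For the second equality I would compute both volumes from the same closed form. The matrix-determinant lemma applied to the edge matrix $\operatorname{diag}(\a_1,\ldots,\a_d) + \a_0\,\one\one^\top$ gives $\Vol_{\Z^d}(S(\a)) = \big(\prod_{i=0}^d \a_i\big)\,\sigma$. For the projected volumes, the crucial observation is that $\pi_i$ is the projection along the ray through the $i$th vertex, and that its image of $S(\a)$ is again a standard terminal simplex: the image of the $i$th vertex lands in the interior and is redundant, so $\pi_i(S(\a)) = S(\a^{(i)})$, where $\a^{(i)}$ deletes the entry $\a_i$. Applying the volume formula in dimension $d-1$ gives $\Vol_{\pi_i(\Z^d)}(\pi_i(S(\a))) = \big(\prod_{j\ne i}\a_j\big)\big(\sigma - 1/\a_i\big)$, uniformly for $0\le i\le d$; here one must verify that $\pi_0(\Z^d)$, the projection along the primitive vector $\one$, carries the normalization making this formula valid. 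Summing over $i$, cancelling $\prod_j\a_j$, and using $\sigma^2 - \sum_i 1/\a_i^2 = 2\sum_{i<j}1/(\a_i\a_j)$ converts the surface-area expression into the closed form.

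The hard part will be the combinatorial min-max of the second paragraph, since it must deliver both bounds at once: exhibiting the deep hole (the lower bound $\mu(S(\a))\ge\big(\sum_{i<j}1/(\a_i\a_j)\big)/\sigma$) and certifying that no point is deeper, equivalently that $\mu(S(\a))\,S(\a) + \Z^d = \R^d$ (the upper bound). The two delicate steps are the exchange argument reducing each coordinate to its two nearest representatives and the proof that the optimal negation pattern is a chain, since a priori the inner minimum ranges over all $2^d$ patterns; the determinant and projection computations of the last paragraph are routine by comparison.
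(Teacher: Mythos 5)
Your strategy is correct and, once unpacked, it is essentially the paper's proof in different clothing: your $d+1$ affine functions $F_{N_0},\dots,F_{N_d}$ are exactly the functionals $\ell_i(\,\cdot\,-e_{[i]})$ (with $e_{[i]}=e_1+\dots+e_i$) that the paper equalizes, your chain of negation patterns is precisely its decomposition of the unit cube into alcoved chambers indexed by orderings of the coordinates, and the final linear system, deep hole, and value coincide. What genuinely differs is how this structure is derived: the paper triangulates $S(\a)$ into the cones over its facets and identifies the induced regions with the alcoved arrangement, whereas you obtain it from the gauge by minimizing over lattice representatives. You also prove the second (volume) equality explicitly, including the needed normalization check for $\pi_0$; the paper leaves that part implicit, so this is a welcome addition.

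However, both mechanisms you sketch for your self-declared ``delicate steps'' fail as stated, even though the claims they are meant to prove are true. First, the exchange ``move any coordinate by an integer toward $0$'' is not monotone: if $s_i<0$ is \emph{not} the minimal coordinate, then replacing $s_i$ by $s_i+1$ leaves $\min\{0,s_1,\dots,s_d\}$ unchanged and \emph{increases} $F$ by $1/\a_i$ (try $\a=\one_3$ and $s=(-\tfrac12,-\tfrac52)$); in fact local moves push non-minimal coordinates \emph{down} toward the minimum, not up toward $0$, so this exchange cannot rule out representatives below $r_i-1$. Second, the chain is not governed by the quantities $1/\a_i+\sigma r_i$: once the minimal coordinate is $s_{i_0}=r_{i_0}-1$, adjoining any index $j$ with $r_j\ge r_{i_0}$ to the pattern strictly decreases $F_N$ by $1/\a_j$, so the only candidate patterns are the upper sets $N(i_0)=\{j : r_j\ge r_{i_0}\}$ of the ordering of the $r_j$ \emph{themselves}; the best single negation (which indeed maximizes $1/\a_i+\sigma r_i$) is in general strictly dominated and not even a candidate.

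Both steps are repaired at once by classifying candidate minimizers by the index $i_0$ attaining $\min_j s_j$ and its depth $m\ge 1$, i.e.\ $s_{i_0}=r_{i_0}-m$ (depth $0$ meaning all $s_j=r_j$). For fixed $(i_0,m)$, minimality forces every other coordinate to be the smallest representative of its coset that is $\ge r_{i_0}-m$, and passing from depth $m$ to depth $m+1$ then increases $F$ by exactly $\sigma-\sum_{j=1}^d 1/\a_j=1/\a_0>0$; hence only depths $0$ and $1$ survive, which yields simultaneously your reduction to $\{r_i,r_i-1\}$ and the chain structure by descending $r_j$. Two further points need a word in the write-up: the assertion that the maximum of $\Phi$ is attained where all $d+1$ affine functions coincide requires the positive linear dependence $\sum_{i=0}^d \a_i^{-1}\ell_i\equiv 0$ (this is the paper's argument) together with the check that the solution lies in the claimed chamber (it does, since consecutive differences of its coordinates equal $1/(\sigma\a_{i+1})>0$); and since the resulting value is symmetric in $\a_1,\dots,\a_d$, every chamber gives the same maximum, so this common value is the covering radius. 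With these repairs your proof is complete.
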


In \cite{gonzalezschymura2017ondensities}, the authors conjecture an optimal lower bound on the \emph{covering product} $\mu_1(K) \cdot \ldots \cdot \mu_d(K) \cdot \Vol_{\Z^d}(K)$ for any convex body~$K \subseteq \R^d$.
As a consequence of the explicit formula for $\mu(S(\a))$, we confirm this conjecture for the simplices~$S(\a)$ (see \cref{cor:lambda_cov_prod}).

Observe that the volume expression on the right in \cref{thm:lambda} can be defined for every simplex with the origin in its interior as follows:

\begin{definition}
\label{def:discretesurfarea}
Let $S = \conv(\{v_0,\ldots,v_d\})$ be a $d$-simplex with the origin in its interior.
We say that $S$ has \emph{rational vertex directions} if the line through the origin and the vertex $v_i$ has rational direction, for every $0 \leq i \leq d$.

Writing $\pi_i: \R^d \to \R^{d-1}$ for a linear projection vanishing at $v_i$, we define the \emph{discrete surface area} of such a simplex~$S$ as
\[
\Surf_{\Z^d}(S) := \sum_{i=0}^d \Vol_{\pi_i(\Z^d)}(\pi_i(S)).
\]
\end{definition}

Note that $\Vol_{\pi_i(\Z^d)}(\pi_i(S)) = \Vol_{\pi_i(\Z^d)}(\pi_i(F_i))$, with~$F_i$ being the facet of~$S$ opposite to the vertex $v_i$.
In this sense, the sum of these numbers is indeed a version of the ``surface area'' of~$S$, except that the volume of each facet is computed with respect to the lattice projected from the opposite vertex.

Motivated by this definition and \cref{thm:lambda} we propose the following conjecture, which is the main object of study in this paper:

\begin{introconjecture}
\label{conj:volume}
Let $S$ be a $d$-simplex with the origin in its interior and with rational vertex directions.
Then
\begin{align}
\mu(S) \leq \frac12 \frac{\Surf_{\Z^d}(S)}{\Vol_{\Z^d}(S)}.
\label{eq:volume}
\end{align}
\end{introconjecture}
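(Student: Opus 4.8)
The plan is to first recast the covering radius analytically and then exploit the fact that \cref{thm:lambda} already establishes the conjecture, \emph{with equality}, on the entire family $S(\a)$. Writing $\|x\|_S = \min\{\mu \ge 0 : x \in \mu S\}$ for the gauge of $S$ (well defined and piecewise linear, with one linear piece per facet, because the origin lies in the interior), the covering radius admits the ``deepest hole'' description
\[
\mu(S) \;=\; \max_{y \in \R^d}\ \min_{z \in \Z^d}\ \|z - y\|_S .
\]
The right-hand side of \eqref{eq:volume} is by contrast a global average-type quantity, and the assertion is that this average dominates the above maximum. The equality case $S=S(\a)$ pins down exactly which configurations are extremal, and it is this family that any proof must reproduce tightly.

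The second step is a reduction that converts the conjecture into a statement purely about the standard simplices $S(\a)$, at the cost of varying the lattice. Since the origin lies in the interior of $S=\conv(v_0,\dots,v_d)$, the vertex rays admit a positive linear dependence; choosing rational points on these rays (possible by the rationality of the directions) together with a rational such dependence produces a map $T \in \GL_d(\Q)$ sending the rays to those of the standard simplices, so that $TS = S(\b)$ for some $\b \in \R_{>0}^{d+1}$. All three quantities in \eqref{eq:volume} are equivariant under $(S,\Z^d)\mapsto (TS, T\Z^d)$: the covering radius and the normalized volume are invariant, and the defining projections $\pi_i$ together with the lattices $\pi_i(\Z^d)$ transform consistently, so $\Surf_{\Z^d}(S)/\Vol_{\Z^d}(S)$ is unchanged. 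Hence \cref{conj:volume} is equivalent to the inequality
\[
\mu(S(\b),\Lambda) \;\le\; \frac12\,\frac{\Surf_\Lambda(S(\b))}{\Vol_\Lambda(S(\b))}
\]
for the fixed-ray simplices $S(\b)$ and every rational lattice $\Lambda = T\Z^d$. Now \cref{thm:lambda} is precisely the case $\Lambda=\Z^d$, where equality holds, so the genuine content is an extremality statement: among all lattices, the standard one makes the covering radius largest relative to the discrete surface-to-volume ratio.

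From here I see two natural routes. The first is induction on $d$ using the self-similar structure of the construction: each projection $\pi_i(S)$ is again a simplex with the origin in its interior, now in dimension $d-1$, and $\Surf_{\Z^d}(S)$ is by definition the sum of their normalized volumes. One would try to bound $\mu(S)$ by the covering radii $\mu(\pi_i(S),\pi_i(\Z^d))$ through a subadditivity estimate for the covering radius along the fibration $\pi_i$, and then reassemble the facet contributions. The second route is to imitate the classical proof of Hadwiger's continuous bound, that the covering radius is at most half the surface-area-to-volume ratio, by a flux argument on the torus $\R^d/\Z^d$, but with the Euclidean surface area replaced by the projected-volume sum $\Surf_{\Z^d}$; the definition of $\Surf_{\Z^d}$ is tailored so that this substitution is exact on $S(\a)$, which is the sense in which \cref{thm:lambda} serves as the consistency check for such an argument.

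The hard part, and the reason the statement remains a conjecture, is that both routes fight the symmetry of $\Surf_{\Z^d}$. The discrete surface area weights all $d+1$ facets on an equal footing, whereas any induction or fibration singles out one direction, so recovering the constant $\tfrac12$ and correctly recombining the per-facet terms is delicate; equivalently, in the reduced picture one must control $\mu(S(\b),\Lambda)$ for \emph{every} lattice $\Lambda$, and no closed formula analogous to \cref{thm:lambda} is available once $\Lambda \ne \Z^d$. In dimension two the obstruction dissolves: the gauge $\|\cdot\|_S$ has only three linear pieces, the deepest hole is a triple point whose location is governed by finitely many combinatorial types, and one can compute $\mu(S)$ and compare it directly with $\tfrac12\,\Surf_{\Z^d}(S)/\Vol_{\Z^d}(S)$. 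I would therefore settle the planar case by this explicit analysis and treat it as the base of the induction, expecting the genuinely new difficulty to appear only in passing to $d \ge 3$.
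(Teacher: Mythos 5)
You have not produced a proof, and none should be expected: the statement is \cref{conj:volume}, which the paper itself leaves open, proving only partial cases --- equality for the family $S(\a)$ (\cref{thm:lambda}), the inequality with constant $1$ in place of $\tfrac12$ (\cref{prop:volume_factor_2}), the case where the primitive vertex directions sum to zero (\cref{cor:conjC_primitiveSimplices}), and dimension two (\cref{thm:volume_dim2}, \cref{coro:volume_dim2}). Your sketch mirrors the shape of these partial results but establishes none of them. Your reduction is correct as far as it goes: since the primitive directions $p_0,\dots,p_d$ admit a positive \emph{rational} dependence, a rational linear map carries $S$ to some $S(\beta)$, and all three quantities in \eqref{eq:volume} are $\GL_d(\Q)$-equivariant (for $\Surf$ this is \cref{cor:discrsurfarea_invariance}), so the conjecture is equivalent to $\mu(S(\beta),\Lambda)\leq\tfrac12\,\Surf_\Lambda(S(\beta))/\Vol_\Lambda(S(\beta))$ for all rational lattices $\Lambda$. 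But this is a pure reformulation. The paper extracts content from this picture only when the dependence is $(1,\dots,1)$: then the lattice generated by the $p_i$ is a \emph{sublattice} of $\Z^d$, so \cref{lemma:lattice_monotone} gives $\mu(S,\Z^d)\leq\mu(S,\Lambda)=\mu(S(\a),\Z^d)$ and \cref{thm:lambda} finishes. For a general dependence $\alpha$ the lattice $T\Z^d$ is incomparable with $\Z^d$, monotonicity is unavailable, and no analog of \cref{thm:lambda} exists; your reformulation relocates the difficulty without reducing it. (Also, a side claim is inaccurate: equality in \eqref{eq:volume} is not exclusive to $S(\a)$; the paper exhibits $S(\one_2)\oplus S'(\one_2)$ as another equality case.)

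The two routes you then propose either provably fall short or are not carried out. The fibration-induction route is exactly the proof of \cref{prop:volume_factor_2}, and one can see concretely why it cannot reach the constant $\tfrac12$: \cref{lem:projection} charges the fiber its full covering radius $1/\ell_i$ rather than $\tfrac12\cdot1/\ell_i$ (a segment of lattice length $\ell$ really has covering radius $1/\ell$), so the factor is lost in the first step and cannot be recovered afterwards. Even taking the true planar case as the base, the recursion $(d+1)c_d=1+dc_{d-1}$ in the remark following \cref{prop:volume_factor_2} yields only the constant $(2d-1)/(2d+2)$, which tends to $1$, not $\tfrac12$; so the ``delicate recombination'' you defer is not an expository gap but the conjecture itself. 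The Hadwiger-style flux argument is left entirely speculative. Finally, the planar case you say you ``would settle by explicit analysis'' is precisely the hard content of the paper's \cref{thm:volume_dim2}: the published proof is not a deepest-hole computation but a case analysis on the coprimality pattern of $(\alpha_0,\alpha_1,\alpha_2)$, applying \cref{lem:projection} along a well-chosen vertex direction via \cref{lemma:barycentric}, together with the sublattice argument above when $(\alpha_0,\alpha_1,\alpha_2)=(1,1,1)$; an explicit gauge analysis over arbitrary triangles and arbitrary rational lattices is not obviously tractable and, in any case, is not done here. In short: the reformulation and the $S(\a)$ equality are right, but every step that would actually prove the inequality --- recovering the factor $\tfrac12$, or controlling $\mu(S(\beta),\Lambda)$ for $\Lambda\neq\Z^d$ --- is missing, and these are exactly the reasons the statement remains a conjecture.
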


We formulate this conjecture only for simplices~$S$ rather than for arbitrary polytopes that contain the origin and have rational vertex directions, because without further study it is not clear how the discrete surface area $\Surf_{\Z^d}(S)$ can be extended in a meaningful way.
For example, we could project along the vertex directions as in the simplex case, but then the correspondence with the opposite facet is lost.

In \Cref{sec:volume} we give additional motivation for \Cref{conj:volume}. We show that it implies \cref{conj:one} (\cref{coro:vol_one}), that it holds in dimension two (\cref{thm:volume_dim2}), and that in arbitrary dimension it holds up to a factor of two (\cref{prop:volume_factor_2}).

Covering criteria such as the one in \Cref{conj:volume} are rare in the literature, but very useful as they reduce the question of covering to computing less complex geometric functionals such as volume or (variants of the) surface area (cf.~\cite[Sect.~31]{gruber2007convex}).
A classical inequality of this type is the following result of Hadwiger.
We regard \cref{conj:volume} as a discrete analog thereof.

\begin{theorem}[Hadwiger~\cite{hadwiger1970volumen}]
\label{thm:hadwiger}
For every convex body $K$ in $\R^d$
\[
\mu(K) \leq \frac12 \frac{\surf(K)}{\vol(K)},
\]
where $\vol(K)$ and $\surf(K)$ are the Euclidean volume and surface area of~$K$.
\end{theorem}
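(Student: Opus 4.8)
The plan is to reduce the inequality to a statement about a single extremal \emph{lattice-free} body and then extract the surface-to-volume bound from the geometry of the lattice obstruction.

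First I would use the reformulation of the covering radius recorded in the introduction: $\mu(K)$ is the largest $\mu\ge0$ for which $\mu K$ admits a hollow translate. Fix $\mu=\mu(K)$ and let $W$ be such a maximal hollow translate of $\mu K$ (equivalently, take $W=p-\mu K$ for a deepest hole $p$, so that $\inter(W)\cap\Z^d=\varnothing$). Since $\vol$ and $\surf$ are homogeneous of degrees $d$ and $d-1$, one has $\surf(W)=\mu^{d-1}\surf(K)$ and $\vol(W)=\mu^d\vol(K)$, so the asserted bound $\mu\le\frac12\surf(K)/\vol(K)$ is \emph{equivalent} to the self-contained inequality
\[
\surf(W)\ \ge\ 2\,\vol(W)
\]
for this extremal hollow body $W$. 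The one-dimensional case already shows that this is sharp: a lattice-free interval has length at most $1$, and its two endpoints give $\surf=2\ge 2\,\vol$.

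Second, I would exploit the \emph{maximality} of $W$, i.e.\ that $W$ cannot be translated off the lattice. Standard deep-hole reasoning shows that the lattice points $z_1,\dots,z_m\in\partial W$ obstructing translation have outer unit normals $u_1,\dots,u_m$ whose positive hull contains the origin, say $\sum_i\lambda_i u_i=0$ with $\lambda_i>0$; otherwise a separating direction would allow $W$ to move away from all $z_i$, contradicting that $\mu$ is the covering radius. The plan is to convert this balancing condition into the surface bound via a flux argument: integrating the constant-divergence field $x\mapsto x-c$ over $W$ gives $d\,\vol(W)=\int_{\partial W}\langle x-c,n(x)\rangle\,dA$, and one would bound the right-hand side by $\surf(W)$ using the contact normals together with the fact that distinct contact points are lattice points (hence at Euclidean distance at least $1$ from one another). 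A complementary route is to slice $W$ by the family of lines in a trapping lattice direction and apply the one-dimensional inequality fibrewise by Fubini; the sharp bound $\surf=2\ge2\,\vol$ on each fibre is exactly what is needed.

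The hard part will be obtaining the \emph{exact} constant $2$. The thin-slab examples (a box of height just below $1$, whose volume-to-surface ratio tends to $\frac12$) show that the inequality is tight, so no step may lose a constant factor; in particular, the crude projection bounds coming from Cauchy's surface-area formula, which introduce dimension-dependent factors such as $\sqrt{d}$, are too lossy. The real difficulty is therefore to convert the global combinatorial deep-hole condition $\sum_i\lambda_i u_i=0$ into a \emph{pointwise} or fibrewise estimate with no slack, while also handling non-smooth boundary points of $W$ and coincident contact normals. I expect this lattice-free isoperimetric step to be the crux; once it is in place, the homogeneity reduction above delivers Hadwiger's inequality at once.
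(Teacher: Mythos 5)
The paper does not prove this theorem at all: it is quoted from Hadwiger's 1970 paper with a citation, so there is no internal proof to compare against, and your attempt must stand on its own. Your first step is correct and is in fact exactly the form in which Hadwiger's result is stated in the literature: by homogeneity and the hollow-translate characterization of $\mu$ (taking $W=p-\mu(K)K$ for a last covered point $p$), the covering-radius bound is equivalent to the lattice-free isoperimetric inequality $\vol(W)\le\frac12\,\surf(W)$ for hollow convex bodies $W$. So the reduction buys you the right reformulation, but nothing more.

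The genuine gap is that you never prove this reformulated inequality, and you say so yourself (``I expect this lattice-free isoperimetric step to be the crux''); unfortunately that step \emph{is} the theorem, and both routes you sketch for it break down concretely. The fibrewise route fails because a hollow convex body need not admit any lattice direction in which all chords have length at most one: the lattice width of hollow bodies is bounded only by the flatness constant, which grows with $d$, and already $2\Delta_d=\conv(\{\zero,2e_1,\ldots,2e_d\})$ is hollow with lattice width $2$. Hence the ``sharp one-dimensional bound on each fibre'' is simply unavailable, and Fubini in a fixed lattice direction loses at least a factor of the width. The flux route is equally incomplete: the identity $d\vol(W)=\int_{\partial W}\langle x-c,n(x)\rangle\,dA$ would yield the claim if the integrand were at most $d/2$ on average, but the balancing condition $\sum_i\lambda_i u_i=\zero$ at the contact points (which may be infinitely many, at non-smooth boundary points) gives no such control; for your own tight example, a box close to $[0,N]^{d-1}\times[0,1]$, the integrand is of order $N$ on the thin side facets, so the bound can only come from a cancellation/weighting argument that you have not specified. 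What remains missing is precisely the content of Hadwiger's paper, so the proposal is a correct (and standard) reduction wrapped around an unproved core, not a proof.
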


Observe that the statement of \cref{conj:volume} is more intrinsic than Hadwiger's inequality. This is because the Euclidean surface area is not invariant under unimodular transformations, so that the bound in \cref{thm:hadwiger} depends on the particular representative of $K$ in its unimodular class.
Moreover the inequality only holds for the standard lattice $\Z^d$ and cannot easily be transfered to other lattices (cf.~\cite{schnell1992minimal} for partial results for arbitrary lattices).
In constrast, our proposed relation in \cref{conj:volume} \emph{is} unimodularly invariant and there is no loss of generality in restricting to the standard lattice as we do (see \cref{cor:discrsurfarea_invariance} for details on these claims).
Moreover, our proposed inequality in \cref{conj:volume} is tight for the large class of simplices $S(\a)$.

In \cref{sec:0inboundary}, we complement our investigations on \cref{conj:volume} by extending it to the case where the origin lies in the boundary of the simplex~$S$, rather than in the interior. 

Another way to extend \cref{conj:one} is to ask for the maximal covering radius among lattice polytopes with at least $k \geq 1$ interior lattice points.
The natural conjecture is:

\begin{introconjecture}
\label{conj:k}
Let $k,d\in \N$ be nonnegative integers. Then, for every lattice $d$-polytope $P$ with $k$ interior lattice points we have
\[
\mu(P) \le \frac{d-1}2 + \frac1{k+1}.
\]
Equality holds for $k=1$ if and only if $P$ is obtained by direct sums and/or translations of simplices of the form $S(\one_l)$, and for $k \geq 2$, if and only if $P$ is obtained by direct sums and/or translations of the segment $[0,k+1]$ and simplices $S(\one_l)$.
\end{introconjecture}

In \cref{sec:k_points} we  prove this conjecture in dimension two (see \cref{thm:dim2-k}).
%
%
%
Observe that no analog of \cref{conj:k} makes sense for other covering minima.
Indeed, the maximum $d$th covering minimum $\mu_d$ among non-hollow lattice $n$-polytopes with $k$ interior lattice points does not depend on $k$ or $n$, for $d < n$:
It equals the maximum covering radius among non-hollow lattice $d$-polytopes, since every non-hollow lattice $d$-polytope can be obtained as the projection of a $(d+1)$-polytope with arbitrarily many interior lattice points.
In fact, assuming \cref{conj:one} this maximum is given by
\[
\mu_d(S(k,1,\ldots,1)) = \mu_d(S(\one_{d+1})) = \frac{d}2, \quad \textrm{for all } n>d \textrm{ and } k \in \N.
\]
Summing up, the relationship between our  conjectures is as follows:
%
\[
\begin{matrix}
&&\cref{conj:volume}\\
&&\Downarrow\\
\cref{conj:k} & \Rightarrow & \cref{conj:one} & \\
&&\Downarrow\\
&&\cref{conj:one}&\text{without equality case}\\
&&\Updownarrow\\
&&\cref{conj:minima}\\
\end{matrix}
\]

A summary of our results is that all these conjectures hold in dimension two, that \cref{conj:one} holds in dimension three and that \cref{conj:volume} holds for the simplices of the form $S(\a)$.

%
%
%
%
%
%

\section{Preliminaries}
\label{sec:tools}

This section develops some tools that will be essential for our analyses.
We first describe how the covering radius behaves with respect to projections, and more importantly, that it is an additive functional on direct sums of convex bodies and lattices.
Afterwards we introduce and study the concept of \emph{tight covering} that facilitates our characterizations of equality, for example the one in \cref{thm:one_smalldim}.

\subsection{Projection and direct sum}
\label{sec:direct_sum}

\begin{lemma}
\label{lem:projection}
Let $K \subseteq \R^d$ be a convex body containing the origin, and let $\pi: \R^d \to \R^l$ be a rational linear projection, so that $\pi(\Z^d)$ is a lattice. Let $Q=K \cap \pi^{-1}(\zero)$  and let $L= \pi^{-1}(\zero)$ be the linear subspace spanned by $Q$.
%
Then, we have
\[
\mu(K,\Z^d) \leq \mu(Q,\Z^d \cap L) + \mu(\pi(K),\pi(\Z^d)).
\]
\end{lemma}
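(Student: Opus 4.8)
The plan is to establish the subadditivity of the covering radius under the splitting of $\R^d$ into the fiber $L = \pi^{-1}(\zero)$ and its image under $\pi$. The core idea is that covering $\R^d$ by dilated copies of $K$ can be decomposed into two stages: first cover the base $\R^l$ using the projection $\pi(K)$, then within each fiber cover using the slice $Q = K \cap L$. I want to show that if $\mu K$ covers along the fibers and $\mu \pi(K)$ covers the base, then $\mu K$ covers all of $\R^d$; setting $\mu = \mu(Q, \Z^d\cap L) + \mu(\pi(K),\pi(\Z^d))$ should suffice.

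Let me set up coordinates and notation carefully. Write $\mu_1 := \mu(Q,\Z^d \cap L)$ and $\mu_2 := \mu(\pi(K),\pi(\Z^d))$, and set $\mu := \mu_1 + \mu_2$. Fix an arbitrary point $x \in \R^d$; I must produce a lattice point $v \in \Z^d$ with $x \in \mu K + v$. First I would use the covering property in the base: since $\mu_2 \pi(K) + \pi(\Z^d) = \R^l$, there is a lattice point $w \in \Z^d$ such that $\pi(x) \in \mu_2 \pi(K) + \pi(w)$, i.e. $\pi(x - w) \in \mu_2 \pi(K)$. This means $x - w$ projects into the dilated projection, so there exists a point $p \in \mu_2 K$ with $\pi(p) = \pi(x-w)$, hence $x - w - p \in L$.

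Next I would correct within the fiber. The point $x - w - p$ lies in $L$, and I want to cover it using $Q$ and the sublattice $\Z^d \cap L$. Since $\mu_1 Q + (\Z^d \cap L) = L$ (this is the covering property of $Q$ inside the subspace $L$), there is a lattice point $u \in \Z^d \cap L$ with $x - w - p - u \in \mu_1 Q \subseteq \mu_1 K$. Setting $v := w + u \in \Z^d$, I then have $x - v = p + (x-w-p-u)$ with $p \in \mu_2 K$ and $x-w-p-u \in \mu_1 K$. The final step is a convexity argument: since $\zero \in K$ and $K$ is convex, the dilates satisfy $\mu_1 K + \mu_2 K \subseteq (\mu_1 + \mu_2) K = \mu K$; concretely, $p + q \in \mu K$ whenever $p \in \mu_2 K$ and $q \in \mu_1 K$, because $\tfrac{p+q}{\mu} = \tfrac{\mu_2}{\mu}\cdot\tfrac{p}{\mu_2} + \tfrac{\mu_1}{\mu}\cdot\tfrac{q}{\mu_1}$ is a convex combination of points of $K$. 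Hence $x - v \in \mu K$, giving $x \in \mu K + v$.

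\textbf{The main obstacle} I anticipate is the lifting step: passing from a point $\pi(x-w) \in \mu_2\pi(K)$ in the image back to an actual point $p \in \mu_2 K$ with the correct projection, and simultaneously ensuring the residual $x-w-p$ genuinely lands in the fiber $L$ so that $Q$ (not just $K$) can be used to cover it. This requires that $Q = K \cap L$ is the correct object spanning $L$, and crucially uses $\zero \in K$ so that $Q$ is nonempty and the Minkowski-sum convexity inequality holds. I would also need to verify that $\pi(\Z^d)$ and $\Z^d \cap L$ are genuine lattices (guaranteed by rationality of $\pi$) so that all three covering radii are well-defined and finite; the decomposition $v = w + u$ with $w \in \Z^d$ and $u \in \Z^d \cap L$ stays in $\Z^d$ without any compatibility issue precisely because $\Z^d \cap L$ is a sublattice of $\Z^d$.
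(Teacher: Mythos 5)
Your proof is correct and follows essentially the same two-stage argument as the paper: cover $\pi(x)$ in the base to find a lattice translate of $\mu_\pi K$ meeting the fiber of $x$, cover the residual vector in $L$ using $Q$ and $\Z^d\cap L$, and combine via $\mu_1 K+\mu_2 K\subseteq(\mu_1+\mu_2)K$. The only difference is presentational: you make explicit the lifting step and the convexity identity that the paper leaves implicit in the phrase ``since $Q\subseteq K$.''
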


\begin{proof}
Let us abbreviate $\mu_{Q} = \mu(Q,\Z^d \cap L)$ and $\mu_{\pi} = \mu(\pi(K),\pi(\Z^d))$.
Let $x \in \R^d$ be arbitrary.
Then, $\pi(x)$ is covered by $\mu_{\pi} \cdot \pi(K) + \pi(\Z^d) = \pi \left(\mu_{\pi} K + \Z^d \right)$.
Hence, there exists a point $x' \in \R^d$ such that the segment $[x,x']$ is parallel to $L$ and such that $x'$ is covered by $\mu_{\pi} K + \Z^d$.
On the other hand, $y = x - x' \in L$ is covered by $\mu_Q Q + (\Z^d \cap L)$.
Since $Q \subseteq K$, this implies that $x = y + x'$ is covered by $(\mu_Q + \mu_{\pi}) K + \Z^d$, as claimed.
\end{proof}

A particularly interesting case of the above result is when~$K$ decomposes as a direct sum.
Let $\R^d = V \oplus W$ be a decomposition into complementary linear subspaces with $\dim(V) = \ell$ and $\dim(W) = d-\ell$.
The \emph{direct sum} of two convex bodies $K \subseteq V, L \subseteq W$ both containing the origin is defined as
\[
K \oplus L := \left\{ \lambda x +  (1 - \lambda) y : 
x \in K, y \in L, \lambda \in [0,1] \right\} \subseteq \R^d.
\]
The direct sum of two lattices $\Lambda \subseteq V$, $\Gamma \subseteq W$ is defined as
\[
\Lambda \oplus \Gamma := \left\{ x + y : x \in \Lambda, y \in \Gamma \right\} \subseteq \R^d.
\]
With these definitions we can now formulate:

\begin{corollary}
\label{cor:directsum}
Let $\R^d = V \oplus W$ be a decomposition as above, let $K \subseteq V$, $L \subseteq W$ be convex bodies containing the origin, and let $\Lambda \subseteq V$, $\Gamma \subseteq W$ be lattices.
Then,
\[
\mu_d(K \oplus L, \Lambda \oplus \Gamma) = \mu_\ell(K, \Lambda) + \mu_{d-\ell}(L, \Gamma).
\]
\end{corollary}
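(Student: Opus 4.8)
The quantities in the statement are covering minima, but since $K$ is full-dimensional in $V$ and $L$ is full-dimensional in $W$, every linear projection onto a space of the same dimension that keeps the image a lattice is an isomorphism, and covering radii are invariant under such maps. Hence $\mu_\ell(K,\Lambda)=\mu(K,\Lambda)$, $\mu_{d-\ell}(L,\Gamma)=\mu(L,\Gamma)$, and $\mu_d(K\oplus L,\Lambda\oplus\Gamma)=\mu(K\oplus L,\Lambda\oplus\Gamma)$. So it suffices to prove the additivity of the ordinary covering radius,
\[
\mu(K\oplus L,\Lambda\oplus\Gamma)=\mu(K,\Lambda)+\mu(L,\Gamma).
\]

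I would prove this through the gauge functional. For a full-dimensional convex body $C\ni\zero$ and a lattice $M$, write $\|x\|_C=\inf\{t\ge 0:x\in tC\}$ for the gauge of $C$. Unwinding the definition of the covering radius, the equation $\mu C+M=\R^n$ holds precisely when $\min_{g\in M}\|x-g\|_C\le\mu$ for every $x$, so that
\[
\mu(C,M)=\max_{x\in\R^n}\ \min_{g\in M}\|x-g\|_C,
\]
the maximum being attained because $x\mapsto\min_{g}\|x-g\|_C$ is continuous and periodic modulo $M$. (Finiteness of the inner minimum, even when $\zero\in\bd C$, follows since the full-dimensional cone $\cone(C)$ meets every coset $x-M$.)

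The central computation is the identity
\[
\|v+w\|_{K\oplus L}=\|v\|_K+\|w\|_L \qquad\text{for } v\in V,\ w\in W.
\]
Indeed, $t(K\oplus L)=(tK)\oplus(tL)$, and $v+w\in(tK)\oplus(tL)$ means exactly that there is some $\lambda\in[0,1]$ with $v\in\lambda t K$ and $w\in(1-\lambda)t L$, i.e.\ with $\|v\|_K\le\lambda t$ and $\|w\|_L\le(1-\lambda)t$; such a $\lambda$ exists if and only if $\|v\|_K+\|w\|_L\le t$.

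Feeding this into the max--min formula finishes the proof in one stroke: writing $x=x_V+x_W$ and $g=g_V+g_W$ with $g_V\in\Lambda$ and $g_W\in\Gamma$, the gauge splits as $\|x-g\|_{K\oplus L}=\|x_V-g_V\|_K+\|x_W-g_W\|_L$, so both the minimization over $\Lambda\oplus\Gamma$ and the maximization over $\R^d=V\oplus W$ separate into independent $V$- and $W$-parts, yielding exactly $\mu(K,\Lambda)+\mu(L,\Gamma)$. As a cross-check, the inequality ``$\le$'' also follows directly from \cref{lem:projection} (whose proof applies verbatim to any lattice) for the projection $\pi\colon\R^d\to W$ along $V$, using $(K\oplus L)\cap V=K$, $\pi(K\oplus L)=L$ and $\pi(\Lambda\oplus\Gamma)=\Gamma$. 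I expect the only delicate point to be the careful justification of the max--min characterization of $\mu$ (attainment of the maximum and finiteness of the inner minimum) when the origin is permitted on the boundary of $K$ or $L$; the remaining ingredient is the elementary gauge identity above.
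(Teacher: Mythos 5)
Your proof is correct, but it takes a genuinely different route from the paper's. The paper splits the equality into two inequalities proved by different means: the upper bound $\mu(K\oplus L,\Lambda\oplus\Gamma)\le\mu(K,\Lambda)+\mu(L,\Gamma)$ is quoted as a special case of \cref{lem:projection} (project $\R^d=V\oplus W\to V$; the fiber over $\zero$ is $L$ and the image is $K$), while the lower bound is a witness-point argument: if $x$ is uncovered by $cK+\Lambda$ and $y$ is uncovered by $\bar c L+\Gamma$, then $x+y$ is uncovered by $(c+\bar c)(K\oplus L)+\Lambda\oplus\Gamma$, because any representation $x+y=(c+\bar c)(\lambda p+(1-\lambda)q)+w+z$ splits, by directness of $V\oplus W$, into $x=(c+\bar c)\lambda p+w$ and $y=(c+\bar c)(1-\lambda)q+z$, forcing the contradictory pair $(c+\bar c)\lambda>c$ and $(c+\bar c)(1-\lambda)>\bar c$. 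Your gauge identity $\|v+w\|_{K\oplus L}=\|v\|_K+\|w\|_L$ is exactly this splitting phenomenon isolated once and for all; feeding it into the sup--min description of the covering radius gives both inequalities simultaneously and makes the proof self-contained (no appeal to \cref{lem:projection}). What the paper's route buys is that it never has to discuss gauges of bodies with $\zero$ on the boundary, where the gauge takes the value $+\infty$ and is only lower semicontinuous; what your route buys is a single unified computation.

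One inaccuracy, repairable but worth flagging: your claim that the maximum in $\mu(C,M)=\max_x\min_{g\in M}\|x-g\|_C$ is attained ``because the function is continuous and periodic'' is false precisely in the boundary case you worry about, which the statement allows (e.g.\ $L=I'=[0,2]$ in the paper). For $C=[0,1]\subseteq\R$ and $M=\Z$, the function $x\mapsto\min_g\|x-g\|_C$ is the fractional part $x-\lfloor x\rfloor$, which is not continuous and does not attain its supremum $1=\mu([0,1],\Z)$. Fortunately your argument never uses attainment: the identity $\mu(C,M)=\sup_x\min_{g\in M}\|x-g\|_C$ does hold (the inner minimum is finite and attained, exactly as you argue via $\cone(C)$ meeting every coset), and both separations you perform --- the infimum over $\Lambda\oplus\Gamma$ and the supremum over $V\oplus W$ splitting into independent parts --- are valid for infima and suprema of sums of functions of independent variables, without any attainment. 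So replace ``max'' by ``sup'', delete the continuity claim, and the proof is complete.
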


\begin{proof}
The inequality $\mu_d(K \oplus L, \Lambda \oplus \Gamma) \leq \mu_\ell(K, \Lambda) + \mu_{d-\ell}(L, \Gamma)$ is a special case of \Cref{lem:projection}, via the natural projection $\R^d = V \oplus W \to V$.

For the other inequality, let  $x \in V$ be a point not covered by $c K + \Lambda$ for some $c < \mu_\ell(K,\Lambda)$ and let $y \in W$ be a point not covered by $\bar c L + \Gamma$ for some $\bar c < \mu_{d-\ell}(L,\Gamma)$.
We claim that $x + y \in V \oplus W = \R^d$ is not covered by $(c + \bar c) (K \oplus L) + \Lambda \oplus \Gamma$, and thus $c + \bar c \leq \mu_d(K \oplus L, \Lambda \oplus \Gamma)$.
Since, $c$ and $\bar c$ were taken arbitrarily, this implies $\mu_\ell(K, \Lambda) + \mu_{d-\ell}(L, \Gamma) \leq \mu_d(K \oplus L, \Lambda \oplus \Gamma)$.

Assume to the contrary, that $x + y \in (c + \bar c)(K \oplus L) + \Lambda \oplus \Gamma$, that is, $x + y = (c + \bar c)(\lambda p + (1 - \lambda) q) + w + z$, for some $\lambda \in [0,1]$, $p \in K$, $q \in L$, $w \in \Lambda$, and $z \in \Gamma$.
Since the sums are direct, we get $x = (c + \bar c) \lambda p + w$ and $y = (c + \bar c)(1 - \lambda) q + z$, which by assumption implies $(c + \bar c) \lambda > c$ and $(c + \bar c) (1 - \lambda) > \bar c$.
These two inequalities cannot hold at the same time, and we arrive at a contradiction.
\end{proof}

\subsection{Tight covering}

\begin{definition}
\label{defi:tight}
Let $K \subseteq \R^d$ be a convex body and let $\Lambda$ be a lattice.
Then,~$K$ is called \emph{tight for $\Lambda$} if for every convex body $K' \supsetneq K$, we have 
\[
\mu(K',\Lambda) < \mu(K,\Lambda).
\]
\end{definition}

\begin{definition}
\label{defi:last_covered}
Let $K \subseteq \R^d$ be a convex body of covering radius $\mu$ with respect to a lattice  $\Lambda$. A point $p \in \R^d$ is \emph{last covered by $K$} if
\[
p \notin \inter(\mu\cdot K) + \Lambda.
\]
Let $P$ be a $d$-polytope, let $F$ be a facet of $P$, and let~$p$ be a point that is last covered by~$P$.
We say that $p$ \emph{needs $F$} if $p \in \relint(\mu\cdot F) + \Lambda$.
\end{definition}

\begin{lemma}
\label{lemma:tight_vs_facets}
Let $K \subseteq \R^d$ be a convex body of covering radius $\mu$ with respect to a lattice  $\Lambda$. Then, the following properties are equivalent:
\begin{enumerate}[i)]
\item $K$ is tight for $\Lambda$.
\item $K$ is a polytope and for every facet $F$ of~$K$ and for every last covered point $p$, $p$ needs $F$.
\item $K$ is a polytope and every facet of every hollow translate of $\mu\cdot K$ is non-hollow.
\item Every hollow translate of $\mu\cdot K$ is a maximal hollow convex body with respect to inclusion.
\end{enumerate}
\end{lemma}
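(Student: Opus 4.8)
The plan is to route everything through a single dictionary between hollow translates of $\mu\cdot K$ and last covered points, where $\mu = \mu(K,\Lambda)$. First I would record the basic correspondence: a translate $t + \mu\cdot K$ is hollow if and only if $p := -t$ is last covered by $K$, since $\inter(t + \mu K)\cap\Lambda = \emptyset$ unwinds to $-t\notin\inter(\mu K)+\Lambda$. Refining this to facets (under the standing assumption that $K$ is a polytope), the facet $t + \mu\cdot F$ of $t + \mu\cdot K$ is non-hollow, i.e.\ meets $\Lambda$ in its relative interior, exactly when $-t\in\relint(\mu F)+\Lambda$, i.e.\ when the last covered point $p = -t$ needs $F$. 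With this dictionary in hand, the equivalence of (ii) and (iii) is a direct translation: quantifying over hollow translates and their facets is the same as quantifying over last covered points $p$ and facets $F$ of $K$.

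Next I would prove (i) $\iff$ (iv) by an enlargement argument that does not presuppose $K$ to be a polytope. If some hollow translate $H = t + \mu K$ fails to be maximal, I pick a hollow convex body $C \supsetneq H$ and set $K' = \frac1\mu(C - t) \supsetneq K$; then $\mu K' = C - t$ is hollow, so $\mu(K',\Lambda) = \mu(K,\Lambda)$ (using that $K'\supseteq K$ forces $\mu(K',\Lambda)\le\mu$ while the hollow translate forces $\mu(K',\Lambda)\ge\mu$), and hence $K$ is not tight. Conversely, if $K$ is not tight there is $K'\supsetneq K$ with $\mu(K',\Lambda) = \mu$, hence a hollow translate $t + \mu K'$; the smaller body $t + \mu K \subsetneq t + \mu K'$ is then itself a hollow translate of $\mu\cdot K$ that is not maximal, so (iv) fails.

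To connect these two halves I would invoke the classical structure theorem for maximal lattice-free convex bodies: a bounded, full-dimensional, inclusion-maximal hollow convex body is a polytope each of whose facets contains a point of $\Lambda$ in its relative interior. Since $K$ is a convex body, every hollow translate of $\mu\cdot K$ is bounded and full-dimensional, so (iv) forces $\mu\cdot K$, and hence $K$, to be a polytope; this supplies the ``polytope'' clause of (ii) and (iii) for free once (i) or (iv) holds. Under the assumption that $K$ is a polytope, the same theorem yields (iii) $\iff$ (iv): a bounded hollow polytope is maximal hollow precisely when each of its facets has a lattice point in its relative interior. The converse direction here is the short observation that enlarging a hollow polytope past any one facet absorbs that facet's relative-interior lattice point into the interior, destroying hollowness.

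I expect the main obstacle to be the careful handling of the classical maximal-lattice-free structure theorem together with its boundedness and full-dimensionality hypotheses: one must check that the relevant translates are genuinely bounded and full-dimensional, so that ``maximal hollow'' really does coincide with ``polytope having a lattice point in the relative interior of every facet,'' and that the facet-by-facet dictionary respects relative interiors rather than merely closures. The substantive external input is exactly the hard direction of that theorem, namely that maximality forces both the polytopal shape and the facet condition; everything else is bookkeeping. Assembling the implications --- (i) $\iff$ (iv), then (iv) $\Rightarrow$ $K$ a polytope, and finally (ii) $\iff$ (iii) $\iff$ (iv) under that polytopality --- then closes the equivalence.
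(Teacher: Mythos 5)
Your proof is correct, but it closes the equivalence through a different set of links than the paper does. The paper proves (iii) $\Leftrightarrow$ (iv) by citing Lov\'asz's characterization of maximal hollow convex bodies, gets (i) $\Leftrightarrow$ (iv) essentially by unwinding definitions, and then gives a \emph{direct geometric} proof of (i) $\Leftrightarrow$ (ii): if some last covered point $p$ does not need a facet $F$, one adjoins a point beyond $F$ and checks that $p$ is still last covered for the enlarged body; conversely, a strict enlargement with equal covering radius forces some facet's relative interior into the interior of the larger body, producing a last covered point that does not need that facet. You instead never touch (i) $\Leftrightarrow$ (ii) directly: you prove (ii) $\Leftrightarrow$ (iii) by the clean dictionary $t + \mu K$ hollow $\iff$ $p = -t$ last covered, refined facet-by-facet to ``$t+\mu F$ non-hollow $\iff$ $p$ needs $F$,'' and you use Lov\'asz twice --- once (the hard direction) to extract polytopality and the facet condition from maximality, and once (the easy direction, your absorption argument) to go back. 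The trade-off: the paper's route makes the link between tightness and needed facets geometrically explicit and uses Lov\'asz only for one equivalence, while your route reduces (ii) $\Leftrightarrow$ (iii) to pure bookkeeping and handles the polytopality clause more carefully than the paper does (the paper's direct proof of (i) $\Rightarrow$ (ii) tacitly assumes $K$ is already a polytope, which must be supplied by the other equivalences, exactly as you supply it via the structure theorem). One small wording slip: in your (i) $\Leftrightarrow$ (iv) argument you write that $\mu K' = C - t$ ``is hollow,'' which need not hold since hollowness is not translation invariant; what you actually use, and state correctly in the parenthetical, is that $\mu K'$ \emph{admits the hollow translate} $t + \mu K' = C$, so $\mu(K',\Lambda) \ge \mu$, and the argument is unaffected.
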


\begin{proof}
The equivalence of iii) and iv) is the characterization of maximal hollow convex bodies by Lov\'asz~\cite{lovasz1989geometryofnumbers}. 
For the equivalence of i) and iv) observe that, by definition, $\mu$ is the largest constant such that (a) $\mu \cdot K$ has a hollow lattice translate and (b) the inequality $\mu(K',\Lambda) < \mu(K,\Lambda)$ in the definition of tightness is nothing but maximality of all such hollow translates.

We now show the equivalence of i) and ii).
Suppose there is a facet $F$ of~$K$ that is not needed by some last covered point $p$. Let $K'=\conv(K \cup \{x\})$, where $x \notin K$ is a point \emph{beyond $F$}, meaning that~$x$ violates the inequality that defines~$F$, but satisfies all other facet-inducing inequalities of~$K$.
Then
\[
\mu(K',\Lambda) = \mu(K,\Lambda),
\]
because $p$ is still a last covered point of $K'$ (for the same dilate $\mu$).

Conversely, if $K$ is not tight let $K'$ be a convex body strictly containing~$K$ and that has the same covering radius. Let $F$ be a facet of $K$ with $\relint(F) \subseteq \inter(K')$.
Let~$p$ be a point that is last covered by $K'$.
Since the covering radii are equal and $K \subsetneq K'$, $p$ must also be last covered by~$K$.
Since we chose~$F$ so that $\relint(F)$ is in the interior of $K'$, $p$ does not need~$F$.
\end{proof}

\begin{example}
\label{ex:tightness_char}
It is not sufficient for tightness that ``every facet is needed by \emph{some} last covered point.'' An example showing this is the hexagon $P=\conv(\{\pm(1,0), \pm(0,1), \pm(1,1)\})$ with respect to the integer lattice. $P$ has covering radius $2/3$, the same as the triangle $\conv(\{(-1,1), (2,1), (-1,-2)\})$ that properly contains it, so it is not tight.
It has two orbits of last covered points, with representatives $\pm\left(2/3,1/3\right)$, each of which needs three of the six edges of $P$.
\end{example}

\begin{lemma}
\label{lemma:simplex_monotone}
Every simplex is tight for every lattice.
\end{lemma}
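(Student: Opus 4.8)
The plan is to prove the statement via the characterization of tightness established in \cref{lemma:tight_vs_facets}. Since a simplex is trivially a polytope, it suffices to verify condition iii): letting $\mu = \mu(S,\Lambda)$, I will show that every facet of every hollow translate of $\mu \cdot S$ contains a point of $\Lambda$ in its relative interior (i.e.\ is non-hollow). So fix a hollow translate $C = \mu S + t$, write it in facet form $C = \{x : \langle a_k,x\rangle \le b_k,\ k=0,\ldots,d\}$, and denote by $w_i$ the vertex opposite the $i$-th facet $G_i$. My goal is to produce, for each $i$, a lattice point in $\relint(G_i)$.

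The key construction is to scale $C$ from the vertex $w_i$: for $\delta > 0$ set $C_\delta := w_i + (1+\delta)(C - w_i)$. Since a homothet of a simplex is a simplex, $C_\delta$ is a translate of $(1+\delta)\mu \cdot S$. Crucially, because each facet hyperplane $\{\langle a_k,x\rangle = b_k\}$ with $k \neq i$ passes through the scaling center $w_i$, it is fixed by the homothety, while the opposite hyperplane $\{\langle a_i,x\rangle = b_i\}$ is pushed strictly outward to some $\{\langle a_i,x\rangle = b_i^\delta\}$ with $b_i^\delta \searrow b_i$ as $\delta \to 0^+$. Thus $C_\delta \setminus C$ is a thin slab abutting $G_i$. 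Now $(1+\delta)\mu > \mu$, so by the very definition of the covering radius $(1+\delta)\mu\cdot S$ admits no hollow translate; hence $C_\delta$ is not hollow and contains a lattice point $q_\delta \in \inter(C_\delta) \cap \Lambda$.

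To finish I pass to a limit. For a fixed $\delta_0$ the simplex $C_{\delta_0}$ contains finitely many lattice points and $C_\delta \subseteq C_{\delta_0}$ for $\delta \le \delta_0$, so along a sequence $\delta_n \to 0^+$ we may assume by pigeonhole that $q_{\delta_n} = q$ is one fixed lattice point. Since $q \in \inter(C_{\delta_n})$ we have $\langle a_k,q\rangle < b_k$ for all $k \neq i$ and $\langle a_i,q\rangle < b_i^{\delta_n}$; since $C$ is hollow and $q \in \Lambda$ we have $q \notin \inter(C)$, which combined with the strict inequalities for $k \neq i$ forces $\langle a_i,q\rangle \ge b_i$. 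Letting $\delta_n \to 0$ gives $b_i \le \langle a_i,q\rangle < b_i^{\delta_n} \to b_i$, so in fact $\langle a_i,q\rangle = b_i$ while $\langle a_k,q\rangle < b_k$ for all $k \neq i$; that is, $q \in \relint(G_i)$. This holds for every $i$, so every facet of $C$ is non-hollow, and condition iii) of \cref{lemma:tight_vs_facets} is satisfied.

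I expect the main subtlety to lie in this final limiting step: one must guarantee that the captured interior lattice point lands \emph{precisely} on the pushed-out facet hyperplane rather than merely near it, which is exactly what the fixed-$q$ pigeonhole together with $b_i^\delta \searrow b_i$ secures. The geometric engine—that scaling a simplex from a vertex moves only the opposite facet while fixing the other facet hyperplanes—is what makes the argument specific to simplices and is what allows us to isolate one facet at a time.
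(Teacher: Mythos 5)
Your proof is correct, but it takes a genuinely different route from the paper's. The paper verifies condition ii) of \cref{lemma:tight_vs_facets}: assuming some facet $F_i$ is not needed by a last covered point $p$, every lattice translate of $\mu\Delta$ containing $p$ would lie in a translated half-space $H_j^p$ with $j \ne i$, so a neighborhood of $p$ would be covered by the $d$ affine half-spaces having $p$ on their boundary with indices $j\ne i$ --- impossible because those $d$ normals are linearly independent. You instead verify condition iii), with a different mechanism: you dilate a hollow translate $C$ of $\mu S$ by a homothety centered at the vertex $w_i$ opposite the facet $G_i$, observe that this fixes all facet hyperplanes through $w_i$ and pushes out only the hyperplane of $G_i$, invoke the characterization of $\mu$ as the largest dilation admitting a hollow translate to obtain an interior lattice point of $C_\delta$, and then a pigeonhole/limit argument pins a fixed lattice point $q$ onto $\relint(G_i)$. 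Both arguments isolate where ``simplex'' enters, but differently: the paper uses that any $d$ of the $d+1$ facet normals are linearly independent, while you use that all facet hyperplanes but one pass through a common vertex; both properties fail for general polytopes, as some such failure must occur given \cref{ex:tightness_char}. The paper's proof is shorter and purely local at a last covered point; yours is pleasantly constructive --- it exhibits, for each facet of each hollow translate, an explicit lattice point in its relative interior --- at the cost of the extra limiting step, which you handle correctly: the pigeonhole over the finitely many lattice points of $C_{\delta_0}$, combined with $b_i^{\delta_n} \searrow b_i$ and the hollowness of $C$, is exactly what forces $q$ to land on the facet hyperplane rather than merely near it.
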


\begin{proof}
We use \Cref{lemma:tight_vs_facets}. 
Let $\Delta$ be a simplex of covering radius $\mu$ with respect to a lattice $\Lambda$, and let~$p$ be a point last covered by $\Delta$.
That is, $p \notin \inter(\mu\Delta) + \Lambda$.
Let $F_0,F_1,\ldots,F_d$ be the facets of $\Delta$,
%
with interior facet normals $v_0,\dots,v_d$.

Every neighborhood of $p$ is covered by $\mu \Delta + \Lambda$, and $p$ can only lie in lattice translates of the boundary of $\mu \Delta$. Suppose, in order to get a contradiction, that a certain facet $F_i$ is not needed by $p$. This implies that for every 
$\mu \Delta + z$ ($z\in \Lambda$) containing $p$ there is a facet $F_j\ne F_i$ such that $
\mu \Delta + z \subset H_j^p$, where 
\[
H_j^p:=\{x \in \R^d : v_j^\intercal x \le v_j^\intercal p\}
\]
is the translation to $p$ of the $j$-th facet-defining half-space of $\Delta$. This implies that we have a neighborhood of $p$ covered by the $d$ affine half-spaces with~$p$ in the boundary corresponding to the indices $j\ne i$.
This is impossible since the corresponding $d$ normals are linearly independent.
%
%
%
\end{proof}

\begin{lemma}
\label{lemma:sum_monotone}
Let $K_1$ and $K_2$ be convex bodies containing the origin and let~$\Lambda_1$ and $\Lambda_2$ be lattices.
Then, $K_1$ and $K_2$ are tight for $\Lambda_1$ and $\Lambda_2$, respectively, if and only if $K_1 \oplus K_2$ is tight for $\Lambda_1 \oplus \Lambda_2$.
\end{lemma}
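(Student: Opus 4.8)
The plan is to run everything through the facet characterization of tightness in \Cref{lemma:tight_vs_facets}(ii): a convex body is tight exactly when it is a polytope and every last covered point needs every facet. Since tightness forces a body to be a polytope, and since $K_1=(K_1\oplus K_2)\cap V$ and $K_2=(K_1\oplus K_2)\cap W$ are sections, in either direction of the biconditional I may assume all three bodies are polytopes; I would also assume (as holds in all applications) that the origin lies in the interior of each summand, so that the facial structure of the direct sum is the clean one below. Writing $\mu_i=\mu(K_i,\Lambda_i)$ and $\mu=\mu(K_1\oplus K_2,\Lambda_1\oplus\Lambda_2)$, \Cref{cor:directsum} gives the arithmetic identity $\mu=\mu_1+\mu_2$, which is exactly what will make the facet bookkeeping match up.

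The argument rests on three factorization facts. First, encoding coverage through the lattice gauge distance $f_i(x)=\min_{w\in\Lambda_i}\lVert x-w\rVert_{K_i}$, the gauge of the direct sum satisfies $\lVert(x_V,x_W)\rVert_{K_1\oplus K_2}=\lVert x_V\rVert_{K_1}+\lVert x_W\rVert_{K_2}$, so the covering function of the sum factors as $f_1(p_1)+f_2(p_2)$; since $f_i\le\mu_i$ with equality precisely at last covered points, a point $(p_1,p_2)$ is last covered by $K_1\oplus K_2$ if and only if $p_1$ and $p_2$ are last covered by $K_1$ and $K_2$. Second, the facets of $K_1\oplus K_2$ are exactly the joins $\conv(F_1\cup F_2)$ of a facet $F_1$ of $K_1$ and a facet $F_2$ of $K_2$, which follows from a dimension count on the faces of a direct sum.

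The crux is the third fact: $(p_1,p_2)$ needs $\conv(F_1\cup F_2)$ if and only if $p_1$ needs $F_1$ and $p_2$ needs $F_2$. Here I would compute in the gauge, using $\mu(K_1\oplus K_2)=(\mu K_1)\oplus(\mu K_2)$ and unwinding the relative interior of a join: a boundary point $(y_1,y_2)$ of the dilate lies in $\relint(\mu\cdot\conv(F_1\cup F_2))$ exactly when $\lVert y_1\rVert_{K_1},\lVert y_2\rVert_{K_2}>0$, $y_1/\lVert y_1\rVert_{K_1}\in\relint F_1$ and $y_2/\lVert y_2\rVert_{K_2}\in\relint F_2$. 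Specializing to $y_i=p_i-w_i$ that realize $f_i(p_i)=\mu_i$, the normalization forces $y_1\in\relint(\mu_1 F_1)$ and $y_2\in\relint(\mu_2 F_2)$, and it is precisely $\mu=\mu_1+\mu_2$ that keeps these two different scales consistent inside the single dilate $\mu(K_1\oplus K_2)$. Translating back, this reads $p_1\in\relint(\mu_1 F_1)+\Lambda_1$ and $p_2\in\relint(\mu_2 F_2)+\Lambda_2$, i.e.\ $p_1$ needs $F_1$ and $p_2$ needs $F_2$. I expect this gauge and relative-interior bookkeeping, tracking the three dilation factors $\mu,\mu_1,\mu_2$ at once, to be the main obstacle.

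With these three facts the equivalence is formal. If $K_1,K_2$ are tight, then every last covered $p_i$ needs every facet $F_i$; so for any facet $\conv(F_1\cup F_2)$ and any last covered $(p_1,p_2)$, both coordinates are last covered and need their facets, whence $(p_1,p_2)$ needs $\conv(F_1\cup F_2)$ and $K_1\oplus K_2$ is tight. Conversely, if $K_1\oplus K_2$ is tight, fix a facet $F_1$ and a last covered point $p_1$ of $K_1$, and pick any facet $F_2$ and any last covered point $p_2$ of $K_2$ (both exist, as $K_2$ is a full-dimensional polytope whose covering radius is attained); then $(p_1,p_2)$ is last covered, hence needs $\conv(F_1\cup F_2)$, which forces $p_1$ to need $F_1$. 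As $F_1,p_1$ were arbitrary, $K_1$ is tight, and symmetrically $K_2$ is tight.
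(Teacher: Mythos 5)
Your overall strategy --- running both directions through the facet characterization of \cref{lemma:tight_vs_facets}(ii) --- is legitimate, and under your extra hypothesis the three factorization facts and the final bookkeeping do go through. But that extra hypothesis, namely that the origin lies in the \emph{interior} of each summand, is a genuine gap rather than a harmless normalization. \cref{lemma:sum_monotone} is stated for convex bodies merely \emph{containing} the origin, and the paper needs exactly the cases you exclude: \cref{coro:ConjA_dim2} invokes the lemma to get tightness of $I\oplus I'$ with $I'=[0,2]$ (origin an endpoint of $I'$), and \cref{thm:ConjA_dim3} invokes it for the nine polytopes of \cref{lemma:minimum_3d}, among them $S'(\one_3)\oplus I$, $S(\one_3)\oplus I'$, $(I\oplus I')'\oplus I$ and $I\oplus I\oplus I'$, in each of which some summand has the origin as a vertex or apex. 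So the parenthetical ``as holds in all applications'' is false, and what you prove is strictly weaker than what the paper uses.

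The restriction is moreover essential to your machinery, not cosmetic: all three of your facts fail once the origin sits on the boundary of a summand. Take $K_1 = I' = [0,2]$, $K_2 = I = [-1,1]$, $\Lambda_1=\Lambda_2=\Z$. Then $K_1\oplus K_2 = \conv\{(2,0),(0,1),(0,-1)\}$, and its facet $\{0\}\times I$ is the join of the \emph{vertex} $\{0\}$ of $K_1$ with all of $K_2$; it is not a join of facets, while the joins $\conv(\{(0,0)\}\cup\{(0,\pm 1)\})$ of facets are not faces of the sum --- so Fact 2 fails in both directions. Likewise, the gauge of $I'$ is infinite on the negative ray, so $f_1(p)=\min_{w\in\Z}\lVert p-w\rVert_{I'}$ equals half the fractional part of $p$; hence $f_1\equiv 0$ on $\Z$ even though the integers are precisely the last covered points of $I'$ (here $\mu_1=\tfrac12$ and $\inter(\tfrac12 I')+\Z=\R\setminus\Z$), and $\sup f_1=\tfrac12$ is not even attained. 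Thus Fact 1's ``equality precisely at last covered points'' collapses, and Fact 3 with it. Note that the paper's own proof never touches facets at all: it combines the additivity of \cref{cor:directsum} with inclusion-monotonicity of the covering radius and the definition of tightness directly, and is therefore insensitive to where the origin lies in each summand. To salvage your route you would have to work out the facial structure of the free sum with the origin allowed on the boundary (facets are then joins of faces that need not be facets), which is a substantially heavier case analysis than the one you sketch.
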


\begin{proof}
First of all, let $K' \supsetneq K_1 \oplus K_2$ be a convex body and let $K'_1$ and $K'_2$ be the projection of $K'$ onto the linear span of $K_1$ and $K_2$, respectively.
Clearly, either $K'_1 \supsetneq K_1$ or $K'_2 \supsetneq K_2$, so that by \Cref{cor:directsum} and the tightness of~$K_1$ and $K_2$, we have
\begin{align*}
\mu(K_1 \oplus K_2, \Lambda_1 \oplus \Lambda_2) &= \mu(K_1,\Lambda_1) + \mu(K_2,\Lambda_2) > \mu(K'_1,\Lambda_1) + \mu(K'_2,\Lambda_2) \\
&= \mu(K'_1 \oplus K'_2, \Lambda_1 \oplus \Lambda_2) \geq \mu(K', \Lambda_1 \oplus \Lambda_2),
\end{align*}
since $K'_1 \oplus K'_2 \subseteq K'$.
Therefore, $K_1 \oplus K_2$ is tight for $\Lambda_1 \oplus \Lambda_2$.

Conversely, if say $K_1$ is not tight for $\Lambda_1$, then there exists $K'_1 \supsetneq K_1$ such that $\mu(K_1,\Lambda_1) = \mu(K'_1,\Lambda_1)$.
Then, $K'_1 \oplus K_2 \supsetneq K_1 \oplus K_2$ and by \Cref{cor:directsum}
\begin{align*}
\mu(K'_1 \oplus K_2,\Lambda_1 \oplus \Lambda_2) &= \mu(K'_1,\Lambda_1) + \mu(K_2,\Lambda_2) = \mu(K_1,\Lambda_1) + \mu(K_2,\Lambda_2) \\
&= \mu(K_1 \oplus K_2,\Lambda_1 \oplus \Lambda_2),
\end{align*}
so $K_1 \oplus K_2$ is not tight for $\Lambda_1 \oplus \Lambda_2$.
\end{proof}

\begin{lemma}
\label{lemma:lattice_monotone}
Let $\Lambda' \subsetneq \Lambda$ be two lattices in $\R^d$, and let $K \subseteq \R^d$ be a convex body. Then,
\[
\mu(K, \Lambda) \le \mu(K, \Lambda').
\]
\end{lemma}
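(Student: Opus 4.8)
The plan is to argue directly from the definition of the covering radius as a minimal dilation factor, exploiting the monotonicity of the covering condition under enlarging the lattice. First I would recall that
$\mu(K,\Lambda) = \min\{\mu \geq 0 : \mu K + \Lambda = \R^d\}$, and likewise for $\Lambda'$. The key observation is that, since $\Lambda' \subseteq \Lambda$, for every $\mu \geq 0$ we have the inclusion of covered sets $\mu K + \Lambda' \subseteq \mu K + \Lambda$. Consequently, whenever the smaller lattice already covers, so does the larger one: if $\mu K + \Lambda' = \R^d$, then $\mu K + \Lambda \supseteq \mu K + \Lambda' = \R^d$, hence $\mu K + \Lambda = \R^d$ as well.

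Next I would compare the two feasible sets of dilation factors. The argument above shows that the set $\{\mu \geq 0 : \mu K + \Lambda' = \R^d\}$ is contained in the set $\{\mu \geq 0 : \mu K + \Lambda = \R^d\}$. Taking the minimum of each set (equivalently, the infimum, so that one need not even invoke attainment) reverses the inclusion into the desired inequality $\mu(K,\Lambda) \leq \mu(K,\Lambda')$. Concretely, one can simply set $\mu := \mu(K,\Lambda')$, observe that it is a feasible dilation factor for $\Lambda$ by the covering inclusion, and invoke the minimality defining $\mu(K,\Lambda)$ to conclude $\mu(K,\Lambda) \leq \mu = \mu(K,\Lambda')$.

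I do not expect any genuine obstacle here, as the statement is an immediate consequence of the set inclusion $\Lambda' \subseteq \Lambda$. The only point worth flagging is that the \emph{strictness} of the inclusion $\Lambda' \subsetneq \Lambda$ plays no role in obtaining the non-strict inequality; it is recorded in the hypothesis presumably because this is the form in which the lemma is applied elsewhere, but the proof works verbatim for any sublattice. (If a strict inequality were ever needed, that would be the delicate part and would require additional geometric input about $K$, but it is not claimed here.)
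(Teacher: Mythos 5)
Your proof is correct and is essentially identical to the paper's: both arguments reduce to the single inclusion $\mu' K + \Lambda' \subseteq \mu' K + \Lambda$ with $\mu' = \mu(K,\Lambda')$, which shows $\mu'$ is a feasible dilation factor for $\Lambda$ and hence $\mu(K,\Lambda) \le \mu'$. Your closing remark that strictness of $\Lambda' \subsetneq \Lambda$ cannot be upgraded to a strict inequality matches the paper as well, which even exhibits an example (the cube $[-1,1]^d$ with a suitable refinement of $\Z^d$) where equality holds.
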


\begin{proof}
Let $\mu = \mu(K, \Lambda)$ and $\mu' = \mu(K, \Lambda')$.
Then, $\mu' K + \Lambda' \subseteq \mu' K + \Lambda$, so $\mu \leq \mu'$.
\end{proof}

\begin{remark}\ 
\begin{enumerate}[i)]
 \item An example where equality holds in \Cref{lemma:lattice_monotone} is the following: Let $K= [-1,1]^d$ and let $\Lambda$ be an arbitrary refinement of $\Z^d$  contained in $\R^{d-1} \times \Z$. Then, $\mu(K,\Z^d) = \mu(K,\Lambda)=1/2$.
 \item The inequality in \Cref{lemma:lattice_monotone} may not be strict, even for simplices. An example is the simplex $(I\oplus I')' \oplus I$ of \Cref{lemma:minimum_3d} below. It has the same covering radius as $S(\one_4)$ (equal to $3/2$), yet it is isomorphic to $S(\one_4)$ when regarded with respect to the  sublattice of index two generated by its vertices and its interior lattice point. This can easily be derived from its depiction in the bottom-center of \Cref{fig:sums_of_I}, or from its coordinates in \Cref{tbl:minimal_tetrahedra} (in these coordinates the sublattice is $\{(x,y,z)\in \Z^3: x\in 2\Z\}$).
\end{enumerate}
\end{remark}

\section{\texorpdfstring{\cref{conj:one}}{\ref{conj:one}} and \ref{conj:minima}: Equivalence and small dimensions}
\label{sec:one-minima}

\subsection{Equivalence of Conjectures~\ref{conj:one} and~\ref{conj:minima}}
\label{sec:one}
%
%
As an auxiliary result we first reduce \cref{conj:one} to lattice simplices.

\begin{lemma}
\label{lemSimplexInside}
Every non-hollow lattice polytope contains a non-hollow lattice simplex of possibly smaller dimension.
\end{lemma}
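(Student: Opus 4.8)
The plan is to reduce the whole statement to a single interior lattice point and then apply the relative-interior version of Carathéodory's theorem. First I would fix a lattice point $q \in \inter(P) \cap \Z^d$, which exists precisely because $P$ is non-hollow. Since the vertices $v_1,\dots,v_n$ of $P$ are lattice points and $q \in \conv(\{v_1,\dots,v_n\})$, I would then choose a subset $T \subseteq \{v_1,\dots,v_n\}$ that is \emph{minimal with respect to inclusion} among all subsets whose convex hull still contains $q$. The candidate simplex is $S := \conv(T)$: it is automatically contained in $P$ because $T$ consists of vertices of $P$, and it is a lattice polytope since $T \subseteq \Z^d$.

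The heart of the argument is to show that this minimal $T$ is affinely independent and that $q$ lies in the relative interior of $S$; granting both, $S$ is a genuine lattice simplex and it is non-hollow because $q \in \relint(S) \cap \Z^d$ is a lattice point of the induced lattice $\aff(S) \cap \Z^d$, which has full rank in $\aff(S)$ as the vertices of $S$ are integral. To obtain $q \in \relint(S)$: minimality first forces every point of $T$ to be a vertex of $\conv(T)$ (otherwise it could be dropped), so if $q$ were on the relative boundary it would lie in a proper face, i.e.\ in the convex hull of a proper subset of $T$, contradicting minimality. To obtain affine independence: assuming an affine dependence $\sum_{t \in T} \alpha_t\, t = 0$ with $\sum_{t \in T} \alpha_t = 0$ and not all $\alpha_t$ zero, I would combine it with a representation $q = \sum_{t \in T} \beta_t\, t$ having all $\beta_t > 0$ (available from $q \in \relint(S)$) and slide the coefficients $\beta_t + s\,\alpha_t$ in $s$ until one of them first vanishes. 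Because the $\alpha_t$ sum to zero they take both signs, so such an $s$ of smallest absolute value exists while all coefficients stay nonnegative and sum to one, expressing $q$ as a convex combination supported on a proper subset of $T$ and again contradicting minimality.

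The only genuinely delicate point is conceptual rather than computational: one must read ``non-hollow'' for a lower-dimensional simplex relative to $\aff(S)$ and the induced lattice $\aff(S) \cap \Z^d$, and I would state explicitly that $q$ serves as the required relatively-interior lattice point. Beyond that I do not expect a real obstacle, since everything reduces to standard minimal-Carathéodory bookkeeping; the one step to phrase carefully is the choice of $s$ in the affine-independence argument, which I would pin down as the value of smallest absolute value at which some coefficient $\beta_t + s\,\alpha_t$ reaches zero.
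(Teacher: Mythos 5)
Your proof is correct, but it takes a genuinely different route from the paper. The paper's argument is shorter modulo a standard black box: it takes a triangulation of $P$ using only the vertices of $P$, and lets $S$ be the unique simplex of the triangulation containing the interior lattice point $p$ in its relative interior; non-hollowness of $S$ is then immediate. You instead run a minimal-Carath\'eodory argument: choose an inclusion-minimal subset $T$ of the vertex set with $q \in \conv(T)$, and prove by hand that minimality forces both $q \in \relint(\conv(T))$ and affine independence of $T$. Your route is self-contained --- it needs no existence theorem for triangulations, only elementary convexity --- at the cost of carrying out the two claims explicitly; both of your arguments for them are sound (dropping non-vertices, passing to a proper face, and the sliding coefficients $\beta_t + s\,\alpha_t$ with $s$ of smallest absolute value all work as you describe). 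One small simplification you could have made: minimality alone already implies that \emph{every} convex representation $q = \sum_{t \in T} \beta_t t$ has all $\beta_t > 0$ (a zero coefficient would put $q$ in the hull of a proper subset), so the affine-independence step does not actually need the relative-interior claim as a prerequisite. Finally, you are right to flag the reading of ``non-hollow'' in lower dimension; the paper glosses this with ``by definition,'' and your explicit remark that $q \in \relint(S)$ is a point of the induced lattice in $\aff(S)$ is exactly what is meant.
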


\begin{proof}
Let $P$ be a non-hollow lattice polytope and $p \in \inter(P)\cap\Z^d$. Applying Carath\'eodory's Theorem to an expression of $p$ as a convex combination of the vertex set of $P$ we obtain an affinely independent subset of vertices that still has $p$ as a convex combination. The vertices involved in that convex combination form a non-hollow simplex contained in $P$.
\end{proof}

\begin{corollary}
\label{cor:ReduToSimpls}
\cref{conj:one} reduces to lattice simplices.
More precisely, \cref{conj:one} holds in every dimension~$\leq d$ if and only if it holds for lattice simplices in every dimension~$\leq d$.
\end{corollary}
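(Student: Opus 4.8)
The plan is to prove \cref{cor:ReduToSimpls} by combining the reduction provided by \cref{lemSimplexInside} with the monotonicity of the covering radius under inclusion of convex bodies. The statement has two directions. One direction is trivial: if \cref{conj:one} holds for all non-hollow lattice polytopes in dimension $\leq d$, then in particular it holds for the lattice simplices among them, since every lattice simplex is a lattice polytope. The substance of the corollary is therefore the converse, namely that the simplex case implies the general polytope case.

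For the nontrivial direction, I would argue as follows. Suppose the inequality $\mu(S) \leq \ell/2$ is known for every non-hollow lattice simplex~$S$ in every dimension $\ell \leq d$. Let $P \subseteq \R^d$ be an arbitrary non-hollow lattice $d$-polytope. By \cref{lemSimplexInside}, $P$ contains a non-hollow lattice simplex~$S$ of some dimension $\ell \leq d$. Since $S \subseteq P$ and both are full-dimensional only after possibly passing to the affine span of~$S$, the key monotonicity step is that shrinking a convex body can only increase its covering radius: because $S \subseteq P$ implies $\mu P + \Z^d \supseteq \mu S + \Z^d$ for every $\mu$, we get
\[
\mu(P) \leq \mu(S).
\]
Here one must be careful that $S$ may have dimension $\ell < d$, in which case $\mu(S)$ should be interpreted as the covering radius of~$S$ within its own affine hull with respect to the induced lattice, and the inclusion-monotonicity must be read correctly: the covering radius of the lower-dimensional simplex sitting inside $\R^d$ with respect to $\Z^d$ needs to be related to its intrinsic covering radius. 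The cleanest route is to apply the hypothesis to~$S$ regarded as an $\ell$-dimensional non-hollow lattice polytope, obtaining $\mu(S) \leq \ell/2 \leq d/2$, and then invoke inclusion-monotonicity in the ambient space to conclude $\mu(P) \leq \mu(S) \leq d/2$.

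The one genuine subtlety---and the step I expect to require the most care---is handling the possible drop in dimension in \cref{lemSimplexInside}, namely reconciling the ambient covering radius $\mu(S,\Z^d)$ of a lower-dimensional simplex with its intrinsic covering radius $\mu(S, \Z^d \cap \aff(S))$ to which the inductive hypothesis applies. One must verify that the simplex furnished by the lemma, which contains an interior lattice point~$p$ in its \emph{relative} interior, is non-hollow as a full-dimensional object in $\aff(S)$, and that the relevant induced sublattice $\Z^d \cap (\aff(S) - p)$ is indeed a lattice of full rank in the linear space parallel to $\aff(S)$; this follows because~$S$ is a lattice simplex, so its affine hull is a rational affine subspace. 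Once this is settled, the inequality $\mu(P, \Z^d) \leq \mu(S, \Z^d \cap \aff(S))$ follows from \cref{lem:projection} (or directly from inclusion), and the hypothesis bounds the right-hand side by $\ell/2 \leq d/2$. The equality characterization in \cref{conj:one} is not claimed by this corollary, so the argument reduces purely to the inequality, which keeps the bookkeeping light.
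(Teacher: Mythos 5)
Your treatment of the full-dimensional case ($\ell = d$) is fine, but the dimension-drop case---which you correctly single out as the crux---is resolved incorrectly, and this is a genuine gap. The inequality you claim, $\mu(P,\Z^d) \leq \mu(S, \Z^d \cap \aff(S))$ for a lower-dimensional non-hollow lattice simplex $S \subseteq P$, is false in general. Concretely, take $P = \conv(\{\pm e_1, \pm e_2\})$ and $S = \conv(\{-e_1, e_1\})$: the intrinsic covering radius of $S$ is $\tfrac12$, while $\mu(P) = 1$ by \cref{cor:directsum}. Neither of your two proposed justifications works. Inclusion-monotonicity only applies with a fixed ambient lattice, and with respect to $\Z^d$ a lower-dimensional $S$ has $\mu(S,\Z^d) = +\infty$, since the lattice translates of its dilates cover only a measure-zero subset of $\R^d$. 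And \cref{lem:projection} does not give your inequality either: it gives $\mu(P) \leq \mu(P \cap L_S, \Z^d \cap L_S) + \mu(\pi(P), \pi(\Z^d))$, where $L_S$ is the linear hull of $S$ (translated so its relative-interior lattice point is the origin), and you are silently dropping the second term, which is strictly positive.

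The paper's proof addresses exactly this point and needs two ingredients your proposal lacks. It applies \cref{lem:projection} keeping both terms: the first term $\mu(S, \Z^d \cap L_S) \leq \tfrac{\ell}{2}$ comes from the simplex hypothesis in dimension $\ell$; the second term concerns the $(d-\ell)$-dimensional non-hollow lattice polytope $\pi(P)$, which is in general \emph{not} a simplex, so the simplex hypothesis cannot be invoked for it directly. Instead the paper argues by induction on $d$: \cref{conj:one} in dimension $d-\ell < d$ (already deduced from the simplex case in lower dimensions) gives $\mu(\pi(P), \pi(\Z^d)) \leq \tfrac{d-\ell}{2}$, and summing the two bounds yields $\tfrac{d}{2}$. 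So a correct repair of your argument requires both the two-term projection inequality and an induction on the dimension; the ``clean route'' via monotonicity alone cannot work.
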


\begin{proof}
One direction is trivially true.
We prove the other one by induction on~$d$.
Let $P \subseteq \R^d$ be a non-hollow lattice polytope.
In view of \cref{lemSimplexInside}, we find an $\ell$-dimensional non-hollow lattice simplex $S \subseteq P$.
If $\ell=d$, then we simply have $\mu(P) \leq \mu(S)$.
So, let us assume that $\ell < d$ and assume that \cref{conj:one} is proven for any dimension~$<d$.
Assume also that~$S$ contains the origin in its interior and write $L_S$ for the linear hull of~$S$.
We now apply \cref{lem:projection} to the projection~$\pi$ onto $L_S^\perp$.
Observe that~$S \subseteq P \cap \pi^{-1}(\zero) = P \cap L_S$, and that~$S$ is non-hollow with respect to $\Z^d \cap L_S$ and $\pi(P)$ is non-hollow with respect to the lattice $\pi(\Z^d)$.
We get that
\[
\mu(P) \leq \mu(S,\Z^d \cap L_S) + \mu(\pi(P),\pi(\Z^d)) \leq \frac{\ell}{2} + \frac{d-\ell}{2} = \frac{d}{2}.\qedhere
\]
\end{proof}

\begin{proof}[Proof of \cref{thm:one}]
Suppose first that for $\ell \le d$ every lattice $\ell$-polytope~$P$ has $\mu(P) \le \ell/2$.
Since $S(\one_{n+1})$ projects to $S(\one_{\ell+1})$, we have by~\eqref{eqn:covradSone}
\[
\mu_\ell(S(\one_{n+1}),\Z^n) \geq \mu_\ell(S(\one_{\ell+1}),\Z^\ell) = \frac{\ell}2.
\]
For the converse inequality, let $\pi : \R^n \to \R^\ell$ be an integer projection along which the value of $\mu_\ell(S(\one_{n+1}))$ is attained.
Then, $\pi(S(\one_{n+1}))$ is non-hollow with respect to the lattice $\pi(\Z^n)$, and thus
\[
\mu_\ell(S(\one_{n+1}),\Z^n) = \mu_\ell(\pi(S(\one_{n+1})), \pi(\Z^n)) \leq \frac{\ell}2.
\]
For the reverse implication (ii) $\Rightarrow$ (i), suppose \cref{conj:minima} holds in every dimension $\ell \leq d$.
Let $P$ be a lattice $\ell$-polytope with at least one interior lattice point, which without loss of generality we assume to be the origin~$\zero$.
By \cref{cor:ReduToSimpls} we can assume $P$ to be a simplex, and we let $v_0,\dots,v_\ell$ be its vertices.
Let $(b_0,\ldots,b_\ell) \in \N^{\ell+1}$ be a multiple of the barycentric coordinates of $\zero$ in $P$; that is, assume that
\begin{equation}
\label{eqn:barycentric}
\zero = \frac{1}{N} \sum_{i=0}^\ell b_i v_i,
\end{equation}
where $N = \sum_{i=0}^\ell b_i \geq \ell+1$.
Consider the $(N-1)$-dimensional simplex $S(\one_{N})$, and the affine projection $\pi : \R^{N-1} \to \R^\ell$ that sends exactly $b_i$ vertices of $S(\one_{N})$ to $v_i$, $i=0,\ldots,\ell$. 
Expression~\eqref{eqn:barycentric} implies that $\pi$ sends the origin to the origin, which in turn implies $\pi$ to be an integer projection.
In particular,
\[
\mu(P,\Z^\ell) \leq \mu_\ell(\pi(S(\one_{N})),\pi(\Z^{N-1})) \leq \mu_\ell(S(\one_{N}),\Z^{N-1}) = \frac{\ell}2,
\]
since $\pi(\Z^{N-1}) \subseteq \Z^\ell$.
\end{proof}

\subsection{\texorpdfstring{\cref{conj:one}}{Conjecture \ref{conj:one}} in dimensions \texorpdfstring{$2$}{2} and \texorpdfstring{$3$}{3}}
\label{sec:smalldim}

We here prove \cref{conj:one} in dimensions two and three, including the case of equality.

\subsubsection*{\texorpdfstring{\cref{conj:one}}{Conjecture \ref{conj:one}} in dimension two}

Let $I=[-1,1]$ and $I'=[0,2]$ be intervals of length two centered at $0$ and $1$, respectively. 

\begin{lemma}
\label{lemma:dimtwo-a}
The three polygons $S(\one_3)$, $I \oplus I$, and $I \oplus I'$ 
have covering radius equal to one.
\end{lemma}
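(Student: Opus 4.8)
The plan is to handle the three polygons separately. The first, $S(\one_3)$, requires no work: it is the $d=2$ case of the standard terminal simplex, so \eqref{eqn:covradSone} gives $\mu(S(\one_3)) = 2/2 = 1$ at once. The remaining two are direct sums of one-dimensional intervals, and I would obtain their covering radii from the additivity established in \cref{cor:directsum}.

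First I would record the trivial one-dimensional computation that an interval of length two has covering radius $1/2$ with respect to $\Z$. For an interval $[a,b]$, the dilate $\mu\cdot[a,b]$ has length $\mu(b-a)$, and its $\Z$-translates cover $\R$ precisely when this length is at least one; hence $\mu([a,b],\Z) = 1/(b-a)$, which equals $1/2$ for both $I=[-1,1]$ and $I'=[0,2]$. Alternatively, one can invoke translation invariance of the covering radius to deduce $\mu(I',\Z)=\mu(I,\Z)$ from the single computation for $I$.

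Viewing $I\oplus I$ and $I\oplus I'$ with respect to the splitting $\Z^2 = \Z\oplus\Z$, and noting that both $I$ and $I'$ contain the origin so that \cref{cor:directsum} applies, additivity then gives
\[
\mu(I\oplus I,\Z^2) = \mu(I,\Z)+\mu(I,\Z) = 1
\quad\text{and}\quad
\mu(I\oplus I',\Z^2) = \mu(I,\Z)+\mu(I',\Z) = 1.
\]
There is no genuine obstacle here: the statement is a bookkeeping consequence of \eqref{eqn:covradSone} and \cref{cor:directsum}, the only points meriting a check being that $I'$ contains the origin (on its boundary) so that the direct sum is defined, and the one-line interval computation above. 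As a sanity check one may note that $I\oplus I'$ is the triangle $\conv(\{(-1,0),(1,0),(0,2)\}) = S(\one_2)\oplus(1+S(\one_2))$, matching the example given earlier in the introduction.
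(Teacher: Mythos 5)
Your proof is correct and takes essentially the same route as the paper's: \eqref{eqn:covradSone} handles $S(\one_3)$, and additivity of the covering radius under direct sums (\cref{cor:directsum}) handles $I\oplus I$ and $I\oplus I'$. The only difference is that you spell out the one-dimensional computation $\mu(I,\Z)=\mu(I',\Z)=\tfrac12$ and the check that both summands contain the origin, which the paper leaves implicit.
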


\begin{proof}
For $S(\one_3)$ this is just \cref{eqn:covradSone}.
For the other two polygons it follows from \Cref{cor:directsum},
since they are unimodularly equivalent to direct sums of segments of length two.
\end{proof}

\begin{figure}[htb]
\centerline{\includegraphics[scale=.8]{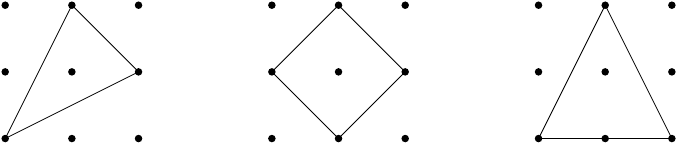}}
\caption{The non-hollow lattice polygons $S(\one_3)$, $I \oplus I$ and $I\oplus I'$ of covering radius equal to one.}
\label{fig:3cov1polys}
\end{figure}

We now show that every other non-hollow lattice polygon contains a (unimodularly equivalent) copy of one of these three, which implies \Cref{conj:one}.
For this let us consider the following auxiliary family of lattice triangles with~$k$ interior lattice points:
For each $k \in \N$, and $\alpha\in \{0,1\}$ let
\[
M_k(\alpha)=\conv(\{(-1,0), (1,\alpha), (0, k+1)\}).
\]
Observe that 
\[
M_1(0) = I\oplus I',
\quad
M_1(1) \cong S(\one_3),
\quad\text{and}\quad
\forall k\ge 2,
\ 
M_{k-1}(\alpha) \subsetneq M_{k}(\alpha).
\]

\begin{remark}
\label{rem:M_k-dim2}
The covering radius of $M_k(\alpha)$ can be computed explicitly via
\[
M_k(0) \cong I \oplus [0,k+1], \quad
\text{and}
\quad
M_k(1) \cong S(k,1,1).
\]
Indeed, this implies
\[
\mu(M_k(0)) = \frac12 + \frac1{k+1} = \frac{k+3}{2k+2} 
, \quad
\text{and}
\quad
\mu(M_k(1)) = \frac{1 + \frac2k}{2+\frac1k} = \frac{k+2}{2k+1},
\]
by \Cref{cor:directsum} and \Cref{thm:lambda}, respectively.
We see that their covering radius equals $1$ for $k=1$, and is strictly smaller for~$k \geq 2$.
\end{remark}


\noindent The following statement might be known in the literature on lattice polygons.
In absence of a clear reference we give a detailed proof for the sake of a complete presentation.

\begin{lemma}
\label{lemma:dimtwo-b}
Every non-hollow lattice polygon~$P$ contains a unimodular copy of either $M_1(0)= I\oplus I'$,  $M_1(1)\cong S(\one_3)$ or $I\oplus I$. 
\end{lemma}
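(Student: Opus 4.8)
The plan is to exhibit, inside $P$, three or four lattice points that form a unimodular copy of one of the three target polygons. After translating an interior lattice point of~$P$ to the origin, it suffices to find either four lattice points of~$P$ forming a copy of $I\oplus I$, or a lattice triangle $T=\conv\{a,b,c\}\subseteq P$ with $\zero\in\inter(T)$ that is unimodularly equivalent to $S(\one_3)$ or to $M_1(0)=I\oplus I'$. Throughout I would use Pick's theorem freely, together with the fact that unimodular maps (in particular shears fixing a chosen lattice line) allow one to normalize coordinates. The first structural observation is that the nonzero lattice points of~$P$ positively span $\R^2$: otherwise they, together with~$\zero$, would lie in a closed half-plane whose bounding line passes through~$\zero$, contradicting $\zero\in\inter(P)$. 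In particular~$P$ contains a primitive lattice point (by convexity $v/k\in P$ whenever $v\in P$ is a lattice point), and there are lattice points of~$P$ strictly on both sides of every line through~$\zero$. This yields the governing dichotomy: either \emph{(Case I)} some three lattice points of~$P$ contain $\zero$ in the interior of their convex hull, or \emph{(Case II)} they do not.

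In Case I, I would choose a lattice triangle $T=\conv\{a,b,c\}\subseteq P$ of \emph{minimal area} subject to $\zero\in\inter(T)$, and argue that minimality forces~$T$ into one of the two admissible shapes. The key reduction is: if~$T$ contained a lattice point~$q$ other than $a,b,c$ and~$\zero$, then — by fanning~$T$ from~$q$ when~$q$ is interior, or by sliding the opposite vertex toward~$q$ when~$q$ lies on an edge — one produces a strictly smaller lattice triangle still containing~$\zero$ in its interior, \emph{unless} the operation pushes~$\zero$ onto the boundary. Tracking the remaining possibilities, the surviving minimal triangles are exactly those whose only lattice points are the three vertices and a single interior point; by Pick's theorem these have area $3/2$ (so $T\cong S(\one_3)$) or area $2$ (so $T\cong I\oplus I'=M_1(0)$), after invoking the classification of such small lattice triangles.

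In Case II, I would show that~$P$ contains a copy of $I\oplus I$. Since~$\zero$ lies in the interior of the hull of the lattice points of~$P$ but in the interior of no lattice triangle, every lattice triangle containing~$\zero$ has it on an edge; hence there is a primitive vector~$u$ with $-u,u\in P$ and $\zero\in\relint[-u,u]$, together with lattice points strictly on both sides of $\R u$. Analyzing how two such one-sided points can fail to create an open triangle around~$\zero$ forces a second primitive antipodal pair $-v,v\in P$ with $\{u,v\}$ a lattice basis, and then $\conv\{\pm u,\pm v\}$ is a unimodular copy of $I\oplus I$ contained in~$P$. This is consistent because $S(\one_3)$ and $I\oplus I'$ both have a lattice point strictly in their interior, so no copy of them can occur in Case II.

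I expect the main obstacle to be this Case~II antipodal/basis extraction — proving that the absence of an open lattice triangle around~$\zero$ really does pin the relevant lattice points onto two lattice lines through~$\zero$ and lets one read off a unimodular basis — together with the bookkeeping in the Case~I reduction needed to guarantee that the area-minimizing triangle does not degenerate (i.e.\ that every shrinking move either succeeds or demonstrably keeps~$\zero$ strictly interior). Once these two technical points are settled, the classification of the minimal configurations is routine.
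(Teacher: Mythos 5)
Your Case~I construction has a genuine gap: minimizing the area of a lattice triangle $T\subseteq P$ subject to the \emph{fixed} point $\zero$ lying in $\inter(T)$ does not force $T$ into one of the two admissible shapes. A concrete counterexample is
\[
T=\conv(\{(0,1),(1,-1),(-1,-2)\}),
\]
whose lattice points are its three vertices together with $(0,0)$ and $(0,-1)$, both interior. Among the four triangles spanned by lattice points of $T$ other than the origin, $T$ itself is the \emph{only} one containing $\zero$ in its interior: fanning from $q=(0,-1)$ pushes $\zero$ onto the segment $[(0,-1),(0,1)]$ (precisely your exceptional case), and the other three triangles have $\zero$ on an edge or miss it entirely. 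So $T$ is minimal for your criterion, yet its normalized area is $5$, while $S(\one_3)$ and $I\oplus I'$ have normalized areas $3$ and $4$; your claimed classification of the surviving triangles therefore fails. (That classification is also internally inconsistent as stated: a triangle whose only lattice points are its three vertices and one interior point has area exactly $3/2$ by Pick's theorem, never $2$; the area-$2$ polygon $I\oplus I'$ necessarily carries a fifth, boundary, lattice point.) The lemma does still hold for this $T$, but only because $T$ contains the copy $\conv(\{(0,0),(1,-1),(-1,-2)\})$ of $S(\one_3)$, whose unique interior lattice point is $(0,-1)$ rather than $(0,0)$: the sought-for copy need not be centered at the interior point you fixed at the outset, and your argument never allows for re-centering.

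The repair is to minimize over \emph{all} non-hollow lattice triangles contained in $P$, letting the interior lattice point vary. Then two interior lattice points $p\neq q$ cannot coexist in the minimal $T$: fanning from $p$ and fanning from $q$ can both fail only if $p\in\relint[q,x]$ and $q\in\relint[p,y]$ for vertices $x,y$, which places both $p$ and $q$ on the edge $[x,y]$ of $T$, absurd. With a unique interior point $p$ one shows each edge carries at most one extra lattice point, collinear with $p$ and the opposite vertex, and a short computation (vertices are primitive as vectors from $p$, and the subtriangle determinants must add up to the Pick area) eliminates the configurations with two or three such extra points, leaving exactly $S(\one_3)$ and $I\oplus I'$. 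Your Case~II, in contrast, is sound and completable as you predicted: once $\pm u\in P$, any lattice points $b$ above and $c$ below the line $\R u$ that are not collinear with $\zero$ form with $u$ or $-u$ a triangle having $\zero$ in its interior, so in Case~II all off-line lattice points lie on a single second line through $\zero$, giving a primitive pair $\pm v\in P$; and if $\det(u,v)\geq 2$, the half-open parallelogram spanned by $u,v$ produces a lattice point of $\conv(\{\pm u,\pm v\})\subseteq P$ off both lines, a contradiction, so $\{u,v\}$ is a basis. Note finally that even the corrected argument is entirely different from the paper's proof, which invokes the classification of complete unimodular fans in the plane (every such fan refines the fan of $S(\one_3)$ or a fan $\F_l$) and reads the three polygons off the primitive ray generators; your route is longer but elementary and self-contained.
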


\begin{proof}
Without loss of generality, assume the origin is in the interior of $P$.
Consider the complete fan whose rays go through all non-zero lattice points in $P$. We call this the \emph{lattice fan} associated to $P$, and it is a complete unimodular fan. Since a 2-dimensional fan is uniquely determined by its rays, we denote by $\F\{v_1,\dots,v_m\}$ the fan with rays through $v_1,\dots, v_m \in \R^2$. In particular, the lattice fan of $P$ is denoted by $\F\{P\cap \Z^2\}$.

By the classification of complete unimodular fans, see~\cite[Thm.~V.6.6]{ewald1996combinatorial}, $\F\{P\cap \Z^2\}$ can be obtained (modulo unimodular equivalence) by successively refining  the lattice fan of either $S(\one_3)$ or
\[
\F_l:= \F\{(0,-1), (0,1), (-1,0), (1,l)\}, 
\]
for some $l\in \Z_{\ge0}$. Observe that $\F_0$ is the lattice fan of $I\oplus I$, $\F_1$ refines the lattice fan of $S(\one_3)\cong M_1(1)$ and, for every $l\ge 2$ we have that $\F_l$ is unimodularly equivalent to the lattice fan of
\[
\begin{cases}
M_k(0)& \text{ if $l=2k$ is even, and}\\
M_k(1)& \text{ if $l=2k-1$ is odd,}
\end{cases}
\]
independently of which interior lattice point of $M_k(\alpha)$ we consider the rays of its lattice fan emanating from.

This, together with the fact that $M_1(\alpha) \subseteq M_k(\alpha)$ for every $k\ge 1$, implies that $P$ contains one of $M_1(0)$, $M_1(1)$ or $I\oplus I$.  
%
%
%
\end{proof}

\begin{corollary}
\label{coro:ConjA_dim2}
Let $P$ be a non-hollow lattice polygon. Then
\[
\mu(P)\le 1,
\]
with equality if and only if $P$ is unimodularly equivalent to one of $S(\one_3)$, $I \oplus I$, or $I \oplus I'$.
\end{corollary}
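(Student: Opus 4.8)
The plan is to derive the statement by combining the two preceding lemmas of this subsection with the tightness machinery of \cref{sec:tools}. First I would establish the inequality $\mu(P) \le 1$. By \cref{lemma:dimtwo-b}, every non-hollow lattice polygon $P$ contains a unimodular copy $Q$ of one of the three polygons $M_1(0) = I \oplus I'$, $M_1(1) \cong S(\one_3)$, or $I \oplus I$, and by \cref{lemma:dimtwo-a} each of these has $\mu(Q) = 1$. Since the covering radius is monotone non-increasing under inclusion of convex bodies --- if $Q \subseteq P$, then any dilate that covers via $Q$ also covers via $P$, so $\mu(P) \le \mu(Q)$ --- this at once yields $\mu(P) \le 1$.

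For the equality characterization, the ``if'' direction is exactly \cref{lemma:dimtwo-a}. For the ``only if'' direction, I would suppose $\mu(P) = 1$ and take the contained unimodular copy $Q \subseteq P$ provided by \cref{lemma:dimtwo-b}; after applying a unimodular transformation we may assume the containment holds literally. The key observation is that all three candidate polygons are \emph{tight} for $\Z^2$: the triangles $S(\one_3)$ and $M_1(0) = I \oplus I'$ are simplices, hence tight by \cref{lemma:simplex_monotone}, while the square $I \oplus I$ is a direct sum of two segments --- themselves tight one-dimensional simplices --- and so is tight by \cref{lemma:sum_monotone}. Since tightness is invariant under unimodular equivalence, $Q$ is tight. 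Now if $P \supsetneq Q$ were a proper containment, tightness of $Q$ would force $\mu(P) < \mu(Q) = 1$, contradicting $\mu(P) = 1$. Hence $P = Q$, i.e.\ $P$ is unimodularly equivalent to one of the three polygons.

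I do not expect a genuine obstacle, since the substantive work is already done in \cref{lemma:dimtwo-a} (the covering radius computation) and \cref{lemma:dimtwo-b} (the fan classification). The only point that needs care is verifying that each extremal polygon is tight, and in particular treating $M_1(0) = I \oplus I'$ as the triangle it is and invoking \cref{lemma:simplex_monotone} directly, rather than going through its direct-sum description --- one of the segments there is not centered at the origin, so \cref{lemma:sum_monotone} would not apply cleanly. Once tightness is secured, the rigidity principle ``a proper superset strictly decreases $\mu$'' upgrades the mere containment of \cref{lemma:dimtwo-b} into the desired unimodular equivalence, settling both the bound and its equality case.
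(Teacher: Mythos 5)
Your proof is correct and takes essentially the same route as the paper: \cref{lemma:dimtwo-b} provides a contained unimodular copy of one of the three polygons of \cref{lemma:dimtwo-a}, and tightness (via \cref{lemma:simplex_monotone} and \cref{lemma:sum_monotone}) converts any strict containment into a strict drop of the covering radius, which settles both the bound and the equality case exactly as in the paper. One minor remark: your caution about $I \oplus I'$ is unnecessary, since \cref{lemma:sum_monotone} only requires the summands to \emph{contain} the origin (which $I'=[0,2]$ does), not to be centered at it --- though invoking \cref{lemma:simplex_monotone} directly for this triangle, as you do, is equally valid.
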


\begin{proof}
By \Cref{lemma:dimtwo-b}, unless $P$ is one of $S(\one_3)$, $I \oplus I$ or $I \oplus I'$ it strictly contains one of them. If the latter happens then its covering radius is strictly smaller than $1$, since the three of them are tight by \Cref{lemma:simplex_monotone} and \Cref{lemma:sum_monotone}.
\end{proof}

\subsubsection*{\texorpdfstring{\cref{conj:one}}{Conjecture \ref{conj:one}} in dimension three}
For the three-dimensional case we introduce the following concept:

\begin{definition}
\label{defi:minimal}
A \emph{minimal $d$-polytope} is a non-hollow lattice $d$-polytope not properly containing any other non-hollow lattice $d$-polytope.
\end{definition}

In this language, our results in dimension 2 can be restated as: There are exactly three minimal $2$-polytopes, they have covering radius $1$, and every other non-hollow lattice $2$-polytope has strictly smaller covering radius.

In dimension three things are a bit more complicated. To start with, instead of three direct sums of (perhaps translated) simplices of the form $S(\one_i)$ there are  nine, that we now describe. 
As in the previous section, let $I=[-1,1] = S(\one_2)$ and $I'=[0,2]$. In a similar way we define:
\begin{align*}
S'(\one_3) &= (1,1) + S(\one_3) = \conv(\{(0,0), (2,1), (1,2)\}),\\
(I \oplus I')^\circ &= (0,-1) + (I \oplus I') = \conv(\{(0,1), (\pm1,-1)\}),\\
(I \oplus I')' &= (0, -2) + (I \oplus I') = \conv(\{(0,0), (\pm1,-2)\}).
\end{align*}
%
Put differently, $S'(\one_3)$ is  $S(\one_3)$ translated to have the origin as a vertex; the other two are $I \oplus I'$ translated to have the origin in the interior and at the ``apex'', respectively.

\begin{lemma}
\label{lemma:minimum_3d}
There are the following nine non-equivalent lattice $3$-polytopes of covering radius $3/2$, obtained as direct sums of (perhaps translated) simplices of the form $S(\one_d)$:
\[
\begin{array}{c}
S(\one_4), 
\\
S(\one_3) \oplus I, \qquad
S'(\one_3) \oplus I, \qquad
S(\one_3) \oplus I',
\\
I\oplus I\oplus I,  \qquad
I\oplus I\oplus I',
\\
(I \oplus I')^\circ \oplus I, \qquad
(I \oplus I')'\oplus I , \qquad
(I \oplus I')^\circ \oplus I'.
\end{array}
\]
\end{lemma}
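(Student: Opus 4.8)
The plan is to verify that each of the nine listed polytopes is a direct sum (of possibly translated) simplices $S(\one_l)$, and then to invoke the additivity of the covering radius under direct sums together with the computation $\mu(S(\one_l)) = (l-1)/2$ coming from~\eqref{eqn:covradSone}. Concretely, I would first record that a segment $I = [-1,1] = S(\one_2)$ is the one-dimensional standard terminal simplex with $\mu(I) = 1/2$, that $S(\one_3)$ is the two-dimensional one with $\mu(S(\one_3)) = 1$, and that $S(\one_4)$ has $\mu = 3/2$ directly by~\eqref{eqn:covradSone}. Since the covering radius is a lattice invariant and is unchanged under lattice translation of a body, the translated variants $S'(\one_3)$, $(I\oplus I')^\circ$, and $(I\oplus I')'$ have the same covering radius as their untranslated counterparts, namely $1$ for $S'(\one_3)$ and $1 = 1/2 + 1/2$ for the two translates of $I\oplus I'$.

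Next I would go through the list and compute each covering radius by \Cref{cor:directsum}. For a direct sum $K_1 \oplus K_2$ (with respect to the direct sum lattice) one has $\mu = \mu(K_1) + \mu(K_2)$, so each of the three-summand decompositions $I \oplus I \oplus I$ gives $1/2 + 1/2 + 1/2 = 3/2$, each decomposition of the form (two-dimensional piece) $\oplus\, I$ gives $1 + 1/2 = 3/2$, and $S(\one_4)$ is $3/2$ on its own. The only point that requires a word of justification is that the listed objects really are direct sums with respect to complementary coordinate subspaces carrying the sublattices $\Lambda_1, \Lambda_2$ whose direct sum is (unimodularly) $\Z^3$; this is immediate from the explicit vertex descriptions, since in each case the summands lie in complementary lattice subspaces and their vertex sets together with the interior point generate $\Z^3$.

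The remaining content of the statement is the claim that these nine polytopes are pairwise \emph{non-equivalent}. For this I would compute a small list of unimodular invariants for each polytope and exhibit that no two share the same invariant tuple. Natural discriminating invariants are the number of vertices (separating the simplex $S(\one_4)$ from those with $4$, $5$, or $6$ vertices), the number of interior lattice points together with the number of boundary lattice points, the $f$-vector, and, where needed, finer data such as the combinatorial type of the facets or the position of the interior lattice point relative to the vertices (e.g.\ whether the origin is interior to a summand or lies at a vertex/apex). The translated pairs such as $(I\oplus I')^\circ \oplus I$ versus $(I\oplus I')' \oplus I$ are combinatorially similar and will require the most care: here the distinguishing feature is the lattice-geometric position of the interior point, which differs because one translate places the origin in the relative interior of the square factor while the other places it at an apex vertex.

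I expect the genuine obstacle to be precisely this non-equivalence bookkeeping rather than the covering-radius computations, which are routine given \Cref{cor:directsum}. In particular, distinguishing the three translated variants of $I \oplus I'$ (and their sums with $I$) from one another and from $S(\one_3)\oplus I$ requires committing to an explicit, lattice-invariant feature and checking it case by case; the cleanest route is to tabulate coordinates and a fixed invariant vector for all nine, referring to an accompanying figure or table (such as the $\Cref{fig:sums_of_I}$ and $\Cref{tbl:minimal_tetrahedra}$ alluded to elsewhere) to make the comparison transparent, and to note that any unimodular equivalence must preserve the interior lattice point and hence the barycentric-type data, which already separates the delicate cases.
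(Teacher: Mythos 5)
Your proposal is correct as far as the lemma's literal claims go, and on those parts it follows the same route as the paper: the covering radii come from additivity under direct sums (\cref{cor:directsum}), translation invariance of $\mu$, and $\mu(S(\one_l))=(l-1)/2$ from \eqref{eqn:covradSone}. The pairwise non-equivalence, which the paper simply leaves to the reader, you actually flesh out more than the paper does; your proposed invariants --- vertex numbers first, then barycentric/volume data of the interior lattice point --- are exactly the volume vectors recorded in \cref{tbl:minimal_tetrahedra}, and they do separate the five tetrahedra on the list. So nothing you wrote is wrong, and your covering-radius computations are the paper's implicit ones.

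What you omit, however, is the one point on which the paper's proof spends essentially all of its text: that the nine sums are \emph{non-hollow}. This is not automatic, because several summands are translated so that the origin lies on their boundary ($I'$, $S'(\one_3)$, $(I\oplus I')'$), and a direct sum of non-hollow lattice polytopes containing the origin can perfectly well be hollow --- for instance $I'\oplus I'=\conv(\{(0,0),(2,0),(0,2)\})$ has no interior lattice point. The paper disposes of this with a general fact: a direct sum of non-hollow lattice polytopes containing the origin is non-hollow if and only if all but at most one of the summands has the origin in its \emph{interior}, and this condition is then checked for each of the nine sums. Although ``non-hollow'' does not appear verbatim in the lemma's statement, it is part of the intended claim (the nine are presented as the equality cases of \cref{conj:one}) and it is used downstream in \cref{thm:26minimal} and \cref{thm:ConjA_dim3}, where these nine must occur among the minimal non-hollow $3$-polytopes. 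You should add this verification; with it, your argument matches the paper's proof in substance, differing only in that you make explicit the covering-radius and non-equivalence bookkeeping that the paper treats as routine.
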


The last five polytopes are illustrated in \Cref{fig:sums_of_I}, which is borrowed from~\cite[p.~123]{blancosantos2019nonspanning}.
Observe that the last three can equally be written as
\[
\begin{array}{ccc}
I \oplus (I \oplus I)' , \qquad
I \oplus (I' \oplus I)'  , \qquad
I \oplus (I \oplus I')'' ,
\end{array}
\]
where $(I \oplus I)'$ denotes $I \oplus I$ translated to have the origin as a vertex and $(I \oplus I')''$ is $I \oplus I'$ translated to have the origin at an endpoint of its edge of length two.

\begin{figure}[htb]
\centerline{\includegraphics[scale=.54]{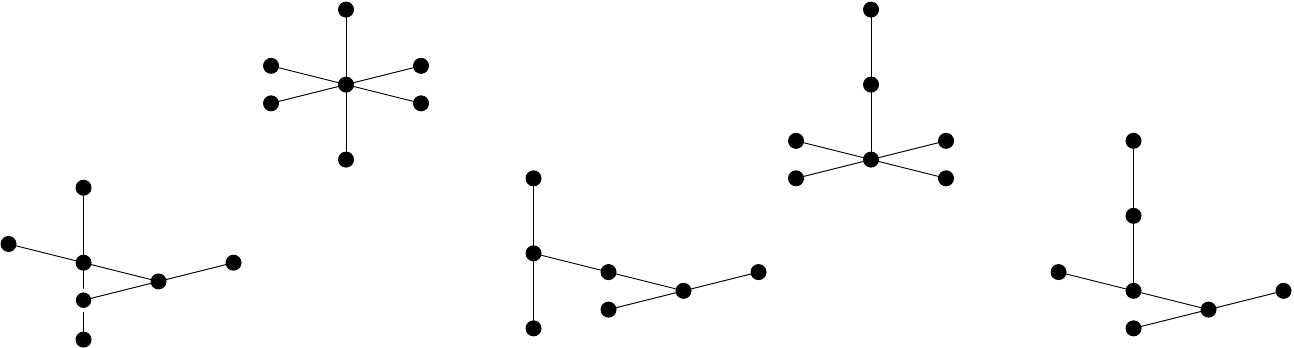}}
\caption{The five non-hollow lattice $3$-polytopes that can be obtained by translations and direct sums of $I=[-1,1]$ arise as the convex hull of the shown segments.}
\label{fig:sums_of_I}
\end{figure}

\begin{proof}
That all the described direct sums are non-hollow follows from the following more general fact: The direct sum of two or more non-hollow lattice polytopes containing the origin is non-hollow if (and only if) all but at most one of the summands has the origin in its interior. Indeed, if the summand exists then its interior point(s) are interior in the sum; if it doesn't then the origin is an interior point in the sum.

With this in mind, we only need to check that the nine described polytopes are pairwise unimodularly non-equivalent, which is left to the reader. 
\end{proof}

A second difference with dimension two is that these nine non-hollow lattice $3$-polytopes are no longer the only minimal ones. Minimal non-hollow $3$-polytopes have been classified and there are 26 with a single interior lattice point (see \cite[Thm.~3.1]{kasprzyk2010threefolds} and Tables 2 \& 4 therein) plus the infinite family described in \Cref{thm:M_k-3d} below.

To prove \cref{conj:one} in dimension three we show that, on the one hand, the covering radii of the 26 with a single interior lattice point can be explicitly computed and/or bounded, giving the following result, the proof of which we postpone to \Cref{sec:26minimal}.

\begin{theorem}
\label{thm:26minimal}
Among the 26 minimal non-hollow $3$-polytopes with a single interior lattice point, all except the nine in \Cref{lemma:minimum_3d}  have covering radius strictly smaller than $3/2$.
\end{theorem}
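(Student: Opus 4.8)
The plan is to reduce the claim to a finite computation, carried out over the explicit list of 26 polytopes provided by the classification in \cite[Thm.~3.1]{kasprzyk2010threefolds} (Tables 2 \& 4 therein). Since the statement concerns \emph{all but nine} of these 26 polytopes, the first thing I would do is separate the list into the nine polytopes of \Cref{lemma:minimum_3d}, which are already known to have covering radius exactly $3/2$, and the remaining seventeen, for which I must prove the strict inequality $\mu(P) < 3/2$. Because there are only finitely many polytopes, each given by explicit vertex coordinates, the overall strategy is to bound $\mu(P)$ from above for each of the seventeen, individually or in families.

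The key tool is to combine the available inequalities so as to avoid computing each covering radius from scratch. First I would use the containment/monotonicity structure: for any $P$ on the list that strictly contains one of the tight polytopes from \Cref{lemma:minimum_3d}, the tightness established in \Cref{lemma:simplex_monotone} and \Cref{lemma:sum_monotone} immediately yields $\mu(P) < 3/2$. This should dispatch a large portion of the seventeen at once. For the remaining cases I would appeal to \cref{thm:lambda} whenever the polytope is (unimodularly equivalent to) a simplex $S(\a)$, since that theorem gives a closed formula for $\mu(S(\a))$ that one can evaluate and compare to $3/2$ directly; and to \Cref{cor:directsum} for any polytope that decomposes as a direct sum, reducing its covering radius to a sum of lower-dimensional covering radii which are already understood from the two-dimensional analysis (\Cref{coro:ConjA_dim2}, \Cref{rem:M_k-dim2}). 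For the genuinely stubborn cases that are neither $S(\a)$-simplices, direct sums, nor obvious over-polytopes of a tight example, I would fall back on \Cref{lem:projection}: choosing a rational projection $\pi$ onto a suitable coordinate line or plane, one bounds $\mu(P)$ by $\mu(Q,\Z^3 \cap L) + \mu(\pi(P),\pi(\Z^3))$, where both summands are covering radii of one- or two-dimensional non-hollow polytopes controlled by the already-proven two-dimensional case.

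The main obstacle I anticipate is twofold. First, organizing the bookkeeping: the 26 polytopes come in a fixed table, so I must match each entry against the nine of \Cref{lemma:minimum_3d} up to unimodular equivalence, and this equivalence-checking is precisely the tedious step the authors defer (\textit{``left to the reader''} in \Cref{lemma:minimum_3d}). Second, and more seriously, the strict inequality is harder than a non-strict bound: establishing $\mu(P) \le 3/2$ would often follow from a containment argument, but to get $\mu(P) < 3/2$ I must either verify \emph{strict} containment of a tight polytope, or compute $\mu(P)$ exactly and observe it is strictly below $3/2$. The delicate cases are those where a polytope on the list has covering radius very close to, but not equal to, $3/2$; for these the projection bound from \Cref{lem:projection} may only give $\le 3/2$ and not the strict inequality, so I would need to verify that the projection is not simultaneously tight in both factors, or supplement it with a direct last-covered-point analysis (\Cref{defi:last_covered}, \Cref{lemma:tight_vs_facets}) to rule out equality. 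I expect the bulk of the real work to lie in these few near-extremal polytopes, with the remaining dozen or so handled routinely by monotonicity, direct sums, and \cref{thm:lambda}.
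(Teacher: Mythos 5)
Your plan overlaps substantially with the paper's first proof (\Cref{sec:26bounds}): direct sums plus the two-dimensional results, and \Cref{lem:projection} with fibers controlled by \Cref{thm:lambda} or \Cref{prop:conj:v_plane}, are exactly how the paper handles most of the seventeen. But there are two genuine gaps. The first is conceptual: the containment/monotonicity step that you expect to ``dispatch a large portion of the seventeen at once'' applies to \emph{none} of them. All 26 polytopes are \emph{minimal} in the sense of \Cref{defi:minimal}, so by definition none of them properly contains any non-hollow lattice $3$-polytope, in particular none of the nine tight ones from \Cref{lemma:minimum_3d}. Tightness does get used in the paper, but only one dimension down, inside a direct-sum factor: e.g.\ for $I\oplus Q_4$ one uses that the polygon $Q_4$ strictly contains $S(\one_3)$, whence $\mu(Q_4)<1$ by \Cref{lemma:simplex_monotone}, and then \Cref{cor:directsum} gives $\mu(I\oplus Q_4)<\tfrac12+1=\tfrac32$.

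The second gap is the decisive one: your toolkit cannot finish the terminal tetrahedra of small volume, $T(1,1,1,2)$ and $T(1,1,2,3)$. These are not direct sums, and they are not unimodularly equivalent to any $S(\a)$ (their primitive vertex directions do not sum to zero), so \Cref{thm:lambda} does not apply to them directly. Moreover, the projection bound of \Cref{lem:projection} genuinely fails here, not merely by giving a non-strict $\le 3/2$: for $T(1,1,1,2)$ the coordinate projections yield fibers $\Delta_{(1/3,1/3)}=S(1/3,1,1)$ with $\mu=\tfrac75$ (giving the bound $\tfrac75+\tfrac13=\tfrac{26}{15}>\tfrac32$) and $\Delta_{(1/2,1)}$ with $\mu=\tfrac65$ (giving $\tfrac65+\tfrac12=\tfrac{17}{10}>\tfrac32$), even though the true value is $\mu(T(1,1,1,2))=\tfrac75<\tfrac32$. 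So for these two cases one needs an \emph{exact} computation, and this is precisely where the paper introduces a tool absent from your proposal: the Cayley-graph diameter method of \Cref{sec:graph-method} (\Cref{coro:graph_method}, yielding $\mu(T(1,1,1,2))=\tfrac75$ and $\mu(T(1,1,2,3))=\tfrac97$ in \Cref{coro:T1112}), or alternatively the exact mixed-integer programming formulation of \Cref{prop:MIP}. Your suggested fallback, a ``direct last-covered-point analysis,'' is a gesture in this direction but is not a worked-out method; without some such exact technique the proof cannot be completed for these cases, and the five larger terminal tetrahedra also require the specific combination of \Cref{lem:projection} with \Cref{prop:conj:v_plane} (or with \Cref{thm:lambda} applied to the fiber) that the paper spells out, using the volume-vector structure $T(a,b,c,d)$ to identify the right projection.
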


On the other hand, all the (infinitely many) minimal non-hollow $3$-poly\-topes with more than one interior lattice point have covering radius strictly smaller than $3/2$, as we now prove.
For any $k \in \N$ and $\alpha, \beta \in\{0,1\}$, we define $M_k(\alpha, \beta)$ to be the following lattice tetrahedron:
\[
M_k(\alpha,\beta)= \conv(\{(1,0,0), (-1,0,\alpha), (0,1,k+1), (0,-1, k+1-\beta)\}).
\]

\begin{theorem}[\protect{\cite[Prop.~4.2]{ballettikasprzyk2016twopoints}}]
\label{thm:M_k-3d}
Every minimal $3$-polytope with $k\ge2$ interior lattice points is equivalent by unimodular equivalence or refinement of the lattice to 
$M_k(\alpha,\beta)$ for some $\alpha,\beta \in \{0,1\}$.
\end{theorem}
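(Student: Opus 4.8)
The plan is to reduce the statement to a normal-form computation by controlling, in turn, the position of the interior lattice points, the combinatorial type of $P$, and finally the coordinates of its vertices. The basic tool is the following reformulation of minimality (\Cref{defi:minimal}): a non-hollow lattice $3$-polytope $P$ is minimal if and only if, for each vertex $v$, the polytope $\conv\bigl((P \cap \Z^3) \setminus \{v\}\bigr)$ is hollow or of dimension at most two. Indeed, any proper non-hollow lattice subpolytope of $P$ must omit at least one vertex, and the largest subpolytope omitting $v$ is exactly $\conv\bigl((P \cap \Z^3) \setminus \{v\}\bigr)$. I would use this vertex-by-vertex criterion throughout.

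\textbf{Step 1 (the interior points are collinear).} Let $p_1, \dots, p_k$ be the interior lattice points, $k \ge 2$. If they did not all lie on a common line, they would affinely span a subspace of dimension at least two, and then for \emph{every} vertex $v$ of $P$ at least one $p_i$ would stay in the interior of $\conv\bigl((P \cap \Z^3) \setminus \{v\}\bigr)$, because the ``corner'' cut off near $v$ cannot capture points spread in two independent directions. This would contradict minimality via the criterion above. Hence the $p_i$ are collinear, and after a unimodular transformation and, if needed, a refinement of the lattice (as the statement permits, passing to the sublattice generated by the lattice points of $P$) I would normalize them to $(0,0,1), \dots, (0,0,k)$ on the $z$-axis.

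\textbf{Step 2 (combinatorial type and vertices).} Projecting along the $z$-axis by $\pi(x,y,z) = (x,y)$ sends all $p_i$ to the origin, so $\pi(P)$ is a non-hollow lattice polygon with $\zero$ in its interior. By \Cref{lemma:dimtwo-b} it contains a unimodular copy of one of $S(\one_3)$, $I \oplus I$, or $I \oplus I'$; examining how a line of $k \ge 2$ interior points can sit over each of these, I would rule out the triangle $S(\one_3)$ and the polygon $I \oplus I'$, concluding that minimality forces $\pi(P) = I \oplus I$. A fiber analysis then shows that $P$ is a tetrahedron: over each vertex of $\pi(P)$ the polytope $P$ must pinch to a single vertex, since an edge of $P$ parallel to the $z$-axis would permit dropping a vertex without losing the interior line. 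Finally, the requirement that the interior lattice points are \emph{exactly} $(0,0,1),\dots,(0,0,k)$ bounds the extent of $P$ in the $z$-direction and pins its four vertices into two opposite edges, one near $z=0$ and one near $z=k+1$, whose endpoints may differ in height by at most one; a short case analysis over these offsets matches each configuration with $M_k(\alpha,\beta)$ for suitable $\alpha,\beta \in \{0,1\}$.

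The step I expect to be the main obstacle is \textbf{Step 1}. Turning the heuristic ``a corner near $v$ cannot capture interior points spread in two directions'' into a rigorous argument requires quantitative control of how far each $p_i$ sits from the facets meeting at $v$, together with a careful treatment of the borderline cases where an interior point lies close to a vertex. A secondary, more technical difficulty is the bookkeeping around the refinement of the lattice: one must verify that passing to the sublattice generated by $P \cap \Z^3$ preserves both minimality and the number $k$ of interior points on the line, so that the normal form extracted in Step 2 is genuinely $M_k(\alpha,\beta)$ with the correct parameter $k$.
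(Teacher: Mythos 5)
Your Step 1 is a genuine gap, and you have in fact identified it yourself: the heuristic that ``a corner cut off near $v$ cannot capture points spread in two independent directions'' is not merely unproven, it is false as a general principle. When a vertex $v$ is deleted from the point configuration, the region $P \setminus \conv\bigl((P\cap\Z^3)\setminus\{v\}\bigr)$ is bounded by \emph{new two-dimensional facets}, and a two-dimensional facet can perfectly well absorb a non-collinear set of formerly interior lattice points; nothing in your sketch excludes this, for a single vertex let alone for all of them. The paper never proves collinearity up front. It takes a line $L$ through just \emph{two} interior points, shows first (Claim~1 of its proof of \cref{thm:M_k-3d}) that the minimal faces of $P$ containing the endpoints of $L\cap P$ are two non-coplanar edges $F_1,F_2$ --- each case handled by exhibiting an explicit smaller non-hollow subpolytope such as $\conv(F_1\cup\{(0,0,r)\})$ --- so that minimality forces $P=\conv(F_1\cup F_2)$ to be a tetrahedron; only \emph{then} does it deduce (Claim~2) that all non-vertex lattice points lie on $L$, and (Claim~3) the half-integrality of the endpoints of $L \cap P$ from the emptiness of certain lattice triangles. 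Your plan reverses this order and therefore must prove the hardest consequence first, with no structural tool yet available.

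Step 2 also fails as stated. Concretely, let $T=\conv(\{(1,1,0),(-1,-1,0),(-1,1,3),(1,-1,3)\})$: its lattice points are its four vertices and $(0,0,j)$ for $j=0,1,2,3$; its interior lattice points are $(0,0,1)$ and $(0,0,2)$, already on the $z$-axis with no refinement needed; and $T$ is minimal, since dropping any vertex leaves a hollow pyramid. Yet $T$ projects along the $z$-axis to $[-1,1]^2$, which \emph{properly} contains $I\oplus I$, and $T$ is not unimodularly equivalent to any $M_2(\alpha,\beta)$ (its normalized volume is $24$, twice that of $M_2(0,0)$); it is exactly the image of $M_2(0,0)$, regarded with respect to the refined lattice generated by $\Z^3$ and $\tfrac12(1,1,0)$, under a linear map carrying that lattice onto $\Z^3$. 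So the refinement is not a bookkeeping device for placing the interior points --- here they are already in position --- but is precisely what is needed to make your claim ``minimality forces $\pi(P)=I\oplus I$'' true at all. Moreover, even after passing to the sublattice generated by $P\cap\Z^3$, deducing $\pi(P)=I\oplus I$ from \cref{lemma:dimtwo-b} is blocked by a lifting problem: lattice points of $\pi(P)$ need not lift to lattice points of $P$ (the fibre over a lattice point of the projection can be lattice-free), so subpolygons of $\pi(P)$ do not yield lattice subpolytopes of $P$ against which minimality could be tested. Both gaps require the kind of direct three-dimensional analysis the paper carries out, so the proposal as it stands is not a proof.
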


\Cref{thm:M_k-3d} is a version of \cite[Prop.~4.2]{ballettikasprzyk2016twopoints}, although more explicit than the original one.
An example where refinement is needed in the statement is $M_k(0,0)$  considered with respect to the lattice $\Lambda$ generated by $\Z^3$ and $(1/q, 1-1/q,0)$, with $q$ and $k+1$ coprime. 
$M_k(0,0)$ is still minimal with respect to $\Lambda$ because it contains no point of $\Lambda \setminus \Z^3$.

\begin{proof}
Let $P$ be a minimal lattice $3$-polytope with more than one interior lattice point, and let $L$ be a line containing two of them. Without loss of generality we assume that $L = \{(0,0,z) : z\in \R\}$ and $L\cap P$ is the segment between $(0,0,z_1)$ and $(0,0,z_2)$, with $z_1 \in [0,1)$ and $z_2\in (r,r+1]$ for some $r \in \{2,\ldots,k\}$, so that $L$ contains $r$ interior lattice points of $P$.

{\bf Claim 1:} \emph{The minimal faces of $P$ containing respectively $(0,0,z_1)$ and $(0,0,z_2)$ are non-coplanar edges.}
Let $F_1$ and $F_2$ be those faces. If one of them, say $F_1$,  had dimension two, then $\conv(F_1 \cup \{(0,0,r)\})$ would be  a non-hollow lattice polytope strictly contained in~$P$. If one of them, say $F_1$,  had dimension zero then necessarily $F_1 = \{(0,0,z_1)\}=\{(0,0,0)\}$. This would imply $\conv(P \cap \Z^3 \setminus\{\zero\})$ to be a non-hollow lattice polytope strictly contained in~$P$.
Thus, $F_1$ and $F_2$ are both edges of~$P$.  They cannot be coplanar, since otherwise there would be vertices $p$ and $q$ of $P$, one on either side of the hyperplane $\aff(F_1 \cup F_2)$, and the polytope $\conv(F_1 \cup \{(0,0,r),p,q\})$ would be non-hollow and strictly contained in~$P$.

Hence, $\conv(F_1 \cup F_2)$ is a non-hollow lattice tetrahedron and by minimality, $P=\conv(F_1 \cup F_2)$. We denote $v_i$ and $w_i$ the vertices of $F_i$, for $i=1,2$.

{\bf Claim 2:} \emph{All the lattice points in the tetrahedron $P$ other than the four vertices are on the line $L$.}
%
%
%
%
Let $H_i$ be the plane containing the line $L$ and the edge~$F_i$, for $i=1,2$. The polytope $Q= \conv(L\cap P \cup\{v_1, w_1, v_2\}) \subset P$ is contained in $H_1^+$, one of the two halfspaces defined by $H_1$; furthermore, the facet of $Q$ lying on $H_1$ is non-hollow, since $(0,0,1)$ is in its relative interior. Therefore, if $P$ contained any lattice point~$u$ other than the vertex $w_2$ in the open halfspace $(H_1^-)^o$ then $\conv(Q \cup \{u\})$ would be a non-hollow lattice polytope strictly contained in $P$.
Thus there are no lattice points in the open halfspace $(H_1^-)^o$. Since the same can be said for the other halfspaces, $H_1^+$ and $H_2^\pm$, all lattice points of $P$ except its four vertices must lie on $L$.

In particular, we have $r=k$. 

{\bf Claim 3:} \emph{The endpoint $(0,0,z_i)$ equals the mid-point of the edge $F_i=\conv(\{v_i,w_i\})$}.
Let us only look at $i=1$, the other case being symmetric. Let $u_1=(0,0,1)$ and $u_2=(0,0,2)$ be the first two interior lattice points of $P$ along $L$. The triangles $\conv(\{u_1,u_2,v_1\})$ and $\conv(\{u_1,u_2,w_1\})$ are empty lattice triangles in the plane $H_1$, hence they have the same area. Thus, $v_1$ and $w_1$ are at the same distance from (and on opposite sides of) the line $L$, which implies the statement.

%

In particular, $z_1\in [0,1)$ and $z_2\in (k,k+1]$ are either integers or half-integers, so they can be written as 
$z_1=\alpha/2$ and $z_2=k+1-\beta/2$ for some $\alpha,\beta \in \{0,1\}$. It is now clear that the affine transformation that fixes $L$ and sends 
$v_1 \mapsto (1,0,0)$ and $v_2 \mapsto (0,1,k+1)$, sends~$P$ to $M_k(\alpha, \beta)$. The map may send $\Z^3$ to a different lattice $\Lambda$, but $\Lambda$ refines $\Z^3$ since $(1,0,0)$, $(0,1,k+1)$, $(0,0,1)$ and $(0,0,2)$ are in $\Lambda$ and they generate $\Z^3$.
%
%
\end{proof}

\begin{corollary}
\label{coro:M_k-3d}
Every minimal $3$-polytope with $k\ge2$ interior lattice points has covering radius strictly smaller than $3/2$.
\end{corollary}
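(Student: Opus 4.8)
The plan is to first reduce, via \cref{thm:M_k-3d}, to the explicit tetrahedra $M_k(\alpha,\beta)$, and then to bound their covering radius by a single projection estimate. Let $P$ be a minimal $3$-polytope with $k \geq 2$ interior lattice points. By \cref{thm:M_k-3d}, $P$ is equivalent, either unimodularly or by a refinement of the lattice, to some $M_k(\alpha,\beta)$ with $\alpha,\beta \in \{0,1\}$. In the unimodular case $\mu(P,\Z^3) = \mu(M_k(\alpha,\beta),\Z^3)$. In the refinement case there is a lattice $\Lambda \supsetneq \Z^3$ with $\mu(P,\Z^3) = \mu(M_k(\alpha,\beta),\Lambda)$, and since the refined lattice only makes covering easier, \cref{lemma:lattice_monotone} gives $\mu(M_k(\alpha,\beta),\Lambda) \leq \mu(M_k(\alpha,\beta),\Z^3)$. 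So in either case it suffices to prove
\[
\mu(M_k(\alpha,\beta),\Z^3) < \tfrac32 \quad \text{for all } k \geq 2,\ \alpha,\beta \in \{0,1\}.
\]

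Next I would apply \cref{lem:projection} to the coordinate projection $\pi \colon \R^3 \to \R^2$, $(x,y,z) \mapsto (x,y)$. The image $\pi(M_k(\alpha,\beta)) = \conv(\{(\pm1,0),(0,\pm1)\}) = I \oplus I$ has covering radius $1$ by \cref{lemma:dimtwo-a}, while the central fiber $Q = M_k(\alpha,\beta) \cap \pi^{-1}(\zero)$ is the segment on the $z$-axis joining the midpoints $(0,0,\alpha/2)$ and $(0,0,k+1-\beta/2)$ of the two opposite edges (these are midpoints by Claim~3 in the proof of \cref{thm:M_k-3d}), of length $k+1-\tfrac{\alpha+\beta}{2}$. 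Its covering radius with respect to $\Z^3 \cap L \cong \Z$ is $\tfrac{2}{2k+2-\alpha-\beta}$. Thus \cref{lem:projection} yields
\[
\mu(M_k(\alpha,\beta),\Z^3) \leq 1 + \frac{2}{2k+2-\alpha-\beta},
\]
whose right-hand side is strictly below $\tfrac32$ exactly when $2k-\alpha-\beta > 2$. This settles every case with $k \geq 3$, and also $k=2$ whenever $\alpha+\beta \leq 1$.

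The single remaining case is $M_2(1,1)$, where the projection bound evaluates to exactly $\tfrac32$, and this is where I expect the real difficulty to lie. To finish I would show that \cref{lem:projection} is in fact strict here: equality at $\mu=\tfrac32$ would force a last-covered point of $\tfrac32 M_2(1,1)$ whose $\pi$-image is a deepest hole of $\tfrac32(I\oplus I)$ (a half-integer point) and whose fiber coordinate is simultaneously a deepest hole of the scaled segment $\tfrac12 Q$, and one checks these two conditions cannot be met at a common point of the tetrahedron, forcing $\mu(M_2(1,1)) < \tfrac32$. A safer alternative is to compute $\mu(M_2(1,1))$ outright by locating its deepest hole and checking the value is below $\tfrac32$.

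In summary, the clean additive projection estimate disposes of all tetrahedra except the threshold case $M_2(1,1)$, which sits exactly at the boundary of the simple bound and requires a dedicated, more delicate analysis; this borderline case is the main obstacle.
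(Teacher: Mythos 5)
Your reduction to the tetrahedra $M_k(\alpha,\beta)$ (including the explicit treatment of the refinement case via \cref{lemma:lattice_monotone}) and your projection bound $\mu(M_k(\alpha,\beta)) \le 1 + \tfrac{2}{2k+2-\alpha-\beta}$ coincide exactly with the paper's argument, and they correctly dispose of every case except $M_2(1,1)$. But for $M_2(1,1)$ your proposal stops at a sketch, and that is precisely where the remaining content lies. Your strictness claim is not justified as stated: the part about the image is fine (if $\pi(p)$ were interior-covered at level $\mu(I\oplus I)=1$, then $p$ would be interior-covered at level $\tfrac32$), but the part about the fiber coordinate does not follow, because the line $\pi^{-1}(\pi(p))$ is covered by slices of \emph{all} lattice translates of $\tfrac32 M_2(1,1)$ that meet it, taken at all heights; these slices are not translates of the scaled central fiber $\tfrac12 Q$, and controlling them is exactly the work that is missing. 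Your ``safer alternative'' of computing $\mu(M_2(1,1))$ outright is viable --- the paper indeed proves $\mu(M_k(1,1)) = 1 + \tfrac{1}{2k}$ in \cref{lem:Mk11} by a Cayley-graph computation, so $\mu(M_2(1,1)) = \tfrac54$ --- but you do not carry it out, so as written the proposal does not prove the corollary.

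For comparison, the paper closes this case with a short trick worth remembering: project $M_2(1,1)$ in a \emph{different} direction, namely $(x,y,z)\mapsto x$. The image is $I$, with covering radius $\tfrac12$, and the fiber over the origin is the triangle $\conv(\{(0,1/2),(1,3),(-1,2)\}) \cong S(3/2,1,1)$, whose covering radius equals $\tfrac{7/3}{8/3}=\tfrac78$ exactly by \cref{thm:lambda}. Then \cref{lem:projection} gives $\mu(M_2(1,1)) \le \tfrac12 + \tfrac78 = \tfrac{11}{8} < \tfrac32$, so no strictness analysis of the borderline projection is needed at all. The lesson is that when an additive projection bound is tight, switching to a projection whose fiber has an exactly computable covering radius can turn the borderline case into a strict inequality with no extra effort.
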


\begin{proof}
The projection of $M_k(\alpha,\beta)$ along the $z$ direction is $I\oplus I$ and the fiber over the origin is the segment $\{0\}\times \{0\} \times [\alpha/2, k+1 - \beta/2]$, of length $k+1 - (\alpha + \beta)/2$. 
Thus, by \Cref{lem:projection},
\[
\mu(M_k(\alpha,\beta)) \le \mu(I\oplus I) + \mu([\alpha/2, k+1 - \beta/2]) = 1 + \frac1{k+1 -\frac{\alpha + \beta}2} \le \frac32.
\]
Moreover, the last inequality is met with equality only in the case $k=2$, $\alpha=\beta=1$.
But for $M_2(1,1)$ we can consider the projection $(x,y,z) \mapsto x$, whose image is $I$ and whose fiber is
\[
\conv(\{(0,1/2), (1,3), (-1,2)\}) \cong S(3/2,1,1).
\]
Thus, by \Cref{lem:projection} and \Cref{thm:lambda}, we have
\[
\mu(M_2(1,1)) \le \mu(I) + \mu(S(3/2,1,1)) = \frac12 + \frac{7/3}{8/3} = \frac{11}8<\frac32.\qedhere
\]
\end{proof}

\noindent In fact we can be more explicit:

\begin{remark}
\label{rem:M_k-dim3}
The covering radius of $M_k(\alpha,\beta)$ admits a closed expression:
\begin{align*}
\mu(M_k(0,0)) &= \mu (I \oplus [0,k+1] \oplus I) = 1 + \frac1{k+1}.
\\
\mu(M_k(1,0)) &= \mu(M_k(0,1)) = \mu (I \oplus M_k(1)) 
= 1 + \frac{3}{4k+2},
\\
\mu(M_k(1,1)) &= 1+ \frac{1}{2k}.
\end{align*}
The first formula directly follows from \cref{lem:projection}. The second one also does, using \Cref{rem:M_k-dim2}. For the third one,
see \cref{lem:Mk11} in \cref{sec:graph-method}.
For $k=1$ the three expressions reduce to $3/2$, which follows also from $M_1(0,0) \cong I\oplus (I\oplus I')'$, $M_1(0,1) \cong I \oplus S(\one_3)$, and $M_1(1,1) \cong S(\one_4)$. 
%
\end{remark}

We are now ready to prove \cref{conj:one} in dimension three:

\begin{theorem}
\label{thm:ConjA_dim3}
Let $P$ be a non-hollow lattice $3$-polytope. Then
\[
\mu(P)\le \frac32,
\]
with equality if and only if $P$ is unimodularly equivalent to one of the nine polytopes in \Cref{lemma:minimum_3d}.
\end{theorem}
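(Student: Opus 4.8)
The plan is to combine the structural reduction to \emph{minimal} polytopes with the covering-radius bounds already established for the two families of minimal non-hollow $3$-polytopes, and then to use tightness to pin down the equality case. The easy direction is immediate: if $P$ is one of the nine polytopes of \Cref{lemma:minimum_3d}, then $\mu(P) = 3/2$ by that lemma. For the bound itself, let $P$ be an arbitrary non-hollow lattice $3$-polytope. Since $P$ contains only finitely many lattice points, by repeatedly passing to a properly contained non-hollow lattice $3$-polytope whenever one exists I obtain a minimal non-hollow lattice $3$-polytope $P_0 \subseteq P$. Because the covering radius is monotone under inclusion (a larger body is easier to cover), $\mu(P) \le \mu(P_0)$, so it suffices to bound $\mu(P_0)$.

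Next I would split according to the number of interior lattice points of $P_0$. If $P_0$ has a single interior lattice point, it is one of the $26$ minimal polytopes, and \Cref{thm:26minimal} gives $\mu(P_0) \le 3/2$ with equality exactly for the nine of \Cref{lemma:minimum_3d}. If $P_0$ has $k \ge 2$ interior lattice points, then by \Cref{thm:M_k-3d} it is, up to unimodular equivalence or refinement of the lattice, one of the $M_k(\alpha,\beta)$; here \Cref{coro:M_k-3d} gives $\mu(M_k(\alpha,\beta),\Z^3) < 3/2$, and the refinement case is absorbed by \Cref{lemma:lattice_monotone}, since passing from $\Z^3$ to a finer lattice only decreases the covering radius. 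In either case $\mu(P_0) \le 3/2$, hence $\mu(P) \le 3/2$.

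Finally, for the equality characterization, suppose $\mu(P) = 3/2$. Then $3/2 = \mu(P) \le \mu(P_0) \le 3/2$ forces $\mu(P_0) = 3/2$, so by the case analysis above $P_0$ must be one of the nine polytopes of \Cref{lemma:minimum_3d}, the family $M_k$ with $k \ge 2$ being ruled out by the strict inequality. Each of these nine is, up to a translation, a direct sum of simplices---segments and triangles of the form $S(\one_l)$---so it is tight by \Cref{lemma:simplex_monotone,lemma:sum_monotone}, tightness being invariant under translation since the covering radius is. Tightness of $P_0$ then gives the conclusion: if $P \supsetneq P_0$ we would have $\mu(P) < \mu(P_0) = 3/2$, a contradiction, so $P = P_0$ is one of the nine.

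I expect the main subtlety to be the bookkeeping in the equality step: verifying that all nine polytopes---including the awkward translated summands such as $(I\oplus I')^\circ$ and $(I\oplus I')'$---are genuinely covered by the tightness machinery of \Cref{lemma:simplex_monotone,lemma:sum_monotone} once translation invariance of tightness is noted, and correctly handling the lattice-refinement clause of \Cref{thm:M_k-3d} through \Cref{lemma:lattice_monotone}. The genuinely hard content, namely the bounds asserted in \Cref{thm:26minimal} and \Cref{coro:M_k-3d}, is deferred to their own proofs and used here only as a black box.
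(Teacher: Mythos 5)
Your proof is correct and follows essentially the same route as the paper: pass to a minimal non-hollow $3$-polytope inside $P$, invoke \Cref{thm:26minimal} and \Cref{coro:M_k-3d} to handle the one-point and $k\ge 2$ cases, and use tightness via \Cref{lemma:simplex_monotone,lemma:sum_monotone} to settle equality. Your explicit handling of the lattice-refinement clause through \Cref{lemma:lattice_monotone} and of translation invariance of tightness are details the paper leaves implicit, but they do not change the argument.
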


\begin{proof}
Let $P$ be a non-hollow lattice $3$-polytope, and let~$T$ be a minimal one contained in it. If $T$ is not one of the nine in \Cref{lemma:minimum_3d} then $T$, and hence~$P$, has covering radius strictly smaller than $3/2$ by either \Cref{coro:M_k-3d} or
\Cref{thm:26minimal}. If $T$ is one of the nine and $P\ne T$ then
\[
\mu(P) < \mu(T) = \frac32,
\]
since these nine are tight by \Cref{lemma:simplex_monotone} and \Cref{lemma:sum_monotone}.
\end{proof}

\section{\texorpdfstring{\cref{conj:volume}}{Conjecture \ref{conj:volume}}}
\label{sec:volume}

We here focus on \cref{conj:volume}.
We show that it implies \cref{conj:one}, we prove it up to a factor of two in arbitrary dimension, and we prove it in dimension two.
Finally, in \cref{sec:0inboundary}, we investigate how the proposed bound changes if we allow the origin to be contained in the boundary of the given simplex.

As a preparation, let us first reinterpret \cref{conj:volume} in terms of (reciprocals of) certain lengths.
To this end, let $S = \conv(\{v_0,\ldots,v_d\})$ be a $d$-simplex with the origin in its interior, and assume that it has \emph{rational vertex directions}, that is, the line through the origin and the vertex $v_i$ has rational direction, for every $0\leq i \leq d$.

As in \cref{conj:volume}, let $\pi_i$ be the linear projection to dimension $d-1$ vanishing at $v_i$. 
Finally, let $\ell_i$ be the lattice length of $S \cap \pi_i^{-1}(\zero)$. 
Put differently, let  $u_i$ be the point where the ray from $v_i$ through~$\zero$ hits the opposite facet of $S$ and let $\ell_i$ be the ratio between the length of $[u_i,v_i]$ and the length of the primitive lattice vector in the same direction. In formula:
\[
\ell_i :=  \Vol_{\Z^d \cap \R v_i}([u_i, v_i]).
\]

\begin{lemma}
\label{lemma:ell}
For every $i \in \{0,1,\ldots,d\}$, we have
\[
\frac1{\ell_i} = \frac { \Vol_{\pi_i(\Z^d)}(\pi_i(S))}{\Vol_{\Z^d}(S)}.
\]
In particular, \Cref{conj:volume} is equivalent to the inequality
\begin{align}
\label{eq:ell}
\mu(S) \leq \frac12 \sum_{i=0}^d \frac1{\ell_i}.
\end{align}
\end{lemma}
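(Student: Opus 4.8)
The plan is to reduce the whole lemma to the single identity
\[
\Vol_{\Z^d}(S) = \ell_i \cdot \Vol_{\pi_i(\Z^d)}(\pi_i(S)) \quad \text{for each } i,
\]
since dividing by $\Vol_{\Z^d}(S)$ gives the first displayed equation of the lemma, and then summing over $i$ together with \Cref{def:discretesurfarea} turns \eqref{eq:volume} into \eqref{eq:ell}; the ``in particular'' part is thus immediate.

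To prove the displayed identity I would first normalize coordinates. Since normalized volumes (of any dimension) and the lattice length $\ell_i$ are invariant under unimodular transformations, and since $\Vol_{\pi_i(\Z^d)}(\pi_i(S))$ does not depend on the chosen realization of the quotient map $\pi_i$, I let $p_i$ be the primitive lattice vector in the direction of $v_i$, extend it to a $\Z$-basis $p_i = b_1, b_2, \ldots, b_d$ of $\Z^d$, and apply the unimodular map sending $b_k \mapsto e_k$. After this we may assume $v_i = \lambda_i e_1$ for some $\lambda_i > 0$, that $\pi_i$ is the projection forgetting the first coordinate, and that $\pi_i(\Z^d) = \Z^{d-1}$ is the standard lattice in $\{x_1 = 0\}$. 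Note also that $\Vol_{\pi_i(\Z^d)}(\pi_i(S)) = \Vol_{\pi_i(\Z^d)}(\pi_i(F_i))$, with $F_i$ the facet opposite $v_i$, as remarked after \Cref{def:discretesurfarea}.

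The geometric heart is then a base-times-height computation for the cone $S = \conv(\{v_i\} \cup F_i)$. The kernel of $\pi_i$ is the line $\R e_1 = \R v_i$, which meets $S$ exactly in the fiber $[u_i, v_i] = S \cap \pi_i^{-1}(\zero)$, of Euclidean length $\ell_i$ since $\Z^d \cap \R e_1 = \Z e_1$. Writing $n$ for the unit normal of $\aff(F_i)$, the Euclidean height of $v_i$ over $\aff(F_i)$ equals $\ell_i \, |\langle n, e_1\rangle|$ (project the $e_1$-segment $[u_i,v_i]$ onto $n$), while orthogonal projection along $e_1$ scales $(d-1)$-volume by the same factor, so $\vol_{d-1}(\pi_i(F_i)) = |\langle n, e_1\rangle| \, \vol_{d-1}(F_i)$. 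The cone volume formula $\vol_d(S) = \tfrac1d \vol_{d-1}(F_i) \cdot h$ then gives $\vol_d(S) = \tfrac{\ell_i}{d} \vol_{d-1}(\pi_i(F_i))$. Converting Euclidean to normalized volumes (multiplying by $d!$ and $(d-1)!$ respectively, both lattices being unimodular in their ambient spaces) yields exactly $\Vol_{\Z^d}(S) = \ell_i \Vol_{\pi_i(\Z^d)}(\pi_i(F_i))$, as required.

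The main bookkeeping obstacle is to make sure the two appearances of the cosine factor $|\langle n, e_1\rangle|$ genuinely cancel and that the lattice normalizations are the standard ones in the chosen coordinates; this is exactly what the reduction to $p_i = e_1$ and $\pi_i(\Z^d) = \Z^{d-1}$ guarantees. Alternatively, the same identity can be obtained purely algebraically: expanding $\Vol_{\Z^d}(S) = |\det(v_j - v_i)_{j\neq i}|$ and $\Vol_{\pi_i(\Z^d)}(\pi_i(F_i))$ (via the homogenized ``row of ones'' determinant) along their common lower $(d-1)\times d$ block of projected columns, and using Cramer's rule to identify the ratio of the two determinants with the first coordinate of $u_i$, one finds the ratio equals $-\ell_i$; I would keep this computation in reserve as a sign-robust cross-check.
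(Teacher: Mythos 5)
Your proposal is correct and follows essentially the same route as the paper: both reduce the lemma to the identity $\Vol_{\Z^d}(S) = \ell_i \cdot \Vol_{\pi_i(\Z^d)}(\pi_i(S))$ and prove it by the base-times-height (Fubini/cone) formula $\vol(S) = \tfrac1d \vol(\pi_i(S))\,\vol([u_i,v_i])$ together with the compatibility of the lattice normalizations. The only difference is bookkeeping: where the paper invokes the abstract determinant factorization $\det(\Z^d)=\det(\pi_i(\Z^d))\det(\Z^d\cap\R v_i)$ for an arbitrary (volume-compatible) realization of $\pi_i$, you first apply a unimodular change of coordinates making $p_i=e_1$ and $\pi_i$ the coordinate-forgetting map, so that this factorization becomes trivial and the cosine factors cancel explicitly --- a legitimate, slightly more hands-on way of handling the same normalization issue.
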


\begin{proof}
By construction, we have $\pi_i(S) = \pi_i(F_i)$, where $F_i$ is the facet of~$S$ opposite to the vertex~$v_i$.
Therefore, $\vol(S) = \frac{1}{d} \vol(\pi_i(S)) \vol([u_i,v_i])$.
The determinants of the involved lattices are related by $1=\det(\Z^d)=\det(\pi_i(\Z^d))\det(\Z^d \cap \R v_i)$ (cf.~\cite[Prop.~1.2.9]{martinet2003perfect}).
Hence,
\begin{align*}
\Vol_{\Z^d}(S) &= \frac{d! \vol(S)}{\det(\Z^d)} = \frac{(d-1)! \vol(\pi_i(S))}{\det(\pi_i(\Z^d))} \frac{\vol([u_i,v_i])}{\det(\Z^d \cap \R v_i)} \\
&= \Vol_{\pi_i(\Z^d)}(\pi_i(S)) \Vol_{\Z^d \cap \R v_i}([u_i, v_i]),
\end{align*}
as desired.
\end{proof}

We now also detail the claim in the introduction, that the discrete surface area defined in \cref{def:discretesurfarea} is invariant under unimodular transformations.

\begin{lemma}
\label{cor:discrsurfarea_invariance}
Let $S$ be a $d$-simplex with the origin in its interior and with rational vertex directions.
Let $A$ be an invertible linear transformation.
Then
\[
\Surf_{A\Z^d}(AS) = \Surf_{\Z^d}(S).
\]
In particular, if $A$ is unimodular, we have $\Surf_{\Z^d}(AS) = \Surf_{\Z^d}(S)$.
\end{lemma}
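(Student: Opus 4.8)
The plan is to prove the scaling identity $\Surf_{A\Z^d}(AS) = \Surf_{\Z^d}(S)$ directly from the definition, by analyzing how each summand $\Vol_{\pi_i(\Z^d)}(\pi_i(S))$ transforms under $A$. The key structural observation is that applying $A$ carries the simplex $S$ to $AS$, carries the lattice $\Z^d$ to $A\Z^d$, and carries the $i$th vertex direction to the $i$th vertex direction of $AS$; hence the projection $\pi_i$ that vanishes at $v_i$ should be replaced by a projection $\pi_i^A$ vanishing at $Av_i$, and these two projections are intertwined by $A$ in a controlled way. So the first step is to make precise the relation between $\pi_i^A$ and $\pi_i$: since both are linear projections to $(d-1)$-dimensional space killing a single line, there is an induced isomorphism $\bar{A}_i$ of the respective target spaces such that $\pi_i^A \circ A = \bar{A}_i \circ \pi_i$.

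The cleanest route avoids tracking the maps $\bar A_i$ explicitly by instead invoking \Cref{lemma:ell}. That lemma shows
\[
\Vol_{\pi_i(\Z^d)}(\pi_i(S)) = \frac{\Vol_{\Z^d}(S)}{\ell_i},
\]
where $\ell_i = \Vol_{\Z^d \cap \R v_i}([u_i,v_i])$ is an intrinsic lattice length measured \emph{along the line $\R v_i$}. Summing over $i$ gives $\Surf_{\Z^d}(S) = \Vol_{\Z^d}(S)\sum_{i=0}^d \ell_i^{-1}$. The plan is then to check that each factor on the right is preserved when we pass from $(S,\Z^d)$ to $(AS,A\Z^d)$. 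The ratio $\ell_i$ is defined purely as a one-dimensional normalized length on the line $\R v_i$ with respect to the lattice $\Z^d \cap \R v_i$; applying the linear map $A$ sends $[u_i,v_i]$ to $[Au_i, Av_i]$ on the line $\R(Av_i)$ and sends $\Z^d \cap \R v_i$ to $A\Z^d \cap \R(Av_i)$, and normalized length is invariant under any linear isomorphism of a line that carries one lattice to the other. Hence each $\ell_i$ is unchanged.

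It remains to observe that the prefactor $\Vol_{\Z^d}(S)$ is also invariant: normalized volume is by definition Euclidean volume divided by the covolume of the lattice, and both scale by $|\det A|$ under $A$, so the ratio $\Vol_{A\Z^d}(AS) = \Vol_{\Z^d}(S)$. Combining these, each term $\Vol_{\pi_i^A(A\Z^d)}(\pi_i^A(AS)) = \Vol_{A\Z^d}(AS)/\ell_i = \Vol_{\Z^d}(S)/\ell_i = \Vol_{\pi_i(\Z^d)}(\pi_i(S))$, and summing gives the claim. The special case where $A$ is unimodular follows because then $A\Z^d = \Z^d$. I expect the main obstacle to be the bookkeeping in the first step — verifying that $AS$ genuinely has rational vertex directions (so that $\Surf_{A\Z^d}(AS)$ is defined) and that the projection killing $Av_i$ is the correct partner of $\pi_i$ — but the reduction to the one-dimensional lengths $\ell_i$ via \Cref{lemma:ell} sidesteps having to manipulate the induced maps $\bar A_i$ on the quotient spaces, which is where a more naive proof would get bogged down.
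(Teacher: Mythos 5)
Your proof is correct, but it takes a genuinely different route from the paper's, which is worth comparing. The paper's proof is a two-liner that exploits exactly the freedom you identified in your first step and then chose to sidestep: since a linear projection vanishing at $Av_i$ is only determined up to post-composition with an isomorphism of the target space, and the normalized volume $\Vol_{\pi(\Lambda)}(\pi(\cdot))$ is insensitive to that choice, one may simply \emph{take} $\bar\pi_i := \pi_i \circ A^{-1}$ as the projection for $AS$. Then $\bar\pi_i(AS) = \pi_i(S)$ and $\bar\pi_i(A\Z^d) = \pi_i(\Z^d)$, so each summand of $\Surf_{A\Z^d}(AS)$ is \emph{literally equal} to the corresponding summand of $\Surf_{\Z^d}(S)$ --- no induced maps $\bar A_i$ on quotient spaces ever need to be manipulated, which defuses the ``bogged down'' worry motivating your detour. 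Your route instead factors each summand via \Cref{lemma:ell} as $\Vol_{\Z^d}(S)/\ell_i$ and checks that the one-dimensional lattice lengths $\ell_i$ and the normalized volume $\Vol_{\Z^d}(S)$ are separately invariant under $(S,\Z^d)\mapsto(AS,A\Z^d)$; both checks are correct (in particular $A(\Z^d\cap\R v_i)=A\Z^d\cap\R(Av_i)$ and $Au_i$ is indeed the point where the ray from $Av_i$ through the origin meets the opposite facet of $AS$). The one step you should flag explicitly is that your chain of equalities applies \Cref{lemma:ell} to the pair $(AS, A\Z^d)$, whereas that lemma is stated in the paper only for the lattice $\Z^d$; you need its version for an arbitrary lattice $\Lambda$. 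This generalization does hold --- the proof carries over verbatim, since the determinant identity $\det(\Lambda)=\det(\pi_i(\Lambda))\cdot\det(\Lambda\cap\R v_i)$ underlying it is valid for any lattice --- but as written you quote the lemma beyond its stated scope. Granting that routine extension, your argument is sound, and it has the mild virtue of showing that each individual summand (not just the total sum) is invariant, because it is a ratio of quantities intrinsic to the pair of body and lattice; the paper's proof buys brevity and needs no auxiliary lemma at all.
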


\begin{proof}
As before we write $S = \conv(\{v_0,\ldots,v_d\})$ and we let $\pi_i$ be the projection vanishing at~$v_i$, for $0 \leq i \leq d$.
Clearly, $AS = \conv(\{Av_0,\ldots,Av_d\})$ and the corresponding projection $\bar \pi_i$ vanishing at $Av_i$ can be written as $\bar \pi_i = \pi_i A^{-1}$.
Therefore, we get
\[
\Surf_{A\Z^d}(AS) = \sum_{i=0}^d \Vol_{\bar \pi_i(A\Z^d)}(\bar \pi_i(AS)) = \sum_{i=0}^d \Vol_{\pi_i(\Z^d)}(\pi_i(S)) = \Surf_{\Z^d}(S),
\]
as claimed.
\end{proof}

\subsection{\texorpdfstring{\cref{conj:volume}}{Conjecture \ref{conj:volume}} implies \texorpdfstring{\cref{conj:one}}{Conjecture \ref{conj:one}}}
\label{sec:volume_implies_one}

\begin{corollary}
\label{coro:vol_one}
\cref{conj:volume} \ $\Longrightarrow$ \cref{conj:one}.
\end{corollary}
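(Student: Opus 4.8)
I want to show that Conjecture~\ref{conj:volume} implies Conjecture~\ref{conj:one}. By Corollary~\ref{cor:ReduToSimpls}, Conjecture~\ref{conj:one} reduces to lattice simplices, so I only need to bound $\mu(S)$ by $d/2$ for a non-hollow lattice $d$-simplex $S$. Assuming Conjecture~\ref{conj:volume}, it suffices to show that for such a simplex the discrete surface area never exceeds the normalized volume, i.e.
$$\frac{\Surf_{\Z^d}(S)}{\Vol_{\Z^d}(S)} \le d.$$

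**Plan.** The plan is to translate $S$ so that an interior lattice point sits at the origin (this is possible since $S$ is non-hollow), and then exploit the reformulation in Lemma~\ref{lemma:ell}, which says that Conjecture~\ref{conj:volume} is equivalent to $\mu(S) \le \tfrac12\sum_{i=0}^d \tfrac1{\ell_i}$, where $\ell_i = \Vol_{\Z^d \cap \R v_i}([u_i,v_i])$ is the lattice length of the chord of $S$ through the origin in the direction of vertex $v_i$. So, granting Conjecture~\ref{conj:volume}, I would be done as soon as I prove
$$\sum_{i=0}^d \frac1{\ell_i} \le d.$$
The key geometric point is that each $\ell_i$ is at least $1$: the segment $[u_i,v_i]$ runs from a vertex $v_i$ (a lattice point) through an interior lattice point (the origin) to the opposite facet, so it contains at least one lattice-length unit, namely the primitive step from $v_i$ toward the origin. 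Hence $1/\ell_i \le 1$ for each of the $d+1$ indices. That alone only gives the bound $d+1$, which is too weak by one, so the real work is to save one full unit.

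**The main obstacle.** The crux is getting the sharp constant $d$ rather than $d+1$. I would argue that the $d+1$ chords cannot all have lattice length exactly $1$ simultaneously. Indeed, if $\ell_i = 1$ for a given $i$, then the primitive lattice vector pointing from $v_i$ toward the origin lands exactly on the opposite facet $F_i$, forcing the origin to be the unique interior lattice point of that chord and pinning down strong integrality constraints on the barycentric coordinates of the origin. Summing up the contributions, I expect that $\sum_i 1/\ell_i = d$ can occur only in the extremal configuration, and $\sum_i 1/\ell_i < d$ strictly otherwise; the candidate extremal simplices are exactly the standard terminal simplices $S(\one_l)$ and their direct sums, matching the equality case asserted in Conjecture~\ref{conj:one}. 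Concretely, for $S(\one_{d+1})$ one checks $\ell_i = 1 + \tfrac1d$ for every $i$ using Theorem~\ref{thm:lambda} (there $\mu = d/2 = \tfrac12(d+1)\cdot\tfrac{d}{d+1}$), so each $1/\ell_i = d/(d+1)$ and the sum is exactly $d$. Establishing that this is the \emph{only} way to reach the bound $d$, and tying it to the direct-sum structure via Lemma~\ref{lemma:sum_monotone}, is the delicate part and the step I expect to require the most care.
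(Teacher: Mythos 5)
Your setup coincides with the paper's: reduce to lattice simplices via \Cref{cor:ReduToSimpls}, pass to the reformulation of \Cref{lemma:ell}, and aim for the inequality $\sum_{i=0}^d 1/\ell_i \le d$. But the proposal stops exactly where the proof has to happen. Your plan for ``saving one full unit'' --- showing that the $d+1$ chords cannot all have $\ell_i=1$, and then hoping that $\sum_i 1/\ell_i = d$ is attained only in an extremal configuration --- is not an argument and cannot become one in this form: excluding the single configuration $\ell_0=\dots=\ell_d=1$ does not improve the bound $d+1$ by any definite amount, since a priori every $\ell_i$ could equal $1+\varepsilon$, making $\sum_i 1/\ell_i$ as close to $d+1$ as one likes. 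What is needed is a quantitative relation tying all the $\ell_i$ together, and you never produce one. Moreover, the equality analysis you defer to as ``the delicate part'' is not needed here at all: the corollary only requires the inequality $\mu(S)\le d/2$.

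The missing relation is the paper's one-line trick. Let $\a_i$ denote the lattice length of the segment $[\zero,v_i]$; since $S$ is a \emph{lattice} simplex and the origin is an interior lattice point, $\a_i$ is a positive integer, so $\a_i\ge 1$. This is where the integrality of the vertices enters, rather than through the vague ``integrality constraints'' you invoke. Now observe that $1-\a_i/\ell_i$ is precisely the $i$-th barycentric coordinate of the origin with respect to the vertices of $S$, so summing over $i$ gives
\[
\sum_{i=0}^d \left(1-\frac{\a_i}{\ell_i}\right) = 1,
\qquad\text{that is,}\qquad
\sum_{i=0}^d \frac{\a_i}{\ell_i} = d .
\]
Combining this identity with $\a_i\ge 1$ yields $\sum_{i=0}^d 1/\ell_i \le \sum_{i=0}^d \a_i/\ell_i = d$, and hence, assuming \Cref{conj:volume}, $\mu(S)\le \frac12\sum_{i=0}^d 1/\ell_i \le d/2$. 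Note that this identity also explains your computation for $S(\one_{d+1})$: there $\a_i=1$ and all barycentric coordinates equal $1/(d+1)$, forcing $\ell_i=(d+1)/d$ and $\sum_i 1/\ell_i = d$ exactly.
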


\begin{proof}
In view of \Cref{cor:ReduToSimpls}, it suffices to consider lattice simplices.
Therefore, let $S = \conv(\{v_0,\ldots,v_d\})$ be a lattice $d$-simplex containing the origin in its interior.
Furthermore, let $\a_i$ be the lattice length of the segment $[\zero,v_i]$.
Then, $1- \a_i/\ell_i$ is the $i$-th barycentric coordinate of the origin with respect to the vertices of~$S$, so that
\[
\sum_{i=0}^d \left(1- \frac{\a_i}{\ell_i}\right) = 1
\]
and, hence, $\sum_{i=0}^d \a_i/\ell_i = d$.
On the other hand, for a lattice simplex we have $\a_i\ge 1$. Thus, assuming \Cref{conj:volume} holds for~$S$, we have
\[
\mu(S) \leq \frac12 \sum_{i=0}^d \frac1{\ell_i} 
\le
\frac12 \sum_{i=0}^d \frac{\a_i}{\ell_i} 
=\frac{d}2. \qedhere
\]
\end{proof}

\subsection{\texorpdfstring{\cref{conj:volume}}{Conjecture \ref{conj:volume}} holds up to a factor of two}

In the formulation of \cref{lemma:ell}, \cref{conj:volume} is easily proved inductively up to a factor of two.

\begin{proposition}
\label{prop:volume_factor_2}
Let $S = \conv(\{v_0,\ldots,v_d\})$ be a $d$-simplex with the origin in its interior and with rational vertex directions.
Then
\[
\mu(S)\leq 
\sum_{i=0}^d \frac{1}{\ell_i},
\]
with the lattice lengths $\ell_i$ defined as above.
\end{proposition}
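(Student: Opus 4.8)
The statement to prove is $\mu(S)\leq \sum_{i=0}^d 1/\ell_i$, which is exactly \cref{conj:volume} with the constant $\tfrac12$ replaced by $1$. Since \cref{lemma:ell} already reformulated the conjecture in terms of the reciprocal lengths $1/\ell_i$, my plan is to prove the weaker inequality~\eqref{eq:ell} with the factor $\tfrac12$ dropped. The phrase ``easily proved inductively'' in the proposition's lead-in strongly suggests an induction on the dimension $d$, peeling off one vertex of $S$ at a time and applying the projection bound of \cref{lem:projection}.

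\textbf{The inductive step via projection.}

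Concretely, I would fix one vertex, say $v_d$, and apply \cref{lem:projection} to the projection $\pi_d:\R^d\to\R^{d-1}$ that vanishes at $v_d$. This gives
\[
\mu(S)\leq \mu\bigl(S\cap\pi_d^{-1}(\zero),\,\Z^d\cap\pi_d^{-1}(\zero)\bigr) + \mu\bigl(\pi_d(S),\,\pi_d(\Z^d)\bigr).
\]
The fiber $S\cap\pi_d^{-1}(\zero)$ is the segment $[u_d,v_d]$ of lattice length $\ell_d$; a symmetric segment of lattice length $\ell$ has covering radius $1/(2\ell)\le 1/\ell$, but since the origin need not be the midpoint I would use the crude bound $\mu([u_d,v_d])\le 1/\ell_d$ for a segment of lattice length $\ell_d$ containing the origin in its relative interior, which holds because any segment of lattice length $\ell$ covers with dilation factor at most $1/\ell$ regardless of where the origin sits. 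The second term $\pi_d(S)$ is a $(d-1)$-simplex with the origin in its relative interior (since $v_d$ projects away and the remaining vertices project to the vertices of $\pi_d(F_d)$), so by induction it is bounded by $\sum_{i\neq d} 1/\ell_i'$, where $\ell_i'$ are the reciprocal lengths computed inside the projected simplex. The key point to verify is that these projected lengths satisfy $1/\ell_i' \le 1/\ell_i$, or more precisely that the relevant volume ratios from \cref{lemma:ell} transfer under the projection so that the telescoping closes with the original $\ell_i$.

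\textbf{The main obstacle.}

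The delicate step is relating the reciprocal lengths $\ell_i'$ of the projected simplex $\pi_d(S)$ to the original $\ell_i$. Because \cref{lemma:ell} identifies $1/\ell_i$ with the volume ratio $\Vol_{\pi_i(\Z^d)}(\pi_i(S))/\Vol_{\Z^d}(S)$, I expect the cleanest route is to run the induction directly on this volume-ratio formulation rather than on the geometric lengths: define $r_i := \Vol_{\pi_i(\Z^d)}(\pi_i(S))/\Vol_{\Z^d}(S)$ and show that under the projection $\pi_d$ each surface term $\Vol_{\pi_i(\pi_d(\Z^d))}(\pi_i\pi_d(S))$ relates to $\Vol_{\pi_i(\Z^d)}(\pi_i(S))$ by the same normalizing factor $\det$-relation used in the proof of \cref{lemma:ell}, so that $r_i' = r_i$ up to the common rescaling by the base volume. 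If the surface contributions match term-by-term in this way, the induction collapses cleanly and the factor of $2$ loss arises precisely from bounding the fiber segment by $1/\ell_d$ instead of the sharp $1/(2\ell_d)$, and from the inductive hypothesis carrying its own factor. I would verify the base case $d=1$ (a segment, where $\mu\le 1/\ell_0 + 1/\ell_1$ holds trivially) and then confirm that the projection identities make the sum of projected reciprocal lengths bounded by the original sum, which is where all the care is needed.
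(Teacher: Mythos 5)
Your overall skeleton is exactly the paper's: induct on $d$, apply \cref{lem:projection} to the projection $\pi_d$ vanishing at a vertex, bound the fiber by $1/\ell_d$, and invoke the inductive hypothesis on $\pi_d(S)$. But the step you yourself flag as ``where all the care is needed'' --- comparing the projected lengths $\ell_j'$ with the original $\ell_j$ --- is left unproven, and the route you sketch for it cannot work. You propose to show that the surface terms transfer under $\pi_d$ as \emph{equalities} (``$r_i' = r_i$ up to the common rescaling''), via the same determinant identities as in \cref{lemma:ell}. This is false: in general $\ell_j' > \ell_j$ strictly, so the projected volume ratios do not match the original ones term by term. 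The correct statement is the one-sided bound $\ell_j' \geq \ell_j$, and it holds for two separate reasons, both of which are genuine inequalities rather than identities: (i) the one-dimensional lattice $\pi_d(\Z^d) \cap \R\,\pi_d(v_j)$ may strictly refine the projected lattice $\pi_d(\Z^d \cap \R v_j)$, and measuring a segment against a finer lattice can only increase its lattice length; (ii) the projected segment $[\pi_d(u_j), \pi_d(v_j)]$ is in general a \emph{proper} subsegment of $\pi_d(S) \cap \R\,\pi_d(v_j)$: since $u_j$ lies in the relative interior of the facet $F_j$, which contains $v_d$, its projection $\pi_d(u_j)$ lands strictly between the origin and the boundary of $\pi_d(S)$. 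Both effects point in the favorable direction, so the inequality $\sum_{j \ne d} 1/\ell_j' \le \sum_{j\ne d} 1/\ell_j$ that you need does hold --- this two-part monotonicity argument is the actual content of the paper's proof --- but your proposal does not contain it, and no argument establishing term-by-term equalities can replace it.

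A smaller but real error: a segment of lattice length $\ell$ has covering radius exactly $1/\ell$, regardless of whether the origin is its midpoint; your claim that a symmetric segment has covering radius $1/(2\ell)$ is wrong (for instance, $[-1,1]$ has lattice length $2$ and covering radius $1/2$, not $1/4$). Consequently your closing explanation of where the factor of two is lost is also off: the fiber bound $1/\ell_d$ is sharp, not crude, and the loss occurs because \cref{lem:projection} necessarily charges the fiber its full covering radius $1/\ell_d$, whereas the conjectured bound~\eqref{eq:ell} would only allot $\frac{1}{2\ell_d}$ to that vertex direction.
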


\begin{proof}
As above, let $u_i$ be the intersection of the line $\R v_i$ with the facet $F$ of $S$ opposite to $v_i$, so that $\ell_i$ is the lattice length of $Q := [u_i,v_i] \subseteq S$.
Note, that $u_i$ lies in the relative interior of~$F$.
Also, let $\pi_i$ be the linear projection vanishing at~$v_i$.
By the assumptions on~$S$, the projection $\pi_i$ is rational and thus $\pi_i(S)$ is  
a $(d-1)$-dimensional simplex having the origin in its interior and with rational vertex directions with respect to~$\pi_i(\Z^d)$.

Using \cref{lem:projection} and the induction hypothesis for~$\pi_i(S)$, we get
\begin{align}
\mu(S,\Z^d) &\leq \mu(Q,\Z^d \cap L_Q) + \mu(\pi_i(S),\pi_i(\Z^d))
\leq \frac{1}{\ell_i} + \sum_{j \neq i} \frac{1}{\ell_j'},\label{eq:induct_conj:volume}
\end{align}
where the $\ell_j'$ are the corresponding lattice-lengths in $\pi_i(S)$.
Thus, to prove the proposition we only need to show that $\ell_j' \geq \ell_j$, for all $j \neq i$.
In fact, since the one-dimensional lattice $\pi_i(\Z^d) \cap \pi_i(\R v_j)$ refines 
$\pi_i(\Z^d \cap \R v_j)$, we have
\begin{align*}
\ell_j = \Vol_{\Z^d \cap \R v_j}([u_j, v_j])
&=  \Vol_{\pi_i(\Z^d \cap \R v_j)} ([\pi_i(u_j), \pi_i(v_j)]) \\
&\le  \Vol_{\pi_i(\Z^d) \cap \pi_i(\R v_j)} ([\pi_i(u_j), \pi_i(v_j)]) \leq \ell_j'. 
\end{align*}
Here, the last inequality comes from the fact that $[\pi_i(u_j), \pi_i(v_j)] \subseteq \pi_i(S)$ is contained in the ray from the vertex $\pi_i(v_j)$ of $\pi_i(S)$ through the origin.
\end{proof}


\begin{remark}
\cref{coro:volume_dim2} in the next section proves \cref{conj:volume} in the plane.
So we can base the inductive proof above on the stronger assumption that $\mu(S') \leq c_{d-1} \sum_{i=0}^{d-1} \frac{1}{\ell_i'}$, where~$S'$ is a $(d-1)$-dimensional simplex and~$c_{d-1}$ is a suitable constant with $c_2 = 1/2$.
Summing the thus modified inequality~\eqref{eq:induct_conj:volume} for all indices $0 \leq i \leq d$, yields the recursion $(d+1)c_d = 1+d c_{d-1}$.
Solving it shows that
\[
\mu(S) \leq \frac{2d-1}{2d+2} \sum_{i=0}^d \frac{1}{\ell_i},
\]
for all $d$-simplices~$S$ with the origin in its interior and with rational vertex directions.
This is a good bound in $\R^3$ since $c_3 = 5/8$.
\end{remark}

\subsection{\texorpdfstring{\cref{conj:volume}}{Conjecture \ref{conj:volume}} in dimension two}
\label{sec:conj:volume_dim2}

In this section we prove \Cref{conj:volume} in dimension two. Our first remarks are valid in arbitrary dimension.

Throughout, let $S = \conv(\{v_0,\ldots,v_d\})$ be a simplex with the origin in its interior and with rational vertex directions. 
For each $i=0,\ldots,d$, let $p_i$ be the primitive positive multiple of $v_i$.
Let $\alpha = (\alpha_0,\dots,\alpha_d)\in \N^{d+1}$ be the primitive integer linear dependence among the $p_i$'s. That is,
\[
\sum_{i=0}^d \alpha_i p_i = \zero \qquad \text{and} \qquad \gcd(\alpha_0,\dots,\alpha_d) = 1.
\]
Denoting the Euclidean length of a vector $x \in \R^d$ by $\|x\|$, and writing $\beta_i = \alpha_i \|p_i\| / \|v_i\| \in \R_{>0}$, for each $i = 0,\ldots,d$, we have
\[
\sum_{i=0}^d \beta_i v_i = \sum_{i=0}^d \alpha_i p_i  = \zero.
\]

\begin{remark}
\label{rem:coprime}
The fact that the $p_i$'s are primitive imposes some condition on the vector $\alpha \in \N^{d+1}$.
Namely, for each $i\in \{0,\ldots,d\}$, we have
\[
\gcd(\alpha_j : j\ne i) = 1.
\]
Indeed, let $\Lambda$ be the lattice generated by $\{p_0,p_1,\ldots,p_d\}$, and let $\Lambda_i$ be the sublattice generated by $\{p_j : j \ne i\}$.
Then, the primitive vector of $\Lambda_i$ in the direction of $p_i$ is
\[
\frac{\sum_{j\ne i} \alpha_j p_j}{\gcd(\alpha_j : j\ne i)}=
\frac{- \alpha_i p_i}{\gcd(\alpha_j : j\ne i)},
\]
which is an integer multiple of $p_i$ if, and only if, $\gcd(\alpha_j : j\ne i)= 1$.
\end{remark}

As in the previous sections, for each $i$ let $\ell_i$ be the lattice length of $S \cap \R v_i$. 
The following lemma says that the vectors $\alpha$ and $\beta = (\beta_0,\beta_1,\ldots,\beta_d)$  contain all the information about $S$ needed to compute the right-hand side in~\eqref{eq:ell}.

\begin{lemma}
\label{lemma:barycentric}
The lattice length of $S \cap \R v_i$ equals 
\[
\ell_i = \frac{\alpha_i}{\beta_i} + \frac{\alpha_i}{\sum_{j\ne i} \beta_j}=
\frac{\alpha_i}{\beta_i} \cdot  \frac{\sum_{j=0}^d \beta_j}{\sum_{j\ne i} \beta_j}.
\]
\end{lemma}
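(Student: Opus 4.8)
The quantity $\ell_i$ is the lattice length of the segment $S \cap \R v_i = [u_i, v_i]$, where $u_i$ is the point where the ray from $v_i$ through the origin meets the opposite facet $F_i$. The plan is to compute the two pieces of this segment separately: the part $[\zero, v_i]$ from the origin to the vertex, and the part $[\zero, u_i]$ from the origin to the opposite facet. Each piece is measured against the primitive lattice vector $p_i$ in the direction of $v_i$, and I would express everything in terms of the coefficients $\alpha_i, \beta_i$.

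\textbf{Key steps.} First I would measure the segment $[\zero, v_i]$. Since $\beta_i v_i = \alpha_i p_i$ and $p_i$ is the primitive lattice vector in the direction $v_i$, the lattice length of $[\zero, v_i]$ is $\|v_i\|/\|p_i\| = \alpha_i/\beta_i$. This gives the first summand. Second, and this is the crux, I would locate $u_i$, the intersection of the ray $\R_{<0} v_i$ with the facet $F_i = \conv(\{v_j : j \ne i\})$. Using the dependence $\sum_{j} \beta_j v_j = \zero$, I can solve for $v_i$ in terms of the other vertices: $v_i = -\frac{1}{\beta_i}\sum_{j \ne i} \beta_j v_j$. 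The point $u_i$ is the positive-multiple intersection of the line through $\zero$ and $v_i$ with $F_i$; writing $u_i = -t\, v_i$ for $t > 0$ and imposing that $u_i$ be an affine combination of the $v_j$ ($j \ne i$) with coefficients summing to $1$, I get $t = \beta_i / \sum_{j \ne i} \beta_j$. Hence $\|u_i\|/\|v_i\| = \beta_i / \sum_{j \ne i}\beta_j$, and the lattice length of $[\zero, u_i]$ is $t$ times the lattice length of $[\zero, v_i]$, namely $\frac{\alpha_i}{\beta_i}\cdot\frac{\beta_i}{\sum_{j\ne i}\beta_j} = \frac{\alpha_i}{\sum_{j \ne i}\beta_j}$. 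Adding the two pieces yields
\[
\ell_i = \frac{\alpha_i}{\beta_i} + \frac{\alpha_i}{\sum_{j \ne i}\beta_j},
\]
and factoring out $\alpha_i/\beta_i$ and combining over the common denominator $\sum_{j\ne i}\beta_j$ gives the second displayed form, since $1 + \beta_i/\sum_{j\ne i}\beta_j = \sum_{j=0}^d \beta_j / \sum_{j \ne i}\beta_j$.

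\textbf{Main obstacle.} The delicate point is the second step: correctly identifying $u_i$ as lying on the \emph{opposite} facet and on the ray through the origin on the far side of $v_i$, and verifying the sign/orientation so that the barycentric solution $t = \beta_i/\sum_{j\ne i}\beta_j$ comes out positive. This is where I must use that all $\beta_j > 0$ (which follows from $\zero \in \inter(S)$, so the barycentric coordinates are strictly positive) to guarantee both that $u_i$ is genuinely in the relative interior of $F_i$ and that $[\zero, v_i]$ and $[\zero, u_i]$ point in opposite directions, so that their lattice lengths add rather than subtract. The remaining identity is purely algebraic bookkeeping.
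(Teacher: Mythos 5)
Your proof is correct and follows essentially the same route as the paper's: both arguments identify the two endpoints of $S \cap \R v_i$ as $v_i = \frac{\alpha_i}{\beta_i} p_i$ and $u_i = -\frac{\alpha_i}{\sum_{j\ne i}\beta_j}\, p_i$, the latter found by using the dependence $\sum_j \beta_j v_j = \zero$ to locate where the line $\R v_i$ meets the opposite facet, and then add the two lattice lengths. The only difference is presentational — the paper carries out the computation in the coordinate system given by $p_1,\ldots,p_d$ (writing the opposite facet as $\sum_{j\ne i}\frac{\beta_j}{\alpha_j}x_j=1$), whereas you work coordinate-free with the affine-combination condition; your explicit attention to $\beta_j>0$ ensuring $u_i\in\relint(F_i)$ and the two pieces pointing in opposite directions is a nice touch.
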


\begin{proof}
To slightly simplify notation, we do the computations for $i=0$.
For this, let us use the vectors $p_1,\dots,p_d$ as the basis for a linear coordinate system in $\R^d$.
In these coordinates, $p_0$ becomes
\[
p_0 = - \frac{1}{\alpha_0} \left( \alpha_1,\dots,\alpha_d \right).
\]
On the other hand, the equation of the facet of $S$ opposite to $v_0$ is
\[
\sum_{j=1}^d \frac{\beta_j}{\alpha_j} x_j = 1, 
\]
so that this facet intersects the line spanned by $p_0$ in the point
\begin{equation}
\frac{\left({\alpha_1}, \dots, {\alpha_d}\right)}{ \sum_{j=1}^d \beta_j}
=
\frac{- \alpha_0} {\sum_{j=1}^d \beta_j} \, p_0.
\end{equation}
Thus, the segment $S \cap \R v_0$ has endpoints $\frac{\alpha_0}{\beta_0} p_0$ and $ \frac{-\alpha_0}{\sum_{j=1}^d \beta_j} p_0$, which implies the statement.
\end{proof}

\begin{remark}
Observe that the quantity $\a_i$ in the proof of \cref{coro:vol_one} equals $\alpha_i/\beta_i$. With this in mind, one easily recovers the equality $\sum_i \frac{\a_i}{\ell_i}=d$ used in that proof, from \cref{lemma:barycentric}.
\end{remark}

\subsubsection*{Specializing to dimension two}

Our proof of \Cref{conj:volume} in two dimensions is based on applying \Cref{lem:projection} to the projection $\pi: \R^2 \to \R$ along the direction of $v_i$, for some fixed $i \in \{0,1,2\}$.
Then, with the notation above,
\begin{enumerate}[i)]
\item $\alpha_0$, $\alpha_1$ and $\alpha_2$ are pairwise coprime, by \Cref{rem:coprime}.
\item The lattice length of $S\cap \pi^{-1}(\zero)$ is $\ell_i$.
\item The lattice length of $\pi(S)$ equals
\[
 \frac{\alpha_j\alpha_k}{\beta_j}+ \frac{\alpha_j\alpha_k}{\beta_k} =
\frac{\alpha_j\alpha_k}{\beta_j\beta_k}(\beta_j + \beta_k),
\]
where $\{j,k\} = \{0,1,2\} \setminus \{i\}$.
Here we use that 
the projection  of the segment $[\zero,v_j]= \frac{\alpha_j}{\beta_j} [\zero,p_j]$ has length $\alpha_k \frac{\alpha_j}{\beta_j}$,
since $\gcd(\alpha_j,\alpha_k)=1$ implies that $\pi(\frac{p_j}{\alpha_k})$ is a primitive lattice point in the projection.
\end{enumerate}
Writing $L=\pi^{-1}(\zero)$, \Cref{lem:projection} gives us
\[
\mu(S) \le \mu(S\cap L,\Z^2 \cap L) + \mu(\pi(S),\pi(\Z^2)).
\]
Hence, the inequality~\eqref{eq:ell} would follow from:
\begin{equation}
\label{eq:volume_dim2}
\frac1{\ell_j} + \frac1{\ell_k} - \frac1{\ell_i} \ge \frac{2\beta_j\beta_k}{\alpha_j\alpha_k(\beta_j + \beta_k)} . 
\end{equation}
We prove this inequality under mild assumptions.

\begin{theorem}
\label{thm:volume_dim2}
Let $S = \conv(\{v_0,v_1,v_2\}) \subseteq \R^2$ be a triangle with the origin in its interior and with rational vertex directions.
Let the vectors $\alpha$ and $\beta$, and the lengths $\ell_i$ be defined as above, and let $p_0$, $p_1$ and $p_2$ be primitive in the directions of $v_0$, $v_1$ and $v_2$.
Assume that $(\alpha_0,\alpha_1,\alpha_2) \ne (1,1,1)$.
Then, the inequality~\eqref{eq:volume_dim2} holds for some choice of $i\in \{0,1,2\}$.

Moreover, the inequality is strict unless $(\alpha_0,\alpha_1,\alpha_2) = (2,1,1)$ and $\beta_1=\beta_2$, up to reordering the indices.
\end{theorem}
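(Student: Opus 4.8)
The plan is to feed \cref{lemma:barycentric} into the inequality~\eqref{eq:volume_dim2} and reduce the whole statement to the nonnegativity of a single binary quadratic form in $\beta_j,\beta_k$, whose sign I can then control using only the coprimality of $\alpha$. Writing $\frac1{\ell_m}=\frac{\beta_m(\sigma-\beta_m)}{\alpha_m\sigma}$ with $\sigma:=\beta_0+\beta_1+\beta_2$, I would multiply~\eqref{eq:volume_dim2} by $\sigma>0$ and collect terms by the power of $\beta_i$. Up to the positive factor $1/\sigma$, the difference between the two sides becomes
\[
C_i\,\beta_i+\beta_j\beta_k\,\frac{\alpha_j+\alpha_k-2}{\alpha_j\alpha_k},
\]
a sum of two terms. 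The constant term is $\ge0$ since $\alpha_j,\alpha_k\ge1$, and it vanishes exactly when $\alpha_j=\alpha_k=1$. After clearing denominators by the positive factor $\alpha_i\alpha_j\alpha_k(\beta_j+\beta_k)$, the coefficient $C_i$ becomes the quadratic form $\Phi_i(\beta_j,\beta_k)=P\beta_j^2+Q\beta_k^2+R\beta_j\beta_k$ with $P=\alpha_k(\alpha_i-\alpha_j)$, $Q=\alpha_j(\alpha_i-\alpha_k)$, and $R=\alpha_i(\alpha_j+\alpha_k-2)-2\alpha_j\alpha_k$. Thus it suffices to exhibit one index $i$ for which $\Phi_i\ge0$ on the positive quadrant.

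The choice I would make is $i=\argmax_m\alpha_m$. Because the $\alpha_m$ are pairwise coprime (\cref{rem:coprime}) and not all equal to $1$, the maximum is at least $2$ and is attained uniquely, since two equal maxima would have gcd $>1$; hence $P,Q>0$. The key algebraic observation is the identity $R=P+Q-2\alpha_i$. A positive-coefficient binary form $Px^2+Qy^2+Rxy$ is nonnegative on $x,y\ge0$ exactly when $R\ge-2\sqrt{PQ}$, and by the identity this is equivalent to $(\sqrt P+\sqrt Q)^2\ge2\alpha_i$, i.e. to
\[
\sqrt{\alpha_k(\alpha_i-\alpha_j)}+\sqrt{\alpha_j(\alpha_i-\alpha_k)}\ \ge\ \sqrt{2\alpha_i}.
\]
I would prove this via the chain $\sqrt P+\sqrt Q\ge 2(PQ)^{1/4}\ge 2\sqrt{\alpha_i-1}\ge\sqrt{2\alpha_i}$: the first inequality is AM--GM; the second uses $PQ=[\alpha_j(\alpha_i-\alpha_j)][\alpha_k(\alpha_i-\alpha_k)]\ge(\alpha_i-1)^2$, which holds because $t(\alpha_i-t)\ge\alpha_i-1$ for every integer $1\le t\le\alpha_i-1$ (the concave parabola is minimized at the endpoints); and the third is $4(\alpha_i-1)\ge2\alpha_i\iff\alpha_i\ge2$, which is our standing hypothesis.

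For the equality characterization I would trace back through the reduction. The difference of the two sides is a positive multiple of $\Phi_i(\beta_j,\beta_k)\,\beta_i$ plus the constant term, both nonnegative, so it vanishes iff both vanish. The constant term forces $\alpha_j=\alpha_k=1$, and then $P=Q=\alpha_i-1$, $R=-2$, so $\Phi_i=(\alpha_i-1)(\beta_j^2+\beta_k^2)-2\beta_j\beta_k$; this is strictly positive for $\alpha_i\ge3$ and equals $(\beta_j-\beta_k)^2$ for $\alpha_i=2$. Hence equality forces $\alpha_i=2$ and $\beta_j=\beta_k$, which is precisely the case $(\alpha_0,\alpha_1,\alpha_2)=(2,1,1)$ with $\beta_1=\beta_2$ up to reordering.

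The routine parts are the substitution and the collection of terms producing $\Phi_i$; the two points that carry the argument are the clean identity $R=P+Q-2\alpha_i$, which converts the quadrant-positivity criterion into the transparent $(\sqrt P+\sqrt Q)^2\ge2\alpha_i$, and the arithmetic bound $PQ\ge(\alpha_i-1)^2$, which is exactly where coprimality (forcing a unique maximum that is at least $2$) enters. I expect the main obstacle to be organizing the bookkeeping so that the equality analysis stays clean, rather than any hard estimate.
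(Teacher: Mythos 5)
Your proposal is correct, and it takes a genuinely different route from the paper's proof. The paper splits into two cases according to how many of the $\alpha_m$ equal $1$: when at least two of them are $\geq 2$ (say $\alpha_1,\alpha_2\neq1$, with $\ell_2\geq\ell_1$), it takes $i=2$ and gets a \emph{strict} inequality by the crude bound $\tfrac1{\ell_0}+\tfrac1{\ell_1}-\tfrac1{\ell_2}\geq\tfrac1{\ell_0}$ followed by a short chain from \cref{lemma:barycentric}; when exactly two equal $1$, it takes $i$ to be the remaining index and reduces, by direct computation, to $(\beta_j+\beta_k)^2\geq4\beta_j\beta_k$, which is also where the equality case is read off. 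You instead make the single uniform choice $i=\argmax_m\alpha_m$ --- well defined and $\geq2$ precisely because of pairwise coprimality (\cref{rem:coprime}) together with $(\alpha_0,\alpha_1,\alpha_2)\neq(1,1,1)$ --- and reduce everything to quadrant-nonnegativity of one binary quadratic form, settled by the identity $R=P+Q-2\alpha_i$ and the integrality estimate $PQ\geq(\alpha_i-1)^2$; I checked the decomposition into $C_i\beta_i$ plus the constant term, the coefficients $P,Q,R$, the identity, the chain $\sqrt P+\sqrt Q\geq2(PQ)^{1/4}\geq2\sqrt{\alpha_i-1}\geq\sqrt{2\alpha_i}$, and the equality analysis, and they are all correct. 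Your route buys uniformity: no case split, one canonical index that always works, and the equality characterization falls out of the same computation. The price is that coprimality becomes essential rather than incidental: if the maximum were attained twice, e.g.\ $\alpha=(2,2,1)$, your form has $P=0$ and $R<0$ and fails to be nonnegative on the quadrant, whereas the paper's two-case argument never invokes \cref{rem:coprime} at all and works for arbitrary integer $\alpha\geq\one$ with $\alpha\neq(1,1,1)$. Since the theorem's hypotheses do supply coprimality, this is a legitimate trade-off, not a gap.
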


\begin{example}\ 
\begin{enumerate}[i)]
\item
\label{exm:equal_alphas}
The necessity of $(\alpha_0,\alpha_1,\alpha_2) \ne (1,1,1)$ is shown by the following example.
If  $S=S(1,1,1)$ (so that $\alpha_i = \beta_i = 1$ for all $i$), then
\[
\frac1{\ell_j} + \frac1{\ell_k} - \frac1{\ell_i}  = \frac23
\qquad \text{and} \qquad
\frac{2\beta_j \beta_k}{\alpha_j \alpha_k(\beta_k + \beta_k)} =1,
\]
so the inequality fails.

\item
\label{exm:small_ell}
Even if $(\alpha_0,\alpha_1,\alpha_2) \ne (1,1,1)$, it is not true that~\eqref{eq:volume_dim2} holds \emph{for every} $i\in \{0,1,2\}$.
For $\a>0$, consider the simplex
\[
S = \conv(\{(0,\a), (-1,-1), (1,-1)\}).
\]
It has parameters $(\alpha_0, \alpha_1, \alpha_2) = (2,1,1)$, $(\beta_0, \beta_1, \beta_2) = \left(\frac2\a, 1,1\right)$, $\ell_0=\a+1$, and $\ell_1 = \ell_2 = \frac{2\a+2}{\a+2}$.
For $i=0$, we indeed have
\[
\frac1{\ell_1} + \frac1{\ell_2} - \frac1{\ell_0} = 1 
=
\frac{2\beta_1\beta_2}{\alpha_1\alpha_2(\beta_1 + \beta_2)}.
\]
But for $i \in \{1,2\}$, we get 
\[
\frac1{\ell_j} + \frac1{\ell_k} - \frac1{\ell_i} = \frac1{\ell_0} = \frac1{\a+1}
<
\frac2{\a+2}
=
\frac{2\beta_j\beta_k}{\alpha_j\alpha_k(\beta_j + \beta_k)}.
\]
\end{enumerate}
\end{example}

\begin{proof}[Proof of \Cref{thm:volume_dim2}]
\emph{Case 1: At most one of the $\alpha_i$s equals 1.}
Say $\alpha_1\ne 1\ne \alpha_2$. 
With no loss of generality assume $\ell_2\ge \ell_1$.
Then, by \Cref{lemma:barycentric},
\begin{align*}
\frac1{\ell_0} + \frac1{\ell_1} - \frac1{\ell_2} 
&
 \ge \frac1{\ell_0} 
= \frac{\beta_0}{\alpha_0} \cdot \frac{\beta_1 + \beta_2}{\beta_0+\beta_1+\beta_2} 
 > \frac{\beta_0}{\alpha_0} \cdot \frac{\beta_1}{\beta_0 + \beta_1}
  \ge \frac{2\beta_0\beta_1}{\alpha_0\alpha_1(\beta_0 + \beta_1)}.    
\end{align*}

\emph{Case 2: Two of the $\alpha_i$s equal 1.}
Assume that $\alpha_1=\alpha_2=1$. The condition $(\alpha_0,\alpha_1,\alpha_2) \ne (1,1,1)$ then implies $\alpha_0\ge 2$, so that \Cref{lemma:barycentric} gives 
\begin{align*}
\frac1{\ell_1} + \frac1{\ell_2} - \frac1{\ell_0} 
& = \frac{\beta_1(\beta_0 + \beta_2)}{\beta_0+\beta_1+\beta_2} 
     + \frac{\beta_2(\beta_0 + \beta_1)}{\beta_0+\beta_1+\beta_2} 
     - \frac{\beta_0}{ \alpha_0} \cdot \frac{\beta_1 + \beta_2}{\beta_0+\beta_1+\beta_2} \\
& = \frac{
          2 \beta_1\beta_2
          +  \left(1-\frac1{\alpha_0}\right) \beta_0 (\beta_1 +\beta_2) 
          }
     {\beta_0+\beta_1+\beta_2} 
\overset{*}{\ge} \frac{
          2 \beta_1\beta_2
          + \frac12 \beta_0 (\beta_1 +\beta_2)
          }
     {\beta_0+\beta_1+\beta_2}.
\end{align*}
Thus, the inequality we want to prove is 
\begin{align*}
\frac{
          2 \beta_1\beta_2
          + \frac12 \beta_0 (\beta_1 +\beta_2)
          }
     {\beta_0+\beta_1+\beta_2} \ge
 \frac{2\beta_1\beta_2}{\beta_1 + \beta_2}
\end{align*}
or, equivalently,
\begin{align*}
          2 \beta_1\beta_2 (\beta_1 +\beta_2)
          + \frac12 \beta_0 (\beta_1 +\beta_2)^2 \ge
    2\beta_1\beta_2 (\beta_0+\beta_1+\beta_2).
\end{align*}
This is equivalent to $(\beta_1 +\beta_2)^2 \overset{*}{\ge} 4 \beta_1\beta_2$, which clearly holds.

The two inequalities we used, marked with ``$\overset{*}{\ge}$'', are equalities if and only if $\alpha_0 =2$ and $\beta_1=\beta_2$, respectively.
\end{proof}

We now prove \Cref{conj:volume} for $d=2$ which also gives another proof of \cref{conj:one} in the plane.

\begin{corollary}
\label{coro:volume_dim2}
\Cref{conj:volume} holds in dimension two.
\end{corollary}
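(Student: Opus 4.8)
The plan is to derive \cref{coro:volume_dim2} directly from \cref{thm:volume_dim2}, reducing the only two cases that theorem does not cover to computations already available in the paper. Recall that \cref{lemma:ell} tells us \cref{conj:volume} in dimension two is equivalent to the inequality $\mu(S) \le \frac12\left(\frac1{\ell_0}+\frac1{\ell_1}+\frac1{\ell_2}\right)$, and that the discussion preceding \eqref{eq:volume_dim2} shows this follows from \cref{lem:projection} provided inequality \eqref{eq:volume_dim2} holds for \emph{some} index $i \in \{0,1,2\}$. So the logical skeleton is: for a given triangle $S$ with the origin in its interior and rational vertex directions, compute the primitive dependence vector $\alpha=(\alpha_0,\alpha_1,\alpha_2)$, and invoke \cref{thm:volume_dim2} to obtain \eqref{eq:volume_dim2} for a suitable $i$, hence \eqref{eq:ell}, hence the conjecture for $S$.

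First I would observe that \cref{thm:volume_dim2} already settles every triangle whose dependence vector satisfies $\alpha \ne (1,1,1)$: for such $S$ the theorem guarantees \eqref{eq:volume_dim2} for some $i$, and combined with \cref{lem:projection} and (ii)--(iii) in the list preceding \eqref{eq:volume_dim2} this yields \eqref{eq:ell}, which is \cref{conj:volume} for $S$ by \cref{lemma:ell}. Thus the only remaining case is $\alpha=(1,1,1)$, i.e.\ $p_0+p_1+p_2=\zero$ for the primitive vectors $p_i$. This is precisely the situation where $S$ is unimodularly equivalent to a simplex $S(\a)$ with $\a=(\a_0,\a_1,\a_2)=(\beta_0,\beta_1,\beta_2)^{-1}$ in the appropriate sense, since the three primitive directions form (up to sign) a unimodular basis configuration. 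For these simplices \cref{thm:lambda} gives the \emph{exact} value of $\mu(S)$, and that formula is exactly $\frac12\,\Surf_{\Z^2}(S)/\Vol_{\Z^2}(S)$; so \cref{conj:volume} holds with equality rather than merely as an inequality.

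The main point to make precise, and the step I expect to require the most care, is the reduction of the case $\alpha=(1,1,1)$ to \cref{thm:lambda}. When $p_0+p_1+p_2=\zero$ and the $p_i$ are primitive, I must argue that $\{p_1,-p_0\}$ (say) is a lattice basis of $\Z^2$, so that a unimodular transformation carries $p_1\mapsto e_1$, $p_2\mapsto e_2$, and $p_0\mapsto -(e_1+e_2)=-\one_2$; under this map $S$ becomes a simplex of the form $S(\a)$. The primitivity and the relation $p_0+p_1+p_2=\zero$ together with \cref{rem:coprime} (which here is vacuous since all $\alpha_i=1$) should force $\det(p_1,p_2)=\pm1$: indeed any common refinement would contradict primitivity of one of the $p_i$. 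Since both sides of the inequality in \cref{conj:volume} are unimodularly invariant — the left by invariance of the covering radius and the right by \cref{cor:discrsurfarea_invariance} — it suffices to verify the conjecture on the image $S(\a)$, where \cref{thm:lambda} delivers equality outright.

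Assembling these pieces, the proof reads as follows.

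\begin{proof}
By \cref{lemma:ell}, \cref{conj:volume} in dimension two is equivalent to the inequality~\eqref{eq:ell}. Let $S=\conv(\{v_0,v_1,v_2\})$ be a triangle with the origin in its interior and rational vertex directions, and let $\alpha=(\alpha_0,\alpha_1,\alpha_2)$ and $\beta=(\beta_0,\beta_1,\beta_2)$ be the associated vectors.

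If $\alpha \ne (1,1,1)$, then by \cref{thm:volume_dim2} the inequality~\eqref{eq:volume_dim2} holds for some index $i \in \{0,1,2\}$. Applying \cref{lem:projection} to the projection $\pi$ along the direction of $v_i$ and using items (ii) and (iii) preceding~\eqref{eq:volume_dim2} to identify the two summands, we obtain
\[
\mu(S) \le \mu(S \cap L, \Z^2 \cap L) + \mu(\pi(S),\pi(\Z^2)),
\]
where $L = \pi^{-1}(\zero)$. Together with~\eqref{eq:volume_dim2}, this yields~\eqref{eq:ell} for $S$, which is the desired conclusion.

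It remains to treat the case $\alpha=(1,1,1)$, that is, $p_0+p_1+p_2=\zero$ for the primitive vectors $p_i$ in the directions of the $v_i$. We claim that $\{p_1,p_2\}$ is a lattice basis of $\Z^2$. Indeed, $p_1$ and $p_2$ are linearly independent since the origin lies in the interior of $S$, and if their index in $\Z^2$ were greater than one, then the primitive vector in the direction of $p_0 = -(p_1+p_2)$ in the lattice they generate would be a proper fraction of $p_0$, contradicting the primitivity of $p_0$. Hence the unimodular transformation $A$ with $Ap_1 = e_1$ and $Ap_2 = e_2$ satisfies $Ap_0 = -\one_2$, and so $AS$ is a simplex of the form $S(\a)$ for a suitable $\a \in \R^3_{>0}$.

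Since the covering radius is invariant under unimodular transformations and, by \cref{cor:discrsurfarea_invariance}, so is the right-hand side of~\eqref{eq:volume}, it suffices to verify the conjecture for $AS = S(\a)$. For this simplex \cref{thm:lambda} gives
\[
\mu(S(\a)) = \frac12 \frac{\Surf_{\Z^2}(S(\a))}{\Vol_{\Z^2}(S(\a))},
\]
so~\eqref{eq:volume} holds, in fact with equality. This completes the proof.
\end{proof}
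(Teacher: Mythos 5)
Your treatment of the case $\alpha \neq (1,1,1)$ matches the paper exactly: invoke \cref{thm:volume_dim2}, combine with \cref{lem:projection} and the identifications (ii)--(iii), and conclude via \cref{lemma:ell}. The gap is in the case $\alpha = (1,1,1)$. You claim that if $p_0,p_1,p_2$ are primitive and $p_0+p_1+p_2=\zero$, then $\{p_1,p_2\}$ must be a lattice basis of $\Z^2$, arguing that otherwise the primitive vector of $\langle p_1,p_2\rangle$ in the direction of $p_0$ would be a proper fraction of $p_0$. This is false. By the computation in \cref{rem:coprime} (with all $\alpha_i=1$), the primitive vector of $\langle p_1,p_2\rangle$ in the direction of $p_0$ is $p_1+p_2=-p_0$ itself, so no contradiction with primitivity of $p_0$ ever arises, whatever the index of $\langle p_1,p_2\rangle$ in $\Z^2$. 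A concrete counterexample: $p_1=(1,0)$, $p_2=(1,5)$, $p_0=(-2,-5)$ are all primitive, sum to zero, yet $\det(p_1,p_2)=5$; the triangle $S=\conv(\{p_0,p_1,p_2\})$ has the origin in its interior (barycentric coordinates $(\tfrac13,\tfrac13,\tfrac13)$) and is \emph{not} unimodularly equivalent to any $S(\a)$. Consequently your further assertion that equality holds in \eqref{eq:volume} whenever $\alpha=(1,1,1)$ is also unwarranted.

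The paper closes this case differently, and this is the repair your argument needs: let $\Lambda$ be the (in general proper) sublattice of $\Z^2$ generated by $p_0,p_1,p_2$, and let $A$ send $e_i\mapsto p_i$ for $i=1,2$, so that $\Lambda=A\Z^2$ and $S=AS(\a)$ for suitable $\a\in\R^3_{>0}$. Because each $p_i$ is primitive in $\Z^2$, it remains primitive in the sublattice $\Lambda$, so the lattice lengths $\ell_i$ are the same for $(S,\Z^2)$, $(S,\Lambda)$, and $(S(\a),\Z^2)$. Then \cref{thm:lambda} gives $\mu(S,\Lambda)=\mu(S(\a),\Z^2)=\frac12\left(\frac1{\ell_0}+\frac1{\ell_1}+\frac1{\ell_2}\right)$, and monotonicity of the covering radius under passing to sublattices (\cref{lemma:lattice_monotone}) gives $\mu(S,\Z^2)\leq\mu(S,\Lambda)$. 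This yields \eqref{eq:ell} — as an inequality, which is all that is needed — and completes the proof; it is also exactly the argument recorded in \cref{cor:conjC_primitiveSimplices}.
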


\begin{proof}
Let $S = \conv(\{v_0,v_1,v_2\}) \subseteq \R^2$ be a triangle with the origin in its interior and with rational vertex directions.
Let the vectors $\alpha$ and $\beta$, and the lengths $\ell_i$ be defined as above, taking $p_0$, $p_1$ and $p_2$ primitive. 
In view of \Cref{lemma:ell} we need to show that
\[
\mu(S) \le \frac12\left( \frac1{\ell_0} + \frac1{\ell_1} + \frac1{\ell_2} \right).
\]
If $(\alpha_0,\alpha_1,\alpha_2) = (1,1,1)$, then consider the lattice $\Lambda$ generated by $p_0,p_1,p_2$.
Let $A$ be the linear transformation sending $e_i$ to $p_i$, for $i=1,2$.
Then, $\Lambda = A\Z^2$ and $S=A S(\a)$ for a suitable $\a \in \R^3_{>0}$.
Moreover, since the $p_i$s are primitive, the lattice lengths $\ell_i$ are the same for every pair $(S,\Z^2)$, $(S,\Lambda)$, and $(S(\a),\Z^2)$.
Observing that $\Lambda \subseteq \Z^2$ is a sublattice, we may therefore apply \Cref{thm:lambda} and get
\[
\mu(S) \leq \mu(S,\Lambda) = \mu(S(\a),\Z^2) = \frac12\left( \frac1{\ell_0} + \frac1{\ell_1} + \frac1{\ell_2} \right).
\]
So, we assume that $(\alpha_0,\alpha_1,\alpha_2) \neq (1,1,1)$ and thus we can apply \Cref{thm:volume_dim2}, which provides us with an index $i \in \{0,1,2\}$ such that the inequality~\eqref{eq:volume_dim2} holds.
As we saw above, this implies the desired bound.
%
\end{proof}

\subsection{Analogs to \texorpdfstring{\cref{conj:volume}}{Conjecture \ref{conj:volume}} with the origin in the boundary}
\label{sec:0inboundary}
%
As we said in the introduction, the question analogous to \Cref{conj:one} for general lattice polytopes has an easy answer: the maximum covering radius among all $d$-dimensional lattice polytopes equals~$d$ and is attained by, and only by, unimodular simplices. This phenomenon generalizes to analogs of \cref{thm:lambda} and \cref{conj:volume}, which admit easy proofs. The generalization concerns the simplices $S(\a)$, except we now allow one of the entries of $\a$ (typically the first one) to be zero so that the origin becomes a vertex:

\begin{proposition}
\label{prop:corner}
For an $\a \in \R^{d}_{>0}$ let 
\[
S(0,\a):=\conv(\left\{\zero, \a_1 e_1, \ldots, \a_d e_d\right\}). 
\]
Then
\[
\mu(S(0,\a) )
=
 {\sum_{i=1}^d \frac{1}{\a_i}}
=
 \frac{\sum_{i=1}^d \Vol_{\pi_i(\Z^d)}(\pi_i(S(\a)))} {\Vol_{\Z^d}(S(\a))},
\]
where $\pi_i: \R^d \to \R^{d-1}$ is the linear projection that forgets the $i$-th coordinate.
\end{proposition}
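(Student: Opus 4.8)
The plan is to prove the two equalities separately, since the rightmost expression is really just a restatement in the language of \cref{thm:lambda} and follows by the same computation as in \cref{lemma:ell}. First I would establish the middle equality $\mu(S(0,\a)) = \sum_{i=1}^d 1/\a_i$, which is the geometric heart of the statement. The simplex $S(0,\a)$ has the origin as a \emph{vertex}, so it is a ``corner simplex'': it is the image of the standard corner simplex $\conv(\{\zero,e_1,\ldots,e_d\})$ under the diagonal scaling $\mathrm{diag}(\a_1,\ldots,\a_d)$. The natural approach is to identify, for a given dilation factor $\mu$, exactly when $\mu S(0,\a)+\Z^d$ fails to cover $\R^d$, equivalently when $\mu S(0,\a)$ admits a hollow translate that is \emph{maximal}. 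Because all facets through the origin are coordinate hyperplanes, the translates of $\mu S(0,\a)$ tile space in a particularly transparent way, and the last-covered point should be the ``far corner'' of the fundamental domain.

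Concretely, I expect the cleanest route is to directly compute the covering radius as
\[
\mu(S(0,\a)) = \min\left\{\mu \ge 0 : \mu\,S(0,\a) + \Z^d = \R^d\right\}
\]
by reducing to a one-dimensional-per-coordinate analysis. The simplex $\mu S(0,\a)$ is the region $\{x \ge \zero : \sum_i x_i/(\mu \a_i) \le 1\}$, and its lattice translates cover $\R^d$ precisely when every point of the half-open unit cube (or more precisely of a fundamental domain) lies in some translate. The worst point to cover is the one maximizing the ``unreachability'', and by the structure of the slanted facet $\sum_i x_i/(\mu\a_i)=1$ one checks that the last-covered point is governed by the single linear functional $\sum_i x_i/\a_i$. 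The threshold condition then becomes $\mu \ge \sum_{i=1}^d 1/\a_i$, giving $\mu(S(0,\a)) = \sum_{i=1}^d 1/\a_i$. I would make this rigorous either by exhibiting an explicit last-covered point (a good candidate is a point just beyond the barycenter of the slanted facet, or a suitable half-integer combination) together with a matching covering argument, or alternatively by invoking \cref{lem:projection} inductively: projecting along $e_d$ sends $S(0,\a)$ to $S(0,\a')$ in one lower dimension with fiber a segment of lattice length $\a_d$, yielding the additive recursion $\mu(S(0,\a)) \le \mu([\zero,\a_d e_d]) + \mu(S(0,\a')) = 1/\a_d + \sum_{i=1}^{d-1}1/\a_i$, with the reverse inequality from the explicit non-covered point.

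For the rightmost equality, I would argue exactly as in the proof of \cref{lemma:ell}. Writing $\pi_i$ for the projection forgetting the $i$th coordinate, the facet of $S(\a)$ opposite the vertex $\a_i e_i$ projects bijectively under $\pi_i$, and the volume splitting $\vol(S(\a)) = \frac1d \vol(\pi_i(S(\a)))\cdot\a_i$ together with the lattice determinant identity $1 = \det(\pi_i(\Z^d))\det(\Z^d \cap \R e_i)$ gives
\[
\frac{\Vol_{\pi_i(\Z^d)}(\pi_i(S(\a)))}{\Vol_{\Z^d}(S(\a))} = \frac{1}{\a_i}.
\]
Summing over $i$ from $1$ to $d$ then matches the middle expression term by term. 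The main obstacle, and the only genuinely new content, is the covering-radius computation in the first step: I must pin down the last-covered point precisely enough to get the exact constant rather than just an inequality. The inductive projection approach via \cref{lem:projection} gives the upper bound essentially for free, so the real work is constructing a specific point of $\R^d$ that is not covered by $\mu\, S(0,\a) + \Z^d$ for any $\mu < \sum_i 1/\a_i$, thereby forcing equality.
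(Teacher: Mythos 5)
Your proposal is correct and takes essentially the same route as the paper: the paper rewrites $S(0,\a)$ as $\left\{x\in \R_{\ge0}^d : \sum_{i=1}^d x_i/\a_i \le 1\right\}$ and observes that, since the simplex lies in the nonnegative orthant, the covering radius is the unique $\mu$ for which the far corner $\one_d$ of the unit cube lies on the boundary of $\mu\, S(0,\a)$ --- precisely your ``far corner of the fundamental domain'' argument, with points $(1-\epsilon)\one_d$ (coverable only by the translate at $\zero$) witnessing the lower bound, so your alternative candidates near the barycenter of the slanted facet are unnecessary. The rightmost equality is, as you say, the same direct volume computation as in \cref{lemma:ell}.
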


\begin{proof}
$S(0,\a) $ can be redescribed as 
\[
\left\{x\in \R_{\ge0}^d : \sum_{i=1}^d \frac{x_i}{\a_i} \le 1\right\}.
\]
In this form it is clear that $\mu(S(0,\a))$ equals the unique $\mu\in [0,\infty)$ such that~$\one_d$ lies in the boundary of $\mu\cdot S(0,\a)$, which equals $\sum_i \frac1{\a_i}$, as stated.
\end{proof}

\begin{corollary}
\label{coro:corner}
Let $S = \conv(\{\zero,v_1,\ldots,v_d\}) \subseteq \R^d$ be a $d$-simplex with rational vertex directions.
For each $i=1,\ldots,d$, let $\pi_i : \R^d \to \R^{d-1}$ be the linear projection vanishing at~$v_i$.
Then,
\[
\mu(S) \leq \frac{\sum_{i=1}^d \Vol_{\pi_i(\Z^d)}(\pi_i(S))}
{\Vol_{\Z^d}(S)},
\]
with equality if and only if $S$ is unimodularly equivalent (by a transformation fixing the origin) to $S(0,\a)$ for some $\a \in \R_{> 0}^d$.
\end{corollary}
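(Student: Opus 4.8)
The plan is to reduce the statement to \cref{prop:corner} by straightening $S$ into the coordinate form $S(0,\a)$, at the cost of passing to a sublattice, and then to read off the claimed volume expression from a one-dimensional slicing of the simplex. Since $S$ has rational vertex directions, let $p_i$ be the primitive lattice vector in the direction of $v_i$ and write $v_i = \a_i p_i$, where $\a_i = \Vol_{\Z^d \cap \R v_i}([\zero,v_i])$ is the lattice length of the edge $[\zero,v_i]$. Let $A$ be the linear map with $Ae_i = p_i$; it is integral, so $\Lambda := A\Z^d \subseteq \Z^d$ is a sublattice, and $A^{-1}S = S(0,\a)$ with $\a = (\a_1,\dots,\a_d)$. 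By \cref{lemma:lattice_monotone} together with the linear invariance of the covering radius (as already used in \cref{lemma:delta_v}), we get $\mu(S) = \mu(S,\Z^d) \le \mu(S,\Lambda) = \mu(S(0,\a),\Z^d)$, and \cref{prop:corner} evaluates the right-hand side as $\sum_{i=1}^d 1/\a_i$.

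To match this with the claimed bound I would invoke the origin-as-vertex analog of \cref{lemma:ell}. Since $\pi_i$ vanishes at $v_i$, the facet of $S$ opposite $v_i$ is $F_i = \conv(\{\zero,v_1,\dots,\widehat{v_i},\dots,v_d\})$ and $\pi_i(S) = \pi_i(F_i)$; moreover the line $\R v_i$ meets $F_i$ exactly at the origin, so the slice playing the role of $[u_i,v_i]$ in \cref{lemma:ell} is here precisely $[\zero,v_i]$. The same determinant factorization $1 = \det(\pi_i(\Z^d))\det(\Z^d\cap\R v_i)$ then gives $\Vol_{\Z^d}(S) = \Vol_{\pi_i(\Z^d)}(\pi_i(S))\cdot\a_i$, so that $1/\a_i = \Vol_{\pi_i(\Z^d)}(\pi_i(S))/\Vol_{\Z^d}(S)$. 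Summing over $i$ turns $\sum_i 1/\a_i$ into the stated ratio, which establishes the inequality.

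For the equality characterization, observe that $S$ is unimodularly equivalent to some $S(0,\a')$ by a transformation fixing the origin precisely when $\{p_1,\dots,p_d\}$ is a $\Z$-basis, i.e.\ when $A$ is unimodular, i.e.\ when $\Lambda = \Z^d$. In that case the single inequality above, coming from \cref{lemma:lattice_monotone}, is an equality, so the bound is tight. The substantive direction is to show that $\Lambda \subsetneq \Z^d$ forces $\mu(S,\Z^d) < \mu(S,\Lambda)$; equivalently, setting $\Gamma := A^{-1}\Z^d \supsetneq \Z^d$, that $\mu(S(0,\a),\Gamma) < \mu(S(0,\a),\Z^d) = \sum_i 1/\a_i =: \mu^\ast$. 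This strict drop under a proper refinement is the main obstacle, because $\mu^\ast$ is a supremum that is approached but never attained: the minimal dilation of $S(0,\a)$ covering a point $x \in [0,1)^d$ equals $\sum_i x_i/\a_i$, which tends to $\mu^\ast$ only as $x \to \one_d$.

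To overcome this I would argue that any sequence $x^{(n)}$ whose covering level with respect to $\Gamma$ tends to $\mu^\ast$ must satisfy $x^{(n)} \to \one_d$ modulo $\Z^d$, since the $\Gamma$-level is at most the $\Z^d$-level $\sum_i \{x^{(n)}_i\}/\a_i$, and this forces every fractional part to tend to $1$. Choosing a nonzero $g \in \Gamma \cap [0,1)^d$ (which exists because $\Gamma \supsetneq \Z^d$), for large $n$ one has $g \le x^{(n)}$ componentwise, so the translate of $\mu\cdot S(0,\a)$ by $g$ already covers $x^{(n)}$ at level at most $\sum_i (x^{(n)}_i - g_i)/\a_i \to \mu^\ast - \sum_i g_i/\a_i < \mu^\ast$, contradicting that the level tends to $\mu^\ast$. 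Hence $\mu(S(0,\a),\Gamma) \le \mu^\ast - \sum_i g_i/\a_i < \mu^\ast$, which yields the strict inequality and completes the equality characterization.
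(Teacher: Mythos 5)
Your proof is correct, and its skeleton is the same as the paper's: map the primitive vertex directions $p_i$ to the standard basis so that $S$ becomes $S(0,\a)$, compare $\Z^d$ with the lattice generated by $p_1,\ldots,p_d$ via \cref{lemma:lattice_monotone} and linear invariance, evaluate with \cref{prop:corner}, and observe that equality of the two lattices is exactly unimodular equivalence (fixing the origin) to some $S(0,\a)$. You add one detail the paper's proof leaves implicit: the identity $1/\a_i = \Vol_{\pi_i(\Z^d)}(\pi_i(S))/\Vol_{\Z^d}(S)$, obtained from the origin-as-vertex analog of \cref{lemma:ell}; this is what identifies the output of \cref{prop:corner} with the stated ratio for $S$ itself, so spelling it out is a genuine improvement in rigor. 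Where you diverge is the strict inequality in the equality case. The paper asserts that $\one$ is the unique last-covered point of $\mu^\ast S(0,\a)+\Z^d$ in the unit cube and that it lies in the interior of $p+\mu^\ast S(0,\a)$ for a nonzero $p\in\Lambda\cap[0,1)^d$, leaving the final passage from ``every point is interior-covered'' to ``the covering radius drops'' to an implicit compactness argument. You instead work with the explicit level function $x\mapsto\sum_i \{x_i\}/\a_i$, show that any sequence whose $\Gamma$-level tends to $\mu^\ast$ must tend to $\one$ modulo $\Z^d$, and then cap the level at $\mu^\ast-\sum_i g_i/\a_i$ using the translate by $g$. Both arguments rest on the same geometric idea --- the fractional lattice point of the finer lattice covers the critical corner --- but yours is quantitative (it exhibits an explicit gap) and self-contained, whereas the paper's is shorter at the cost of invoking the structure of last-covered points without proof.
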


\begin{proof}
Let $p_1,\ldots,p_d \in \Z^d$ be the primitive vertex directions of~$S$, so that $v_i = \a_i p_i$, where $\a_i$ is the lattice length of the segment $[\zero,v_i]$, for each $i=1,\ldots,d$. Then, the linear map sending $p_i \mapsto e_i$, $i=1,\ldots,d$, sends $S$ to $S(0,\a)$ and $\Z^d$ to a lattice $\Lambda$ containing $\Z^d$. This implies
\[
\mu(S, \Z^d)= \mu(S(0,\a), \Lambda) \leq \mu(S(0,\a),\Z^d) = \frac{\sum_{i=1}^d \Vol_{\pi_i(\Z^d)}(\pi_i(S))}
{\Vol_{\Z^d}(S)},
\]
by \Cref{prop:corner}.

The `if' in the equality case is obvious: in this case $\Lambda = \Z^d$.
For the `only if' suppose that  $\Lambda$ is a proper superlattice of $\Z^d$ and let $p \in \Lambda \cap [0,1)^d\setminus\{\zero\}$ be a non-zero lattice point in the half-open unit cube. Let $\mu=\mu(S(0,\a), \Z^d) = \frac{\sum_{i=1}^d \Vol_{\pi_i(\Z^d)}(\pi_i(S))} {\Vol_{\Z^d}(S)}$. Then,
the point $\one$ is the only point in the unit cube $[0,1]^d$ that is last covered by $\Z^d + \mu \cdot S(0,\a)$. Since $\one$ lies in the interior of $p + \mu \cdot S(0,\a)$, the covering radius of 
$S(0,\a)$ is strictly smaller with respect to $\Lambda$  than it is with respect to $\Z^d$.
\end{proof}


Our next results say that \cref{prop:corner} and \cref{coro:corner} are not only analogs (without the factor of two) of \cref{thm:lambda} and \cref{conj:volume}, but also a limit of them when we make one of the vertices tend to zero. We consider this as additional evidence for \Cref{conj:volume}. Formally:

\begin{theorem}
\label{thm:limit}
Let $S= \conv(\{v_0,\ldots,v_d\})$ be a $d$-simplex with the origin in its interior and with rational vertex directions. For each $i \in \{0,\ldots,d\}$ consider the one-parameter family of simplices
\[
S^{(i)}_t:= \conv(\{v_0,\ldots,tv_i,\ldots,v_d\}), \quad t \in [0,1],
\]
so that $S^{(i)}_1=S$ and $S^{(i)}_0 = \conv(\{v_1,\ldots,\zero,\ldots,v_d\})$.
For each $i=0,\dots,d$ let $\pi_i : \R^d \to \R^{d-1}$ be the linear projection vanishing at~$v_i$. 

Then, there is an index $j \in \{0,\ldots,d\}$ such that
\begin{equation}
\label{eq:limit}
\lim_{t\to 0} \frac12 \frac{\sum_{i=0}^d \Vol_{\pi_i(\Z^d)}(\pi_i(S^{(j)}_t))} {\Vol_{\Z^d}(S^{(j)}_t)}
\ge
\frac{\sum_{i=0, i \neq j}^d \Vol_{\pi_i(\Z^d)}(\pi_i(S^{(j)}_0))} {\Vol_{\Z^d}(S^{(j)}_0)},
\end{equation}
with equality if and only if the primitive lattice vectors parallel to $v_0,\ldots,v_d$ add up to zero.
\end{theorem}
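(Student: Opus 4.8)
The plan is to compute the limit on the left of \eqref{eq:limit} by continuity, reduce the whole statement to a single scalar inequality, and then close it by choosing the index $j$ that minimizes a suitable integer parameter. Throughout I would use the vectors $\alpha,\beta$ and the lengths $\ell_i$ introduced in \Cref{sec:conj:volume_dim2}, which are available in arbitrary dimension, together with the primitive directions $p_i$ of the $v_i$ (so $\sum_{i=0}^d\alpha_i p_i=\zero$, $\gcd(\alpha_i)=1$, and all $\alpha_i,\beta_i>0$ since the origin is interior).

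First I would note that $\pi_i$ depends only on the \emph{direction} of $v_i$, hence vanishes on $tv_i$ too, so each $\Vol_{\pi_i(\Z^d)}(\pi_i(S^{(j)}_t))$ is defined and continuous in $t$ on $[0,1]$. Two features make the limit transparent. For $i=j$ the vertex $tv_j$ always projects to $\pi_j(tv_j)=\zero$, which lies in the interior of $\pi_j(F_j)$; hence $\pi_j(S^{(j)}_t)=\pi_j(F_j)$ is \emph{constant} in $t$ and equals $\Vol_{\pi_j(\Z^d)}(\pi_j(S))=:A_j$. For $i\ne j$ the term converges to $B_i:=\Vol_{\pi_i(\Z^d)}(\pi_i(S^{(j)}_0))$, which is exactly the $i$-th summand on the right of \eqref{eq:limit}; and $\Vol_{\Z^d}(S^{(j)}_t)\to\Vol_{\Z^d}(S^{(j)}_0)=:V_j>0$. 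Therefore the left-hand side of \eqref{eq:limit} equals $\tfrac12\bigl(A_j+\sum_{i\ne j}B_i\bigr)/V_j$, and since the right-hand side is $\bigl(\sum_{i\ne j}B_i\bigr)/V_j$, the statement becomes the single inequality $A_j\ge\sum_{i\ne j}B_i$.

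Next I would evaluate these three quantities. Writing $\lambda_k=\beta_k/\sum_m\beta_m$ for the barycentric coordinates of the origin and $V=\Vol_{\Z^d}(S)$, the cone volume gives $V_j=\lambda_j V$. By \Cref{lemma:ell} we have $A_j=V/\ell_j$, and \Cref{lemma:barycentric} yields $\tfrac1{\ell_j}=\tfrac{\beta_j}{\alpha_j}\cdot\tfrac{\sum_{k\ne j}\beta_k}{\sum_m\beta_m}$. Finally, for $i\ne j$ the chord of $S^{(j)}_0$ in the direction $v_i$ through the vertex $v_i$ is the edge $[\zero,v_i]$, because the opposite facet of $S^{(j)}_0$ spans $\lin\{v_k:k\ne i,j\}$, which meets $\R v_i$ only at the origin; so the volume–projection identity from the proof of \Cref{lemma:ell} gives $V_j=B_i\cdot\Vol_{\Z^d\cap\R v_i}([\zero,v_i])=B_i\cdot\alpha_i/\beta_i$, i.e. $B_i=V_j\,\beta_i/\alpha_i$. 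Substituting everything and cancelling the common positive factor $\beta_j V/\sum_m\beta_m$, the inequality $A_j\ge\sum_{i\ne j}B_i$ is equivalent to
\[
\sum_{i\ne j}\beta_i\Bigl(\frac{1}{\alpha_j}-\frac{1}{\alpha_i}\Bigr)\ \ge\ 0 .
\]
I then close the argument by taking $j$ with $\alpha_j=\min_i\alpha_i$: since all $\beta_i>0$ and $\alpha_j\le\alpha_i$, every summand is nonnegative. For equality, with this choice the sum vanishes iff $\alpha_i=\alpha_j$ for all $i$, i.e. all $\alpha_i$ are equal; as $\gcd(\alpha_0,\dots,\alpha_d)=1$ this forces $\alpha=\one_{d+1}$, which by $\sum_i\alpha_i p_i=\zero$ is exactly the condition $\sum_{i=0}^d p_i=\zero$.

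I expect the main obstacle to be the second step, namely verifying the volume–projection identities in the present \emph{degenerate} situation, where the origin sits on the boundary of $S^{(j)}_0$ instead of its interior. One must check that the relevant chord of $S^{(j)}_0$ really is the edge $[\zero,v_i]$ (and not a longer chord leaving through another facet), and that the identity $\Vol_{\Z^d}(T)=\Vol_{\pi_i(\Z^d)}(\pi_i(T))\cdot\Vol_{\Z^d\cap\R v_i}([\zero,v_i])$ used in the proof of \Cref{lemma:ell} holds verbatim for $T=S^{(j)}_0$. Once these identities are established, the remainder is the elementary termwise-nonnegativity observation above.
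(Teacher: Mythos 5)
Your proposal is correct, and its skeleton is the same as the paper's: both arguments use continuity to reduce \cref{eq:limit} to the single inequality $A_j \ge \sum_{i\ne j}B_i$, i.e.\ $\Vol_{\pi_j(\Z^d)}(\pi_j(S))\ \ge\ \sum_{i \neq j}\Vol_{\pi_i(\Z^d)}(\pi_i(S^{(j)}_0))$, and your choice of $j$ is the paper's choice in disguise: by Cramer's rule applied to the dependence $\sum_i \alpha_i p_i=\zero$, each $\alpha_i$ is proportional to $|\det(\{p_k : k\ne i\})|$, so minimizing $\alpha_j$ is exactly the paper's minimization of that determinant. Where you genuinely differ is in how the key inequality is closed. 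The paper argues geometrically and termwise: it decomposes $\pi_j(S)$ into the projections $\pi_j(F_i)$ of the facets $F_i$ of $S^{(j)}_0$, identifies $B_i=\Vol_{\pi_i(\Z^d)}(\pi_i(F_i))$, and proves $\Vol_{\pi_j(\Z^d)}(\pi_j(F_i))\ge\Vol_{\pi_i(\Z^d)}(\pi_i(F_i))$ (its inequality~\eqref{eq:two_projections}) by noting both sides are $\Vol_{\Z^d\cap\aff(F_i)}(F_i)$ times the lattice distance of $p_j$, respectively $p_i$, from $\lin(F_i)$. You instead express everything in the $(\alpha,\beta)$-data of \cref{lemma:barycentric} and \cref{lemma:ell} and reduce to $\sum_{i\ne j}\beta_i\bigl(\tfrac1{\alpha_j}-\tfrac1{\alpha_i}\bigr)\ge 0$; since your $i$-th summand is nonnegative precisely when $\alpha_i\ge\alpha_j$, your scalar inequality is the paper's termwise inequality re-assembled, so the two proofs are equivalent term by term. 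Your version is more computational and makes the equality characterization immediate; the paper's version avoids the one point you flag, namely re-deriving the volume-splitting identity for the degenerate simplex $S^{(j)}_0$. That flagged step is indeed fine and routine: the proof of \cref{lemma:ell} uses only that $\pi_i(T)$ equals the projection of the facet opposite $v_i$, the Fubini-type identity $\vol(T)=\tfrac1d\vol(\pi_i(T))\vol(\text{chord})$, and the factorization $\det(\Z^d)=\det(\pi_i(\Z^d))\det(\Z^d\cap\R v_i)$; none of these requires the foot of the chord to lie in the relative interior of the opposite facet, so the identity holds verbatim for $T=S^{(j)}_0$ with chord $[\zero,v_i]$, as you anticipated.
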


Observe that the condition for equality includes, but is more general than, the case  when $S$ is of the form $S(\a)$.

%

\begin{proof}
For each $i$, let $u_i$ be the primitive lattice vector parallel to $v_i$, and let $U=\{u_0,\ldots,u_d\}$.
We choose $j$ to be an index minimizing the (absolute value of the) determinant of $U \setminus \{u_i\}$ among all $i$. Observe that $S$ is of the form $S(\a)$ if and only if all those determinants are equal to $1$.

To simplify notation, in the rest of the proof we assume $j=0$ and we drop the superindex from the notation $S^{(j)}_t$.

Since the volume functional is continuous, we have
\[
\lim_{t\to 0}  {\Vol_{\Z^d}(S_t)} = {\Vol_{\Z^d}(S_0)},
\]
and, for each $i=1,\dots,d$, 
\[
\lim_{t\to 0}  \Vol_{\pi_i(\Z^d)}(\pi_i(S_t)) = \Vol_{\pi_i(\Z^d)}(\pi_i(S_0)).
\]
Thus, the only thing to prove is that
\[
\lim_{t\to 0} \Vol_{\pi_0(\Z^d)}(\pi_0(S_t))
\ge
 \sum_{i=1}^d \Vol_{\pi_i(\Z^d)}(\pi_i(S_0)).
\]
The volume on the left-hand side does not depend on~$t$ because the vertex of~$S_t$ that depends on~$t$ is projected out by~$\pi_0$.
Moreover, this volume equals $\sum_{i=1}^d\Vol_{\pi_0(\Z^d)}(\pi_0(F_i))$, where $F_i$ is the facet of $S_0$ opposite to $v_i$. Similarly, 
$\Vol_{\pi_i(\Z^d)}(\pi_i(S_0)) = \Vol_{\pi_i(\Z^d)}(\pi_i(F_i))$. Hence, the inequality follows from
\begin{equation}
\label{eq:two_projections}
\Vol_{\pi_0(\Z^d)}(\pi_0(F_i)) \ge \Vol_{\pi_i(\Z^d)}(\pi_i(F_i)).
\end{equation}
Both sides of \cref{eq:two_projections} are integer multiples of  $\Vol_{\Z^d \cap \aff(F_i)}(F_i)$, with the proportionality factors being 
the lattice distances from $F_i$ to $u_0$ and to~$u_i$, respectively. These distances are proportional to the determinants of $U\setminus \{u_i\}$ and $U\setminus \{u_0\}$, so our assumption on $u_0$ minimizing this implies the statement.
Moreover, we have equality  if, and only if, all the determinants of $U\setminus \{u_i\}$ are equal to that of  $U\setminus \{u_0\}$. This in turn is equivalent to $\sum_{i=0}^d u_i=\zero$. 
\end{proof}

\begin{corollary}
\label{coro:limit}
In the conditions of \cref{thm:limit} and for the index~$j$ mentioned therein, we have
\[
\lim_{t\to 0} \mu(S^{(j)}_t) \le
\lim_{t\to 0} \frac12 \frac{\sum_{i=0}^d \Vol_{\pi_i(\Z^d)}(\pi_i(S^{(j)}_t))} {\Vol_{\Z^d}(S^{(j)}_t)},
\]
with equality if and only if the primitive lattice vectors parallel to $v_0,\dots,v_d$ add up to zero.
\end{corollary}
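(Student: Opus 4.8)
The plan is to derive \cref{coro:limit} by simply concatenating the two results it cites --- \cref{thm:limit} and the ``corner'' bound of \cref{coro:corner} --- using as the only extra input the continuity of the covering radius. First I would record that, as $t\to 0$, the simplices $S^{(j)}_t$ converge in the Hausdorff metric to
\[
S^{(j)}_0 = \conv(\{v_0,\ldots,\zero,\ldots,v_d\}),
\]
which is still full-dimensional (the origin now plays the role of the $j$-th vertex). Since $\mu(\cdot)$ is continuous on full-dimensional convex bodies with respect to the Hausdorff metric, this gives
\[
\lim_{t\to 0}\mu(S^{(j)}_t) = \mu(S^{(j)}_0).
\]

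Next I would bound $\mu(S^{(j)}_0)$ via \cref{coro:corner}, applied to the corner simplex $S^{(j)}_0$ whose non-origin vertices are the $v_i$ with $i\neq j$; the projections vanishing at these vertices are exactly the maps $\pi_i$, $i\neq j$, occurring in \cref{thm:limit}. This yields
\[
\mu(S^{(j)}_0) \le \frac{\sum_{i\neq j}\Vol_{\pi_i(\Z^d)}(\pi_i(S^{(j)}_0))}{\Vol_{\Z^d}(S^{(j)}_0)} .
\]
The right-hand side is precisely the quantity that \cref{thm:limit} bounds above by $\lim_{t\to 0}\tfrac12\,\sum_{i=0}^d\Vol_{\pi_i(\Z^d)}(\pi_i(S^{(j)}_t))/\Vol_{\Z^d}(S^{(j)}_t)$. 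Chaining the three displays produces the asserted inequality, so the inequality part is routine once continuity of $\mu$ is granted.

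For the equality statement I would trace equality back through the chain, which forces equality in both intermediate steps. The ``only if'' direction is immediate: equality in the \cref{thm:limit} step holds, by that theorem, precisely when the primitive vectors $u_0,\ldots,u_d$ parallel to $v_0,\ldots,v_d$ sum to zero; conversely, if $\sum_i u_i\neq\zero$ then \cref{thm:limit} is strict and hence the whole chain is strict. For the ``if'' direction I would feed $\sum_i u_i=\zero$ into the \cref{coro:corner} step: recalling from the proof of \cref{thm:limit} that this hypothesis equalizes all the indices $|\det(U\setminus\{u_i\})|$ (via the identity relating these determinants to the coefficients of the primitive linear dependence among the $u_i$), and that $j$ was selected to minimize them, the task reduces to verifying that the corner simplex $S^{(j)}_0$ is then unimodularly equivalent, by a transformation fixing the origin, to some $S(0,\a)$, so that \cref{coro:corner} holds with equality and the chain collapses.

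I expect this last reconciliation to be the main obstacle. The inequality and the necessity of $\sum_i u_i=\zero$ are both quick consequences of the two cited results, whereas the sufficiency requires showing that the equalization of the determinants $\det(U\setminus\{u_i\})$ coming from $\sum_i u_i=\zero$ is exactly what makes the corner bound of \cref{coro:corner} tight for the minimizing index $j$. This is the only place where the two equality criteria must be matched against each other, and it is where the combinatorial structure of the vertex directions $u_i$ (together with the minimality choice of $j$) has to be used; all the remaining content of the statement follows formally from the chain.
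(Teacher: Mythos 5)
Your reduction of the inequality to the chain ``continuity of $\mu$, then \cref{coro:corner} applied to $S^{(j)}_0$, then \cref{thm:limit}'' is exactly the paper's proof, and your ``only if'' argument (strictness in the \cref{thm:limit} step propagates through the chain) is correct. The genuine gap is the ``if'' direction, which you correctly isolate but leave open: you reduce it to showing that $\sum_i u_i = \zero$ forces the corner simplex $S^{(j)}_0$ to be unimodularly equivalent to some $S(0,\a)$, so that \cref{coro:corner} becomes tight. That reduction cannot be carried out, because $\sum_i u_i=\zero$ with all $u_i$ primitive does \emph{not} imply that the $u_i$ with $i\neq j$ form a lattice basis. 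Concretely, take $d=2$ and
\[
u_0=(-1,-2),\qquad u_1=(2,1),\qquad u_2=(-1,1),
\]
all primitive with $u_0+u_1+u_2=\zero$, and $S=\conv(\{u_0,u_1,u_2\})$, which has the origin in its interior. All three determinants $|\det(U\setminus\{u_i\})|$ equal $3$, so any index is admissible in \cref{thm:limit}; take $j=0$. A direct computation gives
\[
\lim_{t\to 0}\ \frac12\,\frac{\Surf_{\Z^2}(S^{(0)}_t)}{\Vol_{\Z^2}(S^{(0)}_t)} \;=\; \frac12\cdot\frac{6+3+3}{3}\;=\;2,
\]
and \cref{thm:limit} indeed holds with equality here. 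But $S^{(0)}_0=\conv(\{\zero,(2,1),(-1,1)\})$ has $\mu(S^{(0)}_0)=\tfrac43<2$: its vertex directions span only an index-$3$ sublattice of $\Z^2$, so \cref{coro:corner} is strict, and the value $\tfrac43$ follows from \cref{coro:graph_method} with normalized volume $V=3$ and Cayley-graph diameter $\delta=2$. Hence the chain, and with it the asserted equality, is strict even though $\sum_i u_i=\zero$.

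So the sufficiency you flagged as ``the main obstacle'' is not merely hard; as stated it is false. Equality in \cref{coro:limit} additionally requires the $u_i$ to generate $\Z^d$ --- equivalently, that $S$ be unimodularly of the form $S(\a)$ --- in which case \cref{thm:lambda} and \cref{prop:corner} make the whole chain collapse exactly as you describe. You should know that the paper's own proof consists of precisely your chain for the inequality and is silent on the equality case; the printed ``if'' direction rests on the same non-sequitur (equality in the \cref{thm:limit} step does not force equality in the \cref{coro:corner} step), which your write-up has the merit of making explicit. A correct version of your proposal proves the inequality and the ``only if'' as you do, and then either adds the hypothesis that $u_0,\dots,u_d$ generate $\Z^d$ to the equality criterion or weakens ``if and only if'' to ``only if''.
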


\begin{proof}
This follows from \cref{thm:limit} since
\[
\lim_{t\to 0} \mu(S^{(j)}_t) = \mu(S^{(j)}_0) \le \frac{\sum_{i=0,i \neq j}^d \Vol_{\pi_i(\Z^d)}(\pi_i(S^{(j)}_0))} {\Vol_{\Z^d}(S^{(j)}_0)},
\]
where the last inequality is \cref{coro:corner}.
\end{proof}

\begin{remark}
\label{rem:limit}
\cref{eq:limit} is not true for all choices of $j$. Without any assumption on $j$
the proof of \cref{thm:limit} carries through up to the point where we say that \cref{eq:limit} would follow from \cref{eq:two_projections}, but the latter inequality is not  true in general. For a specific example, let $S=\conv(\{(0,-1), (1,1), (-1,1)\})$ and consider $j=0$. Then, for $i=1,2$,
\[ 
\Vol_{\pi_0(\Z^d)}(\pi_0(F_i)) = 1 < 2 =  \Vol_{\pi_i(\Z^d)}(\pi_i(F_i)).
\]
This gives
\[
\lim_{t\to 0} \frac12 \frac{\sum_{i=0}^d \Vol_{\pi_i(\Z^d)}(\pi_i(S^{(0)}_t))} {\Vol_{\Z^d}(S^{(0)}_t)} = \frac12\cdot\frac{2+2+2}{2} = \frac32,
\]
and
\[
\frac{\sum_{i=1}^d \Vol_{\pi_i(\Z^d)}(\pi_i(S^{(0)}_0))} {\Vol_{\Z^d}(S^{(0)}_0)} = \frac{2+2}{2} = 2.
\]
\end{remark}

We finally look at the intermediate case where $\zero$ is in the boundary of $S= \conv(\{v_0,\ldots,v_d\})$ but not a vertex. 
%
We can generalize \cref{conj:volume} to

\begin{introconjecture}
\label{conj:volume2}
Let $S = \conv(\{v_0,\ldots,v_d\})$ be a $d$-simplex with $\zero\in S\setminus \{v_0,\ldots,v_d\}$, and with rational vertex directions.
Let $\pi_i:\R^d \to \R^{d-1}$ be the linear projection vanishing at~$v_i$. Let $I\subset\{0,\dots,d\}$ be the set of labels of facets of $S$ containing $\zero$. 
Then
\begin{align}
\mu(S) \leq \frac12 \frac{\sum_{i=0}^d \Vol_{\pi_i(\Z^d)} (\pi_i(S)) + \sum_{i\in I} \Vol_{\pi_i(\Z^d)} (\pi_i(S))}
{\Vol_{\Z^d}(S)}.
\label{eq:volume2}
\end{align}
\end{introconjecture}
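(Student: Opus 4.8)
The plan is to deduce \eqref{eq:volume2} from \cref{conj:volume} in lower dimension by cutting $S$ along the face that contains the origin and applying \cref{lem:projection}. First I would record the combinatorics of the position of $\zero$. Let $\lambda_0,\dots,\lambda_d\ge0$, $\sum_i\lambda_i=1$, be the barycentric coordinates of $\zero$ in $S$. A facet $F_i$ (the one opposite $v_i$) contains $\zero$ exactly when $\lambda_i=0$, so $I=\{i:\lambda_i=0\}$ and $J:=\{0,\dots,d\}\setminus I=\{i:\lambda_i>0\}$. The face $G:=\conv\{v_i:i\in J\}$ is the unique face of $S$ having $\zero$ in its relative interior; it is a $k$-simplex with $k=|J|-1$ and with rational vertex directions. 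I would set $L:=\lin(G)$, a rational subspace, and let $\pi:\R^d\to\R^d/L$ be the projection with kernel $L$. Then $S\cap L=G$, while $\pi(S)$ is a $(d-k)$-simplex whose vertices are $\zero$ (the image of the whole face $G$) and the points $\pi(v_i)$, $i\in I$; in particular $\pi(S)$ is a simplex with the origin as a \emph{vertex}.

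Next I would apply \cref{lem:projection} to $\pi$, obtaining
\[
\mu(S)\ \le\ \mu(G,\Z^d\cap L)\ +\ \mu(\pi(S),\pi(\Z^d)).
\]
For the fiber term, $G$ has $\zero$ in its relative interior and rational vertex directions, so I would invoke \cref{conj:volume} in dimension $k$ to get $\mu(G,\Z^d\cap L)\le\frac12\,\Surf_{\Z^d\cap L}(G)/\Vol_{\Z^d\cap L}(G)$. For the projection term, $\pi(S)$ has the origin as a vertex, so \cref{coro:corner} applies \emph{unconditionally} and bounds $\mu(\pi(S),\pi(\Z^d))$ by the corresponding quotient of discrete volumes.

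The remaining work is bookkeeping to identify these two bounds with the right-hand side of \eqref{eq:volume2}. Write $A_i:=\Vol_{\pi_i(\Z^d)}(\pi_i(S))$, $V:=\Vol_{\Z^d}(S)$, and $\ell_i:=\Vol_{\Z^d\cap\R v_i}(S\cap\R v_i)$, so that $A_i/V=1/\ell_i$ by the computation in \cref{lemma:ell} (which uses only that $S\cap\R v_i$ runs from the apex $v_i$ to the opposite facet). For $i\in J$ the line $\R v_i$ lies in $L$ and $S\cap\R v_i=G\cap\R v_i$, so $\ell_i$ is the same whether measured in $S$ or in $G$; hence the fiber bound equals $\frac{1}{2V}\sum_{i\in J}A_i$ exactly. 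For $i\in I$ one has $\zero\in F_i$, so $S\cap\R v_i=[\zero,v_i]$ and $\ell_i$ is the lattice length of this edge. The corner bound of \cref{coro:corner} contributes $\sum_{i\in I}1/a_i$, where $a_i$ is the lattice length of $[\zero,\pi(v_i)]$ in $\pi(\Z^d)$; since the $\pi$-image of a primitive vector in direction $v_i$ is a positive integer multiple of the primitive vector of $\pi(\Z^d)$ in direction $\pi(v_i)$, one gets $a_i\ge\ell_i$, so this term is at most $\frac{1}{V}\sum_{i\in I}A_i$. Adding the two estimates gives
\[
\mu(S)\ \le\ \frac{1}{2V}\sum_{i\in J}A_i+\frac{1}{V}\sum_{i\in I}A_i=\frac{1}{2V}\Big(\sum_{i=0}^d A_i+\sum_{i\in I}A_i\Big),
\]
which is precisely the bound in \eqref{eq:volume2}.

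I expect two points to require care. The argument is a \emph{reduction}: it proves \eqref{eq:volume2} in dimension $d$ assuming \cref{conj:volume} in dimension $k=\dim G$. Since $\zero$ is not a vertex we have $k\ge1$, and since $\zero$ lies on at least one facet we always have $k\le d-1$; thus the reduction is unconditional whenever $k\le2$ (using the trivial one-dimensional case and \cref{coro:volume_dim2}), and in general it reduces \eqref{eq:volume2} to \cref{conj:volume} itself. The one genuinely technical step is the primitivity estimate $a_i\ge\ell_i$ for $i\in I$; I would establish it from the determinant identity $\det(\Z^d)=\det(\pi(\Z^d))\det(\Z^d\cap L)$ together with the elementary fact that projecting a primitive lattice vector can only increase its length measured in lattice units. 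This is the only place where an \emph{inequality} (rather than an identity) enters, and it is exactly the slack that produces the extra factor on the facets through $\zero$.
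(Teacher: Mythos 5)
Your proposal is correct and takes essentially the same route as the paper's proof of \cref{prop:volume2}: both split $S$ along the unique face containing $\zero$ in its relative interior, apply \cref{lem:projection} to the projection killing that face, bound the fiber term by \cref{conj:volume} in lower dimension, and bound the projection term by the unconditional corner estimate of \cref{coro:corner}. The only cosmetic difference is in the last step, where you apply \cref{coro:corner} directly to $(\pi(S),\pi(\Z^d))$ and compare lattice lengths via $a_i \ge \ell_i$, while the paper identifies $\pi_I(S)$ with the simplex $S_{\overline I}$ regarded with respect to a (possibly) finer lattice and then invokes \cref{lemma:lattice_monotone} --- two phrasings of the same observation.
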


\begin{proposition}
\label{prop:volume2}
\cref{conj:volume2} $\Longleftrightarrow$ \cref{conj:volume}.
\end{proposition}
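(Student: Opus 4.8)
The plan is to prove the two implications separately, after first rewriting the right-hand side of~\eqref{eq:volume2} in the chord-length form of \cref{lemma:ell}. Let $S=\conv(\{v_0,\dots,v_d\})$ have $\zero$ in the relative interior of its minimal face $G$, let $I$ be the index set of \cref{conj:volume2}, and for each $i$ let $\ell_i$ be the lattice length of the chord $S\cap\R v_i$ (for $i\in I$, where $\zero$ lies on the facet $F_i$ opposite $v_i$, this chord is simply $[\zero,v_i]$). The computation proving \cref{lemma:ell} only uses that $u_i=\R v_i\cap\aff(F_i)$ is a single point and that the projection is along $v_i$; it never uses that $\zero$ is interior. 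Hence $\Vol_{\pi_i(\Z^d)}(\pi_i(S))/\Vol_{\Z^d}(S)=1/\ell_i$ for \emph{every} $i$, and \eqref{eq:volume2} is equivalent to
\[
\mu(S)\le \frac12\sum_{i\notin I}\frac1{\ell_i}+\sum_{i\in I}\frac1{\ell_i}.
\]

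For \cref{conj:volume}$\Rightarrow$\cref{conj:volume2}, I would set $L=\lin(G)$, a rational subspace since the $v_j$ have rational directions, so that $G=S\cap L$ is an $m$-simplex with $m=d-|I|<d$ having $\zero$ in its relative interior. Applying \cref{lem:projection} to the rational projection $\pi\colon\R^d\to\R^d/L$ (whose kernel is $L$, so that $S\cap\pi^{-1}(\zero)=G$) gives
\[
\mu(S)\le \mu(G,\Z^d\cap L)+\mu(\pi(S),\pi(\Z^d)).
\]
I bound the first term by \cref{conj:volume} in dimension $m$; since $G\cap\R v_j=S\cap\R v_j$ for $j\notin I$, \cref{lemma:ell} identifies this with $\tfrac12\sum_{j\notin I}1/\ell_j$. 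For the second term I observe that $\pi(S)=\conv(\{\zero\}\cup\{\pi(v_i):i\in I\})$ has the origin as a \emph{vertex}, so the unconditional bound \cref{coro:corner} applies and yields $\sum_{i\in I}1/\tilde\ell_i$, where $\tilde\ell_i$ is the lattice length of $[\zero,\pi(v_i)]$ with respect to $\pi(\Z^d)$. Because $\pi(\Z^d\cap\R v_i)\subseteq\pi(\Z^d)\cap\R\pi(v_i)$ is a refinement, the primitive vector can only get shorter, so $\tilde\ell_i\ge\ell_i$ and $\sum_{i\in I}1/\tilde\ell_i\le\sum_{i\in I}1/\ell_i$. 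Adding the two bounds reproduces exactly the displayed right-hand side.

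For the converse \cref{conj:volume2}$\Rightarrow$\cref{conj:volume}, I would realize an arbitrary interior-origin $m$-simplex $G'$ as the base facet of a tall pyramid one dimension up. For $h\in\N$ put
\[
S_h=\conv\bigl(\{(w,0):w\text{ a vertex of }G'\}\cup\{(\zero,h)\}\bigr)\subseteq\R^{m+1},
\]
so that $\zero$ lies in the relative interior of the base facet $G'\times\{0\}$ and $I$ is the singleton apex index. A direct computation gives $\Vol_{\Z^{m+1}}(S_h)=h\,\Vol_{\Z^m}(G')$, the apex projection contributes $\Vol_{\Z^m}(G')$ (once in the main sum and once in the $I$-sum), and the $m+1$ horizontal projections contribute $h\,\Surf_{\Z^m}(G')$, so that \cref{conj:volume2} for $S_h$ reads
\[
\mu(S_h)\le \frac12\,\frac{\Surf_{\Z^m}(G')}{\Vol_{\Z^m}(G')}+\frac1h.
\]
On the other hand, the projection forgetting the last coordinate sends $S_h$ onto $G'$ and $\Z^{m+1}$ onto $\Z^m$, and covering radius cannot increase under a lattice projection (if $\mu K+\Lambda=\R^d$ then $\mu\,\pi(K)+\pi(\Lambda)=\R^\ell$), whence $\mu(G')\le\mu(S_h)$. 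Letting $h\to\infty$ yields $\mu(G')\le\tfrac12\,\Surf_{\Z^m}(G')/\Vol_{\Z^m}(G')$, which is \cref{conj:volume} for $G'$.

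The main obstacle is the volume bookkeeping underlying both reductions: one must verify that the projected simplices are non-degenerate with the claimed projected lattices, that \cref{lemma:ell} and the corner identity behind \cref{coro:corner} survive the degenerate configurations where $\zero$ sits on a facet, and — most delicately — that the lattice-refinement comparison $\tilde\ell_i\ge\ell_i$ points in the direction favorable to the inequality rather than against it. Once these are in place, the $h\to\infty$ squeeze in the converse is routine, resting only on the elementary monotonicity $\mu(\pi(K),\pi(\Lambda))\le\mu(K,\Lambda)$.
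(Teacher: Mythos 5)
Your proof is correct, and the substantive direction (\cref{conj:volume} $\Rightarrow$ \cref{conj:volume2}) is essentially the paper's own argument: the paper likewise splits $S$ along the minimal face $S_I=\conv(\{v_i : i\notin I\})$ containing the origin, applies \cref{lem:projection} to the projection vanishing on that face, bounds $\mu(S_I)$ by \cref{conj:volume} in lower dimension, and controls the complementary part by the corner bound. The only difference is bookkeeping: where you apply \cref{coro:corner} directly to $\pi(S)$ in the quotient and then compare chord lengths via the one-dimensional refinement $\pi(\Z^d\cap\R v_i)\subseteq\pi(\Z^d)\cap\R\pi(v_i)$ (and your inequality $\tilde\ell_i\ge\ell_i$ does point the right way), the paper instead observes that $\pi_I$ is an affine bijection from $S_{\overline I}=\conv(\{\zero\}\cup\{v_i : i\in I\})$ onto $\pi_I(S)$, so that $\pi_I(S)$ is $S_{\overline I}$ regarded with respect to a possibly finer lattice, and invokes monotonicity of the covering radius under lattice refinement (\cref{lemma:lattice_monotone}); the two devices are interchangeable. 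Where you genuinely deviate is the converse direction: \cref{conj:volume2} as stated only requires $\zero\in S\setminus\{v_0,\ldots,v_d\}$, so it allows $I=\emptyset$, and \cref{conj:volume} is literally that case; the paper dispatches this implication in one line. Your tall-pyramid construction $S_h$ with $h\to\infty$ is a correct but much longer substitute, resting on the (valid) monotonicity $\mu(\pi(K),\pi(\Lambda))\le\mu(K,\Lambda)$ and on volume bookkeeping that checks out ($\Vol_{\Z^{m+1}}(S_h)=h\Vol_{\Z^m}(G')$, horizontal projections contributing $h\Surf_{\Z^m}(G')$, the apex counted twice); its one payoff is that it derives \cref{conj:volume} using only the boundary cases $I\neq\emptyset$ of \cref{conj:volume2}, which the trivial argument does not. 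Finally, your preliminary extension of \cref{lemma:ell} to indices with $\zero\in F_i$ is exactly what the paper uses implicitly when it asserts that $\Vol_{\Z^d}(S)/\Vol_{\pi_i(\Z^d)}(\pi_i(S))$ equals the lattice length of $S\cap\lin(\{v_i\})$ for every $i$.
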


\begin{proof}
The implication \cref{conj:volume2} $\Longrightarrow$ \cref{conj:volume} is obvious, since the latter is the case $I=\emptyset$ of the former.

For the other implication, for each $i=0,\ldots,d$, let
\[
\ell_i  =\frac{\Vol_{\Z^d}(S)}{\Vol_{\pi_i(\Z^d)} (\pi_i(S))},
\]
which equals the lattice length of the segment $S \cap \lin(\{v_i\})$.
The inequality in \cref{conj:volume2} we want to prove becomes
\[
\mu(S) \leq \frac12 \sum_{i\not\in I} \frac1{\ell_i} + \sum_{i\in I} \frac1{\ell_i}.
\]
Let $S_I= \conv(\{v_i : i \not\in I\})$, and  $S_{\overline I}=\conv(\{\zero\} \cup \{v_i: i \in I\})$. Observe that 
$S_I$ equals the intersection of the facets of $S$ containing $\zero$, hence it is a $(d-|I|)$-simplex with $\zero$ in its relative interior. $S_{\overline I}$ is an $|I|$-simplex with~$\zero$ as a vertex. Hence, \cref{conj:volume} and \cref{prop:corner} respectively say:
\[
\mu(S_I) \le \frac12 \sum_{i \not\in I} \frac1{\ell_i} \qquad\text{and}\qquad
\mu(S_{\overline I}) \le \sum_{i \in I} \frac1{\ell_i}.
\]
Consider the linear projection $\pi_I: \R^d \to \R^{I}$ vanishing on $S_I$. By \cref{lem:projection}
\[
\mu(S) \le \mu(S_I) + \mu (\pi_I(S)),
\]
so it only remains to show that 
\[
\mu (\pi_I(S)) \le \mu(S_{\overline I}).
\]
This holds because $\pi_I$ is an affine bijection from $S_{\overline I}$ to $\pi_I(S)$, so that $\pi_I(S)$ can be considered to be the same as  $S_{\overline I}$ except regarded with respect to a (perhaps) finer lattice.
\end{proof}

\section{Covering minima of the simplex \texorpdfstring{$S(\a)$}{S(w)}}
\label{sec:lambda}

\subsection{The covering radius of \texorpdfstring{$S(\a)$}{S(w)}}\label{sec:lambda_proof}

We here prove \cref{thm:lambda} and thus compute the covering radius of $S(\a)=\conv(\left\{-\a_0 \one_d,\a_1 e_1,\ldots,\a_d e_d\right\})$.

\begin{proof}[Proof of \cref{thm:lambda}]
The simplex $S(\a)$ can be triangulated into the $d+1$ simplices
\[
S_i = \conv(\left\{\zero,\a_0 e_0, \a_1 e_1, \ldots, \a_d e_d\right\} \setminus \{\a_i e_i\}), \quad 0 \leq i \leq d,
\]
where $e_0 = -\one_d$.
Writing $[d]_0 := \{0,1,\ldots,d\}$, we define 
\[
\mathring{P}_i = \bigg\{ \sum_{j \in [d]_0 \setminus \{i\}} \alpha_j e_j : 0 \leq \alpha_j < 1 \bigg\}
\]
the half-open parallelotope spanned by the primitive edge directions of~$S_i$ incident to the origin. 
Let $i \in [d]_0$ be fixed. 
Then, for any $x \in \R^d$ there is a lattice point $v_i \in \Z^d$ such that $x \in v_i + \lambda S_i$ and the dilation factor $\lambda \geq 0$ is the smallest possible.
Let $L_i(x)$ be the set of all such lattice points~$v_i$.
For a fixed $v \in \Z^d$, we define
\[
R_i(v) = \left\{x \in \R^d : v \in L_i(x)\right\}
\]
to be the region of points that are associated to~$v$ in this way.

Explicitly these regions are translates of the $\mathring{P}_i$, more precisely we claim that $R_i(v)=v + \mathring{P}_i$, for all $i \in [d_0]$.

Indeed, let $x \in R_i(v)$, and let $\lambda \geq 0$ be smallest possible such that $x \in v + \lambda S_i$.
By the definition of $S_i$, we can write $x-v = \sum_{j \in [d]_0 \setminus \{i\}} \alpha_j e_j$, for some $\alpha_j \geq 0$.
If there would be an index~$j$ such that $\alpha_j \geq 1$, then $x \in v + e_j + \lambda S_i$ and the intersection of this simplex and $v + \lambda S_i$ is a smaller homothetic copy of $S_i$ containing~$x$.
Thus, $\lambda$ is not minimal and this contradiction implies that $x \in v + \mathring{P}_i$.
Conversely, if $x-v = \sum_{j \in [d]_0 \setminus \{i\}} \alpha_j e_j \in \mathring{P}_i$, and $\lambda \geq 0$ is minimal such that $x \in v + \lambda S_i$, then $x-v$ lies in the facet of $\lambda S_i$ not containing the origin.
Since $0 \leq \alpha_j < 1$, for all $j \in [d]_0 \setminus \{i\}$, the scalar $\lambda$ is not only minimal for~$v$, but for any lattice point.
Hence, $v \in L_i(x)$.


With this observation, the regions $R_i(v)$ are seen to be induced by the arrangement of the hyperplanes $\{x_i = a \}, \{x_i -x_j = a\}$ for all $j \in [d]_0 \setminus \{i\}$ and $a \in \Z$, where we define $x_0 = 0$. We call this arrangement $A_d^i$.
Moreover, for a point~$x$ in the interior of $R_i(v)$, the associated lattice point is unique, and we call it~$v_i(x)$.

The smallest common refinement $\A_d$ of the arrangements $\A_d^0,\ldots,\A_d^d$ is known as the \emph{alcoved arrangement} (see~\cite[Ch.~7]{becksanyal2018combinatorial} for a detailed description).
The full-dimensional cells of $\A_d$, also called its \emph{chambers}, are lattice translations of the simplices
\[
C_\pi =\conv\left(\left\{\zero, e_{\pi(1)}, e_{\pi(1)}+e_{\pi(2)}, \ldots, e_{\pi(1)}+\ldots+e_{\pi(d)}\right\}\right),
\]
where $\pi$ is a permutation of $\{1,\ldots,d\}$.

Each chamber of $A_d$ is the intersection of regions $R_i(v)$.
More precisely,
\begin{align*}
\inter(C_\pi) &= R_0(\zero) \cap R_{\pi(1)}(e_{\pi(1)}) \cap  \ldots \cap R_{\pi(d)}(e_{\pi(1)} +\ldots+e_{\pi(d)}) \\
&= \mathring{P}_0 \cap (e_{\pi(1)} + \mathring{P}_{\pi(1)} )\cap  \ldots \cap (e_{\pi(1)} +\ldots+e_{\pi(d)} + \mathring{P}_{\pi(d)}).
\end{align*}
Therefore, the chambers $C_\pi$ are exactly those regions of points in $\R^d$ that, for each $i \in [d]_0$, are associated to the same lattice point, that is, $v_i(x)=v_i(y)$ for all $x, y \in \inter(C_\pi)$.

After these preparations, we are ready to compute the covering radius of~$S(\a)$.
Note that, since $[0,1]^d$ is a fundamental cell of~$\Z^d$, we only need to find the smallest dilation factor $\mu$ so that the lattice translates of $\mu S(\a)$ cover the unit cube.
Moreover, we may focus on what happens within one chamber~$C_\pi$, and by symmetry we assume that $\pi = \id$.
Among all points in $C_\id = \conv\left(\left\{\zero, e_1, e_1+e_2, \ldots, e_1+\ldots+e_d\right\}\right)$, we are looking for a point~$y$ which is last covered by dilations of $S_i + e_{[i]}$, for some $i \in [d]_0$, and the factor of dilation needed.
Here, we write $e_{[i]} = e_1+\ldots+e_i$.
If we let $\ell_i : \R^d \to \R$ be the linear functional which takes value $1$ on the facet $F_i$ of $S(\a)$ that is opposite to $\a_i e_i$, this is equivalent to 
\[
y = \argmax_{x \in C_\id} \min_{i \in [d]_0} |\ell_i(x-e_{[i]})|.
\]
The key observation is that $y$ is the point where all the values $|\ell_i(y-e_{[i]})|$, $0 \leq i \leq d$, are equal.
This is because $\ell_i(x-e_{[i]})$ is nonnegative for $x \in C_\id$ and because there is a positive linear dependence among the functionals $\ell_i$, so there cannot be a point $y'$ where they all achieve a larger value than at a point where they all achieve the same value.
Therefore, ~$y$ satisfies the conditions 
\begin{align*}
\ell_0(y) = \ell_i(y-e_{[i]}), \quad \text{for every }1 \leq i \leq d.
\end{align*}
The explicit expression of the functionals $\ell_i$ is
\begin{align*}
\ell_0(x) = \sum_{j=1}^d \a_j^{-1} x_j \quad \text{and} \quad \ell_i(x) = \sum_{j \in [d] \setminus \{i\}} \a_j^{-1}x_j -\left(\sum_{j \in [d]_0 \setminus \{i\}}\a_j^{-1} \right) x_i.
\end{align*}
Thus we need to solve the system of the following equations:
\[
\sum_{j=1}^d \a_j^{-1} y_j  = \sum_{j \in [d] \setminus \{i\}} \a_j^{-1}y_j -\left(\sum_{j \in [d]_0 \setminus \{i\}}\a_j^{-1} \right) y_i + \a_0^{-1} + \sum_{j>i} \a_j^{-1}, \quad 1 \leq i \leq d.
\]
This system is solved by $y=(y_1,\ldots,y_d)$ with
\[
y_i = \frac{\a_0^{-1} + \a_{i+1}^{-1} + \ldots +\a_d^{-1}}{\a_0^{-1} + \a_1^{-1} + \ldots +\a_d^{-1}}.
\]
The value that the functionals take at $y$ is by what we said above the covering radius of $S(\a)$, and it is given by
\[
\mu(S(\a)) = \ell_0(y) =  \frac{\sum_{0 \leq i < j \leq d} \a_i^{-1}\a_j^{-1}}{\sum_{i=0}^d \a_i^{-1}},
\]
as desired.
\end{proof}

\begin{corollary}
\label{cor:conjC_primitiveSimplices}
Let $S \subseteq \R^d$ be a simplex with the origin it its interior and with rational vertex directions.
If the primitive vertex directions $p_0,p_1,\ldots,p_d$ of $S$ satisfy $p_0+p_1+\ldots+p_d=\zero$, then \Cref{conj:volume} holds for~$S$.
\end{corollary}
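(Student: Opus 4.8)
The plan is to reduce $S$ to a simplex of the form $S(\a)$ by an invertible linear transformation, exactly as in the case $(\alpha_0,\alpha_1,\alpha_2)=(1,1,1)$ treated in the proof of \Cref{coro:volume_dim2}, and then invoke the exact formula of \Cref{thm:lambda}. Concretely, write $v_i = \a_i p_i$, where $\a_i$ is the lattice length of $[\zero,v_i]$, and recall that the primitive vertex directions of $S(\a)=\conv(\{-\a_0\one_d,\a_1 e_1,\ldots,\a_d e_d\})$ are $-\one_d,e_1,\ldots,e_d$, which likewise sum to zero. First I would define the linear map $A$ by $Ae_i=p_i$ for $i=1,\ldots,d$; this is invertible since $p_1,\ldots,p_d$ are linearly independent. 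The hypothesis $p_0+p_1+\cdots+p_d=\zero$ then gives $A(-\one_d)=-\sum_{i=1}^d p_i=p_0$, so $A$ sends the full set of primitive directions of $S(\a)$ to those of $S$, and therefore maps each vertex $\a_i e_i\mapsto v_i$ and $-\a_0\one_d\mapsto v_0$. Hence $A\,S(\a)=S$.

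Next I would track the lattice. Set $\Lambda:=A\Z^d=\langle p_1,\ldots,p_d\rangle\subseteq\Z^d$, a sublattice since each $p_i\in\Z^d$. Because the covering radius is a $\GL_d(\R)$-invariant, $\mu(S,\Lambda)=\mu(A\,S(\a),A\Z^d)=\mu(S(\a),\Z^d)$, while \Cref{lemma:lattice_monotone} yields $\mu(S,\Z^d)\le\mu(S,\Lambda)$. Combining these with \Cref{thm:lambda} gives
\[
\mu(S,\Z^d)\;\le\;\mu(S(\a),\Z^d)\;=\;\frac12\,\frac{\Surf_{\Z^d}(S(\a))}{\Vol_{\Z^d}(S(\a))}.
\]

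It then remains to rewrite the right-hand side in terms of $S$ and the lattice $\Z^d$. By \Cref{cor:discrsurfarea_invariance} and the invariance of normalized volume under $A$ we have $\Surf_{\Z^d}(S(\a))=\Surf_\Lambda(S)$ and $\Vol_{\Z^d}(S(\a))=\Vol_\Lambda(S)$, so the bound reads $\mu(S)\le\frac12\,\Surf_\Lambda(S)/\Vol_\Lambda(S)$. The one genuine point to check --- and the step I expect to be the only real obstacle --- is that this ratio is unchanged when $\Lambda$ is replaced by $\Z^d$. Using the reformulation in \Cref{lemma:ell}, both ratios equal $\sum_{i=0}^d 1/\ell_i$, where $\ell_i$ is the lattice length of $S\cap\R v_i$ measured in the respective lattice. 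Since each $p_i$ is primitive not only in $\Z^d$ but also in $\Lambda$ (any $w\in\Lambda\subseteq\Z^d$ with $p_i=kw$ and $k\ge2$ would contradict primitivity of $p_i$ in $\Z^d$), the primitive vector in direction $v_i$ is the same for both lattices, whence $\ell_i$ does not depend on the choice between $\Lambda$ and $\Z^d$. Therefore $\Surf_\Lambda(S)/\Vol_\Lambda(S)=\Surf_{\Z^d}(S)/\Vol_{\Z^d}(S)$, and the displayed inequality becomes precisely the assertion of \Cref{conj:volume} for $S$. One could equally verify the invariance of the ratio directly: writing $m=[\Z^d:\Lambda]$, both $\Surf_\Lambda(S)$ and $\Vol_\Lambda(S)$ scale by a factor $1/m$ relative to their $\Z^d$-counterparts, and this common factor cancels.
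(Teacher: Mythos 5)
Your proposal is correct and takes essentially the same route as the paper's own proof: the same transformation $A e_i = p_i$ identifying $S$ with $A\,S(\a)$, the same comparison $\mu(S,\Z^d)\le\mu(S,\Lambda)=\mu(S(\a),\Z^d)$ for the sublattice $\Lambda=A\Z^d\subseteq\Z^d$, the same application of \Cref{thm:lambda}, and the same key observation that primitivity of the $p_i$ makes the lengths $\ell_i$ (equivalently the ratio $\Surf/\Vol$) independent of whether one works with $(S,\Z^d)$, $(S,\Lambda)$, or $(S(\a),\Z^d)$. The only difference is presentational: you justify that last invariance explicitly via \Cref{cor:discrsurfarea_invariance}, \Cref{lemma:ell}, and an index-scaling check, where the paper simply asserts it.
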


\begin{proof}
The proof is basically given already in \Cref{coro:volume_dim2}.
Consider the lattice $\Lambda$ generated by $p_0,p_1,\ldots,p_d$, and let $A$ be the linear transformation sending $e_i$ to $p_i$, for $i=1,\ldots,d$.
Then, $\Lambda = A\Z^d$ and $S=A S(\a)$ for a suitable $\a \in \R^{d+1}_{>0}$.
Since the $p_i$s are primitive, the lattice lengths $\ell_i=\frac{\Vol_{\Z^d}(S)}{ \Vol_{\pi_i(\Z^d)}(\pi_i(S))}$ are the same for every pair $(S,\Z^d)$, $(S,\Lambda)$, and $(S(\a),\Z^d)$.
Using that $\Lambda \subseteq \Z^d$ is a sublattice, we therefore apply \Cref{thm:lambda} and get
\[
\mu(S) \leq \mu(S,\Lambda) = \mu(S(\a),\Z^d) = \frac12 \sum_{i=0}^d \frac{1}{\ell_i}. \qedhere
\]
\end{proof}

Observe that \cref{thm:lambda} says that \cref{eq:volume} in \cref{conj:volume}  is an equality for simplices of the form $S(\a)$. 
Other simplices may also produce an equality,  as the triangle~$T = S(\one_2) \oplus S'(\one_2)$ shows:
\[
\frac12 \frac{\sum_{i=0}^2 \Vol_{\pi_i(\Z^2)} (\pi_i(T))} {\Vol_{\Z^2}(T)} = 
\frac12 \cdot \frac{3 + 3 + 2}{4} = 1 = \mu(T).
\]

\subsection{The covering product conjecture}
\label{sec:covprod}

The following conjecture was proposed in \cite{gonzalezschymura2017ondensities}, which was the initial motivation to compute the covering minima of the simplex~$S(\one_{d+1})$.

\begin{introconjecture}[\protect{\cite[Conj.~4.8]{gonzalezschymura2017ondensities}}]
\label{conj:cov_prod}
For every convex body $K \subseteq \R^d$,
\[
\mu_1(K)\cdot\ldots\cdot\mu_d(K)\cdot\vol(K) \geq \frac{d+1}{2^d}.
\]
Equality is attained for the simplex $S(\one_{d+1})$.
\end{introconjecture}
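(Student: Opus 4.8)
The plan is to prove the inequality by induction on the dimension~$d$, exploiting both the product structure of the functional and the fact that each covering minimum $\mu_i$ is a \emph{maximum} over projections, so that it may be bounded below by evaluating at any single projection of our choosing. First I would fix a rational surjection $\pi : \R^d \to \R^{d-1}$ with $\pi(\Z^d) = \Z^{d-1}$ and one-dimensional kernel spanned by a primitive lattice vector. For every $i \le d-1$ and every projection $\sigma : \R^{d-1} \to \R^i$, the composite $\sigma\circ\pi$ is an admissible projection of $K$, whence $\mu_i(\pi(K)) \le \mu_i(K)$. Together with $\mu_d(K) = \mu(K)$ this gives
\[
\prod_{i=1}^d \mu_i(K)\cdot\vol(K) \;\ge\; \mu(K)\,\vol(K)\cdot\prod_{i=1}^{d-1}\mu_i(\pi(K)).
\]
Applying the inductive hypothesis $\prod_{i=1}^{d-1}\mu_i(\pi K)\cdot\vol(\pi K) \ge \tfrac{d}{2^{d-1}}$ to the $(d-1)$-dimensional body $\pi(K)$ then reduces the entire statement to the single \emph{slicing inequality}
\[
\mu(K)\cdot\frac{\vol(K)}{\vol(\pi(K))}\;\ge\;\frac{d+1}{2d},
\]
which must hold for \emph{some} well-chosen $\pi$.

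The quantity $\vol(K)/\vol(\pi(K))$ is precisely the mean length (suitably normalised to the lattice) of the fibres $K\cap\pi^{-1}(x)$, so the slicing inequality says that the covering radius can be small only if $K$ is, on average, long in the fibre direction; this is the analytic heart of the argument. I would first verify it for $K = S(\one_{d+1})$ with $\pi$ the projection onto $S(\one_d)$: there $\mu(K) = d/2$ and $\vol(K)/\vol(\pi K) = (d+1)/d^2$, so the product equals exactly $\tfrac{d+1}{2d}$, which both fixes the base case (the identity $\mu_1\cdot\vol = 1$ in $\R^1$) and shows the constant is sharp. For a general body I would try to combine the elementary volume bound $\mu(K)\ge\vol(K)^{-1/d}$ with a one-dimensional covering estimate along the chosen primitive direction — covering in that direction forces $\mu(K)$ times an average height of $K$ to exceed the lattice spacing — and then optimise the choice of $\pi$ so that the two estimates balance out to land on $\tfrac{d+1}{2d}$.

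Because the conjectured extremiser $S(\one_{d+1})$ is a simplex, I would in parallel attempt to reduce the extremal problem to simplices. The functional $(K,\Lambda)\mapsto \prod_i\mu_i(K,\Lambda)\cdot\vol(K)/\det\Lambda$ is invariant under $\GL_d(\R)$ acting simultaneously on body and lattice, so there is no loss in normalising; one then hopes that a variational (facet-pushing) argument decreases the product until $K$ degenerates to a lattice simplex. On simplices the covering minima of the lower levels can be obtained from the same alcoved-arrangement computation that underlies \Cref{thm:lambda}, allowing the product to be evaluated explicitly; the reduction to $S(\a)$-type simplices via \Cref{cor:conjC_primitiveSimplices} then makes both the bound and the equality characterisation tractable.

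The main obstacle is the slicing inequality in the generality of \emph{arbitrary} convex bodies. While lower-bounding each $\mu_i$ by a single projection is automatic, controlling the coupling between $\mu(K)$ and the fibre-length average $\vol(K)/\vol(\pi(K))$ for a non-polytopal body appears to demand a genuinely new covering estimate: Hadwiger's bound (\Cref{thm:hadwiger}) runs in the wrong direction for a lower bound on $\mu$, and the available Minkowski-type second-theorem arguments only yield a constant weaker than $(d+1)/2^d$. I would therefore regard establishing this sharp slicing inequality, and separately proving that the infimum of the functional is attained at a simplex, as the two decisive and currently unresolved steps.
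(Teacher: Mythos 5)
The statement you set out to prove is \cref{conj:cov_prod}, which is an \emph{open} conjecture: the paper itself does not prove it, recording only that it holds for $d=2$ by a result of Schnell and establishing it for the special simplices $S(\a)$ in \cref{cor:lambda_cov_prod}. Your proposal, as you candidly acknowledge, reduces the conjecture to two unproven claims that carry the entire difficulty: the sharp slicing inequality $\mu(K)\cdot\vol(K)/\vol(\pi(K)) \ge \frac{d+1}{2d}$ for some well-chosen rational projection~$\pi$, and the reduction of the extremal problem to simplices. Neither is supplied, and neither is available in the paper or in the literature it cites; in particular, the $\GL_d$-invariance of the functional does not by itself yield a facet-pushing monotonicity, because shrinking~$K$ increases every $\mu_i(K)$ while decreasing $\vol(K)$, so the product is not monotone in either direction and a variational argument needs a genuinely new idea. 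One further point you should flag: your verification at $K = S(\one_{d+1})$ only shows that your chain of \emph{lower} bounds is tight there, i.e.\ that the method cannot produce a better constant; it does not show that equality is \emph{attained}, since that requires the exact values $\mu_i(S(\one_{d+1})) = \frac{i}{2}$ for all $i < d$, which is precisely the open \cref{conj:minima} --- a dependence the paper makes explicit immediately after \cref{cor:lambda_cov_prod}.

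That said, the skeleton of your induction is arithmetically sound ($\frac{d}{2^{d-1}} \cdot \frac{d+1}{2d} = \frac{d+1}{2^d}$, and your fibre computation $\vol(S(\one_{d+1}))/\vol(S(\one_d)) = (d+1)/d^2$ checks out against $\Vol_{\Z^d}(S(\a)) = \sigma_d(\a_0,\ldots,\a_d)$), and it is in essence how the paper proves its partial result: your bound $\mu_i(K) \ge \mu_i(\pi(K))$ is inequality~\eqref{eqn:covmins_lambda} there, obtained from coordinate projections of $S(\a)$, and for $K = S(\a)$ your slicing inequality becomes the explicitly verified telescoping estimate~\eqref{eqn:covmins_lambda_2}, namely $\sigma_{j-1}(\a_0,\ldots,\a_j)/\sigma_{j-1}(\a_0,\ldots,\a_{j-1}) \ge \frac{j+1}{2}$, which is accessible only because \cref{thm:lambda} gives $\mu(S(\a))$ in closed form. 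So restricted to the family $S(\a)$ your program terminates and recovers \cref{cor:lambda_cov_prod}; for a general convex body it stalls exactly at the slicing step, where, as you correctly note, \cref{thm:hadwiger} points in the wrong direction and no substitute estimate with the sharp constant is known. The honest conclusion is that your proposal organizes the problem well and isolates the right obstructions, but it proves the conjecture only in the cases where the paper already does.
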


\cref{conj:cov_prod} is known to hold for $d=2$ \cite{schnell1995a}.
We show it in arbitrary dimension for the simplices $S(\a)$.

\begin{corollary}
\label{cor:lambda_cov_prod}
For every $\a \in \R^{d+1}_{>0}$, we have
\[
\mu_1(S(\a)) \cdot \ldots \cdot \mu_d(S(\a)) \cdot \Vol_{\Z^d}(S(\a)) \geq \frac{(d+1)!}{2^d}.
\]
Equality can hold only if $\a_0 = \a_1 = \ldots = \a_d$.
\end{corollary}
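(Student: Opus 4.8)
The plan is to normalize the volume and turn the covering-product inequality into a statement about the covering minima $\mu_j(S(\a))$, for which I would derive closed formulas from the known formula for the covering radius. The starting observation is that $S(\a)$ projects nicely: forgetting the right set of coordinates sends $S(\a)$ to a lower-dimensional simplex of the same type. Concretely, I expect that for any subset of the vertices, the relevant projection of $S(\a_0,\ldots,\a_d)$ is again a simplex $S(\a')$ where $\a'$ is the subtuple of weights corresponding to the surviving vertices (with the identifications forced by the projection). Granting this, \cref{thm:lambda} gives an explicit value for the covering radius of each such projection, and $\mu_j(S(\a))$ is the maximum of these values over all projections to dimension $j$.

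The main computation is therefore to evaluate, for each $j$, the quantity
\[
\mu_j(S(\a)) = \max \frac{\sum_{i<i'} \frac{1}{\a_i'\a_{i'}'}}{\sum_i \frac{1}{\a_i'}},
\]
where the maximum runs over the $(j+1)$-element subtuples $\a'$ of $(\a_0,\ldots,\a_d)$, using the formula of \cref{thm:lambda} applied in dimension $j$. I would then form the product $\prod_{j=1}^d \mu_j(S(\a))$ and multiply by $\Vol_{\Z^d}(S(\a))$. The normalized volume of $S(\a)$ is itself an explicit symmetric expression in the $\a_i$; I expect $\Vol_{\Z^d}(S(\a)) = \prod_{i=0}^d \a_i \cdot \left(\sum_{i=0}^d \frac{1}{\a_i}\right)$ (up to a combinatorial factor one checks on the unimodular case $\a=\one_{d+1}$, where the product must equal $(d+1)!/2^d$ to match \cref{conj:cov_prod} with $\Vol$ replacing $\vol$). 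The goal is then a clean inequality among these symmetric functions of $\a_0,\ldots,\a_d$.

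I expect the \textbf{main obstacle} to be twofold. First, identifying which subtuple $\a'$ attains the maximum defining each $\mu_j$: the ratio in \cref{thm:lambda} is a monotone-type functional, so I anticipate that the maximum over $(j+1)$-subsets is achieved by dropping the vertices with the largest weights (equivalently, keeping the smallest $\a_i$, which correspond to the ``longest'' vertex directions), but this monotonicity needs to be verified carefully. Second, once all factors are made explicit, the resulting inequality is a symmetric-function inequality in $d+1$ positive reals, and proving $\prod_{j=1}^d \mu_j \cdot \Vol \ge (d+1)!/2^d$ will require an AM--GM or Maclaurin-type argument. The natural route is to reduce to elementary symmetric functions $e_k$ of the reciprocals $\tfrac{1}{\a_i}$ and apply Maclaurin's inequalities; the factor $\frac{1}{2}$ appearing in each covering radius should conspire with the $2^d$ in the denominator.

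The equality analysis follows from the equality cases of whatever symmetric-function inequality is used: AM--GM-type and Maclaurin inequalities are tight precisely when all the variables coincide, which forces $\frac{1}{\a_0} = \cdots = \frac{1}{\a_d}$, i.e.\ $\a_0 = \a_1 = \cdots = \a_d$, matching the claimed necessary condition. I would note that this only gives a \emph{necessary} condition for equality (as the statement of \cref{cor:lambda_cov_prod} asserts), since the maximizing-subtuple step may introduce inequalities that are strict unless the weights are balanced, and I would not need to decide sufficiency.
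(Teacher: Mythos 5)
Your overall frame does match the paper's: lower-bound each $\mu_j(S(\a))$ by the covering radius of a coordinate projection, evaluate that via \cref{thm:lambda}, use $\Vol_{\Z^d}(S(\a)) = \prod_{i}\a_i\cdot\sum_i \a_i^{-1} = \sigma_d(\a_0,\ldots,\a_d)$ (your volume formula is correct), and finish with a symmetric-function inequality. But your first step overstates what is known. The asserted \emph{equality}
\[
\mu_j(S(\a)) \;=\; \max_{\a'} \mu(S(\a'))
\]
over $(j+1)$-element subtuples $\a'$ is not available: $\mu_j$ is by definition a maximum over \emph{all} rational linear projections to dimension $j$, and the claim that coordinate projections attain this maximum is exactly the paper's \cref{conj:mu_j_lambda}, which is stated there as open (settled only for $j=1$ and $j=d$). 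This particular flaw is repairable, because for the product lower bound you only need ``$\geq$'': the coordinate projection of $S(\a)$ onto the coordinates indexed by $I$ (the vertex $-\a_0\one_d$ always survives) is again $S(\a_I)$, so $\mu_j(S(\a)) \geq \mu(S(\a_I))$ by definition. For the same reason your anticipated ``first obstacle'' --- identifying which subtuple maximizes the ratio --- never needs to be resolved: for a lower bound you simply \emph{choose} a subtuple, and after sorting $\a_0 \leq \cdots \leq \a_d$ the paper takes $I = \{1,\ldots,j\}$, i.e.\ the $j+1$ smallest weights.

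The genuine gap is your second step: the entire content of the corollary is the symmetric-function inequality, and ``apply AM--GM or Maclaurin'' does not produce it. Maclaurin's inequalities compare elementary symmetric functions of \emph{different degrees in a fixed variable set}, whereas the product you must control involves degree-$(j-1)$ and degree-$j$ symmetric functions of \emph{nested, growing variable sets} $(\a_0,\ldots,\a_j)$. What makes this tractable in the paper is a telescoping regrouping, not a classical inequality: with $\a_0\le\cdots\le\a_d$ and the lower bounds $\mu_j(S(\a)) \geq \sigma_{j-1}(\a_0,\ldots,\a_j)/\sigma_j(\a_0,\ldots,\a_j)$, the volume $\sigma_d(\a_0,\ldots,\a_d)$ cancels the last denominator and one rewrites
\[
\prod_{j=1}^d \frac{\sigma_{j-1}(\a_0,\ldots,\a_j)}{\sigma_j(\a_0,\ldots,\a_j)}\cdot \sigma_d(\a_0,\ldots,\a_d)
\;=\;
\prod_{j=1}^{d} \frac{\sigma_{j-1}(\a_0,\ldots,\a_{j})}{\sigma_{j-1}(\a_0,\ldots,\a_{j-1})},
\]
after which each factor equals $1 + \a_j\,\sigma_{j-2}(\a_0,\ldots,\a_{j-1})/\sigma_{j-1}(\a_0,\ldots,\a_{j-1}) \geq 1 + \binom{j}{2}/j = \frac{j+1}{2}$, because $\a_j$ dominates all earlier weights; multiplying these bounds gives $(d+1)!/2^d$. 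The equality statement also comes from this term-wise bound (it is strict for $j=d$ unless $\a_0=\cdots=\a_d$), not from an equality analysis of Maclaurin. Until you either find this regrouping or carry out a genuinely different proof of the inequality in full, your proposal remains a plan rather than a proof.
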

\begin{proof}
Since every permutation of the vertices of $S(\one)$ is a unimodular transformation, and since the considered product functional is invariant under unimodular transformations, we can assume that $\a_0 \leq \a_1 \leq \ldots \leq \a_d$.
By \Cref{thm:lambda}, the covering radius of $S(\a)$ is given by
\[
\mu(S(\a)) = \frac{\sigma_{d-1}(\a_0,\a_1,\ldots,\a_d)}{\sigma_d(\a_0,\a_1,\ldots,\a_d)},
\]
where $\sigma_j(\a_0,\a_1,\ldots,\a_d) = \sum_{0\leq i_1 < \ldots < i_j \leq d} \prod_{\ell=1}^j \a_{i_\ell}$ is the \emph{$j$-th elementary symmetric function} in the $\a_i$'s.
Writing $\a_I = (\a_0,\a_{i_1},\ldots,\a_{i_j})$, for every index set $I = \{i_1,\ldots,i_j\} \subseteq \{1,\ldots,d\}$, $|I|=j$, we project onto the $j$-dimensional coordinate plane indexed by~$I$ and obtain $\mu_j(S(\a)) \geq \mu_j(S(\a_I))$.
In particular, choosing $I=\{1,\ldots,j\}$, we have
\begin{align}
\mu_j(S(\a)) \geq \frac{\sigma_{j-1}(\a_0,\a_1,\ldots,\a_j)}{\sigma_j(\a_0,\a_1,\ldots,\a_j)}.\label{eqn:covmins_lambda}
\end{align}
Next, in view of $\a_j \geq \a_{j-1} \geq \ldots \geq \a_0$, we get
\begin{align}
\frac{\sigma_{j-1}(\a_0,\ldots,\a_j)}{\sigma_{j-1}(\a_0,\ldots,\a_{j-1})} &= \frac{\sigma_{j-1}(\a_0,\ldots,\a_{j-1}) + \a_j \, \sigma_{j-2}(\a_0,\ldots,\a_{j-1})}{\sigma_{j-1}(\a_0,\ldots,\a_{j-1})} \nonumber \\
  &= 1 + \frac{\a_j \, \sigma_{j-2}(\a_0,\ldots,\a_{j-1})}{\sigma_{j-1}(\a_0,\ldots,\a_{j-1})} \geq 1+\frac{\binom{j}{2}}{j} = \frac{j+1}{2}, \label{eqn:covmins_lambda_2}
\end{align}
with strict inequality unless $\a_j = \a_{j-1} = \ldots = \a_0$.

Finally, computing the volumes of the pyramids over the $d+1$ facets of $S(\a)$ with apex at the origin, we obtain $\Vol_{\Z^d}(S(\a)) = \sigma_d(\a_0,\a_1,\ldots,\a_d)$.
Combining this with~\eqref{eqn:covmins_lambda} and~\eqref{eqn:covmins_lambda_2} yields
\begin{align*}
\mu_1(S(\a)) \cdot \ldots \cdot \mu_d(S(\a)) \cdot \Vol_{\Z^d}(S(\a)) &\geq \prod_{j=1}^d \frac{\sigma_{j-1}(\a_0,\ldots,\a_j)}{\sigma_j(\a_0,\ldots,\a_j)} \sigma_d(\a_0,\ldots,\a_d) \\
&= \prod_{j=1}^{d} \frac{\sigma_{j-1}(\a_0,\ldots,\a_{j})}{\sigma_{j-1}(\a_0,\ldots,\a_{j-1})} \geq \frac{(d+1)!}{2^d}.
\end{align*}
Furthermore, equality can only hold if $\a_0 = \a_1 = \ldots = \a_d$ as otherwise~\eqref{eqn:covmins_lambda_2} would be strict for $j=d$.
\end{proof}

Note that if \Cref{conj:minima} holds, then the simplex $S(\one_{d+1})$ attains equality in \Cref{cor:lambda_cov_prod} (this was the original motivation in~\cite{gonzalezschymura2017ondensities} to state \Cref{conj:minima}).

With the notation of the proof above, for each $I \subseteq\{1,\ldots,d\}$, $|I| = j$, we have $\mu_j(S(\a_I)) \leq \mu_j(S(\a_0,\a_1,\ldots,\a_j))$, just because $S(\a) \subseteq S(\bar\a)$, whenever $\a_i \leq \bar\a_i$, for all~$i$.
Therefore, the bound in~\eqref{eqn:covmins_lambda} is maximal among coordinate projections of $S(\a)$.
This suggests the following common generalization of \Cref{conj:minima} and \Cref{thm:lambda}.

\begin{conjecture}
\label{conj:mu_j_lambda}
For every $\a \in \R^{d+1}_{>0}$ with $\a_0 \leq \a_1 \leq \ldots \leq \a_d$, and every $j \in \{1,\ldots,d\}$, the $j$-th covering minimum of the simplex $S(\a)$ is attained by the projection to the first $j$ coordinates. That is:
\[
\mu_j(S(\a)) = \mu_j(S(\a_0,\dots, \a_j)) = \frac{\sigma_{j-1}(\a_0,\a_1,\ldots,\a_j)}{\sigma_j(\a_0,\a_1,\ldots,\a_j)}.
\]
\end{conjecture}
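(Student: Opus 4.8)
The lower bound in the claimed equality, as well as the closed value, is already established in the proof of \cref{cor:lambda_cov_prod}, so the plan is to isolate what genuinely remains. The linear projection $\pi\colon\R^d\to\R^j$ onto the first $j$ coordinates sends $S(\a)$ to $\conv(\{-\a_0\one_j,\a_1e_1,\dots,\a_je_j,\zero\})=S(\a_0,\dots,\a_j)$ (the vertices $\a_{j+1}e_{j+1},\dots,\a_de_d$ all map to $\zero$, which is interior), and $\pi(\Z^d)=\Z^j$. As this image is a $j$-dimensional simplex, \cref{thm:lambda} gives
\[
\mu_j(S(\a))\ \ge\ \mu\bigl(S(\a_0,\dots,\a_j)\bigr)=\frac{\sum_{0\le i<k\le j}\a_i^{-1}\a_k^{-1}}{\sum_{i=0}^j\a_i^{-1}}=\frac{\sigma_{j-1}(\a_0,\dots,\a_j)}{\sigma_j(\a_0,\dots,\a_j)},
\]
the last equality using $\sum_{i<k}\a_i^{-1}\a_k^{-1}=\sigma_{j-1}/\sigma_{j+1}$ and $\sum_i\a_i^{-1}=\sigma_j/\sigma_{j+1}$ for the $j+1$ variables $\a_0,\dots,\a_j$. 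This is exactly \eqref{eqn:covmins_lambda}, and the hypothesis $\a_0\le\dots\le\a_d$ guarantees, as noted after \cref{cor:lambda_cov_prod}, that this coordinate projection already dominates all other coordinate projections. Hence the whole content of the conjecture is the reverse inequality: that \emph{no} admissible projection beats this one.

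For the upper bound I would induct on the codimension $c=d-j$, with base case $c=0$ being \cref{thm:lambda}. The engine of the induction is the factorization
\[
\mu_j(S(\a),\Z^d)=\max_{\rho}\ \mu_j\bigl(\rho(S(\a)),\rho(\Z^d)\bigr),
\]
the maximum running over rational linear projections $\rho\colon\R^d\to\R^{j+1}$ with $\rho(\Z^d)$ a lattice. This holds because every admissible $j$-projection of $S(\a)$ factors as $\bar\pi\circ\rho$ through such a $\rho$: pick a basis of the saturated sublattice $\ker\pi\cap\Z^d$, drop one basis vector, and let $\rho$ be the quotient by the span of the rest. Each body $\rho(S(\a))$ lives in $\R^{j+1}$, so its $j$-th covering minimum is a codimension-one instance, and one would like to feed it to the inductive hypothesis.

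The main obstacle is that $\rho(S(\a))$ is in general a $(j+1)$-polytope with up to $d+1$ vertices, hence \emph{not} of the form $S(\cdot)$, so the inductive hypothesis does not apply to it directly. I would try to close the gap using the monotonicity $A\subseteq B\Rightarrow\mu_j(A)\ge\mu_j(B)$ that the paper already exploits: it suffices to inscribe in $\rho(S(\a))$ a simplex of the form $S(\b)$ whose governing parameters dominate $(\a_0,\dots,\a_j)$ coordinatewise, for then $\mu_j(\rho(S(\a)))\le\mu_j(S(\b))\le M_0$ by induction, where $M_0=\sigma_{j-1}(\a_0,\dots,\a_j)/\sigma_j(\a_0,\dots,\a_j)$. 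A dual route is the explicit picture behind \cref{thm:lambda}: $\mu_j$ is the largest dilation $M$ for which the uncovered set $\R^d\setminus(M\,S(\a)+\Z^d)$ still contains a full translate of an admissible $(d-j)$-dimensional rational subspace, and one would verify from the alcoved description of that set that the threshold is attained precisely by the fiber of the first-$j$-coordinate projection.

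I should be candid that a \emph{complete} proof along these lines is not within reach: setting $\a=\one_{d+1}$ collapses the statement to $\mu_j(S(\one_{d+1}))=\tfrac{j}{2}$, which is exactly \cref{conj:minima} and is presently known only up to dimension three (\cref{thm:one_smalldim}). Thus the hard core of \cref{conj:mu_j_lambda} is at least as difficult as \cref{conj:minima}, and the plan above is best read as a reduction of the general weighted statement to the unweighted one, the decisive unresolved step being the containment (or uncovered-set) analysis that forces the optimal projection to be a coordinate one.
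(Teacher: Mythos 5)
You have correctly identified the essential point: this statement is a \emph{conjecture} in the paper, and the paper itself contains no proof of it. What the paper does establish is essentially what your proposal establishes. The lower bound $\mu_j(S(\a)) \ge \sigma_{j-1}(\a_0,\dots,\a_j)/\sigma_j(\a_0,\dots,\a_j)$ is obtained in the proof of \cref{cor:lambda_cov_prod} (inequality \eqref{eqn:covmins_lambda}) by the same projection onto the first $j$ coordinates that you use, and the discussion following that proof records, just as you do, that the ordering $\a_0\le\dots\le\a_d$ together with the monotonicity $S(\a)\subseteq S(\bar\a)$ for coordinatewise $\a\le\bar\a$ makes this the dominant coordinate projection. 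Your identification of the image $\pi(S(\a))=S(\a_0,\dots,\a_j)$, the fact that $\pi(\Z^d)=\Z^j$, and the symmetric-function bookkeeping are all correct, as is your factorization identity expressing $\mu_j(S(\a),\Z^d)$ as a maximum of $\mu_j$ over rational projections to $\R^{j+1}$.

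Your verdict on the upper bound is also the right one: specializing $\a=\one_{d+1}$ turns the conjectured equality into $\mu_j(S(\one_{d+1}))=j/2$, which is \cref{conj:minima} and is known only in dimension up to three (\cref{thm:one_smalldim}); hence no complete argument is available, to the authors or to you. Two points of comparison. First, the paper additionally proves the case $j=1$: for $\a_0\le\dots\le\a_d$ the claim becomes $\mu_1(S(\a))=1/(\a_0+\a_1)$, which reduces, since \eqref{eqn:covmins_lambda} supplies the lower bound, to the inequality $\det(\Z^d\,|\,L_z)\le \|S(\a)\,|\,L_z\|/(\a_0+\a_1)$ for all primitive $z\in\Z^d\setminus\{\zero\}$, and this follows from an elementary computation with orthogonal projections onto the line $L_z$; your scheme does not recover this case, since your induction is anchored only at codimension zero. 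Second, the gap you flag in your induction is genuine and is precisely where the whole difficulty lives: $\rho(S(\a))$ is a $(j+1)$-polytope with up to $d+1$ vertices, and there is no general method to inscribe in it a simplex $S(\b)$ with parameters dominating $(\a_0,\dots,\a_j)$ coordinatewise --- if there were, the case $\a=\one_{d+1}$ would already settle \cref{conj:minima}. So your proposal should be read as what it honestly claims to be: a correct account of the known partial results (minus the $j=1$ case) together with a reduction heuristic, which is consistent with the statement's status in the paper as an open conjecture rather than a theorem.
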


Besides the case $j=d$ (\Cref{thm:lambda}) also the case $j=1$ of \cref{conj:mu_j_lambda} holds.
Assuming that $\a_0 \leq \a_1 \leq \ldots \leq \a_d$, it states that $\mu_1(S(\a))=\frac{1}{\a_0+\a_1}$.
Since~\eqref{eqn:covmins_lambda} provides the lower bound, this is equivalent to
\[
\det(\Z^d | L_z) \leq \frac{\| S(\a) | L_z \|}{\a_0+\a_1},
\]
for all primitive $z \in \Z^d\setminus\{\zero\}$, where $L_z = \lin\{z\}$.
In view of $\det(\Z^d | L_z) = \|z\|^{-1}$ and $e_i | L_z = \frac{z_i}{\|z\|^2} z$, it follows from an elementary computation.

\section{\texorpdfstring{\cref{conj:k}}{Conjecture \ref{conj:k}}: Lattice polytopes with \texorpdfstring{$k$}{k} interior lattice points}
\label{sec:k_points}

This section is devoted to prove \cref{conj:k} in dimension two.
The conjectured maximum covering radius $\frac{d-1}2+\frac1{k+1}$ is attained, in arbitrary dimension, by the polytopes of the form
\[
[0,k+1] \oplus T_1\oplus \dots \oplus T_m,
\]
where each  $T_i$ is a non-hollow lattice $d_i$-polytope of covering radius $d_i/2$, with $\sum_{i=1}^m d_i=d-1$. The different $T_i$ can be translated to have their (unique) interior lattice point at different positions along the segment $[0,k+1]$ in much the same way as in the examples of \Cref{lemma:minimum_3d}. In the following we analyze the possibilities in dimensions two and three:

\begin{example}
\label{exm:dim2-k}
In dimension two we have a single $T_i$, the segment $[-1,1]$, but we can place it at different heights with respect to $[0,k+1]$.  For each~$k$ we can construct $\lfloor (k+3)/2 \rfloor$ non-isomorphic lattice polygons with $k$ interior lattice points and of covering radius $\frac12 + \frac1{k+1}$, namely:
\[
\conv(\{(0,0), (0,k), (-1,i), (1,i)\}), \quad i=0,\dots, \lfloor (k+1)/2 \rfloor.
\]
The case $i=0$ coincides with the triangle $M_k(0)$; the cases $i>0$ produce kite-shaped quadrilaterals.

Observe that the triangle $M_k(1)\cong S(k,1,1)$ is very similar to $M_k(0)$ but has  smaller area. One could expect it to achieve a larger covering radius but it does not, as computed in \cref{rem:M_k-dim2}:
\[
\mu(M_k(1)) = \frac{k+2}{2k+1} < \frac{k+3}{2k+2} =  \frac12 + \frac1{k+1}, \ \ \text{ if } k > 1.
\]
\end{example}

\begin{example}
\label{exm:dim3-k}
In dimension three we can have $[0,k+1]\oplus T$ with $\dim(T)=2$ or $[0,k+1]\oplus T_1\oplus T_2$ with $\dim(T_1)=\dim(T_2)=1$.

If the latter happens then $T_1=T_2=[-1,1]=I$ and, again, they can be placed at different heights along the segment $[0,k+1]$. Depending on whether  $T_1$ and $T_2$ intersect $[0,k+1]$ in the interior or at an end-point
this gives quadratically many octahedra or linearly many  triangular bipyramids, plus the square pyramid $[0,k+1] \oplus (I \oplus I)$ and the tetrahedron $M_k(0,0)$.

In the case $[0,k+1]\oplus T$, $T$ can be either $S(\one_3)$ or $I\oplus I'$; the case $T=I \oplus I$ being already covered above. This produces two tetrahedra $[0,k+1] \oplus S(\one_3)$ and $[0,k+1] \oplus I \oplus I'$, plus linearly many triangular bipyramids.

As happened in dimension two, the computations of \cref{rem:M_k-dim3} show that $M_k(1,0)$ and $M_k(1,1)$ have covering radius strictly smaller than  $1+\frac1{k+1}$, even if their volume is smaller than that of $M_k(0,0)$.
\end{example}

Since \cref{conj:one} holds in the plane (\cref{coro:ConjA_dim2}), 
to prove  \cref{conj:k} in dimension two it suffices to consider lattice polygons with at least two interior lattice points.
More precisely, we show:

\begin{theorem}
\label{thm:dim2-k}
Let $P$ be a non-hollow lattice polygon with $k \geq 2$ interior lattice points.
Then $\mu(P)\le \frac12+\frac1{k+1}$, with  equality if and only if $P$ is the direct sum of two lattice segments of lengths $2$ and $k+1$.
\end{theorem}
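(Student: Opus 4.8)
The plan is to derive the bound from a single, well-chosen instance of the projection inequality \Cref{lem:projection}, and then to read off the equality case from the places where that estimate can be tight. Let $m$ be the maximal number of interior lattice points of $P$ lying on one affine line. Since any two points are collinear and $k\ge 2$, we have $2\le m\le k$. I would pick a line $L$ attaining this maximum, let $u$ be the primitive lattice vector parallel to $L$, and translate $P$ so that an interior point on $L$ is the origin. Projecting along $u$ via $\pi\colon\R^2\to\R^2/\R u\cong\R$ (so that $\pi(\Z^2)=\Z$, as $u$ is primitive), \Cref{lem:projection} with fiber $Q=P\cap L$ gives
\[
\mu(P)\le \mu(Q,\Z u)+\mu(\pi(P),\Z).
\]

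For the two one-dimensional factors: the fiber $Q$ contains the $m$ interior points of $P$ on $L$ in its relative interior, hence has lattice length at least $m+1$, so $\mu(Q,\Z u)\le \frac1{m+1}$. For the image, maximality of $m$ forces each line parallel to $u$ to carry at most $m$ interior points; thus the $k$ interior points occupy at least $\lceil k/m\rceil$ distinct fibers, and as the extreme ones lie interior to $P$ their images lie interior to $\pi(P)$. Hence $\pi(P)$ has lattice length at least $\lceil k/m\rceil+1$ and $\mu(\pi(P),\Z)\le \frac1{\lceil k/m\rceil+1}$. It then remains to verify the elementary inequality
\[
\frac1{m+1}+\frac1{\lceil k/m\rceil+1}\le \frac12+\frac1{k+1}\qquad(2\le m\le k),
\]
which, after bounding $\frac1{m+1}\le \frac{\lceil k/m\rceil}{k+\lceil k/m\rceil}$ via $\lceil k/m\rceil\ge k/m$, reduces to a routine one-variable estimate maximized at the boundary values of $m$: it is an equality at $m=k$ and strict for $2\le m\le k-1$.

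For equality, the estimate shows $\mu(P)=\frac12+\frac1{k+1}$ forces $m=k$ (so all $k$ interior points are collinear on $L$), the fiber $Q$ to have lattice length exactly $k+1$, the image $\pi(P)$ to have lattice length exactly $2$, and \Cref{lem:projection} to be tight. After a unimodular change of coordinates this places $P$ in the strip $\{-1\le x\le 1\}$, with $P$ meeting both bounding lines and $P\cap\{x=0\}=\{0\}\times[0,k+1]$. I would then invoke tightness: by \Cref{lemma:simplex_monotone} and \Cref{lemma:sum_monotone} the (possibly translated) direct sums of $[-1,1]$ and $[0,k+1]$ listed in \Cref{exm:dim2-k} are tight and already realise the value $\frac12+\frac1{k+1}$, so any polygon strictly containing one of them has strictly smaller covering radius. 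The remaining point is to show that a polygon attaining equality cannot make two-dimensional contact with either wall $x=\pm1$, so that it reduces exactly to this family.

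The inequality is routine once the projection direction is fixed; the genuine difficulty is the equality analysis. Converting the tightness of \Cref{lem:projection} into the precise statement that the wall contacts degenerate to single vertices — equivalently, that the only equality polygons are the translated direct sums of two segments of lengths $2$ and $k+1$ of \Cref{exm:dim2-k}, in agreement with the translation clause of \Cref{conj:k} — is where the real work lies, and is the step I would expect to require the most care.
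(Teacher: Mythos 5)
There is a genuine gap, and it sits at the heart of your argument: the claim that the fiber $Q=P\cap L$ has lattice length at least $m+1$. The segment $Q$ is not a lattice segment --- its endpoints are the points where $L$ crosses the boundary of $P$, and these need not be lattice points --- so from the fact that $Q$ contains $m$ lattice points in its relative interior you can only conclude that its lattice length exceeds $m-1$, giving $\mu(Q,\Z u)<\tfrac{1}{m-1}$ rather than $\le\tfrac{1}{m+1}$. (Your bound for the image is fine, since $\pi(P)$ is a genuine lattice segment with integral endpoints.) This failure is fatal exactly in the case $m=k$ where equality must be analyzed. Concretely, for the parallelogram $P=\conv(\{(-1,0),(-1,k),(1,1),(1,k+1)\})$ all $k$ interior lattice points lie on $\{0\}\times\R$, but the fiber is $\{0\}\times[\tfrac12,k+\tfrac12]$, of lattice length exactly $k$, so your projection estimate gives only $\mu(P)\le\tfrac12+\tfrac1k>\tfrac12+\tfrac1{k+1}$; for the triangle $M_k(1)=\conv(\{(-1,0),(1,1),(0,k+1)\})$ the fiber is $\{0\}\times[\tfrac12,k+1]$ and the estimate gives $\tfrac12+\tfrac{2}{2k+1}$, again above the target. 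Both polygons do satisfy the theorem --- the parallelogram because $\tfrac12 P$ contains a fundamental domain, hence $\mu(P)\le\tfrac12$, and $\mu(M_k(1))=\tfrac{k+2}{2k+1}$ by \cref{thm:lambda} (see \cref{rem:M_k-dim2}) --- but no single application of \cref{lem:projection} certifies either fact, so the theorem cannot be obtained from one well-chosen instance of that lemma.

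This is precisely why the paper's proof is a five-step case analysis rather than a single projection. The paper does use your estimate, but in the correct form $\mu(P)\le\tfrac1\omega+\tfrac1{m-1}$, where $m$ counts collinear \emph{lattice} points of $P$ (so the fiber through them has length at least $m-1$); that inequality closes only the cases where the width $\omega$ and $m$ are both large (\cref{lem:bigm}), while the width-two case (\cref{lemma:width2}), where all equality cases live, needs fundamental-domain arguments, the exact formula of \cref{thm:lambda}, and tightness of triangles and direct sums, with further ad hoc lattice-point arguments in \cref{lem:34}, \cref{lem:3collinear} and the final step. Your equality discussion inherits the same gap: to force the fiber to have length exactly $k+1$ you must rule out the half-integer-endpoint configurations above, and tightness of the extremal bodies (\cref{lemma:simplex_monotone}, \cref{lemma:sum_monotone}) only disposes of polygons that \emph{contain} an extremal body --- the parallelogram above contains none, yet it has width two and all interior points collinear, so it lies squarely inside the case your argument must, and cannot, handle.
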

%


Remember that a lattice polytope $P$ has \emph{(lattice) width} $\omega \in \N$ if there is an affine integer projection from~$P$ to the segment $[0,\omega]$ but not to $[0,\omega-1]$. Equivalently, 
the width is the reciprocal of the first covering minimum. Every non-hollow lattice polytope has width at least two. Our next two lemmas deal with the case of width exactly two.


\begin{lemma}
\label{lemma:width2-a}
For a non-hollow lattice polygon $P$ the following are equivalent:
\begin{enumerate}[i)]
\item $P$ has width equal to two.
\item The interior lattice points of $P$ are collinear.
\end{enumerate}
\end{lemma}

\begin{proof}
The fact that width two implies that all interior lattice points are collinear is straightforward to check. For the converse, without loss of generality assume that the $k$ interior lattice points of~$P$ are $(0,1), \dots, (0,k)$, with $k\ge 2$.
We claim that $P\subset[-1,1]\times \R$, which implies that $P$ has width two with respect to the first coordinate. Suppose to the contrary that $P$ has a lattice point $(x,y)$ with $|x|\ge 2$. Then the triangle with vertices $(0,1)$, $(0,2)$ and $(x,y)$ is not unimodular, which implies that it contains at least one lattice point other than its vertices, by Pick's formula (cf.~\cite[Ch.~1.4]{becksanyal2018combinatorial}). That point is necessarily in $\inter(P)$ and not on the line containing $(0,1)$ and $(0,2)$, a contradiction.
\end{proof}

\begin{lemma}
\label{lemma:width2-b}
\Cref{thm:dim2-k} holds if $P$ has width two.
\end{lemma}

\begin{proof}
We keep the convention from the previous proof that the interior lattice points in $P$ are given by $(0,1), \dots, (0,k)$, which implies that $P\subset [-1,1]\times[0,k+1]$.
Let $S$ be the segment $P\cap(\{0\}\times \R)$, which contains all the interior lattice points.
Observe that one endpoint of $S$ is either $(0,0)$ or $(0,1/2)$ and the other is either $(0,k+1)$ or $(0,k+1/2)$.
We distinguish three cases, depending on whether none, one, or both of them are lattice points:
\begin{itemize}

\item If exactly one endpoint is a lattice point, then $P$ contains a copy of $M_k(1)$, whose covering radius is strictly smaller than $\frac12+\frac1{k+1}$ (see \cref{exm:dim2-k}).

\item If no endpoint is a lattice point, then $S=\{0\} \times [1/2, k+1/2]$, and $P$ is the convex hull of its two edges containing the endpoints of $S$. Without loss of generality we assume
\[
P= \conv(\{(-1,0), (1,1), (-1,a), (1, 1+b)\}),
\]
where $a$ and $b$ are nonnegative integers with $a+b=2k$. There are two possibilities: If $a=b=k$, then $P$ is a parallelogram of covering radius at most $1/2$, because $\frac12 P$ contains a fundamental domain of~$\Z^2$. If $a\ne b$, then one of them, say $a$, is at least $k+1$. In this case, $P$ contains the triangle $\conv(\{(-1,0), (-1,a), (1,1)\})$ whose covering radius is bounded by $1/2 + 1/a \le 1/2 + 1/(k+1)$. Since triangles are tight by \Cref{lemma:simplex_monotone}, equality can only hold when $P$ coincides with this triangle, implying $b=0$. But in that case $a=2k$ and $1/2 + 1/a < 1/2 + 1/(k+1)$, since $k\geq2$.

\item If both endpoints of $S$ are lattice points, then they are given by $(0,0)$ and $(0,k+1)$. Applying \cref{lem:projection} to the projection that forgets the second coordinate gives the upper bound: The fiber $S$ has length $k+1$ and the projection of $P$ has length $2$. For the case of equality, observe that if $P$ has lattice points $u\in \{-1\}\times \R$ and $v\in \{1\}\times \R$ such that the mid-point of $uv$ is integral, then $P$ contains (an affine image of) the direct sum of $[-1,1]$ and a segment of length $k+1$. Since that direct sum is tight by \Cref{lemma:sum_monotone},~$P$ either is given by this direct sum or it has strictly smaller covering radius. 

Thus, we can assume that $P$ does not have such points $u$ and $v$. This implies that $P$ has a single lattice point on each side of $S$. Without loss of generality we can assume 
\[
P= \conv(\{(0,0), (0,k+1), (-1,0), (1,a)\}),
\]
for an odd $a \in [1, 2k+1]$. We claim that the \emph{proof} of \cref{lem:projection} implies that $\mu(P)$ is \emph{strictly smaller} than $\lambda:=1/2 + 1/(k+1)$. Indeed, that proof is based on the fact that $\lambda P$ contains the following parallelogram $Q$, which is a fundamental domain for $\Z^2$:
\[
Q=\conv\left(\left\{
      \left(-\frac12,0\right),
      \left(-\frac12,1\right),
      \left(\frac12,\frac{a}2\right),
      \left(\frac12,1+\frac{a}2\right)
      \right\}\right).
\]
But we can argue that, moreover, the vertices of $Q$ are its only points not contained in the interior of $\lambda P$, and that each of these vertices is in the interior of some lattice translation of $\lambda P$ because the vertical offset of the left and right edges of $Q$ is not an integer. This implies $\lambda$ to be strictly larger than $\mu(P)$.\qedhere
\end{itemize}
\end{proof}

\noindent For the rest of the proof of \cref{thm:dim2-k}, we can assume $\omega\ge 3$.
Let $m$ be the maximum number of collinear lattice points in our polygon $P$. Applying \cref{lem:projection} to the projection along  the line containing those $m$ points gives:
\begin{align}
\mu(P) \le \frac1\omega + \frac1{m-1}.\label{eq:m_omega}
\end{align}
Another useful fact is that along the direction that attains the width $\omega$ there are $\omega-1$ parallel lines intersecting the interior of $P$, each of them contains at most $m$ lattice points, and with every lattice point of $P$ lying on one of those lines. Thus:
\begin{align}
k \le (\omega-1)m.\label{eq:bounded_k}
\end{align}

\noindent These two bounds are enough to show that:

\begin{lemma}
\label{lem:bigm}
\Cref{thm:dim2-k} holds if $m\ge 4$, except perhaps for $(\omega,m) = (3,4)$.
\end{lemma}

\begin{proof}
By \cref{eq:m_omega}, the statement is trivial unless
\[
\frac1\omega + \frac1{m-1} >\frac12.
\]
There are five  integer solutions of this equation with $\omega \geq 3$ and $m-1\ge 3$:
\[(\omega,m-1)\in \{(3,3), (3,4), (4,3), (3,5), (5,3)\}.\] 

\noindent We only need to look at the last four:
\begin{itemize}

\item If $(\omega,m-1)\in \{(3,5), (5,3)\}$ then \cref{eq:m_omega} gives $\mu(P) \le 1/3+1/5 = 1/2 + 1/30$. This is smaller than $1/2 + 1/(k+1)$, because \cref{eq:bounded_k} gives, respectively, $k\le 12$ and $k\le 16$.

\item If $(\omega,m-1)\in \{(3,4), (4,3)\}$ then  $\mu(P) \le 1/3+1/4 = 1/2 + 1/12$. For $(3,4)$ this is enough since  \cref{eq:bounded_k} gives $k\le 10$. For $(4,3)$, however,  \cref{eq:bounded_k} gives $k\le 12$, so we still need to consider the cases $k=11$ or $12$. For these we use the following argument:
$\omega=4$ implies that, along the direction where $\omega$ is attained, we have three intermediate lattice lines intersecting $P$. Along these lines we have to place our $k\ge 11$ points, and no more than $4$ on each line (because $m=4$). Thus, each line gets at least three points. This makes $P$ contain a parallelogram $Q$ with 
two parallel edges of lattice length two and of width two with respect to the direction of those edges. We have that $Q$ is a fundamental domain of~$(2\Z)^2$, which  implies $\mu(P) \le \mu(Q) \le 1/2$. 
\qedhere
\end{itemize}
\end{proof}


\noindent Thus, the cases that remain are $m\le 3$ or $(\omega, m) = (3,4)$. These can be proven with a case study that we only sketch here.
The details can be found in \cref{sect:app-details-section-6} that is not contained in the published version~\cite{published} of this article.
The case study goes as follows:
\begin{itemize}
\item For the case $(\omega,m) = (3,4)$, in \cref{lem:34} we show that one of the following three things happen:
\begin{itemize}
\item $k < 5$, in which case $\mu(P) \le 1/3 + 1/3 < 1/2 + 1/(k+1)$.
\item $P$ contains a fundamental domain $Q$ of~$(2\Z)^2$. As in the last paragraph of the previous proof, this implies $\mu(P) \le \mu(Q) \le 1/2$.
\item $P$ has four collinear lattice points along one of the two intermediate lines in the direction attaining the width, and (at least) three of them are interior to $P$. In this case the intersection of $P$ with that line has length at least  $3+1/3=10/3$, so  \cref{eq:m_omega} can be strengthened to
\[
\mu(P) \le \frac13 + \frac3{10} = \frac{19}{30}  < \frac12 + \frac17.
\]
This gives the statement if $k \in \{5,6\}$.
In the case $k \geq 7$ we must have four collinear lattice points in one of the two intermediate lines, so that we can further improve \cref{eq:m_omega} using $11/3$ for the length. Then: 
\[
\mu(P) \le \frac13 + \frac3{11} = \frac{20}{33}  < \frac12 + \frac19.
\]
This is enough since $(\omega,m) = (3,4)$ implies $k\le 8$, by \cref{eq:bounded_k}.
\end{itemize}

\item The case $m\le 2$ is trivial: it implies that $P$ does not have two lattice points in the same class modulo $2\Z\times 2\Z$, so it has at most four lattice points. The only non-hollow lattice polygon with at most four lattice points is~$S(\one_3)$.

\item For the case $m = 3$, in \cref{lem:3collinear} we show that  $\omega\ge 3$ and $m=3$ imply that $P$ cannot have three collinear interior lattice points. Since the interior lattice points cannot all be collinear (by \cref{lemma:width2-a}), they must form either a unimodular triangle, a unit parallelogram, or $S(\one_3)$. Thus, $P$ is contained in one of the three polygons of~\cref{fig:m=3}. From there, ad-hoc arguments show that always $\mu(P) < 1/2 + 1/(k+1)$, see \cref{lem:ad-hoc}.

\begin{figure}[ht]
\includegraphics[scale=0.5]{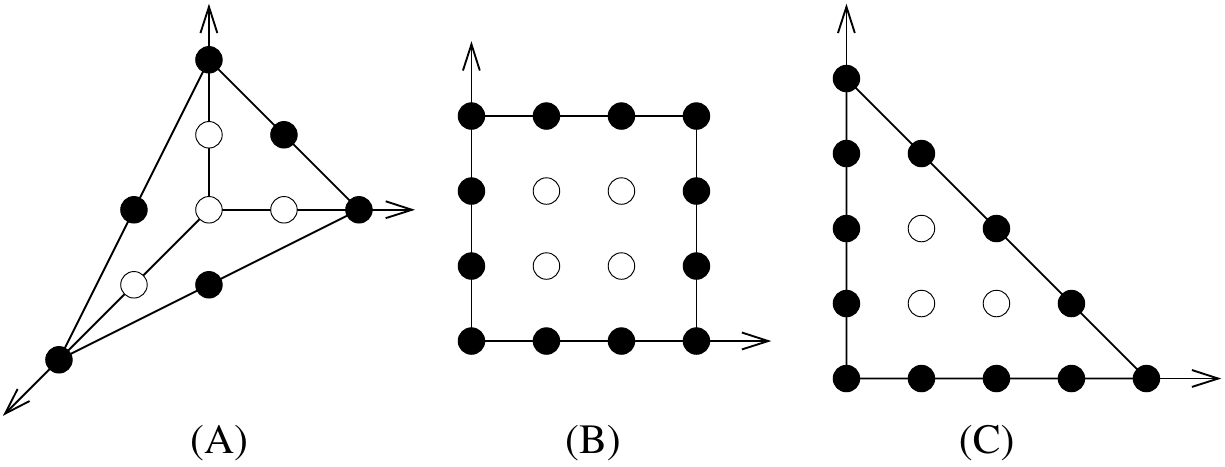}
\caption{The three possibilities in the case $m=3$.}
\label{fig:m=3}
\end{figure}
\end{itemize}

\begin{remark}
\label{rem:castryck}
Lattice polygons with $m \leq 3$ contain at most nine lattice points in total, since they cannot have two points in the same residue class modulo $(3\Z)^2$. In particular, they have $k\le 6$. On the other hand, the polytopes with $(\omega,m)=(3,4)$ have $k \leq 8$ by \cref{eq:bounded_k}. 
Thus, the cases not covered by \cref{lemma:width2-b} and \cref{lem:bigm} have between $3$ and $8$ interior lattice points.
Castryck~\cite{castryck2012movingout} enumerated all lattice polygons with $k\le 30$ up to unimodular equivalence, and showed that there are $120 + 211 + 403 + 714 + 1023 + 1830$ of them with $k$ equal to $3$, $4$, $5$, $6$, $7$, and $8$. Hence, the arguments sketched above can be replaced by a computer-aided computation of the covering radius of these $4301$ polygons. (In fact, the covering radius needs only to be computed for those with $m\in\{3,4\}$).
\end{remark}

%
%

\appendix

\section{The covering radius via a mixed-integer program}
\label{sec:26computations}

Here we describe an algorithmic approach to the computation of covering radii based on a formulation of $\mu(P)$ as the optimal value of a mixed-integer program.
This formulation is already implicit in Kannan's paper~\cite[Sect.~5]{kannan1992latticetranslates}.

Let $P = \{x \in \R^d : a_i^\intercal x \leq b_i, 1 \leq i \leq m\}$ be a polytope with outer facet normals $a_i \in \R^d$ and right hand sides $b_i \in \R$.
Without loss of generality, we assume that $b_i > 0$, that is, $P$ contains the origin in its interior.
Since~$P$ is bounded, there exists a finite subset $N_P \subseteq \Z^d$ such that $\mu(P) P + N_P$ contains the unit cube $[0,1]^d$.

\begin{proposition}
\label{prop:MIP}
The covering radius $\mu(P)$ is equal to the optimal value of the following linear mixed-integer program:
\begin{equation*}
\begin{array}{rrcll}
\displaystyle \mathrm{maximize}  & \mu \\
\mathrm{s.t.} & a_i^\intercal x & \geq & \mu b_i + a_i^\intercal \ell - M(1-y_i^\ell), & \forall i=1,\ldots,m, \, \forall \ell \in N_P \\
& \sum_{i=1}^{m} y_i^\ell & \geq & 1, & \forall \ell \in N_P \\
& y_i^\ell & \in & \{0,1\}, & \forall i=1,\ldots,m, \, \forall \ell \in N_P \\
& x & \in & [0,1]^d . & \\
\end{array}
\end{equation*}
The constant $M > 0$ is chosen large enough such that every non-active inequality involving $M$ is redundant.
\end{proposition}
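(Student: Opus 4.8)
The plan is to show that the optimal value of the mixed-integer program equals $\mu(P)$ by unwinding the definition of the covering radius in terms of \emph{last covered points} and then checking that the constraints of the program encode exactly the condition ``$x$ is not yet covered at dilation factor $\mu$.'' Recall that $\mu(P)$ is the largest $\mu\ge 0$ such that $\mu\cdot P + \Z^d$ fails to cover all of $\R^d$; equivalently, since $[0,1]^d$ is a fundamental domain of $\Z^d$ and $N_P$ is chosen so that $\mu(P)\cdot P + N_P \supseteq [0,1]^d$, we may restrict attention to points $x\in[0,1]^d$ and lattice translates $\ell\in N_P$. The key reformulation is that a point $x$ is \emph{not} in the interior of $\ell + \mu\cdot P$ precisely when $x$ violates at least one of the facet inequalities of that translate, i.e.\ $a_i^\intercal(x-\ell)\ge \mu b_i$ for some $i$.

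First I would make the geometric translation precise. For a fixed $x\in[0,1]^d$, define
\[
f(x) \;=\; \min_{\ell\in N_P}\ \max_{1\le i\le m}\ \frac{a_i^\intercal(x-\ell)}{b_i}.
\]
The quantity $\max_i a_i^\intercal(x-\ell)/b_i$ is the smallest $\lambda\ge 0$ such that $x\in \ell+\lambda P$ (it is the gauge of $P$ evaluated at $x-\ell$), so $f(x)$ is the smallest dilation factor at which $x$ becomes covered by \emph{some} translate in $N_P$. Consequently $x$ is last covered at factor $f(x)$, and $\mu(P)=\max_{x\in[0,1]^d} f(x)$. This identity is the conceptual heart of the proof and I would state it as the main lemma feeding into the MIP.

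Next I would verify that the program computes exactly $\max_{x\in[0,1]^d} f(x)$. For a fixed $x$ and $\ell$, the binary variable $y_i^\ell$ is intended to indicate that inequality $i$ is the one witnessing $a_i^\intercal(x-\ell)\ge \mu b_i$. The constraint $\sum_i y_i^\ell \ge 1$ forces at least one such witness for every $\ell$, which is precisely the condition $\max_i a_i^\intercal(x-\ell)/b_i \ge \mu$ for every $\ell$, i.e.\ $f(x)\ge \mu$. The big-$M$ term makes the constraint $a_i^\intercal x \ge \mu b_i + a_i^\intercal\ell - M(1-y_i^\ell)$ vacuous when $y_i^\ell=0$ and active when $y_i^\ell=1$, provided $M$ exceeds the range of $\mu b_i + a_i^\intercal(\ell - x)$ over the feasible region. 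Maximizing $\mu$ subject to $f(x)\ge\mu$ for some admissible $x$ then returns $\max_x f(x)=\mu(P)$.

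The main obstacle will be the careful justification of two points: that the finite set $N_P$ of translates suffices (so that the minimum in $f$ need not range over all of $\Z^d$), and that the big-$M$ constant can indeed be chosen uniformly so that every inactive constraint is genuinely redundant over the box $x\in[0,1]^d$. For the first point I would argue that, since $\mu(P)P$ is bounded, any point of $[0,1]^d$ can only be covered last by a translate $\ell$ with $\ell$ within bounded distance, and $N_P$ was defined to contain all relevant such $\ell$; increasing $N_P$ cannot change the optimum because it only adds more (satisfiable) covering constraints. For the second point, an explicit bound such as $M > \max_i\bigl(\mu(P)\,b_i + \max_{\ell\in N_P} a_i^\intercal\ell - \min_{x\in[0,1]^d} a_i^\intercal x\bigr)$ works, and I would note that any sufficiently large $M$ leaves the optimal value unchanged. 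With these two technical checks in place, the equivalence between the geometric description of $\mu(P)$ and the optimal value of the program follows directly.
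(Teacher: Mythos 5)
Your proposal is correct and follows essentially the same route as the paper's proof: restrict to the unit cube and the finite translate set $N_P$, characterize non-coverage by a translate $\ell + \mu P$ as the violation of at least one facet inequality, and encode that disjunction with the binary variables $y_i^\ell$ and the big-$M$ constraints. The only cosmetic difference is that you package the argument via the gauge function $f(x)=\min_{\ell\in N_P}\max_i a_i^\intercal(x-\ell)/b_i$, which absorbs the strict-versus-non-strict inequality and supremum-versus-maximum issue that the paper instead settles by a compactness remark at the end.
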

\begin{proof}
By the periodicity of the arrangement $\mu P + \Z^d$, we get that
\[
\mu(P) = \min\{\mu \geq 0 : [0,1]^d \subseteq \mu P + N_P\}.
\]
Hence, the covering radius equals the minimal $\mu \geq 0$ such that for all $x \in [0,1]^d$ there exists an $\ell \in N_P$ such that $x \in \mu P + \ell$.
This gives a mixed-integer program with infinitely many constraints.
In order to turn it into a finite program, we may also interpret the covering radius as the supremum among $\mu \geq 0$ such that there exists an $x \in [0,1]^d$ such that $x \notin \mu P + N_P$.

Modeling this non-containment condition can be done as follows:
For a fixed $\ell \in N_P$, we have $x \notin \mu P + \ell$ if and only if there exists a defining inequality of $P$ that is violated, that is, there exists an $i \in \{1,\ldots,m\}$ such that $a_i^\intercal x > \mu b_i + a_i^\intercal \ell$.
Introducing the binary variable $y_i^\ell$ for each $1 \leq i \leq m$ and each $\ell \in N_P$, and using a large enough constant $M > 0$, this is modeled by the first two lines in the program, as the condition $\sum_{i=1}^{m} y_i^\ell \geq 1$ ensures that at least one inequality is violated for~$\ell$.

We can replace the supremum by a maximum and the strict inequality $a_i^\intercal x > \mu b_i + a_i^\intercal \ell$ by a non-strict one, since $P$ is compact and the covering radius is in fact an attained maximum.
\end{proof}

In order to make this formulation effective, we need to find a suitable finite subset $N_P \subseteq \Z^d$:
A point $x \in [0,1]^d$ is contained in $z + \mu(P)P$, for some $z \in \Z^d$, if and only if $z \in [0,1]^d - \mu(P)P$.
Hence, for any theoretically proven upper bound $\mu(P) \leq \mu$, we can solve the mixed-integer program in \Cref{prop:MIP} with respect to $N_P = ([0,1]^d - \mu P) \cap \Z^d$ and obtain the covering radius of~$P$.

\section{Graphical method for covering radii of simplices}
\label{sec:graph-method}

Let $T=\conv(\{v_0,v_1,\ldots,v_d\})$ be a lattice simplex of normalized volume $V=\Vol_\Lambda(T)$ with respect to a certain lattice $\Lambda$. The affine map defined by $v_0 \mapsto \zero$ and $v_i \mapsto V \cdot e_i$, $i=1,\dots,d$, sends $T$ to the dilated standard simplex $V \cdot\conv(\{\zero,e_1,\dots,e_d\})$ and $\Lambda$ to an intermediate lattice between~$V\Z^d$ and~$\Z^d$, which we still denote by $\Lambda$. Observe that $\Lambda_T:= \Lambda/ V\Z^d$ is a subgroup of $\Z^d/ V\Z^d = (\Z_V)^d$ of order $V$ and that
\[
\Z^d / \Lambda = (\Z_V)^d / (\Lambda/ V\Z^d)
\]
is, hence, a finite abelian group of order $V^{d-1}$.
The \emph{Cayley digraph} $G$ associated with the quotient group $\Z^d / \Lambda$ is defined as the directed graph with vertex set $\Z^d / \Lambda$ and edges $(x+\Lambda,x+e_i+\Lambda)$, for $x \in \Z^d$ and $1 \leq i \leq d$.
The following is a particular case of~\cite[Lem.~3 \& 4]{marklofstrom2013diameters} (cf.~also \cite[Thm.~4.11]{gonzalezschymura2017ondensities}):

\begin{lemma}
\label{lem:graph_method}
In these conditions, 
let $\delta(G)$ be the (directed) diameter of~$G$. That is, $\delta(G)$ is the maximum distance from $\zero$ to any other node of $G$.
Then, 
\[
\mu(T) = \frac{\delta(G)+d}V.
\]
\end{lemma}

\begin{proof}
The covering radius of the standard $d$-simplex $\conv(\{\zero,e_1,\dots,e_d\})$ with respect to the sublattice $\Lambda$ of $\Z^d$ equals $\delta(G)+d$. 
(This is the case $v=(1,\ldots,1)$ of \cite[Thm.~4.11]{gonzalezschymura2017ondensities}).
We divide this by $V$ since we are looking at the $V$th dilation of the standard simplex.
\end{proof}

Let $p \in T$ be a lattice point  in the lattice simplex $T=\conv(\{v_0,\dots,v_d\})$ from above.
For $i\in \{0,\dots,d\}$, let $a_i$ be the normalized volume of the pyramid with apex~$p$ over the facet of~$T$ opposite to $v_i$. 
Observe that the normalized volume of $T$ is given by $V=\sum_i a_i$. In fact, $\frac1V(a_0,\dots,a_d)$ is the vector of barycentric coordinates of $p$ in $T$.

\begin{lemma}
\label{lemma:cyclic}
If $\gcd(a_0, a_1, \dots, a_d) = 1$ then $T$ is equivalent to the simplex $V \cdot\conv(\{\zero,e_1,\dots,e_d\})$ with respect to the lattice $V\Z^d + (a_1,\dots,a_d)\Z$.

In particular, the graph $G$ of \Cref{lem:graph_method} equals the Cayley digraph  of $(\Z_V)^d / \langle(a_1,\dots,a_d)\rangle$ with respect to the standard generators.
\end{lemma}

\begin{proof}
The affine map $f$ sending $v_0$ to $\zero$ and every other $v_i$ to $Ve_i$   has $f(T)=V \cdot\conv(\{\zero,e_1,\dots,e_d\})$ and $f(p)=p':= (a_1,\dots,a_d)$, since $p$ has the same barycentric coordinates in $T$ as $p'$ has in $f(T)$.
Also, $\Lambda':= V\Z^d + (a_1,\dots,a_d)\Z$ is clearly a sublattice of $f(\Lambda)$, where $\Lambda$ is the ambient lattice of~$T$. 
We only need to show that $\Lambda' = f(\Lambda)$ and for this it is enough to check that the normalized volume of $f(T)$ with respect to $\Lambda'$ equals $V$. 

This normalized volume is the order of the quotient $\Lambda'/V\Z^d$, and this quotient is a cyclic group generated by $p' + V\Z^d$. Thus, the normalized volume is the smallest $k\in \N$ such that $k (a_1,\dots,a_d) \in V\Z^d$. We have $k=V$ since
$V=a_0+\dots+a_d$ gives $\gcd(V, a_1, \dots, a_d) = \gcd(a_0, a_1, \dots, a_d) = 1$.
\end{proof}

\cref{lemma:cyclic}  implies that a lattice simplex is determined, modulo unimodular equivalence, by the \emph{volume vector} $(a_0, \dots, a_d)$ of any lattice point $p$ in it, as long as $\gcd(a_0, \dots, a_d)=1$. Since $\Lambda/ V\Z^d$ is then cyclic with generator $p+V\Z^d$, we call $T$ the \emph{cyclic simplex generated by $(a_0, \dots, a_d)$}. We denote  it by $T(a_0, \dots, a_d)$ and denote
$G(V; a_1,\dots,a_d)$ the digraph in the statement.

In what follows we are interested in cyclic \emph{tetrahedra} $T(a,b,c,d)$.
When $a=1$,
%
\cref{lem:graph_method,lemma:cyclic} are particularly easy to apply, 
since then $G(V;a,b,c)$ coincides with the Cayley digraph of $(\Z_V)^2$ with respect to the generators $(1,0)$, $(0,1)$ and $(-b,-c)$. That is, $G(V; a,b,c)$ has $(\Z_V)^2$ as vertex set and from each vertex $(i,j)$ we have the following three arcs:
\[
(i,j) \to (i,j+1), \quad
(i,j) \to (i+1,j), \quad
(i,j) \to (i-b,j-c).
\]

\begin{example}
\label{exm:T1112}
\cref{fig:grids} shows the computation of the covering radii of $T(1,1,1,2)$ ($V=5$) and $T(1,1,2,3)$ ($V=7$):
a grid with $V^2$ cells represents the nodes of $G(5;1,1,1)$ and $G(7;1,1,2)$ with the origin at the south-west corner. 
The grid has to be regarded as a torus, so that every cell has an east, west, north and south neighbor. 
A step north or east  increases the distance from the origin by one, unless the cell that we move to can be reached by a shorter path. When this happens, the corresponding arc of $G(V; a,b,c)$ is not used in any shortest path from the origin, and we highlight n bold the corresponding wall between cells. 
Using this idea, one can compute the distance from the origin to each cell in a breadth-first search manner.

Observe that, by commutativity, we only need to consider paths that first use edges with step $(-b,-c)$ and then east or north steps. Thus, in order to verify that the distances we have put are correct in the whole diagram, 
only the distances along the path with steps $(-b,-c)$ starting at the origin need to be checked. 
The cells along that path have their distances also in bold, and they coincide with the cells with bold south and west walls. 
The path finishes when it arrives in a cell that can be more shortly reached from the origin by only east and north steps.

\begin{figure}[htb]
\begin{tabular}{cc}
\includegraphics[scale=0.5]{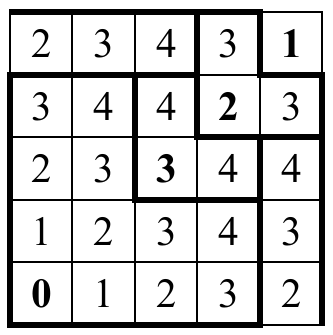} \quad&\quad
\includegraphics[scale=0.45]{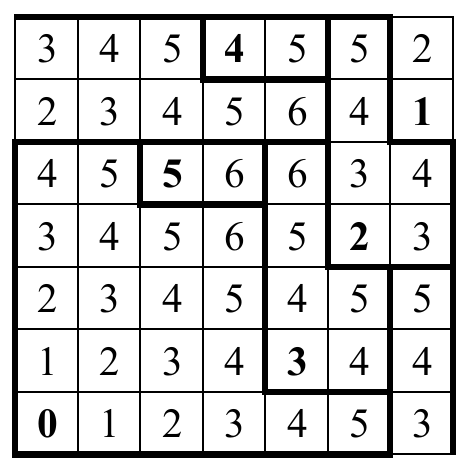}\\
$G(5;1,1,1)$ & $G(7;1,1,2)$ 
\end{tabular}
\caption{Graphical computation of $\delta(G(5;1,1,1))=4$ and $\delta(G(7;1,1,2))=6$, implying $\mu(T(1,1,1,2))=\frac75$ and $\mu(T(1,1,2,3))=\frac97$. 
}
\label{fig:grids}
\end{figure}


\begin{corollary}
\label{coro:T1112}
$\mu(T(1,1,1,2))=\frac75$ \ and \ $\mu(T(1,1,2,3))=\frac97$.
\end{corollary}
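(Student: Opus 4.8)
The plan is to invoke the graph-theoretic formula for the covering radius of a cyclic simplex established in \cref{coro:graph_method}. The first step is to verify that both tetrahedra are cyclic simplices and to read off the relevant parameters. The tetrahedron $T(1,1,1,2)$ has volume vector $(a,b,c,d)=(1,1,1,2)$, so its normalized volume is $V = a+b+c+d = 5$; since $\gcd(1,1,1,2)=1$, the quotient $\Lambda_T$ is cyclic of order $5$ with generator $\frac{1}{5}(a,b,c) = \frac{1}{5}(1,1,1)$. Likewise, $T(1,1,2,3)$ has $V = 1+1+2+3 = 7$, $\gcd(1,1,2,3)=1$, and generator $\frac{1}{7}(1,1,2)$. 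In both cases $a=1$, so the associated Cayley graphs are the simple graphs $G(5;1,1,1)$ and $G(7;1,1,2)$ on $(\Z_5)^2$ and $(\Z_7)^2$, with the three standard generators $(1,0)$, $(0,1)$, $(-b,-c)$ as described just before \cref{coro:graph_method}.

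By \cref{coro:graph_method} (with $d=3$), we then have
\[
\mu(T(1,1,1,2)) = \frac{\delta(G(5;1,1,1))+3}{5}, \qquad \mu(T(1,1,2,3)) = \frac{\delta(G(7;1,1,2))+3}{7},
\]
so the whole problem reduces to computing the two directed diameters. The second and main step is therefore to verify that $\delta(G(5;1,1,1)) = 4$ and $\delta(G(7;1,1,2)) = 6$. This is a finite breadth-first computation on $25$ and $49$ nodes respectively, which I would carry out via the grid diagrams of \cref{fig:grids}: label each cell $(i,j)$ of the $V \times V$ torus with its distance from the origin, check that moving one step east or north raises the distance by at most one, and mark the walls where it does not. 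The only wrap-around arcs $(i,j)\to(i-b,j-c)$ that can matter for shortest paths are those landing on a cell whose south and west walls are already forced; confirming each diagram thus amounts to checking consistency of the displayed labels with these local rules and reading off that the maximum label is $4$ (resp. $6$).

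Plugging these diameters into the formula yields $\mu(T(1,1,1,2)) = \tfrac{4+3}{5} = \tfrac{7}{5}$ and $\mu(T(1,1,2,3)) = \tfrac{6+3}{7} = \tfrac{9}{7}$, as claimed. The one genuinely delicate point is the diameter computation itself: because the generating set is symmetric enough that commutativity kills most of the wrap-around arcs, the care needed is precisely to make sure that no shortest path exploits such an arc in an unexpected way, i.e.\ that the two grid diagrams really do realize all distances from the origin. Once the diagrams are certified correct, the remainder is immediate arithmetic.
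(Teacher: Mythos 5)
Your proposal is correct and follows essentially the same route as the paper: it reduces both computations to \cref{coro:graph_method} (using that $\gcd=1$ makes each simplex cyclic with $V=5$, resp.\ $V=7$, and that $a=1$ identifies the quotient with $(\Z_V)^2$) and then certifies the diameters $\delta(G(5;1,1,1))=4$ and $\delta(G(7;1,1,2))=6$ via the grid diagrams of \cref{fig:grids}. The final arithmetic $\mu = (\delta+3)/V$ giving $\tfrac75$ and $\tfrac97$ is exactly the paper's argument.
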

\end{example}

This method can also be applied to the tetrahedra $M_k(1,1)$ of \cref{sec:smalldim}:

\begin{lemma}
\label{lem:Mk11}
For every $k\in \N$ we have
\[
\mu(M_k(1,1)) = 1+ \frac{1}{2k}.
\]
\end{lemma}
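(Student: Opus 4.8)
The plan is to compute $\mu(M_k(1,1))$ with the Cayley-graph method of \cref{coro:graph_method}, exactly as was done for the terminal tetrahedra $T(1,1,1,2)$ and $T(1,1,2,3)$ in \cref{fig:grids}.

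First I would record the combinatorial data of $T := M_k(1,1) = \conv(\{(1,0,0),(-1,0,1),(0,1,k+1),(0,-1,k)\})$. A direct determinant computation gives the normalized volume $\Vol_{\Z^3}(T) = 4k$, and the $k$ interior lattice points of $T$ are $(0,0,1),\ldots,(0,0,k)$, lying on the fibre over the origin of the projection forgetting the third coordinate (compare the proof of \cref{coro:M_k-3d}). I would single out $p = (0,0,1)$ and compute the normalized volumes of the four pyramids with apex $p$ over the facets of $T$; these come out to $(2k-1,1,1,2k-1)$, which sum to $V=4k$ and have greatest common divisor $1$. Hence $T$ is a cyclic simplex in the sense of \cref{sec:graph-method}, and placing a vertex of pyramid-volume $1$ at the origin, \cref{coro:graph_method} yields
\[
\mu(M_k(1,1)) = \frac{\delta + 3}{4k},
\]
where $\delta$ is the diameter of the Cayley graph $G(4k;1,2k-1,2k-1)$.

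Since the first entry of the generator $(1,2k-1,2k-1)$ is a unit modulo $4k$, I would eliminate the first coordinate to identify $G(4k;1,2k-1,2k-1)$ with the Cayley graph of $(\Z_{4k})^2$ generated by $(1,0)$, $(0,1)$, and the diagonal jump $c=(2k+1,2k+1)$ (here $-(2k-1)\equiv 2k+1 \pmod{4k}$). It then remains to prove the clean statement $\delta = 4k-1$, which at once gives $\mu(M_k(1,1)) = \frac{4k-1+3}{4k} = \frac{4k+2}{4k} = 1 + \frac1{2k}$, consistent with $M_1(1,1)\cong S(\one_4)$ at $k=1$.

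The main obstacle is this diameter computation, which I would handle by writing the distance from $(0,0)$ to an arbitrary $(s,t)$ explicitly as
\[
d(s,t) = \min_{0\le z<4k}\Big[\,z + \langle s - (2k+1)z\rangle + \langle t-(2k+1)z\rangle\,\Big],
\]
where $\langle a\rangle$ is the residue of $a$ in $\{0,\ldots,4k-1\}$, $z$ counting diagonal jumps and the two residues counting the unit steps in each coordinate. The key simplification is the congruence $(2k+1)z \equiv z + 2k\,(z \bmod 2) \pmod{4k}$, which splits the minimization into an even-$z$ branch, where $c$ behaves like the pure diagonal step $(1,1)$, and an odd-$z$ branch, the same shifted by $(2k,2k)$. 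On each branch the bracketed expression is piecewise linear in $z$, so its minimum is attained at an explicit breakpoint; choosing $z$ to annihilate one of the two residues, and using the coordinate symmetry to assume $s\ge t$, I would bound $d(s,t)\le 4k-1$ uniformly, while checking that $(4k-1,2k)$ forces equality (its even-$z$ branch is minimized at $z=2k$ with value $4k-1$, and its odd-$z$ branch never beats $4k$). This per-parity, piecewise-linear case analysis is routine but is where all the work sits; alternatively it can be replaced by the toroidal grid bookkeeping illustrated in \cref{fig:grids}, carried out for general $k$.
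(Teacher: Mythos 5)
Your proposal is correct and follows essentially the same route as the paper: it likewise computes $\Vol_{\Z^3}(M_k(1,1))=4k$, notes that $p=(0,0,1)$ has barycentric coordinates $\tfrac{1}{4k}(1,1,2k-1,2k-1)$ (your listed ordering of the pyramid volumes is permuted, but only the multiset matters), applies \cref{coro:graph_method} to get $\mu=(\delta+3)/(4k)$ with $\delta=\delta(G(4k;1,2k-1,2k-1))$, and concludes via $\delta=4k-1$. The only difference is in how that diameter is established: the paper reads it off from the toroidal-grid bookkeeping of \cref{fig:Gk11}, whereas you propose an explicit piecewise-linear minimization based on the parity splitting $(2k+1)z\equiv z+2k(z\bmod 2)\pmod{4k}$, a self-contained algebraic substitute for the same computation (and your equality check at $(4k-1,2k)$ is correct).
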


\begin{proof}
$M_k(1,1)$ has normalized volume $4k$ and the point $p=(0,0,1)$ has barycentric 
coordinates $\frac1{4k}(1,1,2k-1,2k-1)$. Thus, $M_k(1,1) \cong T(1,1,2k-1,2k-1)$. 
\cref{fig:Gk11} shows that $\delta(G(4k;1,2k-1,2k-1))= 4k-1$, from which \cref{lem:graph_method,lemma:cyclic} give
$\mu(M_k(1,1)) = (4k+2)/4k$. 
\end{proof}

\begin{figure}[ht]
\includegraphics[scale=0.7]{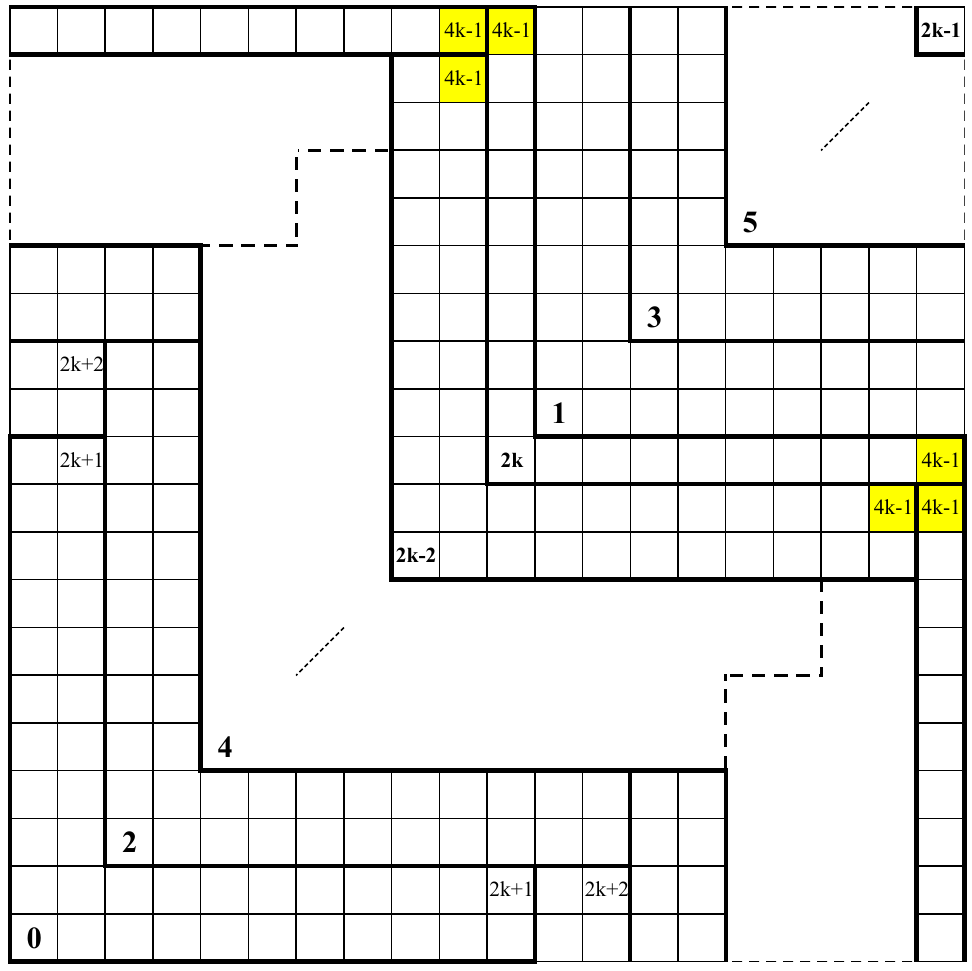}
\caption{Computation of $\delta(G(4k;1,2k-1,2k-1))$, implying $\mu(M_k(1,1))=1+1/2k$. Only the distance to some cells is shown. Cells achieving the diameter are highlighted.}
\label{fig:Gk11}
\end{figure}

Also, we compute the covering radius of a triangle needed in \cref{sect:app-details-section-6}.

\begin{lemma}
\label{lem:2triangles}
\[
\mu(\conv(\{(0,1),(3,0),(1,3)\})) = \frac57.
\]
\end{lemma}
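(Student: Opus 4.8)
The plan is to compute $\mu(T)$, with $T := \conv(\{(0,1),(3,0),(1,3)\})$, through the Cayley-graph method of \cref{coro:graph_method}. First I would record the lattice data of $T$. Taking $(0,1)$ as base vertex, the two edge vectors $(3,-1)$ and $(1,2)$ have determinant $7$, so $\Vol_{\Z^2}(T)=7$; checking the three facet inequalities (or Pick's formula) shows that the interior lattice points are exactly $(1,1)$, $(1,2)$ and $(2,1)$. Thus $T$ is a non-hollow lattice triangle of normalized volume $7$ with three interior points.

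Next I would exhibit $T$ as a \emph{cyclic} simplex in the sense of \cref{sec:graph-method}. The sublattice spanned by the vertex differences has index $V=7$ in $\Z^2$, so the quotient is automatically cyclic ($7$ being prime). Concretely, the interior point $p=(1,1)$ satisfies $p-(0,1)=e_1$, which generates $\Z^2$ modulo that sublattice; hence the vertices of $T$ together with $p$ generate $\Z^2$. A short computation gives the barycentric coordinates of $p$ as $\tfrac17(4,2,1)$, so $T$ is cyclic with this generator and \cref{coro:graph_method} yields
\[
\mu(T)=\frac{\delta+2}{7},
\]
where $\delta$ is the directed diameter of the Cayley graph $G(7;2,1)$.

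The remaining — and only slightly delicate — step is to determine $\delta$. The graph $G(7;2,1)$ is the Cayley graph of $(\Z_7)^2/\langle(2,1)\rangle$; the relation $\bar e_2=-2\bar e_1$ identifies this group with $\Z_7$ generated by $\bar e_1$, under which the two standard arcs become the steps $+1$ and $-2$ on $\Z_7$. A breadth-first search from $0$ reaches $\{1,5\}$ at distance $1$, then $\{2,3,6\}$ at distance $2$, and finally $4$ at distance $3$, so $\delta=3$. Substituting gives
\[
\mu(T)=\frac{3+2}{7}=\frac57,
\]
as claimed. I expect this diameter computation to be the main obstacle, mild as it is: one must be sure to use the \emph{directed} distance and to confirm that the last-reached class genuinely requires three steps rather than two.
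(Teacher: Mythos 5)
Your proof is correct and follows essentially the same route as the paper: both apply \cref{coro:graph_method} with normalized area $V=7$ and the barycentric coordinates of the interior point $(1,1)$ (the paper orders them as $\tfrac17(1,2,4)$ and uses $G(7;1,2)$, which is isomorphic to your $G(7;2,1)$ by relabelling vertices), obtaining $\mu = (\delta+2)/7$. The only difference is that you carry out explicitly the diameter computation $\delta = 3$, which the paper leaves to the reader, and your breadth-first search on $\Z_7$ with steps $+1$ and $-2$ is accurate.
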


\begin{proof}
The triangle has normalized area $7$ and the point $(1,1)$ has barycentric coordinates $\frac17(1,2,4)$. 
Thus,
\begin{align*}
\mu(\conv(\{(0,1),(3,0),(1,3)\})) &= \frac{\delta(G(7;1,2))+2}7 = \frac{3+2}7.
\end{align*}
The (easy) computation $\delta(G(7;1,2)) = 3$ is left to the reader.
\end{proof}

\section{The 26 minimal non-hollow lattice 3-polytopes; Proof of \texorpdfstring{\Cref{thm:26minimal}}{Theorem \ref{thm:26minimal}}}
\label{sec:26minimal}

The $26$ minimal non-hollow lattice $3$-polytopes with a single interior lattice point were classified by Kasprzyk~\cite{kasprzyk2010threefolds}. We list them 
in Tables~\ref{tbl:minimal_tetrahedra} and~\ref{tbl:minimal_non-tetrahedra}, in the same order as they appear in Kasprzyk's Tables~2 and 4. \Cref{tbl:minimal_tetrahedra} contains the 16 that are tetrahedra and \Cref{tbl:minimal_non-tetrahedra} the 10 that are not.

Vertex coordinates are given as the columns of a matrix, and chosen so that the unique interior point is the origin.
For the tetrahedral examples in \Cref{tbl:minimal_tetrahedra} we include the \emph{volume vector} $(a,b,c,d)$, consisting of the normalized volumes of the pyramids from the origin over the facets. When the volume vector is primitive, our tetrahedron equals the cyclic tetrahedron $T(a,b,c,d)$ of \cref{lemma:cyclic}.
In some cases, an additional ``description'' of the example is given, 
which helps us later to bound its covering radius. For example, via this description we can identify the nine polytopes from \Cref{lemma:minimum_3d} that have covering radius $3/2$.

The exact covering radius, computed with the algorithm of \cref{sec:26computations}
using the SCIP solver in exact solving mode~\cite{exact-scip}, is also shown. 
All except those of \Cref{lemma:minimum_3d} have $\mu<3/2$, which provides a computer proof of \Cref{thm:26minimal}. In the rest of this section we include a computer-free proof.

\subsection{The 16 tetrahedra}

\begin{table}[ht]
\begin{adjustwidth}{-.4cm}{-.4cm}
\centering
\begin{tabular}{|c|c|c|c|}
\hline
\footnotesize $\left(\begin{array}{cccc}
-1 & 1 & 0 & 0 \\
-1 & 0 & 1 & 0 \\
-1 & 0 & 0 & 1
\end{array}\right)$ & 
\footnotesize $\left(\begin{array}{cccc}
-2 & 2 & 0 & 0 \\
-2 & 1 & 1 & 0 \\
-1 & 0 & 0 & 1
\end{array}\right)$ & 
\footnotesize $\left(\begin{array}{cccc}
-5 & 5 & 0 & 0 \\
-3 & 2 & 1 & 0 \\
-2 & 1 & 0 & 1
\end{array}\right)$ &
\footnotesize $\left(\begin{array}{cccc}
-1 & 1 & 0 & 0 \\
-1 & 0 & 1 & 0 \\
-2 & 0 & 0 & 1
\end{array}\right)$  \\ 
$(1,1,1,1)$ & $(2,2,2,2)$ & $(5,5,5,5)$ & $(1,1,1,2)$ \\ 
\hline
$S(\one_4)$ & $(I \oplus I')'\oplus I$ & & \\
\boldmath{$\mu = 3/2$} & \boldmath{$\mu = 3/2$}       & $\mu=9/10$ & $\mu=7/5$       \\ \hline \hline
\footnotesize $\left(\begin{array}{cccc}
-1 & 1 & 0 & 0 \\
-1 & 0 & 1 & 0 \\
-3 & 0 & 0 & 1
\end{array}\right)$ & 
\footnotesize $\left(\begin{array}{cccc}
-1 & 1 & 0 & 0 \\
-2 & 0 & 1 & 0 \\
-2 & 0 & 0 & 1
\end{array}\right)$ & 
\footnotesize $\left(\begin{array}{cccc}
-1 & 1 & 0 & 0 \\
-2 & 0 & 1 & 0 \\
-3 & 0 & 0 & 1
\end{array}\right)$ &
\footnotesize $\left(\begin{array}{cccc}
-1 & 1 & 0 & 0 \\
-2 & 0 & 1 & 0 \\
-4 & 0 & 0 & 1
\end{array}\right)$  \\ 
$(1,1,1,3)$ & $(1,1,2,2)$ & $(1,1,2,3)$ & $(1,1,2,4)$ \\ 
\hline
$S(\one_3) \oplus I'$ & $S(\one_3)' \oplus I$ &  & $(I \oplus I')^\circ \oplus I'$ \\
\boldmath{$\mu = 3/2$} & \boldmath{$\mu = 3/2$} & $\mu=9/7$ & \boldmath{$\mu = 3/2$} \\ \hline \hline
\footnotesize $\left(\begin{array}{cccc}
-1 & 1 & 0 & 0 \\
-3 & 0 & 1 & 0 \\
-4 & 0 & 0 & 1
\end{array}\right)$ & 
\footnotesize $\left(\begin{array}{cccc}
-1 & 1 & 0 & 0 \\
-3 & 0 & 1 & 0 \\
-5 & 0 & 0 & 1
\end{array}\right)$ & 
\footnotesize $\left(\begin{array}{cccc}
-1 & 1 & 0 & 0 \\
-4 & 0 & 1 & 0 \\
-6 & 0 & 0 & 1
\end{array}\right)$ &
\footnotesize $\left(\begin{array}{cccc}
-2 & 1 & 0 & 0 \\
-3 & 0 & 1 & 0 \\
-5 & 0 & 0 & 1
\end{array}\right)$  \\ 
$(1,1,3,4)$ & $(1,1,3,5)$ & $(1,1,4,6)$ & $(1,2,3,5)$ \\ 
\hline
$\mu=11/9$ & $\mu=13/10$ & $\mu=4/3$   & $\mu=12/11$ \\ \hline \hline
\footnotesize $\left(\begin{array}{cccc}
-3 & 1 & 0 & 0 \\
-4 & 0 & 1 & 0 \\
-5 & 0 & 0 & 1
\end{array}\right)$ & 
\footnotesize $\left(\begin{array}{cccc}
-1 & 1 & 0 & 0 \\
-3 & 0 & 2 & 0 \\
-4 & 0 & 1 & 1
\end{array}\right)$ & 
\footnotesize $\left(\begin{array}{cccc}
-3 & 2 & 0 & 0 \\
-4 & 1 & 1 & 0 \\
-5 & 1 & 0 & 1
\end{array}\right)$ &
\footnotesize $\left(\begin{array}{cccc}
-4 & 3 & 0 & 0 \\
-3 & 1 & 1 & 0 \\
-5 & 2 & 0 & 1
\end{array}\right)$  \\
$(1,3,4,5)$ & $(2,2,3,5)$ & $(2,3,5,7) $ & $(3,4,5,7)$ \\ 
\hline
& $\text{Pyr}_4(S(\one_3))$ & &\\
$\mu=14/13$ & $\mu=7/6$       & $\mu=1$   & $\mu=18/19$             \\ \hline
\end{tabular}
\caption{The sixteen minimal non-hollow tetrahedra with exactly one interior lattice point, with their covering radii.}
\label{tbl:minimal_tetrahedra}
\end{adjustwidth}
\end{table}


For most of the tetrahedra in \cref{tbl:minimal_tetrahedra} we are going to bound the covering radius based solely on the volume vector of its interior point. We need the following auxiliary result about the covering radius of some (perhaps non-lattice) triangles.

\begin{proposition}
\label{prop:conj:v_plane} 
For each $v \in  \R_{\ge1}^2$, let $\Delta_v := \conv(\{-v,e_1,e_2\})$.
We have
$
\mu(\Delta_v) \le 1.
$
Equality holds if and only if $v \in \{(a,1),(1,a)\}$ with $1 \leq a \leq 2$.
\end{proposition}

\begin{proof}
Due to symmetry, we can assume $v = (v_1,v_2)$ with $v_1 \geq v_2$.
If $v_2 > 1$, then $\Delta_v$ strictly contains the triangle $\Delta_w$, for some $w = (w_1,1) \in \R^2_{\geq 1}$.
By \cref{lemma:simplex_monotone} triangles are tight for every lattice, so that $\mu(\Delta_v) < \mu(\Delta_w)$ and it thus suffices to consider $v = (a,1)$, for $a \geq 1$.

Let $F_0$ be the edge of $\Delta_v$ not containing~$v$, and let $F_1$ and $F_2$ be the edges of~$\Delta_v$ not containing $e_1$ and $e_2$, respectively.
Further, let $\ell = \{(x,y) : x+y=1\}$ be the line containing~$F_0$.
An elementary calculation gives:
\begin{align*}
\ell \cap (F_1 + e_1) &= \left\{\left(\tfrac{2}{a+2},\tfrac{a}{a+2}\right)\right\}, & \ell \cap (F_2 + e_2) &= \left\{\left(\tfrac{1}{a+2},\tfrac{a+1}{a+2}\right)\right\}, \\
\ell \cap (F_1 + (1,1)) &= \left\{\left(\tfrac{2-a}{a+2},\tfrac{2a}{a+2}\right)\right\}, & \ell \cap (F_2 + (1,1)) &= \left\{\left(\tfrac{2}{a+2},\tfrac{a}{a+2}\right)\right\}.
\end{align*}
This already shows that the translates $\{0,1\}^2 + \Delta_v$ cover the unit cube $[0,1]^2$, for every $a \geq 1$, so that $\mu(\Delta_v) \leq 1$ as claimed.

In order to decide the equality case, observe that in the covering of $[0,1]^2$ by these four translates, the point $\left(\tfrac{2}{a+2},\tfrac{a}{a+2}\right)$ is covered last, and is not contained in the interior of any of the four triangles.
However, the translate $(2,1) + \Delta_v$ may contain this point in the interior.
Noting that
\[
\ell \cap (F_1 + (2,1)) = \left\{\left(\tfrac{4-a}{a+2},\tfrac{2a-2}{a+2}\right)\right\},
\]
this happens if and only if $4-a < 2$, that is, $a > 2$.
\end{proof}

\begin{remark}
Every non-hollow lattice triangle is isomorphic to some~$\Delta_v$ considered with respect to a superlattice of $\Z^2$. 
(Let $(a,b,c)$ be the volume vector of an interior point, with $a\le b \le c$, and take $v=(b/a, c/a)$).
With this, 
\cref{prop:conj:v_plane} provides another proof of \cref{conj:one} in the plane. This approach fails in higher dimensions since, for example, we have computed that the tetrahedron $\Delta_{(3/2,1,1)}$ has covering radius $14/9 >3/2$.
\end{remark}

Let $T$ be a lattice tetrahedron with the origin $\zero$ in its interior and let $(a,b,c,d)\in \N^4$ be its volume vector, written with $a\le b \le c \le d$. Let $A$, $B$, $C$, $D$ be the vertices of 
$T$ labeled in the natural way (so that $a$ is the determinant of $BCD$, etc.).

\begin{lemma}
\label{lemma:16tetra}
With this notation, suppose that the triangle $OCD$ is unimodular.
If either of the conditions i) or ii) below holds, then $\mu(T) < 3/2$.
%
\begin{enumerate}[i)]
\item $a+b\le c$ and  $(a,b,c) \ne (1,1,2)$.
\item  $a+b\ge4$, \ $3c\ge a+b+d$, and $(a,b,c,d) \ne (2,2,2,2)$.
\end{enumerate}
\end{lemma}

\begin{proof}
Since the triangle $OCD$ is unimodular, there is no loss of generality in taking~$C=(1,0,0)$ and~$D=(0,1,0)$. Once this is done,~$A$ and~$B$ must have $z$ coordinate equal to $b$ and $-a$, in order for the determinants of $BCD$ and $ACD$ to be $a$ and $b$, respectively. In order for the determinants of $ABD$ and $ABC$ to be $c$ and $d$, the segment $AB$ must intersect the plane $z=0$ at the point $(-c/(a+b), -d/(a+b))$. That is, $T\cap \{z=0\}$ is the triangle $\Delta_{(c/(a+b), d/(a+b))}$ of \Cref{prop:conj:v_plane}.
Then, \Cref{lem:projection} applied to projecting along the $z$ coordinate gives:
\[
\mu(T) \le \mu\left(\Delta_{\left(\frac{c}{a+b}, \frac{d}{a+b}\right)}\right) + \frac1{a+b}.
\]
We now consider the two cases in the statement separately:
For part i),  $c \ge a+b$ implies that  $\frac{d}{a+b} \ge \frac{c}{a+b} \ge 1$.
\Cref{prop:conj:v_plane} says that the first summand is $\le 1$, with equality possible only if $c=a+b$. Thus:
\[
\mu\left(\Delta_{\left(\frac{c}{a+b}, \frac{d}{a+b}\right)}\right) + \frac1{a+b} \le 1+\frac12 = \frac32,
\]
with equality only if $c=a+b$ and $a=b=1$.
%
%

For part ii), $3c\ge a+b+d$ implies that the point $(-1/2,-1/2)$ is in 
$\Delta_{\left(\frac{c}{a+b}, \frac{d}{a+b}\right)}$, so $\Delta_{\left(\frac{c}{a+b}, \frac{d}{a+b}\right)}$ contains the triangle $\Delta_{(1/2,1/2)} = S(1,1,1/2)$. Its covering radius is $5/4$ by \Cref{thm:lambda}.
Hence,
\[
\mu(T) \leq \mu\left(\Delta_{\left(\frac{c}{a+b}, \frac{d}{a+b}\right)}\right) + \frac1{a+b} \le 
 \mu\left(\Delta_{\left(\frac12, \frac12\right)}\right) + \frac1{a+b} \le \frac54+\frac14 = \frac32.
\]
The third inequality is strict unless $a+b=4$. Because simplices are tight, the second inequality is strict unless 
$\Delta_{\left(\frac{c}{a+b}, \frac{d}{a+b}\right)} = \Delta_{(1/2,1/2)}$, that is, unless $c=d=(a+b)/2$, which implies $a=b=c=d$.
\qedhere
\end{proof}

\noindent With this we can prove that all the tetrahedra in \Cref{tbl:minimal_tetrahedra}, except for the five from \cref{lemma:minimum_3d}, have $\mu<3/2$:

\begin{itemize}
\item $(1,1,1,2)$, $(1,1,2,3)$ are the ones whose $\mu$ we computed in \cref{coro:T1112}. 

\item (5,5,5,5) is in the conditions of part (ii) of \cref{lemma:16tetra}. The hypothesis that $OCD$ is unimodular is trivial since $C=(0,1,0)$ and $D=(0,0,1)$.

\item The four volume vectors in the third row satisfy the conditions $a+b\le c$ and $(a,b,c)\ne (1,1,2)$ of part (i) of \cref{lemma:16tetra}. That $OCD$ is unimodular for them follows from $\gcd(a,b)=1$, because the normalized volume of $OCD$ divides those of $OBCD$ and $OACD$, which equal $a$ and $b$, respectively.

\item The four in row four satisfy the conditions $a+b\ge 4$ and $(a,b,c,d)\ne (2,2,2,2)$. The condition that $OCD$ is unimodular follows again from $\gcd(a,b)=1$, except for the tetrahedron (2,2,3,5).

\item The remaining tetrahedron (2,2,3,5) is marked  ``$\text{Pyr}_4(S(\one_3))$'' because it has a facet isomorphic to $S(\one_3)$ (the facet in the  plane $x+z=2y+1$)
 and the opposite vertex is at distance four from that facet. 
\Cref{lem:projection} applied to the projection along the base of the pyramid gives
\[
\mu(\text{Pyr}_4(S(\one_3))) \le \mu(S(\one_3)) + \frac14 = \frac54.
\]
\end{itemize}

\begin{remark}
All these tetrahedra except $(2,2,2,2)$ and $(5,5,5,5)$ have $\gcd(a,b,c,d)=1$. Thus, \cref{lem:graph_method,lemma:cyclic}
can be used to compute their exact covering radii,
as we did for $(1,1,1,2)$, $(1,1,2,3)$ in \cref{exm:T1112}. 
The condition $a=1$ used in that computation can be weakened to $\gcd(a,V)=1$, (which these fourteen tetrahedra satisfy) since then $G(V;a,b,c)=G(V;1,ba^{-1}, ca^{-1})$, where $a^{-1}$ is the inverse of $a$ modulo $V$.
\end{remark}

\subsection{The 10 non-tetrahedra}

\begin{table}[ht]
\centering
\begin{tabular}{|c|c|}
\hline
\footnotesize $\left(\begin{array}{rrrrr}
1 & 0 & 0 &  0 & -1 \\
0 & 1 & 0 &  0 & -1 \\
0 & 0 & 1 & -1 &  0
\end{array}\right)$ &
\footnotesize $\left(\begin{array}{rrrrr}
1 & 0 & 0 & -2  & -1 \\
0 & 1 & 0 & -1  &  0 \\
0 & 0 & 1 &  0  & -1
\end{array}\right)$  \\ \hline

$S(\one_3) \oplus I$, \quad \boldmath{$\mu = 3/2$} & $I\oplus Q_4$, \quad $\mu = 4/3$ \\ \hline \hline

                                                                                                                                      \footnotesize $\left(\begin{array}{rrrrr}
1 & 0 & -1 & 1 & -1 \\
0 & 1 & -1 & 2 & -2 \\
0 & 0 &  0 & 3 & -3
\end{array}\right)$ &
\footnotesize $\left(\begin{array}{rrrrr}
1 & 0 & 0 & -2  & -2 \\
0 & 1 & 0 & -1  &  0 \\
0 & 0 & 1 &  0  & -1
\end{array}\right)$  \\ \hline

$\text{Bipyr}_3(S(\one_3) \oplus I)$, \quad $\mu = 17/18$ & $I\oplus I\oplus I'$, \quad \boldmath{$\mu = 3/2$} \\ \hline \hline

                                                                                                                                      \footnotesize $\left(\begin{array}{rrrrr}
1 & 0 & 0 &  0 & -2 \\
0 & 1 & 0 &  0 & -1 \\
0 & 0 & 1 & -1 &  0
\end{array}\right)$ &
\footnotesize $\left(\begin{array}{rrrrr}
1 & 0 & -2 & 1  & -3 \\
0 & 1 & -1 & 1  & -1 \\
0 & 0 &  0 & 2  & -2
\end{array}\right)$  \\ \hline

$(I \oplus I')^\circ \oplus I$, \quad \boldmath{$\mu = 3/2$} & $\text{Bipyr}_2(I\oplus I\oplus I')$, \quad $\mu=7/8$ \\ \hline \hline

                                                                                                                                      \footnotesize $\left(\begin{array}{rrrrr}
1 & 0 & -2 & 1  & -1 \\
0 & 1 & -1 & 1  & -1 \\
0 & 0 &  0 & 2  & -2
\end{array}\right)$ &
\footnotesize $\left(\begin{array}{rrrrrr}
1 & 0 & 0 & -1 &  0 &  0 \\
0 & 1 & 0 &  0 & -1 &  0\\
0 & 0 & 1 &  0 &  0 & -1
\end{array}\right)$  \\ \hline

$\text{Bipyr}_2((I \oplus I')^\circ \oplus I)$, \quad $\mu = 1$ & $I\oplus I\oplus I$, \quad \boldmath{$\mu = 3/2$} \\ \hline \hline


\footnotesize $\left(\begin{array}{rrrrr}
1 & 0 & 0 & -1 &  1 \\
0 & 1 & 0 & -1 &  1 \\
0 & 0 & 1 &  0 & -1
\end{array}\right)$ &
\footnotesize $\left(\begin{array}{rrrrrr}
1 & 0 & -1 &  0 & 1 & -1 \\
0 & 1 &  0 & -1 & 1 & -1 \\
0 & 0 &  0 &  0 & 2 & -2
\end{array}\right)$  \\  \hline

$ \text{Pyr}_3([0,1]^2)$, \quad $\mu = 4/3$ & $\text{Bipyr}_2(I\oplus I\oplus I)$, \quad $\mu = 3/4$ \\ \hline

\end{tabular}
\caption{The ten minimal non-hollow non-tetrahedra with exactly one interior lattice point, with their covering radii.}
\label{tbl:minimal_non-tetrahedra}
\end{table}

For the ten polytopes in \cref{tbl:minimal_non-tetrahedra} we use the following direct arguments:

\begin{itemize}
\item Four of them are the non-tetrahedra in \Cref{lemma:minimum_3d}, of covering radius $3/2$.

\item There are another four that are affinely equivalent to the previous four, 
except considered with respect to a finer lattice. 
They are marked as $\text{Bipyr}_i($--$)$, where $i$ is the index of the superlattice, since they are also (skew) bipyramids over their intersection with the plane $z=0$.
This intersection is, in the four cases, one of the three non-hollow lattice polygons with $\mu=1$.  \Cref{lem:projection} for the projection $\pi$ onto the $z$-coordinate gives:
\[
\mu(P) \le \mu(P\cap\{z=0\}) + \mu(\pi(P)) \le 1 + \frac14 =\frac54,
\]
since $\pi(P)$ has length at least four in all four cases.

\item The one marked $\text{Pyr}_3([0,1]^2)$ is a  pyramid with base a unimodular parallelogram in the plane $x+y+z=1$ and apex at distance three. \Cref{lem:projection} applied to the projection along the base gives
\[
\mu(\text{Pyr}_3([0,1]^2)) \le 1 + \frac13.
\]

\item The remaining one is marked $I\oplus Q_4$ because it decomposes as
\[
\left(\begin{array}{rrrrr}
0 & -2  \\
1 & -1  \\
0 &  0 
\end{array}\right)
\oplus
\left(\begin{array}{rrrrr}
1 &-1 & 0  & -1 \\
0 & 0 & 0  & 0 \\
0 & 0 & 1  & -1
\end{array}\right),
\]
where the first summand is equivalent to $I=[-1,1]$ and the second is a quadrilateral $Q_4$.
Since $Q_4$ strictly contains a translation of $S(\one_3)$ and $S(\one_3)$ is tight (\cref{lemma:simplex_monotone}), we have:
\[
\mu(I\oplus Q_4) = \mu(I)+ \mu(Q_4) < \mu(I)+ \mu(S(\one_3)) = \frac{3}{2}.
\]
\end{itemize}

\section{Details for \texorpdfstring{\Cref{thm:dim2-k}}{Theorem \ref{thm:dim2-k}}}
\label{sect:app-details-section-6}

Here, we discuss the missing details for the proof of \cref{thm:dim2-k} in \cref{sec:k_points}.
Note that this is not part of the published version~\cite{published} of the article.

\begin{lemma}
 \label{lem:34}
 Suppose $P$ is a non-hollow lattice-polygon of width $\omega = 3$ (assume it is contained in $[0,3]\times \R$), it has at most $m = 4$ collinear lattice points, and it contains $k \geq 5$ interior lattice points. Then, at least one of the following conditions holds:
 \begin{enumerate}[(i)]
 \item $P$ has four collinear lattice points along one of the intermediate vertical lines $\{1\}\times \R$ or $\{2\}\times \R$ and (at least) three of them are interior to $P$,
 \item $P$ contains a parallelogram with one vertical edge of length two and horizontal width two.
 \end{enumerate}
 \end{lemma}
 
 \begin{proof}
%
 We assume that $P$ is neither in the conditions of (i) or (ii), and we derive a contradiction. By (the negation of) (i), $P$ has at most three interior lattice points along each of the two vertical lines. Since it has at least five in total, we assume without loss of generality that
 \[
 (1,1), (1,2), (1,3), (2,1), (2,2) \in \inter(P).
 \]
 The proof is based on arguing that certain additional points must or cannot be in~$P$. This is illustrated in \cref{fig:34} where the points that \emph{must} be in~$P$ are drawn as black dots and the ones that \emph{cannot} as crosses. The initial points that we assume in $\inter(P)$ are drawn as white dots. The labels of the points indicate the order in which they appear in the proof:
 
 \begin{figure}[htb]
 \includegraphics[scale=0.6]{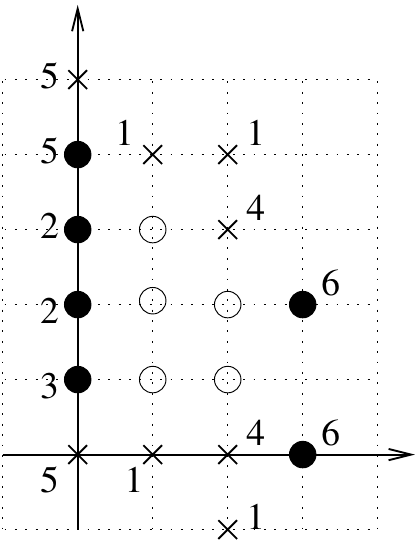}
 \caption{Illustration of the proof of \cref{lem:34}.}
 \label{fig:34}
 \end{figure}
 
 \begin{enumerate}[1)]
 \item None of the points $(1,0)$, $(1,4)$, $(2,-1)$, or $(2,4)$ can be in $P$, since their presence would give condition (i).
 
 \item  The left end-point of the top (respectively, bottom) edge of $P$ meeting the line $\{x=1\}$ must then be of the form $(0,a)$ with $a\ge 3$ (respectively, with $a\le 2$). Hence, $(0,2)$ and $(0,3)$ are in $P$.
 
 \item One of $(0,1)$ or $(0,4)$ must be in $P$, for otherwise the edges going from $(0,2)$ and $(0,3)$ to the right must go strictly below and above $(2,0)$ and $(2,3)$ respectively, giving four interior points along $\{x=2\}$. Assume without loss of generality that $(0,1) \in P$.
 
 \item Since we already have an intersection of length two with $\{x=0\}$, the intersection with $\{x=2\}$ must have length strictly smaller than $2$, in order for~$P$ not to be in the conditions of part (ii). Thus, $(2,0), (2,3)\not\in P$.
 
 \item Now the top edge of $P$ crossing $\{x=1\}$ must have its left end-point above $(0,3)$, because $(2,3) \notin P$, which implies $(0,4) \in P$. Since we already have four collinear points in $\{x=0\}$, neither $(0,0)$ nor $(0,5)$ is in $P$.
 
 \item Now the only possibility for the right end-points of the top and bottom edges of $P$ are $(3,0)$ and $(3,2)$ (remember that the white dots in the figure are meant to be in the interior of $P$).
 \end{enumerate}
 This gives a contradiction, since $P$ is now as described in part (ii).
 \end{proof}

 \begin{lemma}
 \label{lem:3collinear}
 Suppose $P$ is a non-hollow lattice-polygon of width at least three and that it contains three interior collinear lattice points. Then, $P$ has four collinear lattice points. 
 \end{lemma}
 
 \begin{proof}
 
 
 \begin{figure}[htb]
 \includegraphics[scale=0.6]{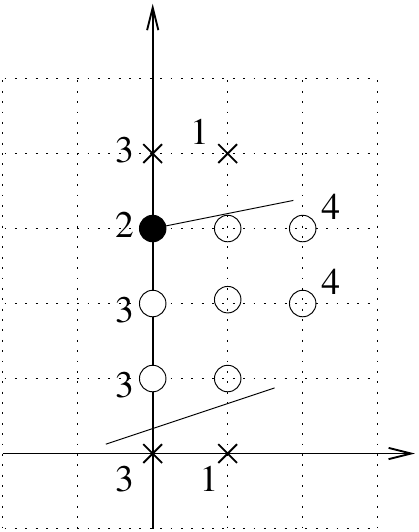}
 \caption{Illustration of the proof of \cref{lem:3collinear}.}
 \label{fig:3collinear}
 \end{figure}
 
 Suppose $P$ contains $(1,1)$, $(1,2)$ and $(1,3)$ in its interior and moreover that $P$ does not contain four collinear lattice points. We are going to arrive in a contradiction. 
 Similarly to the proof of \cref{lem:34}, we illustrate our reasoning in \cref{fig:3collinear}:
 
 \begin{enumerate}[1)]
 \item $(1,0)$ and $(1,4)$ are exterior to $P$, and the length of $P\cap\{x=1\}$ is greater than two.
 
 \item  Since $P$ does not have a vertex in $\{x=1\}$, one of the intersections $P\cap \{x=0\}$ or $P\cap \{x=2\}$ has at least the same length as $P\cap \{x=1\}$. Suppose it is $P\cap \{x=0\}$.
 If $P$ does not have a vertex in $\{x=0\}$ then it has at least three lattice points in $\{x=-1\}$ and at least one in $\{x\geq2\}$. That would make at least ten lattice points in total, which would imply $m\ge 4$ as we observed in \Cref{rem:castryck}.
 So, without loss of generality we assume that the top edge crossing $\{x=1\}$ has a vertex at $(0,3)$. 
 
 \item Then $(0,0)$ and $(0,4)$ are exterior to $P$, in order not to have four collinear points, and $(0,2)$ and $(0,1)$ are interior to $P$, since $P\cap \{x=0\}$ has length larger than two.
 
 \item The edges crossing $\{x=1\}$ must cross $\{x=2\}$ above $(2,3)$ and below $(2,2)$ respectively, so these two points are also in the interior of~$P$.
 \end{enumerate}
 So, we have identified eight lattice points in $P$. But none of them can be an end-point of the bottom edge of $P$ crossing $\{x = 1\}$. Thus, $P$ has at least ten lattice points, which implies $m\ge 4$.
 \end{proof}

 %
 %

 \begin{lemma}
 \label{lem:ad-hoc}
 Let $P$ be a non-hollow lattice polygon with $m=3$ and which is contained in one of the three polygons depicted in \cref{fig:m=3}.
 Then, \[\mu(P) < \frac12 + \frac1{k+1}.\]
 \end{lemma}
 
 \begin{proof}
 Let $Q$ be the convex hull of all the interior lattice points in $P$.
 Denoting by $R$ any of the three polygons in \cref{fig:m=3}, we want to show that every subpolygon of $R$ containing all the white dots (the polygon $Q$) in its interior has covering radius strictly smaller than $\frac12+\frac1{k+1}$, where $k=4$ in cases (A) and (B), and $k=3$ in case (C): 
 \begin{enumerate}[(A)]
 
 \item If $Q=S(\one_3)$, then $R=2Q=S(2,2,2)$. The only lattice subpolygon of~$R$ containing $Q$ in its interior is $R$ itself, whose covering radius is $1/2$ by \cref{thm:lambda} (or by the fact that it coincides with $2S(\one_3)$).
 
 \item If $Q$ is a unit parallelogram, without loss of generality we assume that $Q = [1,2]^2$ and $R=[0,3]^2$. We distinguish cases:
 	\begin{enumerate}[1)]
 	\item $P$ contains at least one lattice point from the relative interior of each edge of $R$. The only possibility for $R$ not to contain four collinear points is that $P$ equals $S := \conv(\{(1,0), (3,1), (2,3), (0,2)\})$ (or its mirror reflection). It is easy to calculate that $\mu(S)=\frac35 <\frac12 + \frac15$, since $Q$, which is a fundamental domain, is inscribed in the dilation of $S$ of factor $\frac35$ centered at $(\frac32,\frac32)$.
 	\item Along some edge, $P$ does not contain any relative interior point of~$R$. Say $P$ contains neither $(1,0)$ nor $(2,0)$. Then, it must contain the edge from $(0,0)$ to $(3,1)$ (or its mirror reflection, which gives an analogous case). If $P$ contains $(0,1)$ 
 then we have four collinear points.  If it does not, then it contains the edge from $(0,0)$ to $(1,3)$. In particular,~$P$ contains the triangle with vertices $(0,0)$, $(3,1)$ and $(1,3)$. This triangle is a translate of $S(1,2,2)$, hence its covering radius equals $\frac58 < \frac12+\frac15$ by \cref{thm:lambda}.
 	\end{enumerate}

 \item If $Q$ is a unimodular triangle, then without loss of generality we can assume that $Q = \conv(\{(1,1), (1,2), (2,1)\})$, so that $P$ is contained in $R = \conv(\{(0,0), (4,0), (0,4)\})$. There are two possibilities:
 	\begin{enumerate}[1)]
 	\item $P$ contains a vertex of $R$, say $(0,0)$. It must also contain (at least) one lattice point on the opposite edge $\{x+y=4\}$. But:
 		\begin{enumerate}[i)]
 		\item If $(2,2)\in P$ then $\mu(P) \le \mu([0,2]^2)=\frac12$.
 		\item If $(4,0)$ or $(0,4)$ is in $P$ then $P$ contains five collinear points.
 		\item If $(3,1)$ is in $P$ then $P\cap\{y=1\}$ has length at least $8/3$. \Cref{lem:projection} for the projection along this line gives $\mu(P) \le \frac13 + \frac38= \frac{17}{24} < \frac34$. The case $(1,3)\in P$ is symmetric to this one.
 		\end{enumerate}
 
 	\item $P$ does not contain a vertex of $R$. Then, in order for $(1,1)$ to be in the interior of~$P$, $P$ must contain (at least) one of the points $(1,0)$ or $(0,1)$. The same reasoning for the other two interior points gives that $P$ contains one of $(3,0)$ and $(3,1)$, and one of $(0,3)$ and $(1,3)$. Out of the eight combinations of one point from each pair the only ones that do not produce four collinear points in $P$ are the triangle $\conv(\{(0,1), (3,0), (1,3)\})$ and its reflection along the diagonal $\{x=y\}$. In \cref{lem:2triangles} we compute the covering radius of this triangle to be $5/7$, which is smaller than $3/4$.
 	\end{enumerate}
 \end{enumerate}
 \end{proof}

\subsection*{Acknowledgements}

We thank Ambros Gleixner for useful discussions and guidance with the exact computer calculations presented in \cref{sec:26minimal}, and Gabrielle Balletti for helpful indications regarding \cite[Prop.~4.2]{ballettikasprzyk2016twopoints}.
We also thank the referees for their careful reading and their suggestions to improve the presentation of the material.

\bibliographystyle{amsplain}
\bibliography{mybib}

\end{document}